\documentclass[11pt, reqno]{amsart}
\ifdefined\pdfminorversion\pdfminorversion=7\fi
\usepackage{amsmath,amsthm,amsfonts,amssymb,mathrsfs,bm,graphicx,stmaryrd,caption}
\usepackage{subcaption}
\usepackage{epstopdf}
\usepackage{mathtools}
\usepackage{dsfont}
\usepackage{multicol}
\usepackage{adjustbox}
\usepackage[colorlinks=true,linkcolor=blue,citecolor=blue,breaklinks]{hyperref}
\usepackage{url}

\usepackage{breakurl}
\usepackage{bbm}
\usepackage{upgreek}
\newcommand{\scT}{\mathscr{T}}

\newcommand{\cR}{\mathcal{R}}

\newcommand{\cQ}{\mathcal{Q}}

\newcommand{\cX}{\mathcal{X}}

\newcommand{\RR}{\mathbb{R}}

\newcommand{\hor}{\mathrm{hor}}

\newcommand{\PP}{\mathbb{P}}

\newcommand{\cB}{\mathcal{B}}

\newcommand{\EE}{\mathbb{E}}

\newcommand{\ZZ}{\mathbb{Z}}

\newcommand{\cI}{\mathcal{I}}

\newcommand{\hitset}{\mathrm{HitSet}}

\newcommand{\disvol}{\mathrm{DisVol}}

\usepackage[letterpaper,hmargin=1.0in,vmargin=1.0in]{geometry}
\parskip 	\smallskipamount

\newtheorem{theorem}{Theorem}
\newtheorem{lemma}[theorem]{Lemma}

\newtheorem{question}[theorem]{Question}

\newtheorem{corollary}[theorem]{Corollary}
\newtheorem{proposition}[theorem]{Proposition}

\theoremstyle{definition}
\newtheorem{remark}[theorem]{Remark}
\newcommand{\ind}{\mathbbm{1}}
\newcommand{\wgt}{\mathrm{Wgt}}

\def\Var{{\rm Var}}

\newcommand{\cA}{\mathcal{A}}

\newcommand{\cD}{\mathcal{D}}
\newcommand{\cE}{\mathcal{E}}
\newcommand{\cN}{\mathcal{N}}

\newcommand{\cM}{\mathcal{M}}

\newcommand{\Cov}{\mathrm{Cov}}

\newcommand{\bn}{\mathbf{n}}

\newcommand{\0}{\mathbf{0}}

\newcommand{\coarse}{\mathrm{Coarse}}

\newcommand{\nearmax}{\mathrm{NearMax}}

\usepackage[backend=biber,style=alphabetic,doi=false,maxalphanames=10,maxnames=50]{biblatex}

\AtBeginBibliography{\small}

\usepackage{longtable}
\usepackage{xcolor}

\usepackage{tikz}
\usetikzlibrary{arrows.meta,calc,decorations.pathreplacing}
\definecolor{clusterblue}{RGB}{39,88,135}
\definecolor{refineteal}{RGB}{18,112,104}
\definecolor{witnessgold}{RGB}{174,112,25}
\tikzset{>=Stealth, every picture/.style={font=\small},
  guide/.style={draw=black!35,densely dashed,line width=.45pt},
  cluster/.style={draw=clusterblue,fill=clusterblue!10,line width=.75pt},
  child/.style={draw=refineteal,fill=refineteal!18,line width=.7pt}}

\newcommand{\norm}[1]{\left\lVert#1\right\rVert}

\newcommand{\dint}[2]{[\![#1,#2]\!]}
\newcommand{\slab}[2]{\dint{#1}{#2}_{\RR}}

\begin{document}
\title[]{Geodesic traces in dynamical Brownian last passage percolation}
\author[]{Manan Bhatia}
\address{Manan Bhatia, Department of Mathematics, University of Geneva, Geneva, Switzerland}
\email{mananbhatia1701@gmail.com}
\date{}
\begin{abstract}
We consider Brownian last passage percolation (BLPP) in which the Brownian
increment process on each unit horizontal interval is independently
resampled at rate one. {By combining strong passage-time
stability estimates with a multiscale analysis of static near-optimal
paths, we show that, for every $\varepsilon>0$, the}
{union of all geodesics between two KPZ-scale rectangles of
transverse width of order $n^{2/3}$ and longitudinal length of order
$n$, separated by a distance of order $n$, visits at most
$n^{1+\varepsilon}$ unit horizontal cells in the bulk during the
critical time interval $[0,n^{-1/3}]$, both in}
expectation and with stretched-exponentially high probability.
{We also obtain the quantitative bound
$n\exp\{C(\log\log n)^2\}$ on the expected hitset size, with a
corresponding failure probability at most $Ce^{-c(\log n)^2}$.}
Using this, we establish that the set of times admitting a
non-trivial bigeodesic has almost surely zero Hausdorff measure for
the subpolynomially decaying gauge $H(r)=\exp\{-L(r)^2\log L(r)\}$,
where $L(r)=\log\log(1/r)$, as $r\downarrow0$.
In particular, this set almost surely has Hausdorff dimension zero. For each fixed
deterministic non-axial direction, we further show that almost
surely no time admits a bigeodesic in that direction.

\end{abstract}
\maketitle
\setcounter{tocdepth}{1}
\tableofcontents

\section{Introduction}

Last passage percolation (LPP) is a model of random geometry in which
directed paths are chosen to maximise the weight they collect. A basic example
is exponential LPP: independent exponential weights are assigned to
the vertices of $\ZZ^2$, and the weight of an up-right lattice path is
the sum of the weights of its vertices. The passage time between two
ordered vertices is the largest weight of such a path, and a path
attaining this maximum is called a geodesic. LPP models are believed to belong to
the Kardar--Parisi--Zhang universality class \cite{KPZ86}, in which
geodesics and passage times exhibit characteristic fluctuations on large scales.
Under the corresponding rescaling, their passage-time fields are
conjectured to converge to the directed landscape, constructed in
\cite{DOV22}, and this has been shown for integrable models \cite{DOV22, DV21}.

Instead of working with a ``static'' LPP model, one might also evolve the vertex weights in a dynamical fashion and study the behaviour of this richer model. For example, by attaching
an independent rate-one Poisson clock to each vertex and resampling
its weight whenever the clock rings, one obtains a stationary
dynamical LPP model. The associated geodesics now change with time, and a central question is how a small perturbation of the environment
affects the geometry of a long geodesic. A natural route to study the above phenomenon is to compare the structure of a geodesic at two times in the dynamics, and this has been investigated \cite{Cha14,GH24}. One may also ask about the evolution of a geodesic over a dynamical time interval: how often does it change, and how large is the trace it sweeps out in the plane?

Such questions about finite paths are closely related to the possible
existence of exceptional times for infinite paths. A bigeodesic is a
bi-infinite directed path every finite portion of which is a geodesic.
For static LPP models, non-trivial bigeodesics are expected to be absent;
this has been proved for exponential LPP in \cite{BHS22,BBS20}.
Here the qualification ``non-trivial'' excludes the entirely horizontal
and entirely vertical paths, which are always bigeodesics.
In planar first passage percolation, the bigeodesic question is also
connected to the existence of non-constant ground states of the
ferromagnetic Ising model with random coupling constants; see
\cite[Section 4.5.2]{ADH17}. Evolving the passage weights corresponds
in this connection to evolving the coupling constants.
In a stationary dynamical model, an event that
has probability zero at each deterministic time may nevertheless
occur at random times. It is therefore natural to ask whether
bigeodesics can appear at such exceptional times, and, if they do,
how large this set of times can be.

In this work, we address the latter question for Brownian last passage
percolation (BLPP) with the discrete resampling dynamics introduced
in \cite{B25}. BLPP has a continuous horizontal coordinate, and its
paths collect increments of independent Brownian motions on successive
rows. The dynamics resample these increments on unit horizontal
intervals. We prove that the set of times at which a non-trivial
bigeodesic exists has almost surely Hausdorff dimension zero, and in fact, its Hausdorff measure
vanishes for the subpolynomially decaying gauge function $H$ in Theorem~\ref{thm:main}.
\begingroup
Our approach is to estimate directly the region swept out by finite
geodesics. A geodesic of length of order $n$ visits, in expectation,
at most $n\exp\{C(\log\log n)^2\}$ unit horizontal cells in its bulk
as time varies over $[0,n^{-1/3}]$.
\par\endgroup

We also rule out exceptional times admitting a bigeodesic
in any fixed deterministic non-axial direction. Whether such times
actually exist when the direction is allowed to be random still remains
an open question; see \cite[Question 2]{BE25} and the discussion following Conjecture 4 therein for the corresponding existence question and
its predicted behaviour in dynamical LPP.

This work is part of the program initiated in \cite{BE25,B25} to
develop a theory of exceptional times at which bigeodesics exist
in dynamical LPP,
motivated by the theory of exceptional infinite clusters in dynamical
percolation \cite{HPS97}. For critical site percolation on the
triangular lattice, there is almost surely no infinite open cluster
at any fixed time. Nevertheless, when the states of the sites are
independently refreshed, Schramm and Steif~\cite{SS10} proved that
exceptional times with an infinite open cluster exist, and Garban,
Pete and Schramm~\cite[Theorem 1.4]{GPS10} showed that their set has
Hausdorff dimension $31/36$. In dynamical LPP, the corresponding
questions concern the appearance of bigeodesics and the size of the
set of times at which they occur. The passage-time and hitset
estimates developed here provide finite-scale inputs to this program.

\subsection{Previous work}

The study of noise sensitivity initiated by \cite{BKS99} asks whether a small resampling of a random
environment can substantially change its macroscopic features.
For Gaussian optimisation problems, \cite{Cha14} developed
the connection between unusually small fluctuations of the optimal
value and instability of its optimiser under perturbations.
In LPP, this instability can be measured through the overlap of
geodesics at two times. For BLPP under the Ornstein--Uhlenbeck dynamics,
\cite{GH24} identifies $n^{-1/3}$, up to subpolynomial corrections,
as the critical time scale for length-$n$ geodesics: the overlap
changes from order $n$ to a smaller order as this scale is crossed.
These dynamics evolve the Brownian environment continuously.
For independent resampling of vertex weights in discrete LPP models,
\cite{ADS24} relates the transition between stability and chaos to
$\Var(T)/n$, under assumptions on the vertex weight distribution.
If the passage-time variance has the conjectured order $n^{2/3}$,
this again gives the scale $n^{-1/3}$.
\begingroup
For dynamical critical first-passage percolation on the triangular and square
lattice, \cite{DHHL23,DHHL26} investigate exceptional times with atypical passage-time growth. {The recent work \cite{GGH26} studies a form of chaos for the
continuum directed random polymer \cite{AKQ14} under changes in
temperature. As a further consequence, they prove that the directed
landscape is a two-dimensional black noise in the sense of
Tsirelson and Vershik~\cite{TV98}. This extends the earlier temporal
black-noise result of \cite{HP24} to a decomposition into independent
local randomness in space--time rectangles. It can be interpreted
as an intrinsic form of noise sensitivity for the directed landscape:
if each rectangle's local randomness is independently resampled
with a fixed positive probability, however small, then
square-integrable observables of the original and resampled
landscapes decorrelate as the rectangular mesh tends to zero.}
\par\endgroup

The work \cite{B25} introduced a discrete dynamics on BLPP and
{investigated the total number of changes accumulated by a geodesic
as time proceeds, a quantity referred to therein as \emph{geodesic switches}.}
In the bulk of a length-$n$ geodesic, the expected number of these
coarse-grained changes over an interval of length $h$ is at most
$n^{5/3+o(1)}h$. A {particular consequence of this} is an upper bound of $1/2$ for the
Hausdorff dimension of exceptional times. For bigeodesics restricted
to a deterministic direction fixed in advance, the upper bound in
that work is already zero. Here we use
a different strategy, estimating the size of the trace directly
without counting switches. A cell is counted only once, even if the
geodesic leaves it and returns many times. Repeated switches between
previously explored portions can therefore contribute to the switch
count without enlarging the trace, and this is why we estimate the size
of the trace directly. The stronger trace estimate also
improves the fixed-direction conclusion from dimension zero to
non-existence; see Theorem~\ref{thm:fixeddirection}.

The companion work~\cite{BE25} approaches the existence question in
dynamical exponential LPP. For suitable opposite $\Theta(n)$ length segments
at distance of order $n$, it gives a lower bound $c/\log n$ for the
probability that some geodesic between the segments passes through
the origin at a time in $[0,1]$. The crucial feature is that this
probability decays subpolynomially: it is larger than $n^{-a}$ for
every fixed $a>0$ and sufficiently large $n$, and this is the sense in
which the result gives near-existence of bigeodesics. The same work
conjectures that, for natural dynamical LPP models, the set of
exceptional times has Hausdorff dimension zero, even if
it is nonempty; see \cite[Conjecture 4]{BE25}.

The discussion following Question~19 in \cite{BE25} suggests a route
to such an upper bound: if a geodesic visits many locations while the
environment changes only slightly, one should be able to detect many
near-optimal alternatives in the static environment. In this paper, we implement
this idea in the discrete BLPP model using a new passage-time
stability estimate. Theorem~\ref{thm:stability} gives a discrete-BLPP
counterpart of the concentration requested in \cite[Question 20]{BE25};
its relation to the $\sqrt{nt}$ scale is explained below.
The geometric difficulty is that the locations visited by a geodesic
as time varies over an interval of length $n^{-1/3}$ need not be well
separated. They can form a hierarchy of clusters, with smaller
excursions along each macroscopic alternative and still smaller
excursions within those. Figure~\ref{fig:blppclusters} depicts this
picture using BLPP staircases.

To control this hierarchy, we apply stability and static near-maximum
estimates at successively smaller scales. The static input bounds
the number of well-separated near-maximisers by comparing BLPP
weight profiles with Brownian motion. It combines
Brownian comparison for the routed profile with an estimate controlling the number of near-maximisers for Brownian motion \cite{CHH23,GH23}.
At each scale, stability places every crossing of a shorter geodesic
segment within unit distance of a near-maximiser of the corresponding
static profile, with a tolerance determined by the segment's own
length.
Its endpoints, however, are selected by the evolving geodesic and
are therefore random. The method of \cite[Section 5]{B25} addresses
such random endpoints by sampling an independent Poisson cloud of
endpoint pairs: with high probability, every central path portion
of interest agrees with a portion of a geodesic between one of the
sampled pairs. We refer to this procedure as \textit{Poisson capture}.
Its geometric foundation is {a} one-sided volume-accumulation
estimate in \cite[Section 5]{BB23}, proved for
exponential LPP and adapted to BLPP in \cite[Section 11]{B25}.
This estimate gives large sets of starting points whose geodesics
merge quickly with a given path; it supplies the basin-volume
bound that allows us to find representatives from the independently sampled Poisson cloud.
We perform this construction simultaneously in every possible region
containing a crossing and at every scale used in the refinement.
For every sampled endpoint pair, passage-time stability relates its
dynamic crossings to static near-maximisers. The static estimate then
bounds the number of separated near-maximisers in the appropriate
window, using the length scale of that pair, and provides a cover by
few intervals. We first obtain one event on which {the \textit{Poisson capture} statement} and these
covers hold simultaneously for all the sampled pairs in all the
regions and scales under consideration. On this event, we refine the
intervals containing the actual crossings from large scales to small
scales and count only the intervals retained. This multiscale
recursion gives the required bound on the entire dynamical trace.
Section~\ref{sec:proofoutline} explains the argument in more detail.

\begin{figure}[tbp]
\centering
\includegraphics[width=\textwidth]{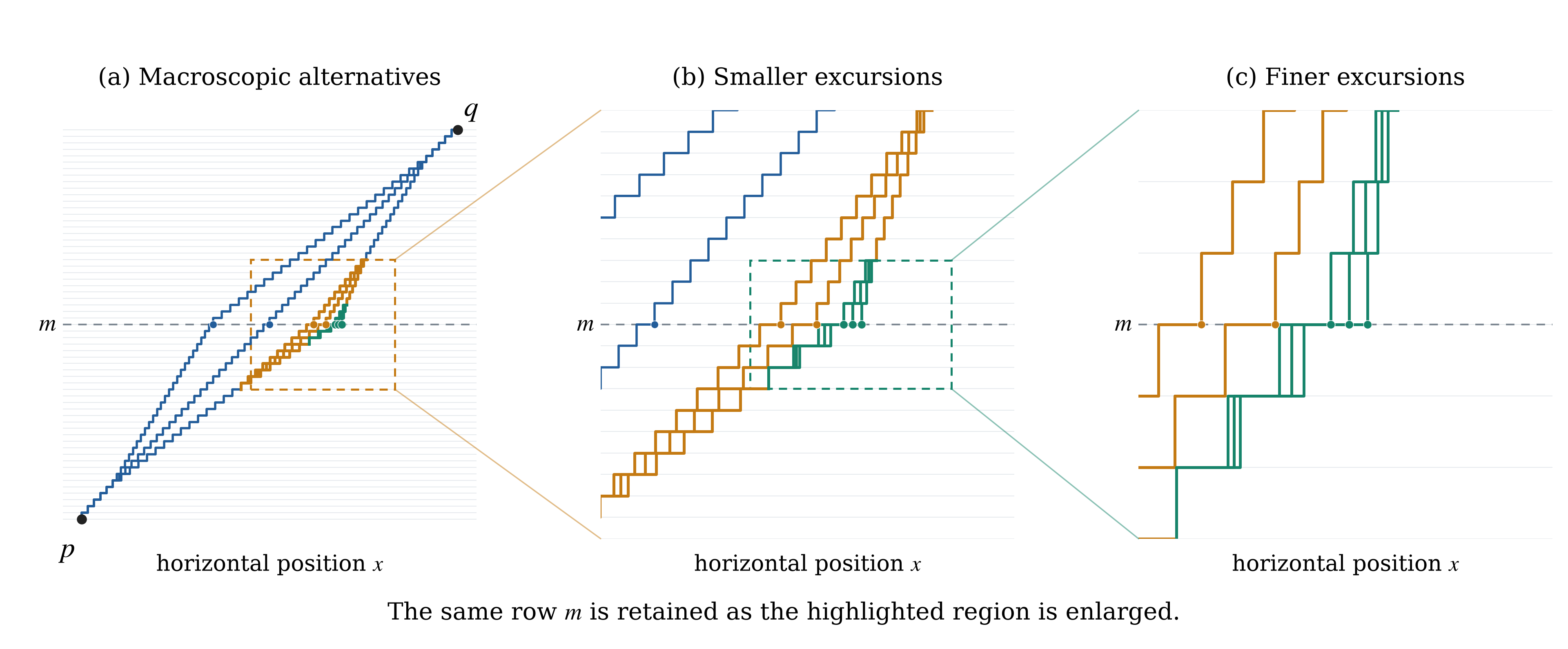}
\caption{The cluster hierarchy in the trace of the evolving geodesic
$\Gamma_p^{q,t}$ as $t$ ranges over a short dynamical interval.
The staircases represent different positions taken by this geodesic
during that interval. Blue paths show alternatives on the original
scale; orange portions show smaller excursions along one such
alternative, and green portions show still smaller excursions.
Panels (b) and (c) enlarge the indicated regions of the same drawing.
Intersecting with the fixed row $m$ reveals groups of hits within
groups of hits. To count all these hits, we must estimate the
probability cost of the alternatives within each cluster using the
length scale of that cluster. This is why the proof applies its
near-maximum estimate at every level of the hierarchy, rather than
only at the largest scale.}
\label{fig:blppclusters}
\end{figure}

\subsection{Dynamical Brownian last passage percolation}\label{sec:dynamics}

We recall the notation and model from \cite{B25}. For
$A,B\subseteq\RR$, write $B_A=A\times B$, and let
$\dint ab=[a,b]\cap\ZZ$. {Thus,} $\ZZ_{\RR}=\RR\times\ZZ$ and
$\slab ab=\RR\times([a,b]\cap\ZZ)$. For points
$p=(x_0,j_0)$ and $q=(x_1,j_1)$, we write $p\le q$ if both
$x_0\le x_1$ and $j_0\le j_1$.

We first define static BLPP. Let $(W_j)_{j\in\ZZ}$ be independent
two-sided standard Brownian motions, with $W_j(0)=0$. For
$p=(x_0,j_0)\le q=(x_1,j_1)$ in $\ZZ_{\RR}$, a staircase
$\xi:p\to q$ is specified by
\[
 x_0=\xi(j_0-1)\le\xi(j_0)\le\cdots\le\xi(j_1)=x_1.
\]
It contains the horizontal segment
$[\xi(j-1),\xi(j)]\times\{j\}$ on row $j$ and, for $j<j_1$, the
upward segment joining $(\xi(j),j)$ to $(\xi(j),j+1)$. We call
$\xi(j)$ the \emph{exit} from row $j$. In particular, the row
coordinate $j$ and the dynamical time introduced below are different
parameters. The weight of this staircase is
\[
 \wgt(\xi)=\sum_{j=j_0}^{j_1}
             \bigl(W_j(\xi(j))-W_j(\xi(j-1))\bigr).
\]
The passage time from $p$ to $q$ is defined by
\[
 T_p^q=\max_{\xi:p\to q}\wgt(\xi),
\]
where the maximum is over all staircases from $p$ to $q$. Any
staircase attaining this maximum is called a geodesic and is denoted
by $\Gamma_p^q$. On the probability-one event that all driving
Brownian paths are continuous, compactness gives a maximiser
simultaneously for every $p\le q$; see Section~\ref{sec:pathclasses}.
Further, for each deterministic endpoint pair $p\le q$, the geodesic
is almost surely unique; see \cite[Lemma B.1]{Ham19}.

We now discuss the discrete dynamics introduced in \cite{B25}.
Starting with the static
environment at time zero, split each Brownian motion into its
unit-interval increment processes:
\[
 X_{i,j}^0(u)=W_j(i+u)-W_j(i),\qquad
 (i,j)\in\ZZ^2,\quad 0\le u\le1.
\]
These are independent standard Brownian motions on $[0,1]$. Attach
an independent rate-one Poisson clock to each $(i,j)\in\ZZ^2$,
independently of the initial environment. Whenever this clock rings,
replace the entire path $X_{i,j}^t$ by a fresh standard Brownian
motion on $[0,1]$, independently of all previous samples and all
other blocks. Between rings, the block remains unchanged. This
defines a stationary process for $t\ge0$, which has a stationary
extension to $t\in\RR$. Figure~\ref{fig:brownianblocks} shows the
decomposition into independently refreshed blocks.

\begin{figure}[tbp]
\centering
\begin{subfigure}{.59\textwidth}
\centering
\adjustbox{valign=c}{\includegraphics[width=.92\linewidth]{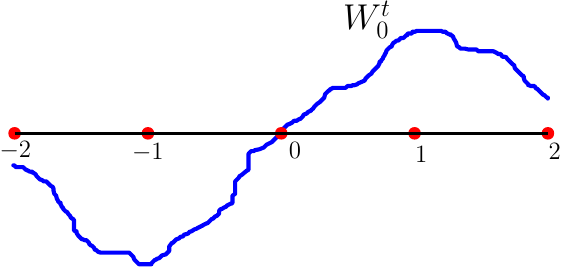}}
\end{subfigure}\hfill
\begin{subfigure}{.38\textwidth}
\centering
\adjustbox{valign=c}{\includegraphics[width=.94\linewidth]{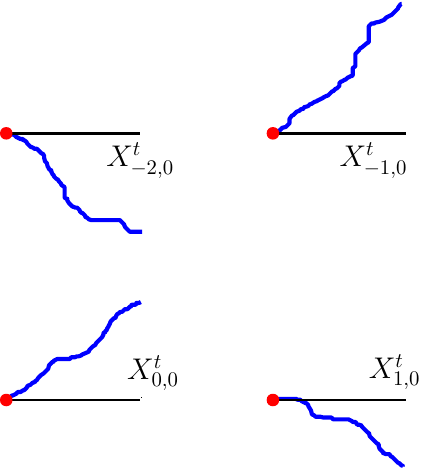}}
\end{subfigure}
\caption{The Brownian-block decomposition used to define the dynamics
(figure from \cite{B25}). On the left is a portion of $W_0^t$.
On the right are the corresponding unit-interval increment processes,
each translated to start at zero:
$X_{i,0}^t(u)=W_0^t(i+u)-W_0^t(i)$ for $0\le u\le1$.
Each block has its own rate-one clock; a ring replaces that entire
increment process by an independent Brownian motion.}
\label{fig:brownianblocks}
\end{figure}

At each dynamical time, the Brownian motions $W_j^t$ are reconstructed
from these increment processes by the conditions
\[
 W_j^t(0)=0,\qquad
 W_j^t(i+u)-W_j^t(i)=X_{i,j}^t(u),
 \quad i,j\in\ZZ,\quad 0\le u\le1.
\]
In particular, $W_j^0=W_j$, and for every deterministic $t$ the
collection $(W_j^t)_{j\in\ZZ}$ has the usual static Brownian law.
Replacing $W_j$ by $W_j^t$ in the preceding definitions gives the
dynamical weights $\wgt^t$, passage times $T_p^{q,t}$ and geodesics
$\Gamma_p^{q,t}$. At an update time, $t$ refers to the updated
configuration and $t^-$ to the preceding one. We refer to
\cite[Section 1.1]{B25} for further details of the construction.

\subsection{Main results}\label{sec:statements}

Our main result concerns exceptional times for infinite paths.
A bigeodesic is a doubly infinite
staircase every finite portion of which is a geodesic between its
endpoints. Entirely horizontal and entirely vertical paths are always
examples, and are called trivial. We are interested in the random set
\[
 \scT=\{t\in\RR:\text{there exists a non-trivial bigeodesic in }T^t\}.
\]
The following theorem, which is the central result of this paper, gives an upper bound on the size of this set.
Throughout, $\dim$ denotes Hausdorff dimension, with the convention
$\dim\varnothing=0$.

\begingroup
We state the result using Hausdorff measures with a general gauge.
For a continuous nondecreasing function $H:[0,\infty)\to[0,\infty)$
with $H(0)=0$, write
\begin{equation}\label{eq:gaugemeasure}
 \mathcal H^H(E)
 =\lim_{\rho\downarrow0}\inf\left\{
       \sum_i H(|J_i|): E\subseteq\bigcup_i J_i,
       \quad |J_i|\le\rho\right\},
\end{equation}
where the infimum is over countable families of intervals and
$|J_i|$ denotes interval length.
\begingroup
For $0<r\le e^{-e}$, write $L(r)=\log\log(1/r)$, so that
$L(r)\ge1$, and define
\begin{equation}\label{eq:hausdorffgauge}
 H(0)=0,\qquad H(r)=\exp\{-L(r)^2\log L(r)\}
                    \quad(0<r\le e^{-e}).
\end{equation}
Set $H(r)=1$ for $r>e^{-e}$. Since $L(e^{-e})=1$, this
extension is continuous; only the behaviour near zero matters.
\par\endgroup

\begingroup
\begin{samepage}
\begin{theorem}[Exceptional times]\label{thm:main}
Almost surely,\footnote{\label{fn:generalgauge}The same conclusion holds with $\log L$
replaced by any fixed positive, continuous, nondecreasing function
$\phi(L)\to\infty$ as $L\to\infty$.
In this case, set $H(r)=e^{-\phi(1)}$ for $r>e^{-e}$
so that the extension remains continuous.}
\[
 \mathcal H^H(\scT)=0.
\]
In particular, $\dim\scT=0$.
\end{theorem}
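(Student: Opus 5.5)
The plan is to derive Theorem~\ref{thm:main} from the hitset estimate announced in the abstract: the expected number of unit cells in the bulk visited by geodesics between two KPZ-scale rectangles during $[0,n^{-1/3}]$ is at most $n\exp\{C(\log\log n)^2\}$. I take this estimate as given. The reduction is a first-moment covering argument. By countable stability of $\mathcal H^H$-null sets and stationarity in time, it suffices to show $\mathcal H^H(\scT\cap[0,1])=0$. A non-trivial bigeodesic must cross infinitely many rows and move transversally. So, after restricting to countably many events indexed by a rational bounding slope range, it passes through a bounded region near the origin and, at every large scale $n$, connects two opposite KPZ-scale rectangles at distance of order $n$ on either side of that region, with direction in a compact set of non-axial directions. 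Fix such a configuration and a scale $n$. Partition $[0,1]$ into $N=n^{1/3}$ intervals $I_k$ of length $n^{-1/3}$. Let $\cE_k$ be the event that some geodesic between the two order-$n$ rectangles (from a finite family of $O(1)$ rectangle pairs covering the allowed directions) passes through the fixed unit cell containing the origin at some time in $I_k$. Every time in $\scT\cap[0,1]$ whose bigeodesic passes through that cell lies in $\bigcup_{k:\cE_k}I_k$.

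Next I bound $\PP(\cE_k)$. By translation invariance of the static environment and stationarity of the dynamics, the expected hitset size over a time window equals the sum over bulk cells of the probability that the cell is hit. The bulk contains of order $n\cdot n^{2/3}=n^{5/3}$ cells, and the hit probabilities are comparable across cells of the bulk up to constants. I would make this precise by averaging over translates of the rectangle pair, which translation invariance permits. This gives
\[
 \PP(\cE_k)\le C n^{-5/3}\,n\exp\{C(\log\log n)^2\}=Cn^{-2/3}\exp\{C(\log\log n)^2\}.
\]
Hence
\[
 \EE\Big[\sum_k H(|I_k|)\ind_{\cE_k}\Big]\le N\,H(n^{-1/3})\,Cn^{-2/3}e^{C(\log\log n)^2}.
\]
Here $L(n^{-1/3})\approx\log\log n$, so $H(n^{-1/3})=\exp\{-(\log\log n)^2\log\log\log n\}$. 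The product is therefore $n^{-1/3}\exp\{(C-\log\log\log n)(\log\log n)^2\}\to0$ along $n=2^m$.

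There is a slack factor of $n^{-1/3}$. It can be spent on a union bound over $O(n^{1/3})$ choices of the central cell if needed, for instance to handle bigeodesics that pass through some cell in a window of height $n^{o(1)}$ rather than exactly through the origin's cell. Fatou's lemma along the sequence $n=2^m$ then gives $\liminf$ of the covering sums equal to $0$ almost surely, so $\mathcal H^H(\scT\cap[0,1])=0$. For the general gauge in footnote~\ref{fn:generalgauge}, the same computation applies, since $\exp\{C(\log\log n)^2-\phi(L)(\log\log n)^2\}\to0$. Finally, $\dim\scT=0$ follows because $r^{s}\le H(r)$ for small $r$ and every $s>0$.

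The main obstacle I expect is the geometric reduction: turning ``non-trivial bigeodesic at time $t$'' into ``some finite geodesic between two fixed KPZ-scale rectangles at scale $n$ passes near the origin.'' A bigeodesic of random direction could exit any fixed rectangle of width $n^{2/3}$. I would handle this by covering all directions in a compact non-axial range with $O(n^{1/3})$ rectangle pairs of width $n^{2/3}$; this is exactly the slack above. I would control near-axial directions by exhausting with countably many compact direction ranges. Here non-triviality ensures the bigeodesic eventually has a direction bounded away from the axes along a subsequence, using the known direction/transversal-fluctuation results for BLPP geodesics from \cite{B25}.
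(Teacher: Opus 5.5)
Your architecture is the paper's: time intervals of length $n^{-1/3}$, spatial averaging of a regional hitset over translates of an enlarged endpoint pair, a union over $O(n^{1/3})$ direction windows, and dyadic costs $\exp\{C(\log\log n)^2-L(r_n)^2\log L(r_n)\}$ that decay.

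The genuine gap is the geometric reduction, which you flag as the main obstacle and then settle in one sentence. You need that, for all large $n$, the exits $x_{n,\pm}=\Gamma(\pm n)$ of a bigeodesic through the cell $\0$ lie in one of your paired windows of width of order $n^{2/3}$, symmetric about the origin. The direction grid only handles the chord slope $\lambda_n=(x_{n,+}-x_{n,-})/(2n)$, which may fluctuate with $n$. You also need the chord midpoint $c_n=(x_{n,+}+x_{n,-})/2$ to lie within the window width of $\0$.

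The infinite-geodesic input \cite[Proposition 13]{B25} controls $c_n$ only at scale $n^{2/3+\chi}$. Windows that wide force the allowance $\ell\approx n^{\chi/12}$ in the regional estimate. Its bound $n\exp\{C\Phi_n(\ell)\}$ then carries a loss $n^{c\chi\log(1/\chi)}$, polynomial in $n$. The gauge $H$ absorbs only losses of size $\exp\{O((\log\log n)^2)\}$, so this route gives essentially only $\dim\scT=0$, not $\mathcal H^H(\scT)=0$. Windows of width a fixed multiple of $n^{2/3}$ fail the other way: an estimate whose failure probability does not decay cannot be made to hold at all large dyadic scales by Borel--Cantelli.

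The missing ingredient is Lemma~\ref{lem:chordlocalization}, which concerns finite geodesics only. Outside probability $Ce^{-c(\log n)^3}$, and simultaneously for all $t\in[0,1]$, every geodesic between $(x_-,-n)$ and $(x_+,n)$ with $|x_\pm|\le Bn$ and chord slope in a compact set stays within $Cn^{2/3}\log n$ of its own chord. It follows from \cite[Corollary 1.5]{GH23} applied on an $n^{2/3}$-mesh of endpoint-interval pairs, together with Lemma~\ref{lem:clocks}. Applied to the portion of the bigeodesic between rows $\pm n$, which visits $\0$, it gives $|c_n|\le 1+Cn^{2/3}\log n$ for all large dyadic $n$, almost surely.

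The paper therefore uses windows of width $w_n=n^{2/3}(\log n)^{12}$. Its counting input is \eqref{eq:logregionalmean}, the bound for the log-enlarged regions $\mathcal K_{n,\lambda}^{\log}$ with arbitrary slopes $\lambda$ in a compact set (Proposition~\ref{prop:generalregional}). The slope-one estimate for $\mathscr R_n^\pm$ that you take as given does not transfer to other directions by itself, since the unit-block dynamics is not invariant under horizontal rescaling. The bigeodesic's direction, from \cite[Propositions 13 and 49]{B25}, is used only to place $\lambda_n$ in a compact set and the exits in $[-Bn,Bn]$ at every large scale. A direction bounded away from the axes only along a subsequence would not combine with your liminf argument.

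Finally, the scale from which a given time is covered depends on its witnessing bigeodesic, so a single-scale cover does not contain all of $\scT$. Your Fatou step therefore needs an extra exhaustion over this threshold. Alternatively, as the paper does, use that the dyadic expected costs are summable and cover by the union over $k\ge k_0$ of the selected intervals.
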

\end{samepage}
The gauge $H$ tends to zero more slowly than $r^s$ for
every $s>0$, so the measure assertion gives finer information
than Hausdorff dimension zero alone.
\par\endgroup
\par\endgroup
This proves Conjecture~4 of \cite{BE25}, also stated there as
Question~19, for the discrete dynamical BLPP model.
For a prescribed direction the conclusion is stronger. We call a
non-trivial bigeodesic $\theta$-directed if it is unbounded in both
row directions and $x_k/j_k\to\theta$ for every sequence $(x_k,j_k)$
on the path with $|j_k|\to\infty$.
For each deterministic $\theta\in(0,\infty)$, let
\[
 \scT^\theta
 =\{t\in\RR:T^t\text{ admits a non-trivial $\theta$-directed bigeodesic}\}.
\]
\begin{theorem}[No exceptional times in a fixed direction]
\label{thm:fixeddirection}
For every fixed deterministic $\theta\in(0,\infty)$,
$\PP(\scT^\theta=\varnothing)=1$.
\end{theorem}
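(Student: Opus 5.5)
The plan is to deduce Theorem~\ref{thm:fixeddirection} from the hitset estimate described in the introduction: the union of geodesics between two KPZ-scale rectangles at distance of order $n$ visits, in expectation, at most $n\exp\{C(\log\log n)^2\}$ unit cells in the bulk over the time interval $[0,n^{-1/3}]$. The argument is a first-moment bound. By stationarity and countable additivity it suffices to show that $\PP(\scT^\theta\cap[0,1]\neq\varnothing)=0$. Since a $\theta$-directed bigeodesic has slope $\theta$ at both ends, we may also use shift invariance and take it to pass near the origin.

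\textbf{Step 1 (reduction to a finite event).} Fix $\theta$ and a large $n$. Let $R_-$ and $R_+$ be KPZ-scale rectangles on the characteristic line of slope $\theta$ through the origin. They have longitudinal length of order $n$ and transverse width $Kn^{2/3}$, and they lie at rows $\approx\mp n$ for $R_\mp$. A $\theta$-directed bigeodesic passing through the unit cell at the origin must cross rows $\pm n$ at horizontal positions $\theta(\pm n)+o(n)$. We need these crossings to lie within $Kn^{2/3}$ of the characteristic line, and this does \emph{not} follow from directedness alone. I would handle it by a union over $n$ along a dyadic sequence. Fix the bigeodesic and its random transversal deviation at scale $n$. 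Its finite portions are geodesics, and for a fixed direction the static transversal-fluctuation estimates give the following: for all large $n$ in the sequence, some geodesic from $R_-$ to $R_+$ at time $t$ passes through the origin cell, with failure probability $\to0$ as $K\to\infty$. To make this uniform over times I would instead cover the possible crossing positions at rows $\pm n$ by $O(n^{1/3}\cdot\mathrm{polylog})$ rectangles of width $n^{2/3}$ along the window $\theta(\pm n)\pm n^{1-\delta}$. This assumes a sublinear rate of directedness, which I expect to obtain from the fact that a bigeodesic directed at $\theta$ is a limit of geodesics with endpoints exactly on the characteristic line.

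\textbf{Step 2 (first moment).} Cover $[0,1]$ by $n^{1/3}$ intervals of length $n^{-1/3}$. For each interval and each pair of rectangles, the probability that the origin cell lies in the hitset is at most (expected hitset size)/(number of bulk cells). Here translation invariance along the characteristic direction is used: the bulk contains about $n\cdot n^{2/3}$ cells, roughly equally likely to be hit. This gives a probability of $n^{-2/3}\exp\{C(\log\log n)^2\}$ per interval and pair. Summing over the $n^{1/3}$ time intervals gives $n^{-1/3}\exp\{C(\log\log n)^2\}\to0$. Summing also over the $n^{1/3-\delta}$-type transverse shifts, the total is $n^{-\delta+o(1)}\to0$ as long as the required window is $n^{2/3+\delta'}$ with $\delta'<1/3$ effectively. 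Letting $n\to\infty$ gives probability zero.

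\textbf{Main obstacle.} The hard part is Step 1: converting the qualitative limit $x_k/j_k\to\theta$ into a quantitative confinement of the bigeodesic at scale $n$ that holds simultaneously for all times in an interval. The hitset bound leaves only a margin of $n^{1/3-o(1)}$, so the transverse window must be at most $n^{1-o(1)}$ wide. I would first try to exploit that geodesic portions between points at linear distance $o(n)$ from the characteristic line still satisfy the hitset bound after a shear, so that one can tolerate a window of $\varepsilon n$. Then I would use the translation-invariance averaging over the whole bulk strip of area $\varepsilon n^2$ rather than $n^{5/3}$. This makes the cost $n^{1/3}\cdot n^{1+o(1)}/(\varepsilon n^{2})\to0$, which is what makes a fixed-direction statement achievable.
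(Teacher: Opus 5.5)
The framework of your Step 2 matches the paper's proof. You cover $[0,1]$ by about $n^{1/3}$ intervals of length $n^{-1/3}$. You bound the marked-cell probability for one pair of endpoint windows by spatial averaging of a regional hitset estimate, as in Lemma~\ref{lem:averaging}. You then conclude because an exceptional time would force the visit event for every large $n$; the paper applies Fatou's lemma to $\liminf_n G_n^\theta$.

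\textbf{The gap is Step 1, which you flag but do not close.} What is needed is a quantitative localization of $\theta$-directed bigeodesics that holds on a single probability-one event for all times simultaneously. Neither of your suggested routes gives this. Static transversal-fluctuation bounds cannot be applied to ``the bigeodesic'' with a failure probability depending on $K$, because at an exceptional time it is not a geodesic between deterministic endpoints. The claim that a $\theta$-directed bigeodesic is a limit of geodesics with endpoints on the characteristic line is unjustified, and in any case carries no rate.

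The paper takes this input from \cite[Proposition 13]{B25}, stated here as Proposition~\ref{prop:infinitegeometry}. Almost surely, for all $t$ at once, every non-trivial bigeodesic is directed. Moreover, for $n\ge n_0(\Gamma,t,\chi)$ it lies within $n^{2/3+\chi}$ of the line of slope $\theta$ through the origin. Taking $\chi=\delta/2$ puts its crossings of rows $\pm n$ in the fixed windows $\pm v_n+\mathscr I_n^\delta$. Lemma~\ref{lem:averaging} then gives $\PP(G_n^\theta)\le Cn^{-1/3+e}\to0$; it uses the power allowance $\ell=n^a$, not the $(\log\log n)^2$ bound. No direction windows and no sublinear-rate assumptions are needed.

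\textbf{Your fallback with transverse windows of width $\varepsilon n$ fails as computed.} The $n^{1+o(1)}$ hitset bound is for endpoint regions of transverse width $n^{2/3}\ell^{12}=n^{2/3+o(1)}$. Windows of width $\varepsilon n$ require a union over at least $\varepsilon n^{1/3}$ such paired regions. If you do not know that the endpoints are nearly opposite about the marked cell, you need about $(\varepsilon n^{1/3})^2$ of them; knowing this requires a chord-localization statement such as Lemma~\ref{lem:chordlocalization}. The resulting union bound over $[0,1]$ is of order
\[
\varepsilon n^{1/3}\cdot n^{1/3}\cdot n^{-2/3+o(1)}=\varepsilon n^{o(1)},
\]
which does not tend to zero.

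Your expression $n^{1/3}\cdot n^{1+o(1)}/(\varepsilon n^2)$ divides the hitset bound for a single KPZ-scale pair by the area of the whole strip. Averaging over that strip would require the hitset of geodesics between windows enlarged by order $\varepsilon n$, and that hitset is not $n^{1+o(1)}$. For the same reason, your claim that windows up to width $n^{1-o(1)}$ are tolerable presupposes the pairing.

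So the proposal is incomplete exactly where the paper invokes the uniform-in-time localization of bigeodesics.
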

Note that $\theta$ in the above is fixed. In particular,
the theorem allows the possibility of exceptional times whose
bigeodesic directions are random. Its proof, in
Section~\ref{sec:fixeddirection}, bounds the probability that a
geodesic in the prescribed direction visits a given unit cell during
a short time interval.

The proof of Theorem~\ref{thm:main} proceeds through estimates for
finite geodesics. We control the size of the region they visit over a
short dynamical time interval, uniformly as their endpoints vary in
suitable windows. To state these estimates, we recall the notion
of a hitset introduced in \cite[Section 1.2]{B25}.

The hitset records the trace of a family of geodesics at unit
horizontal resolution. A \emph{cell} is the segment
$[i,i+1]\times\{m\}$, indexed by $(i,m)\in\ZZ^2$. The Brownian
\emph{block} carried by this cell is the increment process $X_{i,m}^t$
from Section~\ref{sec:dynamics}; the geometric cell and its random
weight process thus have the same index. For a set
$A\subseteq\RR^2$ {we define its coarse-grained set by}
\[
 \coarse(A)=\{(i,m)\in\ZZ^2:
                   ([i,i+1]\times\{m\})\cap A\ne\varnothing\}.
\]
For endpoint sets $U,V\subseteq\ZZ_{\RR}$ and a time interval $J$, the
\emph{hitset} records all cells visited by the relevant geodesics at
any time in $J$. In a spatial region $A$, it is defined by
\begin{equation}\label{eq:hitdefinition}
 \hitset_U^{V,J}(A)=
 \bigcup_{t\in J}\ \bigcup_{p\in U,\,q\in V,\,p\le q}\
 \bigcup_{\Gamma\text{ geodesic from }p\text{ to }q\text{ in }T^t}
 \coarse(A\cap\Gamma).
\end{equation}
The union includes every geodesic when an endpoint pair has more
than one maximiser. For singleton endpoint sets we suppress the braces,
and we omit $A$ when
$A=\RR^2$. For bounded endpoint sets all paths and hitsets lie in a
deterministic rectangle. Note that a rectangle with side lengths $O(n)$ has
$O(n^2)$ coarse cells.

\begingroup
Proving Theorem~\ref{thm:main} requires accurate bounds on hitsets
when the endpoints vary in regions of transverse width $n^{2/3}$
at distance of order $n$ from each other.
\par\endgroup
\par\begingroup
Write $\0=(0,0)$ and $\bn=(n,n)$. We use the endpoint regions
\begin{equation}\label{eq:introregions}
 \mathscr R_n^\pm
 =\{(x,j)\in\ZZ_{\RR}: |j\mp n|\le n/32,\quad |x-j|\le n^{2/3}\}.
\end{equation}
These are rectangles in the coordinates $(x-j,j)$, centered at
$\pm\bn$, with transverse width $2n^{2/3}$ and longitudinal length
$n/16$. Their separation is of order $n$.
\par\endgroup

\begingroup
\begin{theorem}[Regional hitset bounds]\label{thm:regional}
Write
\[
 X_n=\left|\hitset_{{\mathscr R_n^-}}^{{\mathscr R_n^+},[0,n^{-1/3}]}
                         (\slab{-n/2}{n/2})\right|.
\]
{There are constants $A,C,c>0$, $\varepsilon_0\in(0,1)$
and an integer $n_0\ge1$ such that, for every}
integer $n\ge n_0$ and every
\[
 A\frac{(\log\log n)^2}{\log n}\le\varepsilon\le{\varepsilon_0},
\]
we have
\begin{equation}\label{eq:segmenthitsetprob}
 \PP(X_n>n^{1+\varepsilon})
 \le C\exp\!\left\{-c n^{\,c\varepsilon/\log(e/\varepsilon)}\right\}.
\end{equation}
The constants are independent of $n$ and $\varepsilon$.
Moreover, for every $n\ge n_0$,
\begin{equation}\label{eq:segmenthitset}
 \EE X_n\le n\exp\{C(\log\log n)^2\}.
\end{equation}
\end{theorem}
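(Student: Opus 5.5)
The plan follows the outline in the introduction: a stability estimate, static near-maximum counts, Poisson capture, and a multiscale refinement. It is enough to bound, for each fixed row $m\in\dint{-n/2}{n/2}$, the number $X_n^{(m)}$ of cells on row $m$ in the hitset, and then sum over the $O(n)$ rows. The target is $X_n^{(m)}\le n^{\varepsilon}$ outside an event of probability $C\exp\{-cn^{c\varepsilon/\log(e/\varepsilon)}\}$. A union bound over rows then gives \eqref{eq:segmenthitsetprob}. For the expectation \eqref{eq:segmenthitset}, take $\varepsilon=A(\log\log n)^2/\log n$, so that $n^{\varepsilon}=\exp\{A(\log\log n)^2\}$. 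On the complementary event use the deterministic bound $X_n\le Cn^2$, which the failure probability $\exp\{-c\,e^{c'(\log\log n)^2/\log\log\log n}\}\ll n^{-2}$ absorbs. This is the reason for the lower constraint on $\varepsilon$.

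\emph{Step 1 (stability).} Apply the passage-time stability estimate (Theorem~\ref{thm:stability}). Over $J=[0,n^{-1/3}]$, for a pair $(p,q)$ at longitudinal distance $\ell$, the variation $\sup_{t\in J}|T_p^{q,t}-T_p^{q,0}|$ is of order $\sqrt{\ell|J|}$ up to $n^{o(1)}$ factors, with stretched-exponential tails. The same holds for the routed weights through any row-$m$ location. So if some geodesic at time $t\in J$ crosses row $m$ near $x$, then at time $0$ the routed profile $x\mapsto T_p^{(x,m)}+T_{(x,m)}^q$ is within $\delta_\ell=n^{\varepsilon/10}\sqrt{\ell n^{-1/3}}$ of its maximum. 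For $\ell=n$ this tolerance is $n^{1/3+\varepsilon/10}$, matching the fluctuation scale.

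\emph{Step 2 (static near-maximisers).} By Brownian comparison for the routed profile on windows of width $w=\ell^{2/3}$, combined with the near-maximiser counts of \cite{CHH23,GH23}, the number of $w'$-separated near-maximisers within tolerance $\delta$ is controlled. Here $w'$ is the width at which Brownian fluctuation equals $\delta$, so $w'\asymp\delta^2$. With high probability the near-maximisers are covered by $n^{O(\varepsilon/\log(e/\varepsilon))}$ intervals of length $w'$. Taking $\ell=n$, the transverse width is $n^{2/3}$, while $w'\asymp n^{1/3}\cdot n^{2/3}\cdot n^{-1/3}$ is again of order $n^{2/3}$ up to $n^{o(1)}$. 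One level therefore does not shrink the window much. The refinement instead proceeds geometrically in scale: pass to sub-geodesic segments of length $\ell_{k+1}=\ell_k n^{-\eta}$ around the retained row-$m$ intervals, reapply Steps~1--2 with tolerance $\delta_{\ell_{k+1}}$, and obtain covers by intervals of width about $\ell_{k+1}^{2/3}$. After $K\approx1/\eta$ levels the width is $O(1)$. Each level multiplies the count by at most $n^{C\eta'}$, and choosing $\eta\sim\varepsilon/\log(e/\varepsilon)$ keeps the total within $n^{\varepsilon}$. Balancing the per-level tail against the number of levels produces the exponent $c\varepsilon/\log(e/\varepsilon)$.

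\emph{Step 3 (random endpoints via Poisson capture).} The segment endpoints at level $k$ are chosen by the evolving geodesic and are not deterministic. Sample independent Poisson clouds of endpoint pairs at each scale and in each region. Using the volume-accumulation estimate of \cite{BB23}, in its BLPP adaptation from \cite[Section 11]{B25}, every central portion of every relevant dynamic geodesic coincides with part of a geodesic between a sampled pair, simultaneously for all $t\in J$. Applying the pair-wise Steps~1--2 to the countably many sampled pairs, with a union bound over polynomially many of them, gives a single good event on which the recursion is deterministic.

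The main obstacle is making Step~3 uniform in time. The capture must hold at every $t\in J$ with the cloud sampled once, because the basin volumes themselves evolve under the dynamics. I would first try to take the clouds at a polynomially fine grid of times and use that there are only $O(n^2\cdot n^{-1/3})$ block updates in the relevant box, so the environment is piecewise constant. The per-level failure probabilities must also stay stretched-exponential after union bounds over the $K$ scales.
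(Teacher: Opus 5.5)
Your architecture (stability, static near-maximiser covers, Poisson capture for the random cut endpoints, one good event on which a deterministic refinement runs, and the trivial $Cn^2$ bound off that event) is the paper's. The time-uniformity issue you single out is not the real obstacle. The paper handles it by the finite-configuration argument you suggest (Lemma~\ref{lem:clocks}), with each cloud sampled once, independently of the dynamics.

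\textbf{The gap: the scale schedule in Step~2.} It cannot produce the stated bounds.
\begin{itemize}
\item \emph{How many levels you can afford.} Your bound multiplies the count at every level by the near-maximiser threshold $b$. That count has only an $e^{-cb}$ tail, so a per-box failure of order $\exp\{-cn^{c\varepsilon/\log(e/\varepsilon)}\}$ forces $b\ge n^{c\varepsilon/\log(e/\varepsilon)}$. Keeping $b^K\le n^\varepsilon$ then allows only $K=O(\log(e/\varepsilon))$ levels.
\item \emph{Large fixed ratio.} A fixed ratio $\ell_{k+1}=\ell_kn^{-\eta}$ with that few levels needs $\eta\gtrsim1/\log(e/\varepsilon)$. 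But the top level gives no shrinkage: your tolerance $n^{1/3+\varepsilon/10}$ exceeds, rather than matches, the fluctuation scale $n^{1/3}$. So the first passage to scale $n^{1-\eta}$ costs a tiling factor of order $n^{2\eta/3}\gg n^\varepsilon$.
\item \emph{Your actual choice.} With $\eta\sim\varepsilon/\log(e/\varepsilon)$ there are $K\sim\log(e/\varepsilon)/\varepsilon$ levels. This forces $b\le n^{\varepsilon^2/\log(e/\varepsilon)}$, giving at best a tail with $\varepsilon^2$ in the exponent. At $\varepsilon=A(\log\log n)^2/\log n$ this $b$ tends to $1$, so the union bound over the $n^{O(1)}$ boxes collapses. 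Neither the small-$\varepsilon$ range nor the mean bound $n\exp\{C(\log\log n)^2\}$ follows.
\item \emph{Cover widths.} Level-$k$ covers do not have width about $\ell_k^{2/3}$. Their width is about $\delta_{\ell_k}^2\asymp\ell_kh$ times the allowance, a fraction $(\ell_k/n)^{1/3}$ of $\ell_k^{2/3}$. This shrinking fraction is exactly the gain your schedule leaves unused.
\end{itemize}

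\textbf{The missing idea: the accelerated recursion of Proposition~\ref{prop:generalpoint}.}
\begin{itemize}
\item \emph{Subcritical horizon.} One works on $[0,h]$ with $h=n^{-1/3}\ell^{-4}$, strictly below the critical horizon. Then $A_N=\ell\sqrt{Nh}+\ell^2$ satisfies $A_N/N^{1/3}\le\ell^{-1}+\ell^{2-Q/3}$ at every scale $N\ge\ell^Q$. The interval $[0,n^{-1/3}]$ is recovered by Lemma~\ref{lem:timesubdivision} at cost $\lceil\ell^4\rceil$.
\item \emph{Matched scales.} One chooses $N_{j+1}$ so that the next mesh width $N_{j+1}^{2/3}$ matches the current cover width $D_{N_j}=C(N_jh\ell^2+\ell^4)$. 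Each retained parent then has at most $3\ell^{163}$ children.
\item \emph{Few generations.} The quantity $b_j=\log(n/N_j)$ obeys $b_{j+1}\ge\frac32b_j+2\log\ell$. So only $O(\log(\log n/\log\ell))$ generations occur before the cutoff $\ell^Q$, where cells are counted directly. A cutoff is needed anyway, since the additive $+u$ term in Theorem~\ref{thm:stability} prevents widths from reaching $O(1)$.
\item \emph{Choice of allowance.} This gives $n\exp\{C\Phi_n(\ell)\}$ with failure $Ce^{-c\ell^2}$ for every $\log n\le\ell\le n^{1/(2Q)}$. Taking $\ell=n^a$ with $a\asymp\varepsilon/\log(1/\varepsilon)$ yields \eqref{eq:segmenthitsetprob}, and a logarithmic allowance yields \eqref{eq:segmenthitset}.
\end{itemize}
Without acceleration, the additive schedule needs $O(\log n/\log\log n)$ generations when $\ell=\log n$. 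When $\ell=n^a$ it needs $O(1/a)$ generations, each costing $\ell^{O(1)}=n^{O(a)}$. Both give only a polynomial loss.

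A minor slip: at $\varepsilon=A(\log\log n)^2/\log n$ one has $n^{c\varepsilon/\log(e/\varepsilon)}\approx(\log n)^{cA}$, not your displayed expression. This still beats $n^{-2}$ once $A$ is large.
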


For each fixed {$0<\varepsilon\le\varepsilon_0$}, the theorem gives the
$n^{1+\varepsilon}$ bound with a stretched-exponential failure
probability in $n$. {Taking
$\varepsilon=D(\log\log n)^2/\log n$, with $D$ a sufficiently large
fixed constant, gives the smaller-threshold bound}
\begin{equation}\label{eq:quantitativehitsettail}
 \PP\bigl(X_n>n\exp\{C(\log\log n)^2\}\bigr)
 \le Ce^{-c(\log n)^2},
\end{equation}
\begingroup
{after increasing $C$ if necessary.}
All these bounds also hold with $X_n$ replaced by the point-to-point
hitset size
\[
 \left|\hitset_{-\bn}^{\bn,[0,n^{-1/3}]}
                    (\slab{-n/2}{n/2})\right|,
\]
since this hitset is contained in the regional one.

The proof first uses our passage-time stability estimate,
Theorem~\ref{thm:stability}, in the subcritical regime to cover the possible
geodesic exits by a small number {of} intervals. These intervals are still too
wide to count all their cells, so we examine shorter portions of
the geodesics and successively refine the cover inside each interval.
Section~\ref{sec:proofoutline} explains this construction.
Sections~\ref{sec:multiscale} and~\ref{sec:dimension} prove the
fixed-endpoint and regional estimates, respectively; the passage
from a slightly shorter time interval to $[0,n^{-1/3}]$ is given
in Lemma~\ref{lem:timesubdivision}.
\par\endgroup
\par\endgroup

\subsection{Passage-time stability}

Passage-time stability is a key input to the hitset argument:
over a short dynamical time interval, the optimized value changes
little even though its maximising path may change drastically.
We state the estimate here for the endpoints $\0=(0,0)$ and
$\bn=(n,n)$.

\begin{theorem}[Passage-time stability]\label{thm:stability}
There is an absolute constant $C$ such that, for every integer
$n\ge1$, every deterministic $t\ge0$, and every $u\ge2$,
\begin{equation}\label{eq:mainstabilitytail}
 \PP\left(\left|T_{\0}^{\bn,t}-T_{\0}^{\bn,0}\right|
       >C\left(\sqrt{ntu}+u\right)\right)\le2e^{-u}.
\end{equation}
\end{theorem}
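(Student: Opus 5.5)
The plan is to condition on the set of resampled blocks, reduce the problem to a Gaussian comparison between two independent copies of the resampled part of the environment, and then use a Gaussian interpolation (Pisier--Maurey) argument. The point of the interpolation is that it only sees the gradient of $T$ \emph{along the geodesic}, never a worst-case Lipschitz constant. A worst-case constant would be too weak: maximising the number of resampled cells over all staircases is a Bernoulli LPP problem of order $n\sqrt t$, which would only give fluctuations of order $n^{1/2}t^{1/4}$.

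\emph{Step 1 (conditioning on the clocks).} Only blocks $(i,j)$ with $i\in\dint{-1}{n+1}$, $j\in\dint0n$ affect $T_{\0}^{\bn}$. Let $S$ be the random set of such blocks whose clock rings in $(0,t]$. Each block lies in $S$ independently with probability $p=1-e^{-t}\le\min(t,1)$, and $S$ is independent of all Brownian data. Given $S$, write $Z$ for the unchanged blocks, $Y_0=(X^0_{i,j})_{(i,j)\in S}$ and $Y_1=(X^t_{i,j})_{(i,j)\in S}$. Then $Y_0,Y_1,Z$ are independent, and $Y_0,Y_1$ are i.i.d.\ Gaussian. Moreover $T^{\bn,0}_{\0}=F(Y_0)$ and $T^{\bn,t}_{\0}=F(Y_1)$ with $F(y)=T(y,Z)$.

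\emph{Step 2 (interpolation).} I would discretise each block by a fine dyadic grid of Brownian increments, so that $F$ becomes a maximum of finitely many linear functionals of a finite Gaussian vector. Such an $F$ is convex, Lipschitz and a.e.\ differentiable; the discretisation is removed at the end by monotone approximation. Set $Y_\theta=Y_0\cos\theta+Y_1\sin\theta$ and $Y_\theta'=-Y_0\sin\theta+Y_1\cos\theta$. Then $F(Y_1)-F(Y_0)=\int_0^{\pi/2}\nabla F(Y_\theta)\cdot Y'_\theta\,d\theta$, and for each $\theta$ the vector $Y'_\theta$ is an independent copy of $Y_\theta$. By Jensen in $\theta$,
\[
\EE\bigl[e^{\lambda(F(Y_1)-F(Y_0))}\,\big|\,S,Z\bigr]\le \tfrac{2}{\pi}\int_0^{\pi/2}\EE\bigl[e^{\lambda\frac{\pi}{2}\nabla F(Y_\theta)\cdot Y'_\theta}\,\big|\,S,Z\bigr]d\theta=\EE\bigl[e^{\lambda^2\pi^2|\nabla F(Y_0)|^2/8}\,\big|\,S,Z\bigr],
\]
where the last equality uses the equality in law of $Y_\theta$ and $Y_0$. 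The gradient of a maximum of linear functionals is the coefficient vector of the maximiser. In the continuum limit, $|\nabla F|^2$ is therefore the total horizontal length $\ell_S(\Gamma)$ of the static geodesic $\Gamma=\Gamma_{\0}^{\bn,0}$ lying inside cells of $S$. This uses the a.s.\ uniqueness of $\Gamma$ \cite[Lemma B.1]{Ham19}.

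\emph{Step 3 (moment bound).} Since $\Gamma$ is a staircase from $\0$ to $\bn$, it meets at most $2n+2$ cells, and each contributes length at most $1$. Crucially, $\Gamma$ is a function of the time-$0$ environment, which is independent of $S$. Averaging over $S$ conditionally on the environment therefore gives
\[
\EE e^{\mu\ell_S(\Gamma)}\le\bigl(1+p(e^\mu-1)\bigr)^{2n+2}\le \exp\{C n p\mu\}\qquad(0\le\mu\le1).
\]
Combining with Step 2, $\EE e^{\lambda(T^t-T^0)}\le e^{Cnt\lambda^2}$ for $|\lambda|\le c$, and the same holds for $-\lambda$ by the symmetric roles of $Y_0$ and $Y_1$. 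A Chernoff bound with $\lambda=\min\{c,\,x/(2Cnt)\}$ then gives $\PP(\pm(T^t-T^0)>x)\le \exp\{-\min(x^2/(4Cnt),\,cx/2)\}$. Taking $x=C'(\sqrt{ntu}+u)$ makes each side at most $e^{-u}$, which gives \eqref{eq:mainstabilitytail}.

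The main obstacle is Step 2. One must justify that the interpolation picks up only the gradient along the actual geodesic, and pass rigorously from the discretised Gaussian vector to the continuum staircase maximisation; in particular $|\nabla F|^2$ must converge to $\ell_S(\Gamma)$, or at least be bounded by the number of $S$-cells met by the maximiser. Step 3 is routine once it is noted that the relevant geodesic is always a static one, independent of $S$.
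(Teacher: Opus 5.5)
Your proposal is correct and follows essentially the paper's argument (Lemma~\ref{lem:gaussian} together with the approximation in Corollary~\ref{cor:increments}): the same rotation of the refreshed blocks, a derivative that is conditionally Gaussian with variance equal to the refreshed horizontal length of a maximiser determined by an environment independent of the refresh indicators, and an exponential moment bound for that length. The differences are cosmetic: you use the exponential Pisier--Maurey/Jensen form with a Chernoff bound where the paper bounds $L^k$ norms via Minkowski and applies Markov at order $k=u$, and you discretise dyadically where the paper uses finite dense subsets of staircases. Also, the obstacle you flag is not one: at each discretisation level the gradient is the refreshed length of the discrete time-zero maximiser, which is already independent of $S$. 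So your Step~3 bound holds uniformly in the level, and Fatou's lemma passes the moment-generating-function bound to the limit without needing convergence of $|\nabla F|^2$ or uniqueness of $\Gamma$.
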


Passage-time concentration on the $\sqrt{nt}$ scale was asked for in
the dynamical exponential LPP setting in \cite[Question 20]{BE25}.
For $nt\ge1$, Theorem~\ref{thm:stability} gives a discrete-BLPP
version of this estimate: setting $u=\alpha$ and increasing $C$ yields
\begin{equation}\label{eq:stabilityquestion20}
 \PP\left(\left|T_{\0}^{\bn,t}-T_{\0}^{\bn,0}\right|
                 >C\alpha\sqrt{nt}\right)\le2e^{-\alpha},
 \qquad \alpha\ge2.
\end{equation}
The Bernstein form in \eqref{eq:mainstabilitytail} also retains
Gaussian tails for moderate deviations. Indeed, setting
$u=\alpha^2$ yields
\begin{equation}\label{eq:stabilitymoderate}
 \PP\left(\left|T_{\0}^{\bn,t}-T_{\0}^{\bn,0}\right|
                 >C\alpha\sqrt{nt}\right)\le2e^{-\alpha^2},
 \qquad 2\le\alpha\le\sqrt{nt},
\end{equation}
after increasing $C$ if necessary.
When $nt\le1$, the theorem instead gives exponential tails for the
absolute change on a unit scale {as opposed to the $\sqrt{nt}$ scale}.\footnote{The exponential tail bound
\eqref{eq:stabilityquestion20} cannot hold uniformly as $nt\to0$
for discrete resampling: already for $n=1$, a full block refresh can cause an
order-one change with probability of order $t$. The additive term
in \eqref{eq:mainstabilitytail} accommodates these rare jumps.}

The scale $\sqrt{nt}$ is smaller than the static fluctuation scale
$n^{1/3}$ when $t\ll n^{-1/3}$. For example, setting $u=n^r$ in
\eqref{eq:mainstabilitytail}, for a fixed small $r>0$, makes the
failure probability at most $2e^{-n^r}$. This allows a union bound
over polynomially many choices of passage values. Section~\ref{sec:concentration} proves the
estimate by rotating the Brownian increments in the refreshed blocks.
Proposition~\ref{prop:generalstability} gives the version for arbitrary
deterministic endpoints and two times, together with moment bounds;
Corollary~\ref{cor:increments} treats restricted passage times.
The additive $+u$ in \eqref{eq:mainstabilitytail} is also used in the multiscale argument when the
product of the local length and the dynamical time is less than one.

\par\begingroup
The same rotation argument also applies to the Ornstein--Uhlenbeck
(OU) dynamics considered in \cite{GH24}. Let
$(B_j^r)_{j\in\ZZ,r\in\RR}$ be the stationary OU Brownian
environment: each $B_j^r$ is a two-sided standard Brownian motion,
the correlation of the same Brownian increment at times $r,r'$
is $e^{-|r-r'|}$, and different rows evolve independently.

{In the following theorem and its proof,
superscripts on passage values refer to this OU dynamics.}

\par\begingroup
\begin{theorem}[Passage-time stability under OU dynamics]\label{thm:oustability}
There is an absolute constant $C$ such that, for every integer
$n\ge1$, $t\ge0$ and $k\ge2$,
\begin{equation}\label{eq:oumainmoment}
 \left\|T_{\0}^{\bn,t}-T_{\0}^{\bn,0}\right\|_k
 \le C\sqrt{kn(1-e^{-t})}\le C\sqrt{knt}.
\end{equation}
In particular, for every $u\ge2$,
\begin{equation}\label{eq:oumaintail}
 \PP\left(\left|T_{\0}^{\bn,t}-T_{\0}^{\bn,0}\right|
          >C\sqrt{n(1-e^{-t})u}\right)
 \le 2e^{-u}.
\end{equation}
\end{theorem}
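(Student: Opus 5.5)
We will prove Theorem~\ref{thm:oustability} by a Gaussian rotation (interpolation) argument, mirroring the one for Theorem~\ref{thm:stability}. Write the OU environment at times $0$ and $t$ as
\[
 B^t=\rho B^0+\sqrt{1-\rho^2}\,B',\qquad \rho=e^{-t},
\]
with $B'$ an independent copy of the environment. For $\theta\in[0,\pi/2]$ set $B(\theta)=\cos\theta\,B^0+\sin\theta\,B'$, so that each $B(\theta)$ is a standard Brownian environment. We take $\theta_0=\arccos\rho$, so that $B(\theta_0)\stackrel{d}{=}$ the joint configuration with $B^t$; indeed $(B(0),B(\theta_0))$ has exactly the law of $(B^0,B^t)$. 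Let $F(\theta)=T_{\0}^{\bn}(B(\theta))$. Since $F$ is a maximum of linear functionals of the environment, it is Lipschitz in $\theta$, and its derivative equals $\wgt(\Gamma_\theta;\dot B(\theta))$. Here $\Gamma_\theta$ is the geodesic at angle $\theta$, unique for a.e.\ $\theta$ almost surely, and $\dot B(\theta)=-\sin\theta\,B^0+\cos\theta\,B'$. By Danskin's envelope theorem applied to the piecewise-linear maximum over a compact staircase family,
\[
 F(\theta_0)-F(0)=\int_0^{\theta_0}\wgt(\Gamma_\theta;\dot B(\theta))\,d\theta .
\]

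The key observation is that $(B(\theta),\dot B(\theta))$ is a pair of \emph{independent} standard Brownian environments for each fixed $\theta$. Conditionally on $B(\theta)$, the path $\Gamma_\theta$ is fixed, and $\wgt(\Gamma_\theta;\dot B(\theta))$ is a sum of Brownian increments over disjoint horizontal segments of total length $2n$. It is therefore an exact centered Gaussian with variance $2n$. Hence $\|\wgt(\Gamma_\theta;\dot B(\theta))\|_k\le C\sqrt{kn}$ for every $k\ge2$. Minkowski's integral inequality then gives
\[
 \|F(\theta_0)-F(0)\|_k\le \theta_0\,C\sqrt{kn}.
\]
To conclude \eqref{eq:oumainmoment}, we compare $\theta_0=\arccos(e^{-t})$ with $\sqrt{1-e^{-t}}$. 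Since $1-\cos\theta\ge c\theta^2$ on $[0,\pi/2]$, we have $\theta_0\le C\sqrt{1-e^{-t}}\le C\sqrt t$.

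The tail bound \eqref{eq:oumaintail} follows from Markov's inequality applied to the $k$-th moment. Choose $k=u\ge2$, rounding to an integer and adjusting constants if necessary:
\[
 \PP\bigl(|\Delta|>eC\sqrt{n(1-e^{-t})u}\bigr)\le e^{-u}.
\]
After enlarging $C$, this gives the stated $2e^{-u}$.

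The main obstacle is justifying the differentiation step rigorously. This requires measurability and a.e.\ uniqueness of $\Gamma_\theta$ jointly in $(\theta,\omega)$, together with the envelope formula for a maximum over an infinite family of staircases. We plan to handle this by Fubini. For each fixed $\theta$, uniqueness holds almost surely by \cite[Lemma B.1]{Ham19}, so almost surely uniqueness holds for Lebesgue-a.e.\ $\theta$. Danskin's theorem then applies with one-sided derivatives given by the max and min over geodesics, and these agree a.e. Alternatively, we could avoid derivatives entirely by discretizing $[0,\theta_0]$ into $N$ steps. At each step we bound the increment between $F(\theta_i)-F(\theta_{i-1})$ above by the weight change along $\Gamma_{\theta_i}$ and below by the change along $\Gamma_{\theta_{i-1}}$. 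Each such change is a conditionally Gaussian quantity plus an $O(\theta_i-\theta_{i-1})^2$ correction, and letting $N\to\infty$ removes the correction.
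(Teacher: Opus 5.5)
Your proposal is correct and is essentially the paper's argument. It uses the same rotation up to the angle $\arccos(e^{-t})$, the independence of the rotated environment from its angular derivative (which makes the derivative a centred Gaussian conditionally on the selected geodesic), Minkowski's integral inequality, and Markov's inequality at order $k=u$. The only difference is technical: you work with the full compact staircase family, using almost-everywhere uniqueness in $\theta$ and Danskin's theorem, whereas the paper takes a finite class with a fixed tie-breaking rule and then passes to the limit by compact approximation and Fatou's lemma.

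One harmless slip: for $\0\to\bn$ the common horizontal length is $n$, not $2n$, so the conditional variance is exactly $n$.
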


The theorem gives Gaussian tails on the $\sqrt{nt}$ scale even
when $nt<1$, without the additive term required for discrete
resampling. A proof sketch is given after
Corollary~\ref{cor:increments}. 
{Translation and Brownian scaling give the corresponding
bound for general deterministic endpoints. These estimates are used
in the companion paper~\cite{Dynamics26} to construct a non-trivial
dynamics on the directed landscape.}
\par\endgroup

For the hitset argument and the resulting bounds on exceptional
times, discrete updates provide a further advantage:
Lemma~\ref{lem:clocks} reduces uniformity over a time interval to a
union over finitely many environment configurations. We use this
in the regularity and \textit{Poisson capture} arguments.
OU dynamics visit infinitely many configurations, so extending
the hitset theorem to this setting would also require
uniform-in-time geometric estimates replacing these finite unions.

\par\endgroup

\subsection{The time scale and the dimension bound}

It is useful to compare Theorem~\ref{thm:regional} with the hitset
estimate obtained by counting switches in \cite{B25}. Because the
dynamics have discrete updates, for a fixed endpoint pair the
geodesic can change only when an update occurs. The switch count
defined in \cite[Equation (7)]{B25} adds, over these updates, the number
of cells entered by the new geodesic that were absent from the
preceding one. Every cell in the hitset is either visited at time
zero or accounted for by such an entry; revisits can therefore make
the switch count larger than the number of distinct new cells.
The estimate in \cite[Theorem 1]{B25} bounds this count in the bulk
by $n^{5/3+o(1)}h$ in expectation. \begingroup
\par\begingroup
Poisson capture transfers the fixed-endpoint bound to endpoint
regions. Taking $\gamma=1/2$ in \cite[Proposition 35]{B25}, the
endpoint parallelograms there contain $\mathscr R_n^-$ and
$\mathscr R_n^+$, respectively, for all sufficiently large $n$.
Consequently, that proposition gives
\[
 \EE\left|\hitset_{\mathscr R_n^-}^{\mathscr R_n^+,[0,h]}
                  \bigl(\slab{-n/2}{n/2}\bigr)\right|
 \le n^{1+o(1)}+n^{5/3+o(1)}h.
\]
\par\endgroup
Here the $o(1)$ notation means that each exponent loss may be
replaced by any fixed positive constant for all sufficiently large $n$.
\par\endgroup

For one direction window, spatial averaging divides by the volume
$n^{5/3}$ of a central tube. Covering a compact range of directions then
costs another factor $n^{1/3}$. The resulting probability that one of these geodesics visits a
given unit cell during $[0,h]$ is bounded by
{
\[
 n^{-1/3+o(1)}+n^{1/3+o(1)}h.
\]}
{For $h=n^{-2/3}$, the spatial averaging calculation appears in
the proof of \cite[Lemma 41]{B25}, and the union bound over directions
appears in \cite[proof of Proposition 44, Equation (154)]{B25}.}
The powers of $n$ in its two terms balance at $h=n^{-2/3}$,
giving the probability bound $h^{1/2+o(1)}$. This accounts for the dimension $1/2$ obtained from
that argument.

Our hitset estimates reach the longer time scale $n^{-1/3}$,
the predicted critical scale for natural LPP dynamics. For BLPP
under Ornstein--Uhlenbeck dynamics, this scale is identified, up to
subpolynomial corrections, in \cite{GH24}.
\begingroup
\begingroup
Our proof bounds the expected hitset size by
$n\exp\{C(\log\log n)^2\}$ over $[0,n^{-1/3}]$, also for the
endpoint families used in the exceptional-time argument.
The same spatial averaging and union over directions bound the
probability that one of these geodesics visits a given unit cell
during this interval by
$n^{-1/3}\exp\{C(\log\log n)^2\}$.
Divide $[0,1]$ into intervals of length $n^{-1/3}$ and retain
those during which such a visit occurs. Their expected number is
at most $\exp\{C(\log\log n)^2\}$.
Section~\ref{sec:gaugeproof} uses these intervals to construct
the Hausdorff cover proving Theorem~\ref{thm:main}.
\par\endgroup
\par\endgroup

\paragraph{\textbf{Acknowledgements.}}
The author thanks Riddhipratim Basu and Dor Elboim for the discussions and acknowledges the use of GPT-6 Astra at the level of a coauthor in developing the arguments and preparing this manuscript.

\paragraph{\textbf{Outline of the paper.}}
Section~\ref{sec:model} collects the geometric and probabilistic
preliminaries. Section~\ref{sec:proofoutline} gives the proof outline,
with particular attention to the need for more than one spatial scale.
The passage-time estimate, {cover of dynamic exits}, and local \textit{Poisson capture}
argument occupy Sections~\ref{sec:concentration}--\ref{sec:capture}.
{Section~\ref{sec:multiscale} combines
these estimates in one accelerated recursion with an adjustable
allowance. Section~\ref{sec:dimension} proves the regional theorem
and completes the exceptional-time arguments.}

\section{Preliminaries}\label{sec:model}

We now give the basic definitions and recall the estimates from
the literature that will be used in the proof.

\subsection{Coordinates, subpaths, and characteristic lines}

For endpoints $p=(x_0,j_0)\le q=(x_1,j_1)$ with $j_0<j_1$, we use
the slope convention
\[
 \lambda=\frac{x_1-x_0}{j_1-j_0}.
\]
{Thus,} slope means horizontal displacement per unit increase in row
number. The characteristic location on row $j$ is
$x_0+\lambda(j-j_0)$, and transverse distances are measured horizontally
from this line. Throughout, slopes lie in a fixed compact subset of
$(0,\infty)$. We shall informally call a geodesic a path at scale $N$ when its
horizontal and vertical displacements are comparable to $N$.
The corresponding KPZ transverse fluctuation scale is $N^{2/3}$. We use $n$ for the original scale and $N$ for a shorter
scale within the same path.

For $\beta\in(0,1/2)$, define the bulk slab
\begin{equation}\label{eq:bulk}
 \operatorname{Bulk}_\beta(p,q)
 =\slab{j_0+\beta(j_1-j_0)}{j_1-\beta(j_1-j_0)}.
\end{equation}
Its rows stay a fixed positive fraction of the vertical separation
away from both endpoints.

The notation $\Gamma_p^{q,t}(j)$ refers to the right endpoint of the
horizontal portion of the geodesic $\Gamma_p^{q,t}$ on row $j$.
That portion is
\[
 [\Gamma_p^{q,t}(j-1),\Gamma_p^{q,t}(j)]\times\{j\}.
\]

{We will repeatedly use the optimality of portions of a geodesic.
Fix an environment time $t$. If $a,b\in\Gamma_p^{q,t}\cap\ZZ_{\RR}$
occur in this order along $\Gamma_p^{q,t}$, then its portion from $a$
to $b$ maximizes weight among staircases with those endpoints.
Indeed, replacing that portion by a staircase of larger weight would
increase the weight of the path from $p$ to $q$.}
We also use the planar ordering of geodesics: paths
between ordered endpoints can be bounded by the corresponding extreme
geodesics. This permits confinement of an entire family between
endpoint windows by confinement of finitely many bounding paths.

\subsection{Staircase coordinates and compact path classes}\label{sec:pathclasses}

We identify a staircase from $p=(x_-,j_-)$ to $q=(x_+,j_+)$ with its
vector of exit coordinates: $z_j$ is the horizontal coordinate of
its upward step from row $j$ to row $j+1$. {Thus,} the space of all such staircases is
\begin{equation}\label{eq:staircasesimplex}
 \mathfrak S(p,q)=\{(z_{j_-},\ldots,z_{j_+-1}):
                 x_-\le z_{j_-}\le\cdots\le z_{j_+-1}\le x_+\}.
\end{equation}
We set $z_{j_--1}=x_-$ and $z_{j_+}=x_+$. If $j_-=j_+$, the space
has one element, corresponding to the horizontal segment from $p$ to
$q$. The set $\mathfrak S(p,q)$ is a compact simplex in the Euclidean
space of exit coordinates. A \emph{compact class of staircases}
means a nonempty compact subset of this space. For example, the
constraint $z_m\in I$, where $I$ is a closed interval, defines a
closed, and hence compact, subset whenever it is nonempty. These
coordinates will also be used in Section~\ref{sec:concentration} to
approximate compact classes by finite collections of staircases.
For a fixed Brownian environment, the weight of a staircase with
exit coordinates $(z_j)$ is
\[
 \sum_{j=j_-}^{j_+}\bigl(W_j(z_j)-W_j(z_{j-1})\bigr).
\]
The continuity of the Brownian paths makes this weight continuous
in the exit coordinates. By compactness, it therefore attains its
maximum on every nonempty compact staircase class.

\subsection{Routed weight profiles and near maximisers}\label{sec:profiles}

For endpoints $u\le(x,m)\le(x,m+1)\le v$, we define the
\emph{routed weight profile} by
\begin{equation}\label{eq:Zbullet}
 Z_u^{v,\bullet,t}(x,m)=T_u^{(x,m),t}+T_{(x,m+1)}^{v,t}.
\end{equation}
This is the maximum weight of a staircase whose upward step from
row $m$ to row $m+1$ occurs at $x$. {Therefore,}
\begin{equation}\label{eq:maxprofile}
 T_u^{v,t}=\max_x Z_u^{v,\bullet,t}(x,m),\qquad
 Z_u^{v,\bullet,t}(\Gamma_u^{v,t}(m),m)=T_u^{v,t}.
\end{equation}
At a fixed time, the two summands in \eqref{eq:Zbullet} are
independent because they use disjoint sets of Brownian rows. On a
compact interval strictly inside the admissible horizontal range,
each profile, after subtracting its value at the left endpoint, has
law absolutely continuous with respect to Brownian motion; see the
Brownian Gibbs {\cite{CH14}} description of BLPP profiles in
\cite[Sections 3.1--3.2]{CHH23}. Their sum therefore has local
Brownian behaviour, with variance rate two. The quantitative
comparison for the routed profile is given in
\cite[Theorem 1.2]{GH23}. Stronger comparisons for
the Airy line ensemble and BLPP profiles are available in
\cite{Dau24} and \cite[Appendix 2]{B25}; see
footnote~\ref{fn:strongercomparison} below for our choice of input.
 We derive the resulting
near-maximiser count in Lemma~\ref{lem:packing}.

If $I$ is a nonempty closed interval in the admissible horizontal
range, we shall write
\begin{equation}\label{eq:FIdefinition}
 F_I^t=\max_{x\in I}Z_u^{v,\bullet,t}(x,m).
\end{equation}
This is the best passage time among staircases whose exit from row $m$
belongs to $I$.

We will need to count separated locations at which a profile is close
to its maximum. Since a Brownian fluctuation of height $\alpha$ has
horizontal scale $\alpha^2$, it is natural to impose separation at
that scale. For a continuous function $f$ on a compact interval $J$
and $\alpha>0$, define
\begin{equation}\label{eq:NMdefinition}
 \nearmax^\alpha(f;J)=\max\left\{|S|:S\subseteq J,\quad
 \begin{array}{l}
 |x-y|\ge\alpha^2\quad(x\ne y\text{ in }S),\\
 f(x)\ge\max_Jf-\alpha\quad(x\in S)
 \end{array}\right\}.
\end{equation}
This is the number of near touches considered in
\cite[Section 1.2.3]{CHH23}, with separation parameter $\alpha^2$;
we use the notation of {\cite[Section 9.3, immediately before Lemma 80]{B25}}.
A maximal separated set covers the near-maximum set by intervals of
radius $\alpha^2$ about its points, since any uncovered near maximiser could
otherwise be added to the set. {In particular, a bound on
$\nearmax^\alpha$ gives a bound on the number of intervals needed
for a cover.}

\subsection{Probability conventions and clock configurations}

For a random variable $Y$ and $k\ge1$, we write
$\norm Y_k=(\EE|Y|^k)^{1/k}$. Constants denoted by $C,c$ may change
from line to line. The notation $a\asymp b$ means that their ratio
lies between two positive fixed constants, and $a\lesssim b$ means
$a\le Cb$ for such a constant. Statements written as $n^{o(1)}$ in
the discussion mean that the required bound is available with $n^\varepsilon$
for every fixed $\varepsilon>0$, for sufficiently large $n$.

An event holds with \emph{stretched-exponentially high probability}
in $n$ if its failure probability is at most $Ce^{-cn^\theta}$ for
some fixed $C,c>0$ and $\theta\in(0,1)$. Such an error is, in
particular, $O_A(n^{-A})$ for every fixed $A>0$.
Constants and decay exponents may depend on fixed slope and bulk
ranges and on small exponent parameters, but are uniform in
deterministic endpoints within those ranges. We allow the lower
threshold on $n$ to depend on these fixed parameters.

A useful feature of discrete resampling is that only finitely many
environments are seen in a bounded rectangle during a bounded time
interval. Although those environments are dependent, each retains the
static marginal law even after the clock times have been specified.
The following observation makes precise the union bound we will often use.

\begin{samepage}
\begin{lemma}[Static marginals conditional on the clocks]\label{lem:clocks}
Fix a deterministic rectangle meeting at most $C n^2$ update blocks.
Conditional on all its clock rings in $[0,1]$, the environment in each
successive configuration has the product Brownian marginal distribution.
Except on an event of probability $e^{-c n^2}$, the number of configurations
is at most $C' n^2$. {Thus,} a static event determined by this rectangle and
having failure probability $p_n$ holds in every configuration except with
probability at most $C'n^2p_n+e^{-c n^2}$.
\end{lemma}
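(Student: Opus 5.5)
The plan is to condition on the entire clock process restricted to the rectangle and then exploit the fact that the refresh samples are independent of the clocks. Let $\mathcal K$ be the $\sigma$-field generated by the ring times in $[0,1]$ of the at most $Cn^2$ blocks meeting the rectangle. Order the distinct ring times as $0<\tau_1<\dots<\tau_K<1$; the configurations seen in $[0,1]$ are those on $[0,\tau_1)$, $[\tau_1,\tau_2)$, and so on, so there are $K+1$ of them. (Ties have probability zero.)

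\emph{Step 1: static marginals.} Fix $k\le K$. In the $k$-th configuration, each block $(i,j)$ carries either its time-zero increment $X^0_{i,j}$ (if it has not rung before $\tau_k$) or the sample drawn at its most recent ring. Conditional on $\mathcal K$, the collection consisting of all initial blocks and all refresh samples is i.i.d.\ standard Brownian motion on $[0,1]$, and it is independent of $\mathcal K$ by construction. For fixed $k$, the map selecting one sample per block is $\mathcal K$-measurable and injective: distinct blocks use distinct samples. The blocks in configuration $k$ are therefore conditionally i.i.d.\ standard Brownian motions. Reconstructing $W^t_j$ from these blocks, as in Section~\ref{sec:dynamics}, gives the product Brownian law on the rectangle. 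Blocks outside the rectangle do not matter, since the event is determined by the rectangle. Different configurations share samples and hence are dependent, but only the marginals are needed.

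\emph{Step 2: counting configurations.} The total number of rings $K$ is Poisson with mean at most $Cn^2$, because it is a sum of independent rate-one clocks over at most $Cn^2$ blocks and a unit time interval. A Chernoff bound gives $\PP(K>2eCn^2)\le e^{-cn^2}$; for instance, $\PP(\mathrm{Poi}(\mu)\ge 2e\mu)\le e^{-\mu}$ applied with $\mu=Cn^2$ (if fewer blocks are met, take $\mu=Cn^2$ by domination). So, with $C'=2eC+1$, there are at most $C'n^2$ configurations outside an event of probability $e^{-cn^2}$.

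\emph{Step 3: union bound.} Let $E$ be a static event with $\PP(E^c)=p_n$, and let $E_k$ be the event that $E$ holds in configuration $k$. On $\{K+1\le C'n^2\}$, the failure of $E$ in some configuration implies $\bigcup_{k\le C'n^2}(E_k^c\cap\{k\le K+1\})$. By Step~1, $\PP(E_k^c\mid\mathcal K)=p_n$ on $\{k\le K+1\}$. Summing over $k$ and taking expectations bounds the failure probability by $C'n^2p_n+e^{-cn^2}$.

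The only delicate point is Step~1, namely the independence of the refresh samples from the clock times and the injectivity of the sample selection. This is immediate from how the dynamics were defined, so I do not expect a genuine obstacle.
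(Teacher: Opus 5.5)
Your proposal is correct and follows essentially the same route as the paper. The paper also conditions on the clock $\sigma$-algebra and observes that each configuration selects, in a clock-measurable way, one sample from each block's independent sequence, which gives the product Brownian law. It then combines the conditional union bound with a Poisson tail estimate for the number of rings. The only difference is bookkeeping: the paper integrates the conditional bound $(K+1)p_n$ over the good-clock event, whereas you sum over a fixed range of configuration indices.
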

\end{samepage}
\begin{proof}
Let $\cB$ be the set of update blocks meeting the rectangle, and let
$K_{\cB}$ be the number of their clock rings in $[0,1]$. Write
$0=\tau_0<\tau_1<\cdots<\tau_{K_{\cB}}\le1$ for time zero and the
ordered rings. Let $\mathscr C_{\cB}$ be the {$\sigma$-algebra} generated by
these clocks, including the identity of the block at each ring.
For each block $b$, write $(X_{b,k})_{k\ge0}$ for its initial and
successive replacement samples. Conditional on $\mathscr C_{\cB}$,
the configuration at $\tau_i$ selects one deterministic index from each
sequence $(X_{b,k})_{k\ge0}$. Since the samples are independent of the
clocks and independent across blocks, that configuration has the product
Brownian law.

Let $\cA$ be a static event determined by the rectangle, with
$\PP(\cA^c)=p_n$, and let $\cA^{\tau_i}$ denote its occurrence in the
configuration at $\tau_i$. Conditional on $\mathscr C_{\cB}$,
\[
 \PP\left(\bigcup_{i=0}^{K_{\cB}}(\cA^{\tau_i})^c
          \,\middle|\,\mathscr C_{\cB}\right)
 \le (K_{\cB}+1)p_n.
\]
Also $K_{\cB}$ is Poisson with mean $|\cB|\le Cn^2$. Choose
$C'>2C+2$ and set $\cE^{\mathrm{clock}}_{\cB}
=\{K_{\cB}+1\le C'n^2\}$. The Poisson exponential moment gives
$\PP((\cE^{\mathrm{clock}}_{\cB})^c)\le e^{-cn^2}$, after increasing
$C'$ if necessary. Integrating the preceding conditional bound on
$\cE^{\mathrm{clock}}_{\cB}$ proves
\[
 \PP\left(\bigcup_{i=0}^{K_{\cB}}(\cA^{\tau_i})^c\right)
 \le C'n^2p_n+e^{-cn^2}.
\]
Every configuration in $[0,1]$ is one of those indexed by $\tau_i$.
This proves the assertion, including its all-time formulation.
\end{proof}

For deterministic endpoint pairs, {uniqueness of geodesics in the static model and}
Lemma~\ref{lem:clocks} give uniqueness at all times in a bounded interval,
almost surely. The same holds for a finite or countable list of pairs
sampled independently of the dynamics, by conditioning on that list.

\begingroup
\subsection{Geometric regularity of one geodesic}\label{sec:regularity}

We use regularity to locate the cut endpoints of a geodesic and to
convert covers of its exits into covers of all visited cells. It is
useful to retain an adjustable allowance $\ell$ in these estimates.
Later, taking $\ell=\log n$ or a small power of $n$ will give the
two regimes of the hitset theorem from the same proof.

\begin{lemma}[Regularity with an adjustable allowance]\label{lem:regularity}
Fix $0<a<b<\infty$ and a compact interval $K\subset(0,\infty)$.
There exist $C_{\mathrm{reg}},C,c>0$ and $N_1<\infty$ such that
the following holds for every integer $N\ge N_1$ and
$\log N\le\ell\le N^{1/20}$. Let
$p=(x_0,j_0)\le q=(x_1,j_1)$ be deterministic with
\[
 aN\le j_1-j_0\le bN,\qquad
 \lambda=\frac{x_1-x_0}{j_1-j_0}\in K,
\]
and put
\begin{equation}\label{eq:regularitymargin}
 P=C_{\mathrm{reg}}\ell^2.
\end{equation}
Let $\cE^{\mathrm{reg}}_{p,q,N}(\ell)$ be the event on which,
simultaneously for all $t\in[0,1]$,
\begin{align}
 |\Gamma_p^{q,t}(j)-x_0-\lambda(j-j_0)|
       &\le PN^{2/3},\label{eq:rootTF}\\
 |\Gamma_p^{q,t}(j+M)-\Gamma_p^{q,t}(j)-\lambda M|
       &\le PM^{2/3},\label{eq:modulus}\\
 \Gamma_p^{q,t}(j)-\Gamma_p^{q,t}(j-1)
       &\le P.\label{eq:rowlength}
\end{align}
In the first and third inequalities $j_0\le j\le j_1$, with
$\Gamma_p^{q,t}(j_0-1)=x_0$; in the second, $j,M$ are integers
with $M\ge1$ and $j_0\le j<j+M\le j_1$. Then
\begin{equation}\label{eq:regularityprob}
 \PP\bigl((\cE^{\mathrm{reg}}_{p,q,N}(\ell))^c\bigr)
 \le Ce^{-c\ell^3}.
\end{equation}
All constants depend only on $a,b,K$ and are uniform in
$N,\ell,p,q$ in these ranges.
\end{lemma}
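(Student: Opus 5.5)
The plan is to reduce the all-time statement to static estimates via Lemma~\ref{lem:clocks}, and to obtain the static estimates from the standard BLPP transverse fluctuation and local regularity bounds.

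\emph{Step 1: reduction to a static event.} The first observation is that, for deterministic $p,q$ with $j_1-j_0\le bN$ and slope in $K$, every geodesic from $p$ to $q$ lies in a deterministic rectangle of size $O(N)\times O(N)$. This rectangle meets at most $CN^2$ update blocks. Hence it suffices to show that each of \eqref{eq:rootTF}--\eqref{eq:rowlength} holds in a static environment with failure probability at most $Ce^{-c\ell^3}$. Lemma~\ref{lem:clocks} then gives the bound
\[
 C'N^2e^{-c\ell^3}+e^{-cN^2}
\]
for all configurations in $[0,1]$ simultaneously. Since $\ell\ge\log N$, we have $N^2\le e^{2\ell}\le e^{c\ell^3/2}$ for $N$ large. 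Since $\ell\le N^{1/20}$, we have $e^{-cN^2}\le e^{-c\ell^3}$. Both error terms are therefore absorbed after adjusting $c$. In the static setting, I apply each bound to all geodesics between $p$ and $q$, which is possible since the geodesic is a.s.\ unique for deterministic endpoints.

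\emph{Step 2: the static bounds.}
\begin{itemize}
\item For \eqref{eq:rootTF}, I use the known static transverse fluctuation tail for BLPP: the probability that the geodesic deviates by $sN^{2/3}$ from the characteristic line at some row is at most $Ce^{-cs^3}$. This is proved, for example, via a union over rows of a $\max_x Z_p^{q,\bullet}(x,m)$ comparison together with the parabolic curvature of the limit shape and moderate deviation bounds for passage times with tails $e^{-cs^{3/2}}$ in the deviation. Taking $s=C_{\mathrm{reg}}\ell^2$ gives a bound far smaller than $e^{-c\ell^3}$. Even $s=\ell$ would suffice, so the $\ell^2$ margin is generous.
\item For \eqref{eq:modulus}, I discretize. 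For each dyadic scale $M=2^k\le bN$ and each starting row $j$ on a grid of spacing $M$, the geodesic passes through an interval of width $PN^{2/3}$ on row $j$ and on row $j+M$. I cover these intervals by windows of width $M^{2/3}$. For each pair of windows, I apply the fluctuation bound of the previous item to the interval-to-interval geodesic at scale $M$. This is a polynomial in $N$ number of events, each failing with probability $e^{-c\ell^3}$ once $P\ge C\ell$ absorbs the window mismatch; the union bound then costs only a factor $N^{O(1)}\le e^{O(\ell)}$. A general $j$ and $M$ is reduced to the dyadic grid by chaining at most $O(1)$ comparable scales.
\item For \eqref{eq:rowlength}, the length of a row segment exceeding $P$ forces a Brownian increment $W_j(x+P)-W_j(x)$ to beat the competing staircases. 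A standard estimate is that a geodesic spends more than horizontal length $s$ on a single row with probability at most $e^{-cs^{2}}$ per row. This follows because staying on row $j$ for length $s$ loses order $s$ in expected weight relative to the slope-$\lambda$ alternatives, while the fluctuation is only $\sqrt s$. With $s=P=C\ell^2$ this gives $e^{-c\ell^4}$, and the union over $O(N)$ rows and $O(N^{5/3}P)$ locations is again absorbed.
\end{itemize}

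\emph{Main obstacle.} The main difficulty is \eqref{eq:modulus}: making the small-scale transverse fluctuation bound uniform over all $j,M$ with only polynomial entropy. The issue is that the sub-geodesic endpoints are random. To handle this I would rely on geodesic ordering, sandwiching the portion between the extreme geodesics joining the window endpoints, so that only finitely many deterministic endpoint pairs need to be controlled.
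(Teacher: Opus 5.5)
\textbf{Assessment.} Your reduction to a static event through Lemma~\ref{lem:clocks} agrees with the paper, and so does your treatment of \eqref{eq:rootTF}. There is a genuine gap in \eqref{eq:modulus}, and your argument for \eqref{eq:rowlength} is only a heuristic.

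\textbf{The gap in \eqref{eq:modulus}.} Write $\Gamma=\Gamma_p^{q,t}$. A transversal fluctuation bound for geodesics between a window on row $j$ and a window on row $j+M$ controls how far such a path strays from the chord joining its own endpoints. It says nothing about $\Gamma(j+M)-\Gamma(j)$, which is exactly the horizontal offset between those endpoints. The global bound only places $\Gamma(j)$ and $\Gamma(j+M)$ in intervals of width $PN^{2/3}$. So the window pairs the path might use have offsets anywhere in $\lambda M\pm 2PN^{2/3}$, far wider than $\lambda M\pm PM^{2/3}$ when $M\ll N$. Their slopes are $\lambda\pm O(PN^{2/3}/M)$, which for small $M$ leave any compact range on which the fluctuation estimate is available. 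Absorbing the window width $M^{2/3}$ into $P$ is not the issue. The issue is that nothing in your argument excludes the window pairs with the wrong offset. Sandwiching between extreme window-to-window geodesics, as in your last paragraph, confines the portion spatially but gives no local slope control, for the same reason.

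\textbf{What is missing: a local fluctuation estimate.} One route is midpoint chaining over dyadic $M$. Compare $\Gamma(j+M)$ with the chord joining $\Gamma(j)$ and $\Gamma(j+2M)$. Use interval-to-interval fluctuation bounds for windows of width $M^{2/3}$ whose slope is already known, from the previous scale, to be within $O(\ell^2M^{-1/3})$ of $\lambda$. For the maximal error $e(M)$ at scale $M$ this gives $e(M)\le e(2M)/2+C\ell M^{2/3}$, hence $e(M)=O(\ell^2M^{2/3})$. However, this only works while $M$ is large enough (roughly $M\ge\ell^{10}$) for the cited estimate to apply with parameter $\ell$. Bounded separations therefore still need a separate input.

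\textbf{The row-length bound.} Your heuristic for \eqref{eq:rowlength} does not supply that input. The competing detours carry their own passage-time fluctuations and start at random points, which the comparison of $\sqrt s$ against $s$ ignores. The single-row bound is really the $M=1$ case of the same local modulus.

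\textbf{How the paper does it.} The paper gets both bounds directly from the local transversal fluctuation theorem \cite[Theorem 1.4(1),(2)]{GH23}. This gives the modulus with margin $C\ell M^{2/3}(\log(1+D/M))^{1/3}$ and row lengths at most $C\ell(\log D)^{1/3}$. The logarithms are then absorbed using $\log D\le C\ell$. That absorption, and not generosity in \eqref{eq:rootTF}, is why the margin is $P=C_{\mathrm{reg}}\ell^2$.
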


\par\begingroup
For example, suppose the exit on row $m$ lies in an interval of
width $N^{2/3}$ centered at $c$. The increment bound
\eqref{eq:modulus} places the exits on rows $m-N,m+N$, when these
rows lie between the endpoints, within distance
$(P+1)N^{2/3}$ of $c-\lambda N,c+\lambda N$, respectively.
These are the windows in which we will use Poisson capture.
Once the exits have been covered, the row-length bound
\eqref{eq:rowlength} lets us count all the cells visited by the
horizontal portions of the geodesic.
\par\endgroup

\begin{proof}
Write $D=j_1-j_0$. At a static time, translation and horizontal
Brownian scaling by $\lambda^{-1}$ reduce the pair to
$(0,0),(D,D)$. Apply \cite[Theorem 1.4(1), (2) and
Corollary 1.5]{GH23} with regularity parameter $r=\ell$.
The restriction $r\le D^{1/10}$ holds uniformly for large $N$
since $D\asymp N$ and $\ell\le N^{1/20}$.

\begingroup
{After this horizontal scaling, write $\gamma(i)$ for the exit
on row $i$, with $\gamma(-1)=0$. The cited estimates imply that
the following three bounds hold simultaneously outside an event of
probability at most $C(1+\log D)e^{-c\ell^3}$:}
\begin{align*}
 \max_{0\le i\le D}|\gamma(i)-i|
    &\le C\ell D^{2/3},\\
 |\gamma(i+M)-\gamma(i)-M|
    &\le C\ell M^{2/3}\bigl(\log(1+D/M)\bigr)^{1/3},\\
 \max_{0\le i\le D}\bigl(\gamma(i)-\gamma(i-1)\bigr)
    &\le C\ell(\log D)^{1/3}.
\end{align*}
{Here the second bound holds for all integers $0\le i<i+M\le D$.
To obtain the stated simultaneous probability bound, apply
Corollary 1.5 to the first inequality, with failure probability
$Ce^{-c\ell^3}$. The second follows from Theorem 1.4(1)}
on each dyadic range of $M/D$ to which it applies, with failure
$Ce^{-c\ell^3 k}$ on the $k$th range. Theorem 1.4(2) therein covers the
remaining bounded separations and horizontal row portions, with
failure $CD^{-c\ell^3}$; taking two points on the same row also
gives the third bound, including the first row. For $M$ comparable
to $D$, the first bound at both endpoints gives the second after
increasing $C$.

Since $\log D\le C\log N\le C\ell$, all the displayed margins
are bounded by $C'\ell^2$ times the indicated powers of $D$ or $M$.
Undoing the horizontal scaling introduced at the beginning of this proof multiplies them by $\lambda$.
As $D\asymp N$ and $\lambda\in K$, increasing
$C_{\mathrm{reg}}$ gives \eqref{eq:rootTF}, \eqref{eq:modulus}
and \eqref{eq:rowlength}, respectively. Summing the dyadic failure bounds shows that, at a fixed
time, the probability that any of \eqref{eq:rootTF},
\eqref{eq:modulus} or \eqref{eq:rowlength} fails is at most
$C(1+\log N)e^{-c\ell^3}$.
\par\endgroup

{Every staircase from $p$ to $q$ lies in
$[x_0,x_1]\times\dint{j_0}{j_1}$. This rectangle meets at most
$CN^2$ unit horizontal cells, each carrying one Brownian increment
process and its rate-one refresh clock.}
Lemma~\ref{lem:clocks} gives all-time failure at most
\[
 CN^2(1+\log N)e^{-c\ell^3}+e^{-cN^2}
 \le C'e^{-c'\ell^3}.
\]
The last inequality is uniform over the allowed $\ell$:
$\ell\ge\log N$ absorbs the polynomial factor and
$\ell\le N^{1/20}$ handles the second term.
\end{proof}
\par\endgroup

\subsection{Bigeodesics in static BLPP}
\label{sec:bigeodstatblpp}
For static BLPP, {\cite[Theorem 3.1(v), (vi)]{SS23}} proves nonexistence of non-trivial bigeodesics along any fixed direction.
Furthermore, \cite[Theorem 5.4]{RS26} rules out all non-trivial
locally leftmost or locally rightmost bigeodesics, meaning that their
finite portions are always the corresponding extremal geodesics
between their endpoints. Unrestricted static nonexistence
follows from {the dynamical BLPP paper} \cite{B25}: its bound $\dim\scT\le1/2$ implies
$\operatorname{Leb}(\scT\cap[0,1])=0$ almost surely, and stationarity
along with an application of Fubini's theorem yields
$\PP(0\in\scT)=\EE[\operatorname{Leb}(\scT\cap[0,1])]=0$.

Apart from the estimates recorded in these preliminaries, the
remaining inputs concerning geodesics will be stated where they
are used. Section~\ref{sec:crossings} derives a static
near-maximiser estimate on the crossing windows at our shorter
scales. Section~\ref{sec:capture} records the general-slope
version of the Poisson capture result in \cite[Proposition 30]{B25}.
The input about infinite geodesics used in
Section~\ref{sec:dimension} is \cite[Proposition 13]{B25}: every
non-trivial bigeodesic has an asymptotic direction and stays within
a suitable neighbourhood of the corresponding line.
\section{Outline of the proof}\label{sec:proofoutline}

The passage from finite geodesics to exceptional times at which
bigeodesics exist is governed by how many cells those finite geodesics
can visit during a short time interval. We begin with the estimate that makes this
connection, and then explain how passage-time stability and a hierarchy
of spatial covers lead to it.

\paragraph{\textbf{The target estimate and the reduction to one row.}}
\begingroup
We focus on the quantitative expectation bound in
Theorem~\ref{thm:regional}:
\[
 \EE\left|\hitset_{{\mathscr R_n^-}}^{{\mathscr R_n^+},[0,n^{-1/3}]}
                \bigl(\slab{-n/2}{n/2}\bigr)\right|
 \le n\exp\{C(\log\log n)^2\}.
\]
Since the slab contains $O(n)$ rows, it is enough to prove
\[
 \EE\left|\hitset_{{\mathscr R_n^-}}^{{\mathscr R_n^+},[0,n^{-1/3}]}
                 (\RR\times\{m\})\right|
 \le\exp\{C(\log\log n)^2\}
\]
uniformly over integers $|m|\le n/2$. We may therefore concentrate
on the locations visited in a single row.

For this outline we take $\ell=\log n$ and work first over
$[0,h]$, where $h=n^{-1/3}\ell^{-4}$. Subdividing the critical
interval $[0,n^{-1/3}]$ into $\lceil\ell^4\rceil$ pieces then
recovers the stated horizon; see Lemma~\ref{lem:timesubdivision}.
This costs only a fixed power of $\log n$, which is absorbed in
the bound above by increasing $C$. The proofs retain $\ell$ as
an adjustable parameter to obtain the full tail estimate in
Theorem~\ref{thm:regional} from the same construction.
For each fixed $n$, however, we choose $\ell$ once and keep it
unchanged throughout the proof, including at every smaller scale
$N$. Thus $\ell=\log n$ is not replaced by $\log N$ during the
recursion. We suppress this fixed allowance in notation such as
$A_N$ and $D_N$; their subscripts record the scale being refined.
\par\endgroup

\paragraph{\textbf{Replacing endpoint regions by a finite list of pairs.}}
There is still a continuum of possible endpoints in the two {regions} ${\mathscr R_n^-}$ and ${\mathscr R_n^+}$.
The \textit{Poisson capture} argument of \cite[Section 5]{B25}
represents the central portions of these geodesics using a much
smaller family with independently sampled endpoints.
More precisely, an independent Poisson cloud supplies a finite list
of endpoint pairs such that, on an event of high probability, the
central portion of every geodesic from ${\mathscr R_n^-}$ to ${\mathscr R_n^+}$, at every
time in $[0,h]$, agrees with a portion of a geodesic from this list
at the same time. {The number of sampled
pairs costs a fixed power of $\ell$.}

Conditional on the cloud, its endpoint pairs are deterministic and
the environment retains its original law. It therefore suffices to
establish
\begin{equation}\label{eq:outlinepointrow}
 \EE\left|\hitset_p^{q,[0,h]}(\RR\times\{m\})\right|
 \le\exp\{C(\log\log n)^2\}
\end{equation}
uniformly over deterministic endpoint pairs
$p,q$ at scale $n$ and bulk rows $m$, as specified in
Proposition~\ref{prop:generalpoint}.
For the rest of the outline, we fix such a pair $p,q$, a bulk
row $m$, and the time interval $[0,h]$. Write
\begin{equation}\label{eq:outlineexitset}
 E_m=\{\Gamma_p^{q,t}(m):0\le t\le h\}
\end{equation}
for the set of all exits of this evolving geodesic from row $m$.
We first cover $E_m$ by short intervals; at the end, a bound on the
length of each horizontal portion of the geodesic converts this exit
cover into a bound on all the cells visited in that row.

\paragraph{\textbf{Passage-time stability.}}
Theorem~\ref{thm:stability} controls the change of an optimised
passage time for diagonal endpoints. Its version for general
endpoints, Proposition~\ref{prop:generalstability}, gives, for the
deterministic pair under consideration,
\begin{equation}\label{eq:outlinestabilitytail}
 \PP\left(\left|T_p^{q,t}-T_p^{q,0}\right|
       >C\bigl(\sqrt{ntu}+u\bigr)\right)\le2e^{-u},
 \qquad t\ge0,\quad u\ge2.
\end{equation}
Corollary~\ref{cor:increments} gives the same bound for $F_I^t$, the
best passage time among staircases whose exit from row $m$ lies in a
deterministic unit interval $I$. The proof uses a rotation of the
Brownian increments in the refreshed blocks. These bounds, together
with the discrete update structure, also give simultaneous control
over the required intervals $I$ and all times in $[0,h]$; this is
established in Lemma~\ref{lem:timeuniform}.

\paragraph{\textbf{The first cover and its limitation.}}
To see why stability is useful, consider a unit interval $I$ containing
an exit of $\Gamma_p^{q,t}$ at some time $t\le h$. At that time,
$F_I^t=T_p^{q,t}${, with $F_I^t$ as in \eqref{eq:FIdefinition}}. If both passage values have changed by at most
$A$ from time zero, then
\begin{equation}\label{eq:introwitness}
 T_p^{q,0}-F_I^0\le 2A.
\end{equation}
This use of passage-time stability to produce near-optimal paths in a
static environment also appears in the work of Ganguly--Hammond
\cite[Section 3.3]{GH24}. We discuss the relation with their
argument in Remark~\ref{rem:ghstrategy} below.
{Thus,} every dynamically visited exit interval contains a near maximiser
of the static routed profile $Z_p^{q,\bullet,0}(\cdot,m)$. This gives
information about near-optimal paths in the time-zero environment;
the time-zero geodesic need not itself visit $I$.

The static near-maximum estimate in Section~\ref{sec:crossings} now
provides a spatial cover. It bounds the number of near maximisers
separated by order $A^2$, and hence covers the possible exits by a
small number of intervals of that length. We call these covering
intervals \emph{clusters}: each groups nearby candidate exits, but
{we do not assert that every point of the interval is ever visited
by $\Gamma_p^{q,t}$ as time evolves.}
\begingroup
At the largest scale the simultaneous stability bound uses
$A=C(\ell\sqrt{nh}+\ell^2)$. The factor $\ell$ allows the
estimate to hold for all the passage values needed in the argument.
Since $h=n^{-1/3}\ell^{-4}$ and $\ell=\log n$, {$A^2$,}
and hence the width of each covering interval, is of order
$n^{2/3}\ell^{-2}$, still much larger than a unit cell. 
\par\endgroup

This first cover leaves a substantial counting problem. Although
there are few clusters, each still contains many possible unit cells.
Counting every cell in even one cluster would cost
\begin{equation}\label{eq:onescalecost}
 {n^{2/3}\ell^{-2}}\quad\hbox{per row},\qquad
 {n^{5/3}\ell^{-2}}\quad\hbox{over $O(n)$ rows}.
\end{equation}
Our target is $n\exp\{C(\log\log n)^2\}$ visited cells in
total, so {this bound is far too large, recalling that $\ell=\log n$ in this outline}. We therefore need further
information about which cells inside each cluster can actually be
visited. Figure~\ref{fig:clusters} illustrates this obstruction.

\begin{figure}[tbp]
\centering
\begin{tikzpicture}[x=1cm,y=1cm]
  \fill[clusterblue!7] (0.55,2.3) rectangle (11.55,3.08);
  \draw[guide] (0.55,3.08)--(11.55,3.08);
  \draw[guide] (0.55,2.3)--(11.55,2.3);
  \node[anchor=west,fill=white,inner sep=2pt] at (8.65,3.25) {$\max Z^0$};
  \node[anchor=west,fill=white,inner sep=2pt] at (8.65,2.12) {$\max Z^0-A$};
  \draw[<->,witnessgold] (11.8,2.3)--node[right] {$A$}(11.8,3.08);
  \draw[->,black!65] (.45,.78)--(12.1,.78) node[right] {$x$};
  \draw[clusterblue,line width=1pt] plot coordinates {
    (.65,1.1)(1.0,1.45)(1.18,1.3)(1.38,1.85)(1.6,2.55)
    (1.74,2.43)(1.92,2.83)(2.05,2.65)(2.24,3.08)(2.4,2.77)
    (2.53,2.92)(2.71,2.58)(2.88,2.7)(3.08,2.31)(3.24,2.48)
    (3.42,2.01)(3.67,2.17)(3.94,1.65)(4.16,1.8)(4.45,1.32)
    (4.69,1.56)(4.9,1.37)(5.16,1.89)(5.4,1.66)(5.72,2.07)
    (5.91,1.9)(6.16,2.49)(6.31,2.36)(6.49,2.8)(6.65,2.61)
    (6.83,2.89)(7.0,2.67)(7.15,2.77)(7.3,2.4)(7.46,2.59)
    (7.65,2.04)(7.83,2.15)(8.07,1.62)(8.34,1.76)(8.64,1.39)
    (8.91,1.66)(9.17,1.3)(9.4,1.52)(9.64,1.09)(9.9,1.36)
    (10.15,1.15)(10.42,1.52)(10.69,1.27)(11.0,1.48)(11.5,1.04)};
  \node[anchor=west] at (.6,3.65) {\textbf{(a)} A static profile and its near-maximum band};
  \draw[guide] (1.4,.9)--(1.4,-.27);
  \draw[guide] (3.5,.9)--(3.5,-.27);
  \draw[guide] (5.98,.9)--(5.98,-.27);
  \draw[guide] (7.8,.9)--(7.8,-.27);
  \draw[cluster] (1.4,-.45) rectangle (3.5,-.12);
  \draw[cluster] (5.98,-.45) rectangle (7.8,-.12);
  \draw[<->] (1.4,-.75)--node[below] {$O(A^2)$}(3.5,-.75);
  \node[anchor=west] at (.6,-1.55) {\textbf{(b)} Few clusters, but many possible cells inside each};
  \node[anchor=west,align=left,font=\footnotesize] at (8.3,-.45)
    {Few intervals;\\many possible cells inside.};
\end{tikzpicture}
\caption{The first cover obtained from a static routed weight profile.
Near maximisers lie within $A$ of the maximum, and their
locations can be covered by a few intervals of length $O(A^2)$.
After unit enlargement, these intervals cover the dynamically visited
exits. Each interval still contains many candidate cells; the finer
cover must determine which of them can actually be visited.}
\label{fig:clusters}
\end{figure}

\paragraph{\textbf{A hierarchy of finer clusters.}}
We shall use a multiscale argument to obtain this information from shorter
geodesic segments. One can picture the evolving geodesic as making
different excursions on the original scale, with further, smaller
excursions along each of these paths. In the fixed row $m$, this
suggests clusters containing smaller clusters, and so on.

To examine a parent interval of width $N^{2/3}$, we look at the
portions of the paths between rows $m-N$ and $m+N$.
Figure~\ref{fig:cutting} shows such a portion between its two cut
endpoints, $a_t$ and $b_t$; it retains the original exit on row $m$.
These portions are themselves geodesics of length of order $N$.
Applying passage-time
stability and the static near-maximum estimate at this length gives,
over the same time interval $[0,h]$, a passage-time tolerance
$C(\ell\sqrt{Nh}+\ell^2)$ and covering intervals of length at most
\[
 D_N=C_{\mathrm{width}}(Nh\ell^2+\ell^4).
\]
Apart from the logarithmic factors and the final additive term,
the new width is $Nh$, with $N$ replacing the original length $n$.
Thus each
refinement uses the rarity estimate at the scale of the shorter
segment itself. Repeating the procedure reveals which parts of a
parent interval can be visited at finer scales.

\paragraph{\textbf{\textit{Poisson capture} for the random cut endpoints.}}
The preceding description hides a difficulty: the endpoints of a cut
segment depend on the evolving geodesic. Although the original
endpoints $p,q$ are deterministic, these new endpoints vary with the
environment and with time. We cannot simply regard them as a fixed
pair when applying the passage-time and near-maximum estimates.

\textit{Poisson capture} is used a second time to address this issue, now
{within the proof of the one-row point-to-point hitset estimate
\eqref{eq:outlinepointrow}.} Suppose that, at
time $t$, the exit coordinate $\Gamma_p^{q,t}(m)$ of the geodesic
from row $m$ lies in an interval $J$ of width $N^{2/3}$, centered at $c$.
Geodesic regularity places the two cut endpoints in deterministic
windows associated with $(N,c,m)$;
Figure~\ref{fig:windowsbasins} shows their placement.
We sample an independent cloud
$\cQ_{N;c,m}$ of endpoint pairs in slightly larger regions around
these windows. Its geodesics capture the central portions of the cut
segments: Figure~\ref{fig:cutting} shows a sampled pair whose geodesic
agrees with the cut segment around row $m$, preserving its exit.
The representative pair may change with the path and with time.

Conditional on the cloud alone, the sampled endpoint pairs are fixed
and the environment retains its original law. We apply the scale-$N$
passage-time and near-maximum estimates to every pair in the cloud,
obtaining a cover of all its exits during $[0,h]$. Taking the union
of these covers then covers $E_m\cap J$. {Thus,} the random cut
endpoints are handled by testing the whole independently sampled
list, without having to know which pair captures a given segment.
Section~\ref{sec:capture} proves this \textit{Poisson capture} statement, and
Figure~\ref{fig:cloudrefinement}(a) illustrates how the covers are combined.

\begingroup
\paragraph{\textbf{Preparing a finer cover for every possible interval.}}
\begingroup
The parent intervals will be selected using the environment.
To allow these random choices, we first prove estimates for every
interval that could be selected. We use a deterministic list of
scales, ending at {the cutoff $\ell^Q$ for a sufficiently large fixed $Q$}.
At scale $N$, a deterministic mesh of width $N^{2/3}$ supplies
the possible parent intervals. For every bulk row $m$ and every
such interval $J$, centered at $c$, we prepare its own windows
and independent cloud $\cQ_{N;c,m}$.

We construct one event $\cE_n^{\mathrm{ref}}$ on which the
following three properties hold simultaneously for all these
choices of $(N,m,J)$:
\begin{enumerate}
\item Whenever $\Gamma_p^{q,t}$ exits row $m$ in $J$, for any
$t\in[0,h]$, some pair in $\cQ_{N;c,m}$ has a geodesic that
agrees with its cut segment around row $m$, including the exit.
\item The cloud $\cQ_{N;c,m}$ contains at most a fixed power of
{$\ell$} endpoint pairs.
\item For each of these pairs, all its geodesic exits from row $m$
during $[0,h]$ admit a cover by at most a fixed power of {$\ell$}
intervals, each of length at most $D_N$.
\end{enumerate}
The first property lets us cover $E_m\cap J$ by taking the union
of the covers in the third. The second bounds the number of covers
being combined. Thus each parent has a finer cover with at most
a fixed power of {$\ell$} intervals. This is the construction
shown in Figure~\ref{fig:cloudrefinement}(a).

Lemma~\ref{lem:refinement} proves these properties by a union
bound over all possible boxes. There are polynomially many in $n$;
the local failure probabilities are small enough to absorb this
number because $\ell=\log n$. Stopping at a sufficiently large
power of {$\ell$ ensures that the estimates remain strong enough}
even at the smallest prepared scale. The distinction is essential:
every possible parent enters this probability bound, but only the
parents retained by the cover construction enter the count below.
Figure~\ref{fig:cloudrefinement}(b) illustrates these two steps.
\par\endgroup
\par\endgroup

\begin{figure}[tbp]
\centering
\begin{tikzpicture}[x=1cm,y=1cm,font=\footnotesize]
  \node[anchor=west,font=\small] at (0,9.15)
    {\textbf{(a)} One parent $J$: combine the covers from its sampled pairs};
  \node[anchor=north west,align=left,text=black,text width=4.5cm]
    at (.2,8.8) {Cloud $\cQ_{N;c,m}$:\\{a fixed power of $\ell$ pairs}};
  \node[anchor=north west,align=left,text=black,text width=7cm]
    at (5.0,8.8) {Each pair:\\{a fixed power of $\ell$ covering intervals}};
  \draw[rounded corners,draw=witnessgold,fill=witnessgold!6]
    (.15,5.55) rectangle (3.05,7.6);
  \foreach \y/\i in {7.2/1,6.5/2,5.8/3}{
    \fill[witnessgold] (.48,\y) circle (2pt);
    \node[anchor=west] at (.7,\y) {sampled pair $\i$};
    \draw[->,black!55] (3.2,\y)--(4.05,\y);
    \draw[black!25] (4.25,\y)--(12.25,\y);
  }
  \draw[child,line width=2pt] (4.8,7.2)--(5.3,7.2);
  \draw[child,line width=2pt] (8.1,7.2)--(8.65,7.2);
  \draw[child,line width=2pt] (5.05,6.5)--(5.55,6.5);
  \draw[child,line width=2pt] (10.8,6.5)--(11.35,6.5);
  \draw[child,line width=2pt] (8.35,5.8)--(8.85,5.8);
  \draw[child,line width=2pt] (10.95,5.8)--(11.45,5.8);
  \node[anchor=west,align=left] at (.2,4.9)
    {Take the union:\\a cover of $E_m\cap J$};
  \draw[->,black!55] (3.2,4.9)--(4.05,4.9);
  \draw[cluster] (4.25,4.7) rectangle (12.25,5.1);
  \foreach \a/\b in {4.8/5.3,5.05/5.55,8.1/8.65,8.35/8.85,10.8/11.35,10.95/11.45}{
    \draw[refineteal,line width=2pt] (\a,4.9)--(\b,4.9);
  }
  \node[anchor=north west,align=left,text=black,text width=8cm]
    at (4.25,4.45) {{A fixed power of $\ell$ intervals,}\\each of length at most $D_N$};

\begin{scope}[yshift=-9.5mm]
  \node[anchor=west,font=\small] at (0,3.95)
    {\textbf{(b)} Prepare clouds everywhere; then count the retained intervals};
  \foreach \k in {0,1,2}{
    \draw[draw=black!25,fill=black!3] ({4*\k},2.1) rectangle ({4*\k+4},2.42);
    \draw[black!30] ({4*\k+2},2.42)--({4*\k+2},2.61);
    \draw[draw=black!30] ({4*\k+2},2.82) ellipse (.48 and .2);
    \foreach \d in {-.2,0,.2}{
      \fill[black!35] ({4*\k+2+\d},2.82) circle (1.1pt);
    }
  }
  \draw[cluster] (4,2.1) rectangle (8,2.42);
  \node at (6,2.26) {$J$};
  \draw[witnessgold,fill=witnessgold!8] (6,2.82) ellipse (.48 and .2);
  \foreach \d in {-.2,0,.2}{\fill[witnessgold] ({6+\d},2.82) circle (1.3pt);}
  \node[anchor=west] at (8.7,3.4) {scale $N$: width $N^{2/3}$};

  \foreach \k in {0,...,11}{
    \draw[draw=black!25,fill=black!3] (\k,.2) rectangle ({\k+1},.52);
    \draw[black!30] ({\k+.5},.52)--({\k+.5},.68);
    \draw[draw=black!30] ({\k+.5},.85) ellipse (.32 and .16);
    \foreach \d in {-.12,.12}{
      \fill[black!35] ({\k+.5+\d},.85) circle (1pt);
    }
  }
  \foreach \k in {4,7}{
    \draw[child] (\k,.2) rectangle ({\k+1},.52);
    \draw[witnessgold,fill=witnessgold!8] ({\k+.5},.85) ellipse (.32 and .16);
    \foreach \d in {-.12,.12}{
      \fill[witnessgold] ({\k+.5+\d},.85) circle (1.2pt);
    }
  }
  \draw[->,refineteal,line width=.8pt] (5,2.02)--(4.5,1.08);
  \draw[->,refineteal,line width=.8pt] (7,2.02)--(7.5,1.08);
  \node[anchor=west] at (8.7,1.35) {next scale $N'<N$};
  \node[anchor=north,align=center] at (6,-.05)
    {Every mesh interval has its own cloud.\\Only the highlighted intervals are retained in this example.};
\end{scope}
\end{tikzpicture}
\caption{The local cover and its use at successive scales, for one
fixed row $m$ and the same dynamical time interval $[0,h]$.
\textbf{(a)} A dot denotes a sampled \emph{endpoint pair} in the cloud
for $J$, not a point on row $m$. Each pair has a cover of all its
geodesic exits during $[0,h]$; only the portions of those covers in
$J$ are drawn. \textit{Poisson capture} ensures that their union covers $E_m\cap J$.
The geometric agreement with a sampled geodesic is shown separately
in Figure~\ref{fig:cutting}.
\textbf{(b)} Each oval denotes the cloud belonging to the interval
beneath it. Before any intervals are retained, the union bound
establishes the properties in (a) for all intervals at both scales,
including the grey ones. The cover for the highlighted parent $J$
then determines which intervals of the finer mesh are retained.
Their own clouds provide the next refinement. All intervals enter
the failure-probability bound, but only retained intervals enter the
count of the final cover. The numbers and placements are schematic;
no independence between the successive covers is used.}
\label{fig:cloudrefinement}
\end{figure}

\paragraph{\textbf{Following the covers from coarse to fine.}}
On $\cE^{\mathrm{ref}}_n$, for every parent interval $J$, the
construction gives a cover of $E_m\cap J$ with the bounds just
described. We now use these simultaneous bounds to build successively
finer covers of $E_m$. At the largest scale, we keep the mesh
intervals needed to cover $E_m$. Suppose that $J$ is one of the
intervals kept at some scale $N$. We use its cloud to obtain the
finer cover of $E_m\cap J$, and keep the intervals of the next
{deterministic mesh that meet this cover within the horizontal
range of the original endpoint pair.} These are the
children of $J$. We repeat this operation for each interval kept
at the current scale. Every point of $E_m$ remains covered, although
some retained intervals may contain no actual exit.

For example, if the first cover uses just two mesh intervals, only
those two intervals contribute their finer covers to the next count.
The estimates prepared for all other intervals were needed to ensure
that these two choices were allowed, wherever the first cover happened
to place them. They do not increase the number of children we count.
Figure~\ref{fig:cloudrefinement}(b) illustrates the same distinction
for one parent. Independence of the clouds from the environment is
used when proving the estimates for all possible intervals. Once
$\cE^{\mathrm{ref}}_n$ holds, each step simply uses a cover already
known to be valid; it requires neither independence between successive
covers nor a new probability estimate conditional on the intervals
chosen at the previous step.

\begingroup
\begingroup
\paragraph{\textbf{Choosing the scales: why the refinement accelerates.}}
The next scale is determined by the intervals just produced: we
choose $N_{j+1}$ so that its mesh width $N_{j+1}^{2/3}$ is
comparable to $D_{N_j}$. We can then cover each of those intervals
by a bounded number of intervals in the next mesh. The next cut
is at rows $m-N_{j+1}$ and $m+N_{j+1}$, around the same row $m$;
Figure~\ref{fig:scaleselection} shows these nested geodesic portions.
The time interval $[0,h]$ remains unchanged throughout.

The shorter portions provide progressively better information
because the stability tolerance decreases relative to their own
fluctuations. {Before the additive $\ell^2$ term in the stability tolerance $C(\ell\sqrt{Nh}+\ell^2)$ becomes relevant,}
the two scales to compare are $\sqrt{Nh}$ and $N^{1/3}$. Their ratio is $\sqrt{hN^{1/3}}$, which decreases
as $N$ decreases. Equivalently, the local critical time $N^{-1/3}$
becomes larger, so the same interval $[0,h]$ lies further below
that critical time. The near-maximiser condition therefore becomes
more restrictive relative to the fluctuations at the scale where
it is applied. This is what improves the cover inside a parent;
reusing only the original length-$n$ estimate would not do so.

One can see the resulting acceleration directly in the widths.
While the term $N_jh\ell^2$ dominates the additive $\ell^4$
correction, the ratio of the new width to the parent width is,
up to constants,
\[
 q_j=\frac{N_jh\ell^2}{N_j^{2/3}}
     =\ell^2hN_j^{1/3}.
\]
Ignoring constants in this explanation, the scale rule gives
$q_{j+1}=q_j^{3/2}$. Initially $N_0\asymp n$, so
$q_0\asymp\ell^{-2}$. The factors by which the widths shrink
therefore improve at successive generations: in the idealised
recursion they are $\ell^{-2},\ell^{-3},\ell^{-9/2},\ldots$.
Thus the gain in the logarithm of the width grows geometrically.
Only $O(\log\log n)$ generations are needed to reach a scale
that is a fixed power of {$\ell$}. If the additive $\ell^4$ term dominates
the $N_jh\ell^2$ term
earlier, the next scale is already at most a fixed power of
{$\ell$}, and we can stop. This acceleration is the key to retaining a subpolynomial cover
size when $h=n^{-1/3}(\log n)^{-4}$ is only a logarithmic factor
below the critical time scale. It leads to the quantitative
hitset bound in Theorem~\ref{thm:regional} and, through the
exceptional-time cover, the subpolynomial gauge in
Theorem~\ref{thm:main}. Step 1 of the proof of
Proposition~\ref{prop:generalpoint}, especially
\eqref{eq:acceleration}--\eqref{eq:generations}, makes these
statements precise with the constants and rounding included.

\paragraph{\textbf{Counting the final cover.}}
The number of children per retained parent is at most a fixed
power of {$\ell=\log n$}, uniformly over the scales. Combining this
with the number of generations gives a total multiplication
factor bounded by
\[
 \bigl((\log n)^{C_1}\bigr)^{C_2\log\log n}
 =\exp\{C_1C_2(\log\log n)^2\}.
\]
This is the source of the squared $\log\log n$ in the hitset
bound: each generation costs $O(\log\log n)$ in the logarithm
of the interval count, and there are $O(\log\log n)$ generations.
The initial cover contains only a fixed power of $\log n$
intervals, so it does not change this order.

At the final scale, every retained interval has length at most
a fixed power of $\log n$. We no longer test for near maximisers
inside it: we count every possible unit cell there. Geometric
regularity bounds the horizontal length traversed on each row
by another fixed power of $\log n$. Enlarging each terminal
interval by this amount therefore covers the whole horizontal
path portion whenever its exit lies in that interval. Both costs
are absorbed by increasing the constant in
$\exp\{C(\log\log n)^2\}$. On the complement of
$\cE_n^{\mathrm{ref}}$, the deterministic $O(n^2)$ bound on the
entire hitset, multiplied by its failure probability
$Ce^{-c(\log n)^2}$, contributes negligibly to the expectation.
This proves the one-row bound \eqref{eq:outlinepointrow}.

Finally, the first Poisson capture transfers this fixed-endpoint
estimate to the endpoint regions. Its cloud size and the number
of time subintervals both cost only fixed powers of $\log n$.
Summing over the $O(n)$ rows contributes the factor $n$, giving
the target $n\exp\{C(\log\log n)^2\}$. The same proof, with
$\ell$ left adjustable, yields the tail estimate in
Theorem~\ref{thm:regional}.
\par\endgroup
\par\endgroup
\begingroup

\paragraph{\textbf{From the quantitative hitset to a Hausdorff gauge.}}
Spatial averaging and the unions over directions and time intervals
leave an expected $\exp\{O((\log\log n)^2)\}$ selected intervals
of length $n^{-1/3}$. The gauge $H$ makes their expected
total cost summable over dyadic $n$. Section~\ref{sec:gaugeproof} gives the complete covering
argument.

\par\endgroup

\begingroup
\begin{remark}[Relation to the stability argument of Ganguly--Hammond]
\label{rem:ghstrategy}
The strategy of converting a dynamical event into a rare event in a
static environment already appears in \cite[Section 3.3]{GH24}.
There, passage-time stability is used to construct competitive
time-zero paths reflecting the geometry of a time-$t$ geodesic, and
static near-optimality estimates then control its overlap with the
time-zero geodesic. That argument uses several excursion scales and
deals with endpoints selected by the geodesic; see also
\cite[Sections 6 and 7]{GH24}.

The passage-time input in \cite[Proposition 4.2]{GH24}, for OU
dynamics, is a second-moment bound. The subsequent endpoint-uniform
estimates have polynomial error bounds; see
\cite[Theorem 6.1 and the proof of Proposition 6.2]{GH24}.
{By controlling higher moments of the rotation derivative, we obtain
the exponential increment bound in Proposition~\ref{prop:generalstability}
and its constrained version, Corollary~\ref{cor:increments}.
The same method also gives Gaussian increment bounds for OU dynamics,
as recorded in {Theorem~\ref{thm:oustability}}.} These
stronger tails allow simultaneous control of the passage values
needed throughout the hitset argument.

The geometric construction here is also substantially different. The overlap
argument in \cite{GH24} compares the geodesics at times zero and
one fixed time $t$. Its excursion decomposition treats that pair at
several scales; it does not require a recursively refined cover of
all locations visited as time ranges over an interval. Here, on the other hand, we must
count this entire set of visited locations, and the clusters that
appear inside each earlier cluster lead to such a hierarchy of covers.
Each refinement uses the stability tolerance and the near-maximum
estimate at the length scale of that cluster, with a further use of
independent Poisson representatives for the random cut endpoints.
This recursive cover, together with the stronger passage-time
estimate, supplies the additional ingredients for our hitset bounds.
{Unlike the arguments in \cite{GH24,B25}, the proof here does not
require a twin-peaks estimate: the static input in
Lemma~\ref{lem:packing} controls the number of separated near maximisers
at each scale.}
\end{remark}
\par\endgroup

\section{Concentration for passage maxima}\label{sec:concentration}

We prove Theorem~\ref{thm:stability} and the corresponding estimate
for passage times of staircases restricted to exit a specified
horizontal row through a fixed interval. The latter will be
used alongside the unrestricted estimate when a dynamic crossing is
compared with a static near maximiser.

The following result Proposition~\ref{prop:generalstability} is a generalised version of Theorem~\ref{thm:stability} which gives the passage-time
estimate for arbitrary deterministic endpoints
$p=(x_-,j_-)\le q=(x_+,j_+)$.
Every staircase between these endpoints has the same total horizontal
length $L=x_+-x_-$. This is the quantity that determines the variance
of its Brownian weight, and it will also control the change of the
optimized passage value. For the endpoint pairs at scale $N$ used
later, $L\asymp N$, so the leading scale is $\sqrt{N|t-s|}$.

\begin{proposition}[Passage-time increments for general endpoints]\label{prop:generalstability}
There is an absolute constant $C$ such that the following holds. Let
$p=(x_-,j_-)\le q=(x_+,j_+)\in\ZZ_{\RR}$ be deterministic and set
$L=x_+-x_-$. For all deterministic $s,t\in\RR$ and $k\ge2$,
\begin{equation}\label{eq:mainstability}
 \norm{T_p^{q,t}-T_p^{q,s}}_k
 \le C\left(\sqrt{kL|t-s|}+k\right).
\end{equation}
For $u\ge2$,
\begin{equation}\label{eq:generalstabilitytail}
 \PP\left(\left|T_p^{q,t}-T_p^{q,s}\right|
       >C\left(\sqrt{Lu|t-s|}+u\right)\right)
 \le2e^{-u}.
\end{equation}
At the level of second moments, the sharper estimate
\begin{equation}\label{eq:mainstabilitysecond}
 \norm{T_p^{q,t}-T_p^{q,s}}_2
 \le\frac\pi2\sqrt{L(1-e^{-|t-s|})}
\end{equation}
holds. 
\end{proposition}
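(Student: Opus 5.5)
The plan is to interpolate between the two environments by rotating the Brownian increments in the refreshed blocks, and to write the passage-time change as an integral of a derivative that is conditionally Gaussian.

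\textbf{Setup and rotation.} By stationarity take $s=0\le t$, and write $\mathfrak p=1-e^{-t}$. Let $R\subseteq\ZZ^2$ be the set of blocks whose clock rings in $(0,t]$. Blocks belong to $R$ independently, each with probability $\mathfrak p$, and $R$ is independent of the time-zero increments $X=(X^0_{i,j})$. Let $Y$ be the increments at time $t$ on the blocks of $R$; these are independent of $X$ and of $R$. Off $R$ the environment is unchanged. For $\theta\in[0,\pi/2]$ define
\[
 X^\theta_b=\cos\theta\,X_b+\sin\theta\,Y_b\quad(b\in R),\qquad X^\theta_b=X_b\quad(b\notin R),
\]
and the derivative field
\[
 \dot X^\theta_b=-\sin\theta\,X_b+\cos\theta\,Y_b\quad(b\in R),\qquad \dot X^\theta_b=0\quad(b\notin R).
\]
Let $T(\theta)$ be the passage time from $p$ to $q$ in the environment $X^\theta$. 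Then $T(0)=T_p^{q,0}$ and $T(\pi/2)=T_p^{q,t}$. By rotation invariance of Gaussian processes, conditional on $R$ the pair $(X^\theta,\dot X^\theta)$ consists of two independent product-Brownian fields on $R$. In particular, $X^\theta$ has the static law and is independent of $R$.

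\textbf{Derivative formula.} Every staircase weight is a finite linear functional of the increments, so it is smooth in $\theta$. On the compact class $\mathfrak S(p,q)$ these weights are uniformly Lipschitz in $\theta$, with Lipschitz constant bounded by the suprema of $|X|$ and $|Y|$ over the relevant finitely many blocks. Hence $T$ is Lipschitz, and
\[
 T(\pi/2)-T(0)=\int_0^{\pi/2}T'(\theta)\,d\theta .
\]
By Danskin's theorem, at every $\theta$ where the geodesic $\Gamma^\theta$ in $X^\theta$ is unique,
\[
 T'(\theta)=D_\theta:=\wgt_{\dot X^\theta}(\Gamma^\theta),
\]
the weight of $\Gamma^\theta$ computed with the derivative field. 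For each fixed $\theta$, $X^\theta$ has the static law, so uniqueness holds almost surely by \cite[Lemma B.1]{Ham19}. By Fubini, it then holds for almost every $\theta$, almost surely.

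I expect this step to be the main technical obstacle. It requires measurability of $\theta\mapsto\Gamma^\theta$ and the a.e.\ differentiability on a common full-measure event. I would handle both through the one-sided Danskin derivatives, which exist everywhere and equal the extreme values over the set of maximisers; these coincide wherever the geodesic is unique.

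\textbf{Conditional Gaussianity and moments.} Conditional on $(R,X^\theta)$, the path $\Gamma^\theta$ is determined, and $\dot X^\theta$ is an independent Brownian field on $R$. So $D_\theta$ is centred Gaussian with variance $V_\theta=\operatorname{Leb}\bigl(\Gamma^\theta\cap\bigcup_{b\in R}\text{cell}(b)\bigr)$, the horizontal length of $\Gamma^\theta$ lying in refreshed cells. Since $\Gamma^\theta$ depends only on $X^\theta$, which is independent of $R$, we can write $V_\theta=\sum_b w_b\,\ind_{b\in R}$. Here the $w_b\in[0,1]$ are the overlaps of $\Gamma^\theta$ with the cells $b$ and satisfy $\sum_b w_b=L$, and the indicators are independent Bernoulli$(\mathfrak p)$.

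This gives $\EE V_\theta=L\mathfrak p$, and Minkowski's inequality in $\theta$ yields
\[
 \norm{T_p^{q,t}-T_p^{q,0}}_2\le\int_0^{\pi/2}\norm{D_\theta}_2\,d\theta=\frac\pi2\sqrt{L\mathfrak p},
\]
which is \eqref{eq:mainstabilitysecond}.

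For $k\ge2$, the Gaussian moment bound gives $\norm{D_\theta}_k\le C\sqrt k\,\norm{V_\theta}_{k/2}^{1/2}$. Bernstein/Rosenthal moments for a weighted Bernoulli sum with weights at most one give $\norm{V_\theta}_{k/2}\le C(L\mathfrak p+k)$. Hence
\[
 \norm{D_\theta}_k\le C\bigl(\sqrt{kL\mathfrak p}+k\bigr)\le C\bigl(\sqrt{kLt}+k\bigr),
\]
using $\mathfrak p\le t$. Integrating over $\theta$ with Minkowski gives \eqref{eq:mainstability}.

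\textbf{Tail bound.} Finally, \eqref{eq:generalstabilitytail} follows from Markov's inequality with $k=u$: the probability that $|T_p^{q,t}-T_p^{q,0}|$ exceeds $e\cdot C(\sqrt{uLt}+u)$ is at most $e^{-u}$, and the constants are then adjusted to the stated form $2e^{-u}$.
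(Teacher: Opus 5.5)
Your proposal is correct and is essentially the paper's argument. Both rotate the increments in refreshed blocks, identify the derivative as conditionally Gaussian with variance equal to the refreshed horizontal length of a maximiser independent of the refresh indicators, bound moments of this weighted Bernoulli sum, and finish with Minkowski's integral inequality and Markov's inequality at order $k=u$.

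The only difference is how the maximiser is handled. You use a.s.\ uniqueness plus Danskin and Fubini; the paper instead proves the bound for finite staircase classes with a fixed tie-breaking enumeration, so no uniqueness or measurability issues arise. It then passes to the full compact class by dense finite approximation and Fatou's lemma, which also gives the exit-constrained version used later.
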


{The proof uses a Gaussian rotation argument. We refer the reader to \cite[Theorem 2.2]{Pis86} as the rotation step in our argument is along the same lines. Compared with the second-moment arguments in \cite{Cha14,GH24}, the additional step is to control higher moments of the derivative by estimating its random conditional variance under discrete resampling. To estimate this variance, we condition on the geodesic in the current environment and use its independence from the refresh indicators.}

In \eqref{eq:mainstabilitysecond} above, $1-e^{-|t-s|}$ is the probability that a given block
is refreshed between the two times. We retain the exponential in the sharper
second-moment bound, and the other statements use the simpler upper
bound $1-e^{-|t-s|}\le |t-s|$. We shall first prove a version of Proposition \ref{prop:generalstability} for maxima over finitely many staircases, and a compact approximation will then give the unrestricted and
exit-constrained passage values.

\subsection{Rotating the Brownian increment processes}

For deterministic endpoints $p=(x_-,j_-)\le q=(x_+,j_+)$, we first
express a staircase's weight as a sum of its contributions from
individual unit blocks.

Every staircase from $p$ to $q$ is contained in the rectangle
\[
 \mathcal R(p,q)=[x_-,x_+]\times[j_-,j_+].
\]
The finite set of unit blocks that can contribute to its weight is
\begin{equation}\label{eq:endpointblocks}
 \cB(p,q)=\{(i,j)\in\ZZ^2:j_-\le j\le j_+,\quad
                         [i,i+1]\cap[x_-,x_+]\ne\emptyset\}.
\end{equation}
For $b=(i,j)\in\cB(p,q)$ and $\xi\in\mathfrak S(p,q)$, define
\begin{equation}\label{eq:blockcoordinates}
 \begin{split}
 a_b(\xi)&=\min\{1,\max\{0,\xi(j-1)-i\}\},\\
 d_b(\xi)&=\min\{1,\max\{0,\xi(j)-i\}\},\qquad
 \ell_b(\xi)=d_b(\xi)-a_b(\xi).
 \end{split}
\end{equation}
When $a_b(\xi)<d_b(\xi)$, the part of $\xi$ in block $b$ is the
horizontal segment from $(i+a_b(\xi),j)$ to $(i+d_b(\xi),j)$.
When they coincide, the block contributes zero weight. We refer the reader to Figure~\ref{fig:blockcoordinates} which shows these coordinates within a single block. In particular,
\begin{equation}\label{eq:coeff}
 0\le\ell_b(\xi)\le1,\qquad
 \sum_{b\in\cB(p,q)}\ell_b(\xi)=x_+-x_-.
\end{equation}
For each $b\in\cB(p,q)$, let $X_b\colon[0,1]\to\RR$ be a continuous
function with $X_b(0)=0$. In the BLPP environment, $X_b(t)$ is the
Brownian increment $W_j(i+t)-W_j(i)$ when $b=(i,j)$. For any such
collection $X=(X_b)_{b\in\cB(p,q)}$, define
\begin{equation}\label{eq:blockweight}
 \wgt^X(\xi)=\sum_{b\in\cB(p,q)}
       \bigl(X_b(d_b(\xi))-X_b(a_b(\xi))\bigr).
\end{equation}
For the Brownian increments $X_{i,j}(u)=W_j(i+u)-W_j(i)$, summing
the block contributions along each row gives
\[
 \wgt^X(\xi)
 =\sum_{j=j_-}^{j_+}
    \bigl(W_j(\xi(j))-W_j(\xi(j-1))\bigr)
 =\wgt(\xi).
\]
{Thus,} \eqref{eq:blockweight} agrees with the staircase weight from
Section~\ref{sec:dynamics}. This notation will be useful to denote the weight of a staircase after some of the blocks have been replaced.

\begin{figure}[tbp]
\centering
\begin{tikzpicture}[x=8cm,y=1cm]
  \fill[clusterblue!10] (0,-.09) rectangle (1,.09);
  \draw[black!50] (0,0)--(1,0);
  \draw[black!50] (0,-.13)--(0,.13);
  \draw[black!50] (1,-.13)--(1,.13);
  \draw[refineteal,line width=1.5pt] (.2,-.4)--(.2,0)--(.75,0)--(.75,.4);
  \fill[refineteal] (.2,0) circle (1.6pt);
  \fill[refineteal] (.75,0) circle (1.6pt);
  \node[above=5pt] at (.2,0) {$i+a_b(\xi)$};
  \node[below=5pt] at (.75,0) {$i+d_b(\xi)$};
  \node[above=4pt] at (0,0) {$i$};
  \node[above=4pt] at (1,0) {$i+1$};
  \node[anchor=east] at (-.025,0) {row $j$};
  \draw[<->] (.2,-.7)--node[below] {$\ell_b(\xi)=d_b(\xi)-a_b(\xi)$}(.75,-.7);
  \draw[->,black!55] (0,-1.55)--(1.04,-1.55);
  \foreach \x/\lab in {0/0,.2/{a_b(\xi)},.75/{d_b(\xi)},1/1}{
    \draw (\x,-1.61)--(\x,-1.49);
    \node[below=3pt] at (\x,-1.61) {$\lab$};
  }
  \node[anchor=east] at (-.025,-1.55) {local coordinate};
  \draw[guide] (.2,-.9)--(.2,-1.45);
  \draw[guide] (.75,-.9)--(.75,-1.45);
\end{tikzpicture}
\caption{The portion of a staircase in the unit block $b=(i,j)$.
The coordinates $a_b(\xi),d_b(\xi)\in[0,1]$ are measured from the
left endpoint $i$ of the block. Its weight contribution is
$X_b(d_b(\xi))-X_b(a_b(\xi))$. If the staircase enters row $j$
to the left of the block, then $a_b=0$; if it exits to the right,
then $d_b=1$. A block traversed completely has $\ell_b=1$.
The drawing shows the case where both endpoints lie inside the block.}
\label{fig:blockcoordinates}
\end{figure}

Fix a deterministic staircase $\xi$ from $p$ to $q$. In block $b$,
it uses a horizontal length $\ell_b(\xi)$, and these lengths sum to
$L=x_+-x_-$. Let $S_b$ indicate whether block $b$ is refreshed.
If the indicators are independent Bernoulli variables with parameter
$\rho$, then
\[
 \EE\left[\sum_{b\in\cB(p,q)}S_b\ell_b(\xi)\right]=\rho L.
\]
The sum is over all rows traversed by the staircase. It measures only
the horizontal portions lying in refreshed blocks; see
Figure~\ref{fig:refreshedlength}.

For an optimized passage value, the maximizing staircase is random
and changes when the environment changes. The useful feature of the
interpolation below is that, at each fixed angle, its Brownian
environment has the same distribution for every choice of the
indicators $(S_b)$. Consequently, after revealing that environment
and its selected maximizing staircase, the indicators are still
independent Bernoulli variables. This will allow the same computation
of the mean horizontal length in refreshed blocks, together with an
exponential-moment bound, for the selected staircase.

\begin{figure}[tbp]
\centering
\begin{tikzpicture}[x=1.7cm,y=1.1cm]
  \foreach \j in {0,1,2}{
    \draw[guide] (-.05,\j)--(5.15,\j);
    \node[anchor=east] at (-.1,\j) {$\j$};
    \foreach \i in {0,1,2,3,4}{
      \draw[black!25] (\i,\j-.11) rectangle (\i+1,\j+.11);
    }
  }
  \foreach \i/\j in {0/0,2/1,3/1,4/2}{
    \fill[witnessgold!25] (\i,\j-.11) rectangle (\i+1,\j+.11);
  }
  \draw[refineteal,line width=1.3pt]
    (.3,0)--(1.2,0)--(1.2,1)--(3.6,1)--(3.6,2)--(4.8,2);
  \draw[witnessgold,line width=2.4pt] (.3,0)--(1,0);
  \draw[witnessgold,line width=2.4pt] (2,1)--(3.6,1);
  \draw[witnessgold,line width=2.4pt] (4,2)--(4.8,2);
  \fill (.3,0) circle (1.5pt) node[below=4pt] {$p$};
  \fill (4.8,2) circle (1.5pt) node[above=4pt] {$q$};
  \node[anchor=west,refineteal] at (1.28,.5) {$\xi_\theta$};
  \draw[<->] (2,.62)--node[below] {$\ell_{(2,1)}=1$}(3,.62);
  \draw[<->] (.3,-.75)--node[below] {$L=x_+-x_-$}(4.8,-.75);
  \node[anchor=west,font=\footnotesize] at (.3,2.75)
    {Shaded blocks have $S_b=1$; gold portions contribute to $V_\theta$.};
\end{tikzpicture}
\caption{The horizontal length in refreshed blocks. A block contributes
only the length of the portion of the selected staircase inside it,
which can be smaller than one. {Thus,} $V_\theta=\sum_bS_b\ell_b(\xi_\theta)$,
whereas $\sum_b\ell_b(\xi_\theta)=L$. Conditional on the current
environment and $(S_b)$, the derivative $D_\theta$ has variance
$V_\theta$. The configuration shown is schematic.}
\label{fig:refreshedlength}
\end{figure}

We will use the following elementary bound when converting an
exponential tail into moment estimates.

\begin{lemma}[An elementary gamma-function bound]\label{lem:gammamoment}
Let $\Gamma(z)=\int_0^\infty x^{z-1}e^{-x}\,dx$, $z>0$, be the
Euler gamma function. For every $v\ge1$,
\[
 \Gamma(v+1)^{1/v}\le4v.
\]
\end{lemma}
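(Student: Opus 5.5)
The plan is to bound $\Gamma(v+1)$ directly by an elementary estimate and then take the $v$-th root. The cleanest route uses $x^{v}\le v^{v}e^{x-v}$ for all $x>0$. This follows because $\log(x^ve^{-x})$ is maximised at $x=v$.

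The first attempt splits the integral. Write
\[
 \Gamma(v+1)=\int_0^\infty x^ve^{-x}\,dx
 =\int_0^\infty x^ve^{-x/2}\,e^{-x/2}\,dx .
\]
Maximising $x^ve^{-x/2}$ over $x>0$ gives the value $(2v)^ve^{-v}$, attained at $x=2v$. Since $\int_0^\infty e^{-x/2}\,dx=2$, we obtain
\[
 \Gamma(v+1)\le 2(2v)^ve^{-v}.
\]

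Taking $v$-th roots yields
\[
 \Gamma(v+1)^{1/v}\le 2^{1/v}\cdot\frac{2v}{e}.
\]
For $v\ge1$ we have $2^{1/v}\le2$. Hence the right side is at most $4v/e$, which is at most $4v$. This is slightly stronger than the claim.

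No step is a genuine obstacle. The only point to check is that the constant is uniform down to $v=1$, where the factor $2^{1/v}$ is largest. There the direct value $\Gamma(2)=1\le4$ confirms the bound, and the general estimate already gives it.
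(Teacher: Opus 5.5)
Your proof is correct and matches the paper's argument: split $e^{-x}=e^{-x/2}e^{-x/2}$, bound $x^ve^{-x/2}$ by its maximum $(2v/e)^v$ at $x=2v$, integrate the remaining $e^{-x/2}$ to get $2$, and use $2^{1/v}\le2$. The opening remark $x^v\le v^ve^{x-v}$ plays no role, and on its own it would not give a finite bound, so you can drop it.
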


\begin{proof}
The function $x\mapsto x^v e^{-x/2}$ attains its maximum at $x=2v$.
Consequently,
\[
 \Gamma(v+1)
 =\int_0^\infty (x^v e^{-x/2})e^{-x/2}\,dx
 \le (2v/e)^v\int_0^\infty e^{-x/2}\,dx
 =2(2v/e)^v.
\]
Taking $v$th roots and using $2^{1/v}\le2$ proves the bound.
\end{proof}

For the application to passage times over $[0,h]$, keep in mind
$L\asymp n$ and $\rho=1-e^{-h}\sim h$,
with {$h=n^{-1/3}\ell^{-4}$}. At a shorter
geometric scale $N$, {the same argument has $L\asymp N$, while the time interval $[0,h]$ remains unchanged}. For a single increment of duration $s\le h$, the
refresh probability is instead $1-e^{-s}\le h$.
The following lemma carries out the Gaussian rotation argument
for a finite collection of staircases. Its constants are independent
of the number of staircases, {allowing us to obtain the corresponding
estimates for unrestricted and restricted passage times by compact approximation}.

\begin{samepage}
\begin{lemma}[A Gaussian rotation estimate]\label{lem:gaussian}
Fix $p=(x_-,j_-)\le q=(x_+,j_+)\in\ZZ_{\RR}$, put $L=x_+-x_-$,
and let $\cA\subseteq\mathfrak S(p,q)$ be a finite, nonempty and
deterministic set of staircases. Write $\cB=\cB(p,q)$. Let
$(X_b)_{b\in\cB}$ and $(\widetilde X_b)_{b\in\cB}$ be independent
families of {i.i.d.\ standard Brownian motions on $[0,1]$}.
Independently of these families, let $(S_b)_{b\in\cB}$ be {i.i.d.}
Bernoulli variables with parameter $\rho\in[0,1]$. Define
\begin{equation}\label{eq:replacementcoupling}
 X'_b=(1-S_b)X_b+S_b\widetilde X_b,\qquad
 F=\max_{\xi\in\cA}\wgt^X(\xi),\qquad
 F'=\max_{\xi\in\cA}\wgt^{X'}(\xi).
\end{equation}
There is an absolute constant $C$, independent in particular of
$|\cA|$, such that, for every $k\ge2$,
\begin{equation}\label{eq:lp}
 \norm{F'-F}_k\le C(\sqrt{k\rho L}+k).
\end{equation}
Moreover, $\norm{F'-F}_2\le(\pi/2)\sqrt{\rho L}$, and, for $u\ge2$,
\begin{equation}\label{eq:tail}
 \PP\bigl(|F'-F|>C(\sqrt{\rho Lu}+u)\bigr)\le2e^{-u}.
\end{equation}
\end{lemma}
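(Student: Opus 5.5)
Following the remark before the lemma, we interpolate by rotation along the lines of \cite[Theorem 2.2]{Pis86}. For $\theta\in[0,\pi/2]$ set
\[
 X_b^\theta=(1-S_b)X_b+S_b\bigl(\cos\theta\,X_b+\sin\theta\,\widetilde X_b\bigr),
 \qquad F_\theta=\max_{\xi\in\cA}\wgt^{X^\theta}(\xi).
\]
Then $F_0=F$ and $F_{\pi/2}=F'$. For each fixed $\theta$ and each realisation of $(S_b)$, the family $(X^\theta_b)$ consists of i.i.d.\ standard Brownian motions, so $X^\theta$ is independent of $(S_b)$. The companion process $\dot X^\theta_b=S_b(-\sin\theta\,X_b+\cos\theta\,\widetilde X_b)$ also equals $S_b$ times a Brownian motion $Y_b^\theta$. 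Conditional on $(S_b)$, the pair $(X^\theta,Y^\theta)$ consists of independent families of Brownian motions. Since $\cA$ is finite, $F_\theta$ is Lipschitz and piecewise equal to $\wgt^{X^\theta}(\xi_\theta)$, where $\xi_\theta$ is the almost surely unique maximiser for a.e.\ $\theta$. Thus
\[
 F'-F=\int_0^{\pi/2}D_\theta\,d\theta,\qquad
 D_\theta=\sum_bS_b\bigl(Y_b^\theta(d_b(\xi_\theta))-Y_b^\theta(a_b(\xi_\theta))\bigr).
\]
By Minkowski (Jensen) it suffices to bound $\norm{D_\theta}_k$ uniformly in $\theta$, and the bound is independent of $|\cA|$ because only the selected staircase enters.

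The key step is to compute the law of $D_\theta$ conditionally. The staircase $\xi_\theta$ is a function of $X^\theta$ alone. Conditional on $(X^\theta,(S_b))$, the increments of $Y^\theta$ over the disjoint intervals $[a_b,d_b]$ in distinct blocks are independent centred Gaussians. Hence $D_\theta$ is conditionally centred Gaussian with variance
\[
 V_\theta=\sum_bS_b\ell_b(\xi_\theta).
\]
Since $(S_b)$ is independent of $X^\theta$, conditional on $X^\theta$ the variable $V_\theta$ is a sum of independent Bernoulli$(\rho)$ variables weighted by deterministic $\ell_b\in[0,1]$, with $\sum_b\ell_b=L$ by \eqref{eq:coeff}. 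This gives $\EE V_\theta=\rho L$, so $\norm{D_\theta}_2\le\sqrt{\rho L}$ and the $L^2$ bound $(\pi/2)\sqrt{\rho L}$ follows. The same computation with refresh probability $1-e^{-|t-s|}$ yields \eqref{eq:mainstabilitysecond}.

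For higher moments, the Gaussian moment bound gives $\norm{D_\theta}_k\le C\sqrt k\,\norm{V_\theta}_{k/2}^{1/2}$. It remains to bound $\norm{V_\theta}_{k/2}\le C(\rho L+k)$. This is the main technical point. Because the weights $\ell_b$ are at most one, Bernstein's inequality, or directly the exponential moment bound
\[
 \EE e^{\lambda V}\le\exp\{\rho L(e^\lambda-1)\},
\]
gives $\PP(V_\theta>2e\rho L+x)\le e^{-x}$. Integrating this tail and applying Lemma~\ref{lem:gammamoment} gives $\norm{V_\theta}_{k/2}\le C(\rho L+k)$. Therefore
\[
 \norm{F'-F}_k\le C\bigl(\sqrt{k\rho L}+k\bigr),
\]
which is \eqref{eq:lp}. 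The tail bound \eqref{eq:tail} follows from Markov's inequality applied to the $k$th moment with $k=u$: since $C$ is absolute, choosing the constant in the threshold large makes the ratio at most $e^{-1}$, and $(e^{-1})^u\le 2e^{-u}$.

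The main obstacle is justifying the conditional Gaussian structure: the maximiser must be a measurable function of $X^\theta$ alone, and the derivative formula must hold almost everywhere in $\theta$. For this we use the finiteness of $\cA$, together with almost sure uniqueness of the maximiser for all but finitely many $\theta$ via Gaussian non-degeneracy. If ties cause trouble, the fallback is to use one-sided derivatives and a measurable tie-breaking rule depending only on $X^\theta$.
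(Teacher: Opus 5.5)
Your proposal is correct and follows essentially the same route as the paper. Both use the rotation interpolation, the conditional Gaussian law of the derivative with variance $V_\theta=\sum_b S_b\ell_b(\xi_\theta)$, exponential-moment control of $V_\theta$ via the independence of $(S_b)$ from $X^\theta$ at each fixed angle, and Markov's inequality at order $k=u$. The one difference is the treatment of ties: you argue that maximisers are almost surely unique away from finitely many angles, whereas the paper fixes an enumeration tie-break and observes that every maximising index has derivative equal to $F_\theta'$ wherever $F$ is differentiable, which avoids the non-degeneracy argument.
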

\end{samepage}

\begin{proof}
Fix a deterministic enumeration
\[
 \cA=\{\xi_1,\ldots,\xi_M\}.
\]
We interpolate between the original and refreshed environments, using
this enumeration to select a maximizing staircase whenever there is
a tie. We then estimate the derivative of the optimized weight.

\paragraph{\textbf{An interpolation with the same marginal environment.}}
For $0\le\theta\le\pi/2$ and $b\in\cB$, set
\begin{equation}\label{eq:rotation}
 X_b(\theta)=
 \begin{cases}
 X_b,&S_b=0,\\
 \cos\theta\,X_b+\sin\theta\,\widetilde X_b,&S_b=1,
 \end{cases}
 \qquad F_\theta=\max_{\xi\in\cA}\wgt^{X(\theta)}(\xi).
\end{equation}
At angle zero this is the original environment $X$, and at angle
$\pi/2$, it is the refreshed environment $X'$. Thus, in this coupling,
\[
 F_0=F,\qquad F_{\pi/2}=F'.
\]
The interpolation gives a continuous parameter with respect to which
we can differentiate, even though the original update dynamics has jumps.

For each fixed $\theta$, conditional on any realization of
$(S_b)_{b\in\cB}$, the processes $(X_b(\theta))_{b\in\cB}$ are
independent standard Brownian motions: an unrefreshed block uses
$X_b$, while a refreshed block uses a linear combination with
$\cos^2\theta+\sin^2\theta=1$. Since this conditional law does not
depend on the indicators,
\begin{equation}\label{eq:independence}
 (X_b(\theta))_{b\in\cB}\text{ is independent of }
 (S_b)_{b\in\cB}\qquad\text{for each fixed }\theta.
\end{equation}
Using the enumeration fixed at the start, define
\begin{equation}\label{eq:selectedmaximiser}
 i_\theta=\min\{i:\wgt^{X(\theta)}(\xi_i)=F_\theta\},
 \qquad \xi_\theta=\xi_{i_\theta}.
\end{equation}
The selected maximizer $\xi_\theta$ is determined by $X(\theta)$
alone. {Hence,} it is independent of $(S_b)$ at each fixed angle $\theta$ as well.

\paragraph{\textbf{Differentiation and the moment to be estimated.}}
For $b\in\cB$, define
\[
 Y_b(\theta)=-\sin\theta\,X_b+\cos\theta\,\widetilde X_b.
\]
Recall from \eqref{eq:blockcoordinates} that $a_b(\xi),d_b(\xi)$
are the coordinates, measured from the left edge of the unit block,
of the portion used by $\xi$, and that
$\ell_b(\xi)=d_b(\xi)-a_b(\xi)$. For each fixed $\xi_i$,
$g_i(\theta)=\wgt^{X(\theta)}(\xi_i)$ is continuously differentiable:
it is a finite sum of fixed Brownian increments multiplied by
$1$, $\cos\theta$, or $\sin\theta$. Its derivative is
\[
 g_i'(\theta)=\sum_{b\in\cB}S_b
 \bigl(Y_b(\theta)(d_b(\xi_i))-Y_b(\theta)(a_b(\xi_i))\bigr).
\]
Almost surely, $\max_i\sup_\theta|g_i'(\theta)|<\infty$.
{Consequently,} $F_\theta=\max_i g_i(\theta)$ is Lipschitz and
absolutely continuous, and is differentiable at almost every angle.

At an interior angle where $F$ is differentiable, every index $i$
with $g_i(\theta)=F_\theta$ satisfies $F_\theta'=g_i'(\theta)$.
Indeed, $\beta\mapsto F_\beta-g_i(\beta)$ has a minimum of zero
at $\theta$, so its derivative there vanishes. Define at every angle
\begin{equation}\label{eq:rotationderivative}
 D_\theta=\sum_{b\in\cB}S_b
 \bigl(Y_b(\theta)(d_b(\xi_\theta))
             -Y_b(\theta)(a_b(\xi_\theta))\bigr).
\end{equation}
Thus, almost surely, $D_\theta=F_\theta'$ for Lebesgue-almost every
$\theta$. Note that this conclusion does not require uniqueness
of the maximizing staircase simultaneously for every value of
$\theta$. The almost-sure absolute continuity of
$\theta\mapsto F_\theta$ yields
\begin{equation}\label{eq:rotationintegral}
 F_{\pi/2}-F_0=\int_0^{\pi/2}D_\theta\,d\theta
 \qquad\text{almost surely}.
\end{equation}
Therefore, by Minkowski's integral inequality, we have
\begin{equation}\label{eq:rotationminkowski}
 \norm{F'-F}_k\le\int_0^{\pi/2}\norm{D_\theta}_k\,d\theta.
\end{equation}
It remains to bound $\norm{D_\theta}_k$ uniformly over deterministic
$\theta\in[0,\pi/2]$.

\paragraph{\textbf{The conditional Gaussian law.}}
Let
\[
 \mathscr X_\theta=\sigma(X_b(\theta):b\in\cB),\qquad
 \mathscr S=\sigma(S_b:b\in\cB).
\]
Conditional on $\mathscr S$, consider a block with $S_b=1$.
The pair of processes $(X_b(\theta),Y_b(\theta))$ is jointly Gaussian,
each component is standard Brownian motion, and for $u,v\in[0,1]$,
\[
 \Cov\bigl(Y_b(\theta)(u),X_b(\theta)(v)\mid\mathscr S\bigr)
 =(-\sin\theta\cos\theta+\cos\theta\sin\theta)\min(u,v)=0.
\]
The two processes on this block are therefore independent. Moreover,
the pairs $(X_b,\widetilde X_b)$ used on distinct blocks are
independent. It follows that all the $Y_b(\theta)$ with $S_b=1$
are independent of each other and of the entire collection
$(X_b(\theta))_{b\in\cB}$, conditional on $\mathscr S$.

Conditioning further on $\mathscr X_\theta$ fixes $\xi_\theta$ and
the endpoints $a_b(\xi_\theta),d_b(\xi_\theta)$. Each summand in
$D_\theta$ with $S_b=1$ is now a centered Gaussian variable with
variance $\ell_b(\xi_\theta)$, and these summands are conditionally
independent. With
$V_\theta=\sum_{b\in\cB}S_b\ell_b(\xi_\theta)$, we have
\begin{equation}\label{eq:variance}
 D_\theta\mid\sigma(\mathscr X_\theta\cup\mathscr S)
 \sim\cN(0,V_\theta).
\end{equation}
The variance is the total horizontal length of $\xi_\theta$ in
refreshed blocks. If $Z$ is a standard normal variable, taking
conditional moments and then expectations gives
\begin{equation}\label{eq:gaussianmixturemoment}
 \norm{D_\theta}_k
 =\norm Z_k\norm{V_\theta}_{k/2}^{1/2}
 \le C\sqrt{k}\,\norm{V_\theta}_{k/2}^{1/2}.
\end{equation}

\paragraph{\textbf{Moments of the horizontal length in refreshed blocks.}}
We now condition only on $\mathscr X_\theta$, leaving the refresh
indicators $(S_b)_{b\in\cB}$ unrevealed. This reveals the
selected maximizing staircase $\xi_\theta$, so the numbers
$w_b=\ell_b(\xi_\theta)$ are fixed. By \eqref{eq:coeff},
\[
 0\le w_b\le1,\qquad \sum_{b\in\cB}w_b=L.
\]
The sum is over all blocks, including those that are not refreshed.
Crucially, \eqref{eq:independence} says that after this conditioning
the variables $(S_b)_{b\in\cB}$ are still independent Bernoulli
variables of parameter $\rho$. {Thus,}
\begin{align*}
 \EE[e^{V_\theta}\mid\mathscr X_\theta]
 &=\EE\left[\exp\left(\sum_{b\in\cB}S_bw_b\right)
                      \biggm|\mathscr X_\theta\right]\\
 &=\prod_{b\in\cB}\EE[e^{S_bw_b}\mid\mathscr X_\theta]
   =\prod_{b\in\cB}(1-\rho+\rho e^{w_b})\\
 &\le\exp\left(\rho\sum_{b\in\cB}(e^{w_b}-1)\right)
 \le\exp((e-1)\rho L).
\end{align*}
For each factor, $S_b=0$ with probability $1-\rho$ and $S_b=1$
with probability $\rho$, which gives $1-\rho+\rho e^{w_b}$.
The first inequality uses $1+x\le e^x$, and the last uses
$e^w-1\le(e-1)w$ for $0\le w\le1$, together with
$\sum_{b\in\cB}w_b=L$.

Applying conditional Markov's inequality to $e^{V_\theta}$ gives,
for every $u\ge0$,
\begin{equation}\label{eq:refreshedtail}
 \begin{aligned}
 \PP(V_\theta>(e-1)\rho L+u\mid\mathscr X_\theta)
 &\le e^{-(e-1)\rho L-u}\EE[e^{V_\theta}\mid\mathscr X_\theta]\\
 &\le e^{-u}.
 \end{aligned}
\end{equation}
Put $W_\theta=\max\{V_\theta-(e-1)\rho L,0\}$.
Then $V_\theta\le(e-1)\rho L+W_\theta$, and for every $v\ge1$,
integration of \eqref{eq:refreshedtail} yields
\[
 \EE[W_\theta^v\mid\mathscr X_\theta]
 =v\int_0^\infty u^{v-1}
               \PP(W_\theta>u\mid\mathscr X_\theta)\,du
 \le v\int_0^\infty u^{v-1}e^{-u}\,du=\Gamma(v+1).
\]
Here $\Gamma$ denotes the usual Euler gamma function. The elementary
bound $\Gamma(v+1)^{1/v}\le Cv$ is proved in
Lemma~\ref{lem:gammamoment}. Taking expectations and using the
triangle inequality in $L^v$, we obtain\footnote{Here $\rho L$ is
the mean horizontal length in refreshed blocks. The additive $v$
accounts in high moments for rare configurations in which a much
larger portion of the selected maximizing staircase lies in refreshed
blocks, using the exponential tail in
\eqref{eq:refreshedtail}. In the corresponding rotation argument for the OU dynamics, each driving Brownian motion would be rotated with an independent copy through the same angle. Conditional on the current environment, the derivative would be Gaussian with variance exactly $L$, and the additional randomness arising from the choice of refreshed blocks in the discrete dynamics would therefore be absent.}
\begin{equation}\label{eq:refreshedmoments}
 \norm{V_\theta}_v\le(e-1)\rho L+Cv\le C(\rho L+v).
\end{equation}
Substituting $v=k/2$ in \eqref{eq:gaussianmixturemoment} gives
\begin{equation}\label{eq:derivativemoments}
 \norm{D_\theta}_k
 \le C\sqrt{k}\sqrt{\rho L+k}
 \le C(\sqrt{k\rho L}+k).
\end{equation}
This bound is uniform in $\theta$. {Hence,} \eqref{eq:rotationminkowski}
gives, after absorbing the factor $\pi/2$ into $C$,
\[
 \norm{F'-F}_k\le C(\sqrt{k\rho L}+k),
\]
which is \eqref{eq:lp}.

For the sharper second-moment bound, recall from
\eqref{eq:independence} that the refresh indicators $(S_b)_{b\in\cB}$
are independent of the current environment $\mathscr X_\theta$.
Since $\xi_\theta$ is determined by that environment, conditioning
on $\mathscr X_\theta$ fixes its block lengths and leaves
$\EE[S_b\mid\mathscr X_\theta]=\rho$. {Thus,} \eqref{eq:coeff} and
the conditional Gaussian law \eqref{eq:variance} give
\[
 \EE[V_\theta\mid\mathscr X_\theta]
   =\rho\sum_{b\in\cB}\ell_b(\xi_\theta)=\rho L,\qquad
 \norm{D_\theta}_2^2=\EE V_\theta=\rho L.
\]
Applying \eqref{eq:rotationminkowski} with $k=2$ proves
$\norm{F'-F}_2\le(\pi/2)\sqrt{\rho L}$.
{Finally, denote the constant $C$ in \eqref{eq:lp} by $C_1$.} For $u\ge2$,
Markov's inequality with moment order $k=u$ gives
\[
 \PP\bigl(|F'-F|>eC_1(\sqrt{u\rho L}+u)\bigr)
 \le\frac{\EE|F'-F|^u}
              {[eC_1(\sqrt{u\rho L}+u)]^u}
 \le e^{-u}.
\]
Increasing the absolute constant gives \eqref{eq:tail}.
\end{proof}

\subsection{From finitely many staircases to passage times}

An unrestricted BLPP passage time is a maximum over the infinite collection
$\mathfrak S(p,q)$, whereas Lemma~\ref{lem:gaussian} concerns a finite
collection. We pass between them by approximating a compact class in
the exit-coordinate metric of Section~\ref{sec:pathclasses} by
increasing finite subsets. Continuity of the weight makes their
maxima converge to the desired passage value. This also allows us to treat restricted passage times.

\begin{samepage}
\begin{corollary}[Constrained BLPP passage increments]\label{cor:increments}
Let $p=(x_-,j_-)\le q=(x_+,j_+)\in\ZZ_{\RR}$ be deterministic and set
$L=x_+-x_-$. Let
$\cA\subseteq\mathfrak S(p,q)$ be nonempty, compact and deterministic,
and put $F^t=\max_{\xi\in\cA}\wgt^t(\xi)$. For $k\ge2$ and $s\ge0$,
\[
 \norm{F^s-F^0}_k\le C(\sqrt{kLs}+k).
\]
For $u\ge2$,
\[
 \PP\bigl(|F^s-F^0|>C(\sqrt{Lsu}+u)\bigr)\le2e^{-u}.
\]
Also $\norm{F^s-F^0}_2\le(\pi/2)\sqrt{L(1-e^{-s})}$.
These estimates apply to the unrestricted value $F^t=T_p^{q,t}$
and to the restricted value
\[
 F_I^t=\max_{x\in I}Z_p^{q,\bullet,t}(x,m),
\]
for deterministic $j_-\le m<j_+$ and nonempty closed
$I\subseteq[x_-,x_+]$.
\end{corollary}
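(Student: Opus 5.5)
The plan is to deduce the corollary from Lemma~\ref{lem:gaussian} by two reductions: first identify the dynamical pair $(F^0,F^s)$ with the coupling $(F,F')$ of that lemma, and then pass from finite classes $\cA$ to compact ones by approximation.

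\textbf{Step 1: the coupling.} Let $\cB=\cB(p,q)$. Set $X_b=X_b^0$ and let $S_b$ indicate that the clock of block $b$ rings in $(0,s]$. Then the $S_b$ are i.i.d.\ Bernoulli with $\rho=1-e^{-s}$, independent of the initial samples. Let $\widetilde X_b$ be the last replacement sample in $(0,s]$ when $S_b=1$, and an independent fresh Brownian motion otherwise. Conditional on the clocks, the families $(X_b)$ and $(\widetilde X_b)$ are then independent i.i.d.\ Brownian families, as in the proof of Lemma~\ref{lem:clocks}. Since this conditional law does not depend on the clocks, the $\widetilde X$ family is also independent of $(S_b)$. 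Moreover $X_b^s=(1-S_b)X_b+S_b\widetilde X_b$. Hence, for any finite deterministic $\cA$, the pair $(F^0,F^s)$ has exactly the law of $(F,F')$ in \eqref{eq:replacementcoupling}. Lemma~\ref{lem:gaussian} then gives all three bounds, using $\rho\le s$ for the $L^k$ and tail bounds and keeping $\rho=1-e^{-s}$ for the second-moment bound.

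\textbf{Step 2: compact approximation.} Let $\cA$ be compact in $\mathfrak S(p,q)$. Choose finite subsets $\cA_1\subseteq\cA_2\subseteq\cdots\subseteq\cA$ whose union is dense in $\cA$, for example a nested sequence of $2^{-r}$-nets. The weight is continuous in the exit coordinates, and on the almost sure event that all finitely many relevant block paths are continuous, the same holds at times $0$ and $s$. Uniform continuity on the compact simplex then gives $F^t_{\cA_r}\uparrow F^t_{\cA}$ almost surely for $t\in\{0,s\}$. Each finite-class bound holds with constants independent of $|\cA_r|$. Fatou's lemma passes the $L^k$ and $L^2$ bounds to the limit. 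For the tail, I would use $\PP(|Y|>a)\le\liminf_r\PP(|Y_r|>a)$, which follows from almost sure convergence and the openness of $\{|y|>a\}$. This step is where the independence of $C$ from $|\cA|$ is essential.

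\textbf{Step 3: the two applications.} For the unrestricted value, take $\cA=\mathfrak S(p,q)$, which is a compact simplex, so $F^t=T_p^{q,t}$. For the restricted value, take $\cA=\{z\in\mathfrak S(p,q):z_m\in I\}$. This set is closed, and it is nonempty because $I\subseteq[x_-,x_+]$. Its maximum weight is the maximum over $x\in I$ of the best staircase with $z_m=x$. That staircase splits at the upward step on row $m$ into a part from $p$ to $(x,m)$ and a part from $(x,m+1)$ to $q$, so the maximum equals $Z_p^{q,\bullet,t}(x,m)$.

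The main obstacle is Step 1. The independence $(\widetilde X)\perp(S)$ must be checked carefully, since $\widetilde X_b$ is defined using the clock. It holds because the replacement samples are i.i.d.\ and independent of the clocks, so selecting one by a clock-measurable index still yields a Brownian motion independent of the clocks. Everything else is routine continuity and compactness.
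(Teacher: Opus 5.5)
Your proposal is correct and follows the paper's proof. It uses the same identification of $(X^0,X^s)$ on $\cB(p,q)$ with the refresh coupling of Lemma~\ref{lem:gaussian} at $\rho=1-e^{-s}$, then approximates $\cA$ by increasing finite dense subsets and applies Fatou's lemma. The one cosmetic difference is the tail bound: you pass it to the limit directly via $\PP(|Y|>a)\le\liminf_r\PP(|Y_r|>a)$, whereas the paper recovers it from the limiting moment bound by Markov's inequality at order $k=u$; both arguments are valid.
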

\end{samepage}

\begin{proof}
Fix $s\ge0$. For $b\in\cB(p,q)$, let $S_b$ be the indicator that
its clock rings at least once in $(0,s]$. These variables are
independent Bernoulli variables with parameter $\rho=1-e^{-s}$.
They are independent of the initial Brownian samples. Conditional on
all the clocks, the last sample used at time $s$ on a block with
$S_b=1$ is a fresh Brownian motion independent of the initial samples
and of the samples used in the other blocks. {Thus,}
\begin{equation}\label{eq:twotimecoupling}
 \bigl((X_b^0)_{b\in\cB(p,q)},(X_b^s)_{b\in\cB(p,q)}\bigr)
 \stackrel{\mathrm d}{=}(X,X'),
\end{equation}
where $X,X'$ are exactly the collections in
\eqref{eq:replacementcoupling} with parameter $\rho=1-e^{-s}$.

Choose a countable dense subset $\{\zeta_1,\zeta_2,\ldots\}$ of
$\cA$ in the Euclidean metric on the exit coordinates of
Section~\ref{sec:pathclasses}, and set
$\cA_j=\{\zeta_1,\ldots,\zeta_j\}$. For $t\in\{0,s\}$ define
\[
 F_j^t=\max_{\xi\in\cA_j}\wgt^t(\xi),\qquad
 F^t=\max_{\xi\in\cA}\wgt^t(\xi).
\]
Since $\cA_j$ increases to a dense subset and the weight is
continuous on $\cA$, we have $F_j^t\to F^t$ almost surely for
$t=0,s$. By \eqref{eq:twotimecoupling}, Lemma~\ref{lem:gaussian}
applies to every $\cA_j$ with $\rho=1-e^{-s}$ and the same
absolute constant $C$. Fatou's lemma gives
\[
 \EE|F^s-F^0|^k
 \le\liminf_{j\to\infty}\EE|F_j^s-F_j^0|^k,
 \qquad k\ge2.
\]
{Thus,} both the general moment bound and the sharper second-moment
bound pass to the compact class. Using $1-e^{-s}\le s$ gives the
stated moment estimates. The tail bound follows by Markov's
inequality at moment order $k=u$.
\end{proof}

\par\begingroup
\par\begingroup
\begin{proof}[Proof sketch for Theorem~\ref{thm:oustability}]
First take a finite class $\cA$ of staircases from $\0$ to $\bn$,
and write $F^r=\max_{\xi\in\cA}\wgt^r(\xi)$. Let
$B,\widetilde B$ be independent Brownian environments and set
\[
 B_\alpha=\cos\alpha\,B+\sin\alpha\,\widetilde B,
 \qquad
 Y_\alpha=-\sin\alpha\,B+\cos\alpha\,\widetilde B,
 \qquad 0\le\alpha\le\vartheta=\arccos(e^{-t}).
\]
The environments $(B_0,B_\vartheta)$ have the two-time OU law,
and, at each deterministic angle, $Y_\alpha$ is a Brownian
environment independent of $B_\alpha$. If $f(\alpha)$ is the
optimized value over $\cA$ in $B_\alpha$, the finite-maximum
derivative argument used in Lemma~\ref{lem:gaussian} gives,
almost surely for almost every $\alpha$, that $f'(\alpha)$ is
the weight collected from $Y_\alpha$ along a selected maximiser
for $B_\alpha$. Conditional on $B_\alpha$, this weight is centered
Gaussian with variance exactly $n$, the common horizontal length
of these staircases. Thus $\|f'(\alpha)\|_k\le C\sqrt{kn}$, and
Minkowski's inequality gives
\[
 \|F^t-F^0\|_k
 \le\int_0^\vartheta\|f'(\alpha)\|_k\,d\alpha
 \le C\vartheta\sqrt{kn}
 \le C\sqrt{kn(1-e^{-t})}.
\]
The bound is independent of the finite class. Approximating the
compact space of all staircases from $\0$ to $\bn$ by increasing
finite subsets, as in the proof of Corollary~\ref{cor:increments},
passes it to the unrestricted passage times. Finally,
$1-e^{-t}\le t$ gives the second inequality in
\eqref{eq:oumainmoment}, and Markov's inequality at order $k=u$,
after increasing $C$, gives \eqref{eq:oumaintail}.
\end{proof}
\par\endgroup
\par\endgroup

\begin{proof}[Proof of Proposition~\ref{prop:generalstability} and Theorem~\ref{thm:stability}]
Take $\cA=\mathfrak S(p,q)$ in the approximation argument in the proof
of Corollary~\ref{cor:increments}.
It gives \eqref{eq:lp} and its second-moment version with
$\rho=1-e^{-|t-s|}$ and horizontal length exactly $L$. Stationarity
of the dynamics identifies the increment between $s$ and $t$ with
that between $0$ and $|t-s|$, up to sign. Using
$1-e^{-|t-s|}\le |t-s|$ proves
\eqref{eq:mainstability} and \eqref{eq:mainstabilitysecond}; the same
Markov bound proves \eqref{eq:generalstabilitytail}. If $L=0$, all
staircase weights are zero and the assertions are immediate.
Finally, taking $p=\0$, $q=\bn$ and $s=0$ in
\eqref{eq:generalstabilitytail} gives \eqref{eq:mainstabilitytail}
for every $t\ge0$, proving Theorem~\ref{thm:stability}.
\end{proof}

\subsection{Uniform control over a time interval}

As explained in the outline, if the geodesic $\Gamma_p^{q,t}$
exits row $m$ through an interval $I$, then $F_I^t=T_p^{q,t}$. To compare with the
time-zero environment, we must control the changes of both $T_p^{q,t}$
and $F_I^t$. Moreover, the visit itself can occur at any time in $[0,h]$, and we therefore need bounds on quantuties of the type $\sup_{0\le t\le h}|F^t-F^0|$ for both restricted and unrestricted passage times.
In the following lemma, we shall use an index set $\mathcal L_n$ to record different path
classes and endpoint pairs. Its polynomial size permits a union
bound over the list.

\begingroup
\begin{lemma}[Uniform stability for an indexed family]\label{lem:timeuniform}
Fix $B,{C_0}>0$. For each $n$, let $\mathcal L_n$ be a deterministic
index set with $|\mathcal L_n|\le n^B$. For $i\in\mathcal L_n$,
let $p_i=(x_{i,-},j_{i,-})\le q_i=(x_{i,+},j_{i,+})$ be
deterministic, put $L_i=x_{i,+}-x_{i,-}$, and let
$\cA_i\subseteq\mathfrak S(p_i,q_i)$ be nonempty, compact and
deterministic. Suppose all their containing rectangles lie in a
deterministic rectangle $\mathcal R_n$ meeting at most ${C_0}n^2$
update blocks. For $\ell\ge\log n$ and $0<h\le1$, put
\begin{equation}\label{eq:AN}
 A_N=\ell\sqrt{Nh}+\ell^2\quad(N\ge0),\qquad
 F_i^t=\max_{\xi\in\cA_i}\wgt^t(\xi).
\end{equation}
There are $C_1,C,c>0$ and $n_0<\infty$, depending only on
$B,{C_0}$, such that, for every $n\ge n_0$,
\begin{equation}\label{eq:uniftime}
 \PP\left(\exists i\in\mathcal L_n:
       \sup_{0\le t\le h}|F_i^t-F_i^0|>C_1 A_{L_i}\right)
 \le Ce^{-c\ell^2}.
\end{equation}
The constants are uniform in $\ell,h$ and the listed classes.
The same bound holds conditionally on a random indexed list,
independent of the dynamics, whenever it satisfies these size and
rectangle hypotheses.
\end{lemma}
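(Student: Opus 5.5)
The plan is to reduce the supremum over $t\in[0,h]$ to a maximum over the finitely many configurations seen in $\mathcal R_n$, and then union bound. Every staircase in every $\cA_i$ lies in $\mathcal R_n$, so each $F_i^t$ is a function of the environment in $\mathcal R_n$ and is piecewise constant in $t$, changing only at clock rings of blocks meeting $\mathcal R_n$. Let $K$ be the number of rings in $(0,h]$ among these at most $C_0n^2$ blocks. Then $K$ is Poisson with mean at most $C_0n^2h\le C_0n^2$, and $\PP(K>C'n^2)\le e^{-cn^2}$. On the complement,
\[
 \sup_{0\le t\le h}|F_i^t-F_i^0|=\max_{0\le r\le K}|F_i^{\tau_r}-F_i^0|,
\]
where $\tau_0=0<\tau_1<\dots<\tau_K$ are the ring times.

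The ring times are random, so we cannot apply Corollary~\ref{cor:increments} at $s=\tau_r$ directly. Instead we condition on the clock $\sigma$-algebra $\mathscr C$, as in the proof of Lemma~\ref{lem:clocks}. Given $\mathscr C$, the pair of environments at times $0$ and $\tau_r$ is the following. Blocks that rang in $(0,\tau_r]$ carry an independent fresh sample; all other blocks keep their original sample. This is exactly the coupling $(X,X')$ of Lemma~\ref{lem:gaussian}, except that the refreshed set $S^{(r)}$ is now deterministic rather than Bernoulli. So we need a version of Lemma~\ref{lem:gaussian} for deterministic indicators. In the rotation argument, take $S_b$ fixed. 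Conditional on $\mathscr X_\theta$, the derivative is $\cN(0,V_\theta)$ with $V_\theta=\sum_b S_b\ell_b(\xi_\theta)\le \sum_{b\in S^{(r)}}\ell_b(\xi_\theta)$. Bounding this crudely by $|S^{(r)}|$ loses too much, so we do not condition on the full clock configuration.

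Instead we condition only on the unordered set of rings and use a cleaner route. The clock process is independent of the samples. For each ring index $r$, the set of refreshed blocks at $\tau_r$ is contained in the set $S^{h}$ of blocks ringing in $(0,h]$, whose indicators are i.i.d.\ Bernoulli with parameter $\rho=1-e^{-h}\le h$. Apply the rotation argument with the refreshed set being those blocks of $S^h$ that rang by $\tau_r$. For fixed $r$, the variables $(S_b)$ are i.i.d.\ Bernoulli only in the unconditioned sense. Handling this dependence is the main obstacle, and we address it by discretising time.

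Pick a deterministic grid $t_k=kh/M$ with $M=n^{B'}$. For each grid time, Corollary~\ref{cor:increments} with $u=c\ell^2$ gives
\[
 |F_i^{t_k}-F_i^0|\le C(\sqrt{L_ih\ell^2}+\ell^2)=CA_{L_i}
\]
outside probability $2e^{-c\ell^2}$. A union over $i$ and $k$ costs $n^{B+B'}\le e^{C\ell}$, using $\ell\ge\log n$, which is absorbed into $e^{-c\ell^2}$. Between consecutive grid times, with probability at least $1-C_0n^2h/M$ no block of $\mathcal R_n$ rings in more than one grid cell. More to the point, the grid cells containing at least two rings have total probability at most $M(C_0n^2h/M)^2\le n^{-B'+4}$. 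This is polynomial, not $e^{-c\ell^2}$, so this naive grid bound fails.

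To fix this, we control the within-cell fluctuation by stability applied across a single cell, uniformly. For $t\in[t_k,t_{k+1}]$, $F_i^t$ takes at most $1+K_k$ values, where $K_k$ is the number of rings in the cell. Each value equals $F_i$ at a configuration in which the refreshed set, relative to $t_k$, is a subset of the blocks ringing in that cell. Conditional on the clocks, the configurations at the ring times inside $[t_k,t_{k+1}]$ each have the static law, so we apply Lemma~\ref{lem:gaussian} conditionally with deterministic $S_b$.

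For deterministic $S$, the rotation proof gives $V_\theta\le|S|$, hence a deviation at most $C(\sqrt{k|S|}+\dots)$. Here $|S|\le K_k$, which is at most $C\ell^2$ with probability at least $1-e^{-c\ell^2}$, because $\EE K_k\le C_0n^2h/M\le1$ and the Poisson tail gives $\PP(K_k>\ell^2)\le e^{-c\ell^2\log\ell}$. So each within-cell change is at most $C(\ell\sqrt{|S|}+\ell^2)\le C\ell^2\le CA_{L_i}$ outside probability $e^{-c\ell^2}$. Union bounding over $i$, over $k$, and over the at most $\ell^2$ ring configurations in each cell, and combining with the triangle inequality $|F_i^t-F_i^0|\le|F_i^t-F_i^{t_k}|+|F_i^{t_k}-F_i^0|$, gives \eqref{eq:uniftime}. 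The conditional statement for an independent random list follows by conditioning on the list, since every bound above is uniform over deterministic lists satisfying the hypotheses.
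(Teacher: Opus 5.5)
Your final argument is correct: a polynomial grid $t_k=kh/M$, stability at the grid points, and a separate within-cell estimate. It takes a different route from the paper, and your second and third paragraphs are abandoned attempts that can be deleted.

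The paper removes the obstruction you identified (two rings in one cell having only polynomially small probability) by making the mesh exponentially fine. With spacing $e^{-\ell^2}$ and total clock rate at most $C_0n^2$, the probability that some mesh cell contains two rings is at most $Cn^4he^{-\ell^2}$. On the complement, every configuration in a cell is the configuration at one of its endpoints, so the supremum over $[0,h]$ is a maximum over mesh points. The union over the roughly $e^{\ell^2}$ mesh points is paid for by applying Corollary~\ref{cor:increments} with deviation parameter $2\ell^2$, whose failure probability $2e^{-2\ell^2}$ per index and mesh point absorbs the count.

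You instead keep polynomially many grid points and allow several rings per cell. You control $|F_i^\tau-F_i^{t_k}|$ by conditioning on the clocks. Given the clocks, the pair $\bigl((X_b^{t_k})_b,(X_b^{\tau})_b\bigr)$ is the coupling \eqref{eq:replacementcoupling} with a fixed refresh set $S$. State this joint description explicitly; your sentence only asserts the static marginals. Rerunning the rotation argument with the crude bound $V_\theta\le|S|\le\ell^2$ then gives a deviation $O(\ell\sqrt{|S|})=O(\ell^2)\le CA_{L_i}$.

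This works, but it needs three things the paper's argument does not:
\begin{itemize}
\item a deterministic-refresh variant of Lemma~\ref{lem:gaussian}, which you should state;
\item the compact approximation from the proof of Corollary~\ref{cor:increments}, repeated for that variant;
\item a per-cell Poisson tail bound.
\end{itemize}
The paper's route uses Corollary~\ref{cor:increments} only as a black box at deterministic times. Its point is that the number of time points is no constraint: a stability tail $e^{-u}$ at $u\asymp\ell^2$ beats any $e^{O(\ell^2)}$ mesh, so there was no need to keep $M$ polynomial in $n$.
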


{Note that, as the dynamics proceeds over $[0,h]$, only finitely many configurations occur in the containing rectangle.}
A sufficiently fine deterministic time mesh sees every configuration
with high probability, because no mesh interval contains two updates.
We can therefore apply the fixed-time estimate at deterministic
mesh points, without conditioning the environment on update events.

\begin{proof}
Set $\Delta=e^{-\ell^2}$, $K_n=\lceil h/\Delta\rceil$ and
$t_j=jh/K_n$ for $0\le j\le K_n$. Then
$t_j-t_{j-1}\le\Delta$ and $K_n+1\le e^{\ell^2}+2$.
Let $\cB_n$ be the update blocks meeting $\mathcal R_n$, and
write $K_{\cB_n}(I)$ for the total number of their rings in $I$.
Define
\begin{align*}
 \cE_n^{\mathrm{mesh}}
 &=\bigcap_{i\in\mathcal L_n}\bigcap_{j=0}^{K_n}
       \{|F_i^{t_j}-F_i^0|\le C_1A_{L_i}\},\\
 \cE_n^{\mathrm{one}}
 &=\bigcap_{j=1}^{K_n}
       \{K_{\cB_n}((t_{j-1},t_j])\le1\}.
\end{align*}
For $L_i>0$, Corollary~\ref{cor:increments}, with deviation
parameter $2\ell^2$ and elapsed time $t_j\le h$, gives a
failure bound $2e^{-2\ell^2}$ at each index and mesh point.
Its threshold is at most $C_1A_{L_i}$ for a fixed sufficiently
large $C_1$. When $L_i=0$ the passage value is identically zero.
Consequently,
\[
 \PP((\cE_n^{\mathrm{mesh}})^c)
 \le 2n^B(e^{\ell^2}+2)e^{-2\ell^2}.
\]
{For every deterministic interval $I$, $K_{\cB_n}(I)$ is Poisson with mean $\Lambda_n|I|$, where the total clock rate is $\Lambda_n=|\cB_n|\le {C_0}n^2$.}
The Poisson bound $\PP(\mathrm{Poi}(v)\ge2)\le v^2/2$ gives
\[
 \PP((\cE_n^{\mathrm{one}})^c)
 \le\frac{\Lambda_n^2}{2}\sum_{j=1}^{K_n}(t_j-t_{j-1})^2
 \le Cn^4h e^{-\ell^2}.
\]
Their sum is at most $Ce^{-c\ell^2}$ uniformly for
$\ell\ge\log n$ and large $n$.

Almost surely there is no ring at any deterministic $t_j$.
On $\cE_n^{\mathrm{one}}$, every configuration during
$[t_{j-1},t_j]$ is its configuration at one of the two endpoints.
Thus $\cE_n^{\mathrm{mesh}}\cap\cE_n^{\mathrm{one}}$ implies
the simultaneous stability bound throughout $[0,h]$.
Finally, conditioning on an independent indexed list leaves the
law of the dynamical environment unchanged. Applying the same
argument to each realization satisfying the hypotheses proves
the conditional assertion.
\end{proof}
\par\endgroup

\section{Covering dynamic exits by static near maximisers}\label{sec:crossings}

Fix an endpoint pair $u,v$ and a row $m$. If the geodesic exits
through a unit interval $I$ at some time $t\le h$, then the
restricted value $F_I^t$ equals $T_u^{v,t}$. The stability estimates
from the preceding section will imply that $I$ contains a point
whose time-zero routed weight is close to $T_u^{v,0}$. We will cover
all such points by a small number of intervals, using
the static near-maximiser estimate in Lemma~\ref{lem:packing}.
This gives the local {exit cover} that will later be applied at every scale.

{{The local scale will be denoted by $N\le n$.}
We will use an allowance $\ell\ge\log n$, work at scales
$N\ge\ell^Q$ for a fixed $Q>1080$, and take
$0<h\le n^{-1/3}\ell^{-4}$.}
Fix $0<a_0<b_0<\infty$, $\beta\in(0,1/2)$ and a compact
interval $K\subset(0,\infty)$. The deterministic endpoints
$u=(x_-,j_-)\le v=(x_+,j_+)$ and integer row $m$ satisfy the
following scale conditions:
\begin{equation}\label{eq:localranges}
 \begin{gathered}
 a_0N\le D:=j_+-j_-\le b_0N,\qquad
 \lambda:=\frac{x_+-x_-}{D}\in K,\\
 \beta D\le m-j_-\le(1-\beta)D-1.
 \end{gathered}
\end{equation}
The first condition says that the vertical distance between the
endpoints is comparable to the scale $N$. The slope condition makes
their horizontal distance comparable to $N$ as well, with constants
uniform over $\lambda\in K$; see Figure~\ref{fig:localgeometry}
for a depiction of these conditions. The last condition keeps the crossing
away from both endpoints: the lower portion, from $u$ to row $m$,
has vertical length $m-j_-\ge\beta D$, and the upper portion, from
row $m+1$ to $v$, has vertical length $j_+-m-1\ge\beta D$.
{The subtraction of one is a minor correction coming from splitting at the exit between}
rows $m$ and $m+1$. {Thus,} both portions have length comparable to
$N$.
All constants in this section may depend on $a_0,b_0,\beta,K$.

Since the endpoints and row are fixed in this section, we abbreviate
the \emph{exit-constrained} routed profile by
\begin{equation}\label{eq:routed}
 Z^s(x):=Z_u^{v,\bullet,s}(x,m)
       =T_u^{(x,m),s}+T_{(x,m+1)}^{v,s}.
\end{equation}
{Thus,} $Z^s$ is only shorthand for the profile defined earlier, with
its endpoint and row arguments suppressed.
The two terms use disjoint sets of Brownian rows. Moreover,
\(T_u^{v,s}=\max_xZ^s(x)\), and the geodesic exit at row \(m\) is a
maximiser.

\begin{figure}[tbp]
\centering
\begin{tikzpicture}[x=1.3cm,y=1cm,font=\small]
  \foreach \y/\lab in {0/{j_-},2/{m},2.55/{m+1},4/{j_+}}{
    \draw[guide] (-.15,\y)--(5.65,\y);
    \node[anchor=east] at (-.25,\y) {$\lab$};
  }
  \draw[black!55,dashed] (0,0)--(5,4);
  \node[anchor=west,fill=white,inner sep=2pt] at (3.75,3.05)
    {slope $\lambda$};
  \draw[refineteal,line width=2pt] (.9,2)--(4.1,2);
  \draw[refineteal] (.9,1.91)--(.9,2.09);
  \draw[refineteal] (4.1,1.91)--(4.1,2.09);
  \node[refineteal,above=3pt] at (1.3,2) {$W$};
  \fill[refineteal] (2.5,2) circle (2pt);
  \node[below=5pt,fill=white,inner sep=1pt] at (2.5,2) {$(\chi,m)$};
  \fill (0,0) circle (2pt) node[below right=3pt] {$u=(x_-,j_-)$};
  \fill (5,4) circle (2pt) node[above=3pt] {$v=(x_+,j_+)$};
  \draw[<->] (-1.15,0)--node[left] {$D$}(-1.15,4);
  \draw[<->] (5.9,0)--node[right] {$m-j_-$}(5.9,2);
  \draw[<->] (5.9,2.55)--node[right] {$j_+-m-1$}(5.9,4);
  \draw[<->] (0,-.85)--node[below] {$x_+-x_-=\lambda D$}(5,-.85);
\end{tikzpicture}
\caption{The geometry of the scale conditions in \eqref{eq:localranges}. Both vertical
lengths on the right are at least $\beta D$, and $D$ is comparable to $N$.
The straight line from $u$ to $v$ meets row $m$ at
$\chi=x_-+\lambda(m-j_-)$. The window $W$ in
Lemma~\ref{lem:packing} is centered at this deterministic location.
The gap between rows $m$ and $m+1$ is exaggerated for visibility;
it accounts for the one-row correction in the upper length.}
\label{fig:localgeometry}
\end{figure}

\subsection{An estimate on the number of near-maximisers}

Recall that $\nearmax^\alpha(f;J)$ counts points in $J$ whose values
are within $\alpha$ of $\max_J f$ and whose pairwise distances are
at least $\alpha^2$; see \eqref{eq:NMdefinition}. We estimate this
count directly on the window needed for the {exit cover}, using
Brownian comparison for the routed profile and the corresponding
Brownian near-maximiser bound. The same argument is used in
\cite[Lemma 80]{B25}; only bulk rows are needed here, as opposed to the setting therein.

\begingroup
\begin{lemma}[Near-maximisers on the crossing window]\label{lem:packing}
Fix $C_{\mathrm{win}}>0$ and $Q>1080$.
{Let $\ell_0$ be sufficiently large, depending only on these
constants and the fixed geometric ranges.} For $\ell\ge\ell_0$
and an integer $N\ge\ell^Q$, let $u,v,m$ satisfy \eqref{eq:localranges}, and set
\begin{equation}\label{eq:packingwindow}
 \chi=x_-+\lambda(m-j_-),\qquad
 W=[\chi-C_{\mathrm{win}}\ell^2N^{2/3},\chi+C_{\mathrm{win}}\ell^2N^{2/3}].
\end{equation}
{There are $C,c>0$ such that, uniformly over the}
stated endpoints, scales and $0<\alpha\le N^{1/3}$,
\begin{equation}\label{eq:localpacking}
 \PP\bigl(\nearmax^\alpha(Z^0;W)>\ell^{90}\bigr)
 \le Ce^{-c\ell^{90}}.
\end{equation}
The constants depend only on $Q,C_{\mathrm{win}}$ and the fixed geometric ranges.
\end{lemma}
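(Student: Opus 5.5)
The plan is to reduce the count of near-maximisers of the routed profile $Z^0$ on $W$ to a count for Brownian motion. We use a Brownian comparison for the profile and the Brownian near-touch estimate of \cite{CHH23}, following \cite[Lemma 80]{B25}.

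\textbf{Step 1: a finite subdivision.} Cover the window $W$, of length $2C_{\mathrm{win}}\ell^2N^{2/3}$, by $K=\lceil 2C_{\mathrm{win}}\ell^2\rceil$ consecutive subintervals $W_1,\dots,W_K$, each of length at most $N^{2/3}$. Any $\alpha^2$-separated set of near maximisers on $W$ splits into pieces on the $W_k$. A point within $\alpha$ of $\max_W Z^0$ is also within $\alpha$ of $\max_{W_k}Z^0$. Hence
\[
 \nearmax^\alpha(Z^0;W)\le\sum_{k=1}^K\nearmax^\alpha(Z^0;W_k),
\]
and it suffices to show that $\PP(\nearmax^\alpha(Z^0;W_k)>\ell^{87})\le Ce^{-c\ell^{90}}$ for each $k$. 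Indeed $K\ell^{87}\le\ell^{90}$ for $\ell\ge\ell_0$, and a union over the $K$ pieces costs only a factor $C\ell^2$, which is absorbed into the exponent.

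\textbf{Step 2: Brownian comparison on each piece.} The bulk condition \eqref{eq:localranges} gives both the lower and the upper portion vertical length at least $\beta D\asymp N$. The window lies within $C\ell^2N^{2/3}\ll N$ of the characteristic point $\chi$, since $N\ge\ell^Q$ and $\ell^2\ll N^{1/3}$. So each $W_k$ sits strictly inside the admissible range, at the $N^{2/3}$ scale, up to a factor $\ell^2$ in the displacement. The two summands of $Z^0$ use disjoint rows and are independent. By \cite[Theorem 1.2]{GH23}, the process $x\mapsto Z^0(x)-Z^0(\inf W_k)$ on $W_k$ is comparable to a Brownian motion of rate two. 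The comparison has the form $\PP(Z^0|_{W_k}\in A)\le G(\PP(B\in A))$, where $G(p)\approx p\exp\{C(\log p^{-1})^{5/6}\}$, with constants allowed to depend polynomially on the displacement parameter, here at most $C\ell^2$. Rescaling by $N^{2/3}$ horizontally and $N^{1/3}$ vertically maps $W_k$ to a unit interval. The event $\{\nearmax^\alpha>M\}$ is scale-invariant once $\alpha$ is rescaled to $\alpha N^{-1/3}\le1$; this is why the hypothesis $\alpha\le N^{1/3}$ is needed.

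\textbf{Step 3: the Brownian near-touch bound.} For Brownian motion on $[0,1]$ and any $\alpha'\in(0,1]$, the count of $\alpha'^2$-separated points within $\alpha'$ of the maximum has exponential tails uniformly in $\alpha'$: $\PP(\nearmax^{\alpha'}(B;[0,1])>M)\le Ce^{-cM}$. This is the near-touch estimate in \cite[Section 1.2.3]{CHH23}, and can also be seen via the Markov property. Each successive separated near touch requires an excursion back up to near the running maximum after a spacing $\alpha'^2$, which has probability bounded away from one; iterating gives a geometric tail. Taking $M=\ell^{87}$ gives a Brownian probability $e^{-c\ell^{87}}$. Applying $G$ loses only a factor $\exp\{C\ell^{87\cdot5/6}\}$ together with the polynomial $\ell$-dependence, leaving $e^{-c\ell^{87}}$.

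\textbf{Main obstacle.} The step to watch is the bookkeeping of the comparison constants against the exponents. The comparison degrades with both the window displacement ($\ell^2$ in units of $N^{2/3}$) and the required regularity, and this must be done uniformly over all endpoints in \eqref{eq:localranges}. I expect the large exponents $90$ and $Q>1080$ to be chosen precisely so that every such loss is a fixed power of $\ell$, dominated in the exponent. If the thresholds come out as $e^{-c\ell^{87}}$ rather than $e^{-c\ell^{90}}$, I would either run the split with threshold $\ell^{90}/K$, or use the higher-power margin that the comparison leaves. In either case the final tail is $Ce^{-c\ell^{90}}$ after adjusting constants.
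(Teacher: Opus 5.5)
\textbf{Gap: the subdivision loses a factor $\ell^2$ in the exponent.} The subadditivity in Step 1 is fine. The problem is that the subdivision costs you the exponent, and the repair you suggest at the end does not work.

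Step 1 asks, for each piece, for the bound $Ce^{-c\ell^{90}}$ at threshold $\ell^{87}$. Step 3 delivers only $e^{-c\ell^{87}}$, and this route cannot do better. The Brownian near-touch count has exponential tails and no better: near the argmax the process stays within $\alpha'$ of its maximum for $k$ time units of size $\alpha'^2$ with probability of order $e^{-Ck}$. The comparison can only worsen a Brownian bound, so there is no ``higher-power margin'' to draw on.

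Splitting with threshold $\ell^{90}/K\asymp\ell^{88}$ gives, after the union bound, $CKe^{-c\ell^{88}}$. No adjustment of $C,c$ turns this into $Ce^{-c\ell^{90}}$, since $e^{-c\ell^{88}}/e^{-c'\ell^{90}}\to\infty$. So, as structured, your argument proves only $\PP(\nearmax^\alpha(Z^0;W)>\ell^{90})\le Ce^{-c\ell^{88}}$. This is weaker than the lemma.

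\textbf{The missing observation: no subdivision is needed.} The horizontal scaling factor is the square of the vertical one, so $\nearmax$ is invariant under Brownian scaling. Hence the bound of \cite[Proposition 2.5]{CHH23} holds uniformly on windows of any length.

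Normalise $Z^0$ about $\chi$ at scale $D^{2/3}$ horizontally and $D^{1/3}$ vertically. Then $W$ becomes $[-d,d]$ with $d\asymp\ell^2$, and the tolerance becomes $\widetilde\alpha\le C$. You can take threshold $b=\ell^{90}$ on the whole window, with Brownian probability at most $C_0e^{-c_0\ell^{90}}$.

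Then apply \cite[Theorem 1.2]{GH23} once, on this window. It is centred at the characteristic point, so the linear correction in that comparison vanishes. On your off-centre pieces that correction would change the near-maximiser event, so you would have to pass to the centred window anyway.

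The comparison loss is $Cd^6\exp\{Cd^7b^{5/6}\}$. The window width multiplies the $5/6$-power term in the exponent; it is not only a polynomial prefactor. Consequently a threshold $\ell^\kappa$ works exactly when $14+\tfrac56\kappa<\kappa$, i.e.\ $\kappa>84$. With $\kappa=90$ this gives $C\ell^{12}\exp\{-c_1\ell^{90}+C\ell^{89}\}\le Ce^{-c\ell^{90}}$.

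\textbf{Two further points to make explicit.}
\begin{itemize}
\item The comparison applies only to events whose Brownian probability $q$ lies in an admissible range, roughly $d^{12}\ll\log(1/q)\ll D^{1/12}$. You handle this by enlarging the Brownian event (the law is atomless) to probability exactly $e^{-c_1\ell^{90}}$ with $c_1<c_0$.
\item The upper cutoff $\ell^{90}=o(N^{1/12})$ is exactly where $Q>12\cdot 90=1080$ is used.
\end{itemize}
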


The window $W$ is wider than the usual transverse scale by the
factor $\ell^2$, so that it can contain all the dynamic exits on
the regularity event. The proof first counts separated near-maximisers
for Brownian motion and then transfers this bound to the routed
profile. The quantitative comparison has a cost depending on the
window width; the count $\ell^{90}$ is chosen so that Brownian
rarity dominates this cost. The cutoff $N\ge\ell^Q$ ensures that
{the comparison is applicable with uniform constants at every scale used later.}

\begin{proof}
{The bulk condition in \eqref{eq:localranges} keeps $\chi$ at
distance at least $cN$ from both ends of $[x_-,x_+]$.
Since $\ell^2N^{-1/3}\to0$ uniformly for $N\ge\ell^Q$,
the window $W$ lies in this interval for large enough $\ell_0$.
Put $a=(m-j_-)/D$, the fraction of the total vertical separation
lying below row $m$. Translation and Brownian scaling give the
normalized routed profile}
\[
 \widehat Z(y)=
 \frac{\lambda^{-1/2}Z^0(\chi+2\lambda D^{2/3}y)-2D}
      {\sqrt2 D^{1/3}}.
\]
{The bulk condition gives $a\ge\beta$ and}
$1-a-D^{-1}\ge\beta$. The window and tolerance become
\[
 [-d,d],\qquad
 d=\frac{C_{\mathrm{win}}\ell^2N^{2/3}}{2\lambda D^{2/3}}\asymp\ell^2,
 \qquad \widetilde\alpha=\frac{\alpha}{\sqrt{2\lambda}D^{1/3}}\le C.
\]
The horizontal scaling factor is the square of the weight scaling
factor, so it preserves the definition of the near-maximiser count:
\[
 \nearmax^\alpha(Z^0;W)
 =\nearmax^{\widetilde\alpha}(\widehat Z;[-d,d]).
\]

Let $B$ be a rate-two Brownian motion on $[-d,d]$, anchored
at its left endpoint, and put $b_\ell=\ell^{90}$.
The Brownian near-maximiser estimate
\cite[Proposition 2.5]{CHH23}, with Brownian scaling, gives
\[
 \PP(\nearmax^{\widetilde\alpha}(B;[-d,d])>b_\ell)
 \le C_0e^{-c_0b_\ell}.
\]
Choose $0<c_1<c_0$ and enlarge this event, under the atomless
Brownian law, to probability exactly $q=e^{-c_1b_\ell}$.
Subtracting the left-endpoint value does not affect the count.
The comparison in \cite[Theorem 1.2]{GH23}\footnote{\label{fn:strongercomparison}
Dauvergne~\cite{Dau24} proves much sharper Brownianity
estimates for the Airy line ensemble, and
\cite[Appendix 2]{B25} provides an adaptation of some of the results therein to BLPP. The comparison in \cite[Theorem 1.2]{GH23} already
suffices for the near-maximiser bound here, so we use it for
simplicity.} applies to this
enlarged event for $\widehat Z-\widehat Z(-d)$.
Indeed, its window is centered at zero, so the linear correction
vanishes, and its probability cutoffs hold uniformly since
\[
 d^{12}=O(\ell^{24})=o(b_\ell),\qquad
 b_\ell=o(D^{1/12}),
\]
where the second assertion uses $Q>12\cdot90$.
The resulting bound is
\begin{align*}
 \PP(\nearmax^\alpha(Z^0;W)>b_\ell)
 &\le C d^6
       \exp\{-c_1b_\ell+C d^7b_\ell^{5/6}\}\\
 &\le C\ell^{12}\exp\{-c_1\ell^{90}+C\ell^{89}\}
 \le Ce^{-c\ell^{90}}.
\end{align*}
All comparison constants are uniform over the fixed bulk and slope
ranges. The strict inequality $90>42\cdot2$ makes the negative
term dominate the comparison loss.
\end{proof}
\par\endgroup

\subsection{Concentration turns dynamic crossings into static witnesses}

A dynamic exit through a unit interval forces the restricted passage
value to equal the unrestricted one at that time. Applying stability
to these two values separately produces a static near-maximum witness
inside the unit interval, and the proposition below records the resulting
cover.
Figure~\ref{fig:crossingwitness} distinguishes the dynamic exit from
the static witness in its unit interval, while
Figure~\ref{fig:localcover} shows the resulting cover of the entire
dynamic exit set.

\begin{figure}[tbp]
\centering
\begin{tikzpicture}[x=1cm,y=1cm,font=\footnotesize]
  \node[anchor=west,font=\small] at (0,4.5)
    {\textbf{(a)} Two optimisations with an exit in $I$};
  \draw[guide] (0,1.65)--(5.25,1.65);
  \draw[guide] (0,2.25)--(5.25,2.25);
  \node[anchor=east] at (0,1.65) {$m$};
  \node[anchor=east] at (0,2.25) {$m+1$};
  \fill[clusterblue!12] (2.18,1.49) rectangle (3.0,1.81);
  \draw[clusterblue,line width=1pt] (2.18,1.65)--(3.0,1.65);
  \node[below=7pt] at (2.59,1.49) {$I$};
  \draw[refineteal,line width=1.3pt]
    (.3,.15)--(.75,.15)--(.75,.8)--(1.55,.8)
    --(1.55,1.65)--(2.38,1.65)--(2.38,2.25)
    --(3.35,2.25)--(3.35,3.0)--(4.3,3.0)--(4.3,3.65)--(4.9,3.65);
  \draw[witnessgold,densely dashed,line width=1.2pt]
    (.3,.15)--(1.05,.15)--(1.05,.8)--(1.88,.8)
    --(1.88,1.65)--(2.82,1.65)--(2.82,2.25)
    --(3.7,2.25)--(3.7,3.0)--(4.6,3.0)--(4.6,3.65)--(4.9,3.65);
  \fill (.3,.15) circle (1.5pt) node[left] {$u$};
  \fill (4.9,3.65) circle (1.5pt) node[above] {$v$};
  \fill[refineteal] (2.38,1.65) circle (2pt);
  \fill[witnessgold] (2.82,1.65) circle (2pt);
  \node[above left,refineteal] at (2.38,1.65) {$x$};
  \node[above right,witnessgold] at (2.82,1.65) {$y$};
  \node[refineteal,anchor=west] at (.15,-.45) {$\Gamma_u^{v,t}$: weight $T_u^{v,t}=F_I^t$};
  \node[witnessgold,anchor=west] at (.15,-.95) {$\xi_I^0$: weight $F_I^0$ in the static environment};
  \node[anchor=west,font=\small] at (6.2,4.5)
    {\textbf{(b)} The static near-maximum witness};
  \draw[->,black!60] (6.4,.25)--(13.1,.25) node[right] {$z$};
  \fill[clusterblue!7] (6.6,2.65) rectangle (12.75,3.25);
  \draw[guide] (6.55,3.25)--(12.8,3.25);
  \draw[guide] (6.55,2.65)--(12.8,2.65);
  \draw[clusterblue,line width=1pt] plot coordinates {
    (6.6,.85)(7.0,1.3)(7.3,1.1)(7.6,1.8)(7.9,1.55)
    (8.3,2.2)(8.55,2.35)(8.8,2.85)(9.0,2.6)(9.2,2.35)
    (9.45,1.75)(9.8,2.1)(10.1,1.75)(10.45,2.45)
    (10.75,2.2)(11.1,2.85)(11.4,2.6)(11.7,3.25)
    (11.95,2.95)(12.2,2.55)(12.5,2.7)(12.75,2.1)};
  \draw[cluster,line width=2pt] (8.25,.25)--(9.08,.25);
  \node[below] at (8.665,.25) {$I$};
  \draw[witnessgold,densely dashed] (8.8,.25)--(8.8,2.85);
  \fill[witnessgold] (8.8,2.85) circle (2pt);
  \node[above left,witnessgold] at (8.8,2.85) {$Z^0(y)=F_I^0$};
  \node[above left] at (12.75,3.25) {$T_u^{v,0}=\max Z^0$};
  \draw[<->] (13.05,2.65)--node[right] {$2A$}(13.05,3.25);
  \node[anchor=west] at (6.45,-.65)
    {$T_u^{v,0}-F_I^0\le |T_u^{v,t}-T_u^{v,0}|+|F_I^t-F_I^0|\le2A$};
\end{tikzpicture}
\caption{A dynamic exit produces a static witness in the same unit
interval. The green geodesic exits at $x\in I$ at time $t$. The gold
staircase maximises the time-zero weight subject to exiting in $I$;
its exit $y$ may differ from $x$. Stability of the unrestricted and
restricted passage values puts $Z^0(y)$ within $2A$ of the static
maximum. The gold staircase need not be the unrestricted static
geodesic. The two panels are schematic.}
\label{fig:crossingwitness}
\end{figure}

\begin{figure}[tbp]
\centering
\begin{tikzpicture}[x=1cm,y=1cm,font=\small]
  \draw[guide] (.6,1.8)--(11.8,1.8);
  \node[anchor=east] at (.4,1.8) {row $m$};
  \foreach \x in {1.7,1.9,2.25,4.0,4.2,4.6,4.85,8.6,8.95}{
    \fill[refineteal] (\x,1.8) circle (2pt);
    \draw[black!25,dotted] (\x,1.65)--(\x,.65);
  }
  \node[above=5pt] at (1.9,1.8) {$\Gamma_u^{v,t_1}(m)$};
  \node[above=5pt] at (4.6,1.8) {$\Gamma_u^{v,t_2}(m)$};
  \node[above=5pt] at (8.95,1.8) {$\Gamma_u^{v,t_3}(m)$};
  \foreach \a/\b/\lab in {1.35/2.6/{I_1},3.6/5.15/{I_2},8.2/9.4/{I_K}}{
    \draw[clusterblue,line width=3pt] (\a,.55)--(\b,.55);
    \draw[clusterblue] (\a,.4)--(\a,.7);
    \draw[clusterblue] (\b,.4)--(\b,.7);
    \node[below=5pt] at ({(\a+\b)/2},.55) {$\lab$};
  }
  \node at (6.6,.55) {$\cdots$};
  \draw[<->] (3.6,-.45)--node[below] {$|I_2|\le D_N$}(5.15,-.45);
  \node[anchor=west,align=left] at (10.0,.7) {{$K\le\ell^{90}$}};
\end{tikzpicture}
\caption{The conclusion of Proposition~\ref{prop:local}, for endpoints
$u,v$ whose horizontal and vertical separations are of order $N$.
Here $N$ is the local scale, while $n$ is the original scale that
determines {$h=n^{-1/3}\ell^{-4}$}. The dots
represent exits on the same row at different times in $[0,h]$.
{One random family of at most $\ell^{90}$ intervals,
each of length at most $D_N=C_{\mathrm{width}}(Nh\ell^2+\ell^4)$, covers all these
exits simultaneously.}
Different times can use different intervals. The intervals may
contain additional points that the geodesic never visits.}
\label{fig:localcover}
\end{figure}

\begingroup
\begin{proposition}[{Local exit cover}]\label{prop:local}
Fix $Q>1080$ and the geometric ranges in \eqref{eq:localranges}.
Let $\ell\ge\log n$, $\ell^Q\le N\le n$, and
$0<h\le n^{-1/3}\ell^{-4}$. For deterministic $u,v,m$ in those
ranges, define
\[
 E(u,v,m;h)=\{\Gamma_u^{v,t}(m):0\le t\le h\}.
\]
There are $C_{\mathrm{width}},C,c>0$ such that, uniformly in these parameters
and for all sufficiently large $n$, this exit set has a cover
by at most $\ell^{90}$ closed intervals of length at most
\begin{equation}\label{eq:localwidth}
 D_N=C_{\mathrm{width}}(Nh\ell^2+\ell^4)
\end{equation}
except with probability at most $Ce^{-c\ell^2}$.
\end{proposition}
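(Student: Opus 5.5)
We intersect three good events and then argue deterministically on their intersection. The first is the regularity event $\cE^{\mathrm{reg}}_{u,v,N}(\ell)$ of Lemma~\ref{lem:regularity}, which fails with probability $Ce^{-c\ell^3}$. The second is the uniform stability event of Lemma~\ref{lem:timeuniform}. The third is the near-maximiser event of Lemma~\ref{lem:packing}, which fails with probability $Ce^{-c\ell^{90}}$.

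\textbf{Step 1: confinement.} By \eqref{eq:rootTF}, for every $t\in[0,h]$ the exit $\Gamma_u^{v,t}(m)$ lies within $C_{\mathrm{reg}}\ell^2N^{2/3}$ of $\chi=x_-+\lambda(m-j_-)$. Choosing $C_{\mathrm{win}}\ge C_{\mathrm{reg}}+1$, the whole set $E(u,v,m;h)$ lies in the window $W$ of \eqref{eq:packingwindow}. Partition a unit enlargement of $W$ into deterministic unit intervals $I_1,\dots,I_R$, with $R\le C\ell^2N^{2/3}\le n$. Since $W$ lies well inside $[x_-,x_+]$, each restricted class $\{\xi:\xi(m)\in I_r\}$ is nonempty and compact.

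\textbf{Step 2: stability on the index family.} Apply Lemma~\ref{lem:timeuniform} to the index set consisting of the unrestricted class $\mathfrak S(u,v)$ together with the $R$ restricted classes. This list has size at most $n+1\le n^2$, and its horizontal length is $L=x_+-x_-\asymp N$. With probability at least $1-Ce^{-c\ell^2}$, we then get, for all $t\le h$ and all $r$,
\[
|T_u^{v,t}-T_u^{v,0}|\le C_1A_L,\qquad |F_{I_r}^t-F_{I_r}^0|\le C_1A_L.
\]
Here $A_L=\ell\sqrt{Lh}+\ell^2\le C(\ell\sqrt{Nh}+\ell^2)$.

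Now suppose $\Gamma_u^{v,t}(m)\in I_r$ for some $t\le h$. Then $F_{I_r}^t=T_u^{v,t}$, and the two bounds give $T_u^{v,0}-F_{I_r}^0\le 2C_1A_L$. So $I_r$ contains a point $y$ with $Z^0(y)\ge\max Z^0-\alpha$, where
\[
\alpha=2C_1C(\ell\sqrt{Nh}+\ell^2).
\]
To compare with $\max_W Z^0$ rather than the global maximum, use the fact that on the regularity event the time-zero exit lies in $W$, so $\max_W Z^0=\max Z^0$. Also check that $\alpha\le N^{1/3}$: we have $\ell\sqrt{Nh}\le\ell^{-1}N^{1/3}$ since $h\le n^{-1/3}\ell^{-4}\le N^{-1/3}\ell^{-4}$, and $\ell^2\ll N^{1/3}$ because $N\ge\ell^Q$. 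Hence Lemma~\ref{lem:packing} applies with this $\alpha$.

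\textbf{Step 3: building the cover.} On the event of Lemma~\ref{lem:packing}, take a maximal $\alpha^2$-separated set $S$ of $\alpha$-near-maximisers of $Z^0$ in $W$, with $|S|\le\ell^{90}$. Every near-maximiser lies within $\alpha^2$ of $S$. Each dynamic exit lies in some $I_r$ containing a witness $y$, so each exit is within $\alpha^2+1$ of $S$. The intervals $[s-\alpha^2-1,\,s+\alpha^2+1]$, $s\in S$, therefore cover $E(u,v,m;h)$. Their length is
\[
2\alpha^2+2\le C(\ell^2Nh+\ell^4)=D_N
\]
after fixing $C_{\mathrm{width}}$.

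Summing the three failure probabilities gives $Ce^{-c\ell^2}$.

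The main obstacle is Step 2. The witness must be measured against a maximum that Lemma~\ref{lem:packing} controls, namely the maximum over $W$ rather than over all of $[x_-,x_+]$; the regularity event has to supply this at time zero. We also need every class in the union bound to be deterministic and of polynomial count. Both points follow from the deterministic unit partition of $W$ together with the requirement $\ell\ge\log n$, which absorbs the factor $n^B$ in Lemma~\ref{lem:timeuniform}.
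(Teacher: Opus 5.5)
Your proof is correct and is essentially the paper's argument: uniform stability from Lemma~\ref{lem:timeuniform} for the unrestricted value and the unit-interval restricted values, regularity to confine all exits, a static witness $y$ in each visited unit interval, and a maximal $\alpha^2$-separated near-maximiser set from Lemma~\ref{lem:packing}, which yields at most $\ell^{90}$ intervals of length $2\alpha^2+2$. The step you flag as the main obstacle is not needed: $Z^0(y)\ge T_u^{v,0}-\alpha$ and $T_u^{v,0}=\max Z^0\ge\max_W Z^0$ already make the witness an $\alpha$-near-maximiser relative to $\max_W Z^0$, so no time-zero localization of the maximiser is required (the paper argues this way, using only that $y\in W$, which follows from $|x-y|\le1$ and the margin between the exit window and $W$).
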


The allowance determines both the number and width of the covering
intervals. {To apply the static near-maximiser estimate, the passage-time
tolerance must be smaller than the local fluctuation scale $N^{1/3}$.
The restrictions $h\le n^{-1/3}\ell^{-4}$ and $N\ge\ell^Q$
ensure this throughout the range of scales: they control,
respectively, the terms $\ell\sqrt{Nh}$ and $\ell^2$ in $A_N$,
as verified in \eqref{eq:alphacheck}.}

\begin{proof}
\par\begingroup
Let $\chi=x_-+\lambda(m-j_-)$, and let $C_{\mathrm{reg}}$
be the constant in Lemma~\ref{lem:regularity}, with $a=a_0$,
$b=b_0$ and slope interval $K$. Put
\[
 \begin{aligned}
 W_0&=[\chi-C_{\mathrm{reg}}\ell^2N^{2/3},
          \chi+C_{\mathrm{reg}}\ell^2N^{2/3}],\\
 W&=[\chi-2C_{\mathrm{reg}}\ell^2N^{2/3},
       \chi+2C_{\mathrm{reg}}\ell^2N^{2/3}].
 \end{aligned}
\]
\par\endgroup
These windows lie in $[x_-,x_+]$ for large $n$, by the bulk
condition and $N\ge\ell^Q$. Let
\[
 \cI(u,v)=\{[i,i+1]\cap[x_-,x_+]:i\in\ZZ,
                          \ [i,i+1]\cap[x_-,x_+]\ne\emptyset\},
 \qquad F_I^t=\max_{x\in I}Z^t(x).
\]
This is a deterministic list with at most $CN+3$ members.
\par\begingroup
Every corresponding path class has horizontal separation
$L=\lambda D\le b_0(\max K)N$, so \eqref{eq:AN} gives
\[
 A_L\le\max\{1,\sqrt{b_0\max K}\}\,A_N.
\]
Apply Lemma~\ref{lem:timeuniform} to the unrestricted class and
these restricted classes. There are at most $CN+4\le n^2$
classes for large $n$, and their common containing rectangle
meets at most $Cn^2$ update blocks. Let $C_1$ be the constant
in \eqref{eq:uniftime} for this application, and define
\[
 {C_1'}=C_1\max\{1,\sqrt{b_0\max K}\},\qquad
 \alpha_N=2{C_1'}A_N.
\]
\par\endgroup
Define
\begin{align*}
 \cE^{\mathrm{stab}}_{u,v,m}
 &=\left\{\sup_{t\le h}|T_u^{v,t}-T_u^{v,0}|\le {C_1'}A_N\right\}
   \cap\bigcap_{I\in\cI(u,v)}
      \left\{\sup_{t\le h}|F_I^t-F_I^0|\le {C_1'}A_N\right\},\\
 \cE^{\mathrm{loc}}_{u,v,m}
 &=\{E(u,v,m;h)\subseteq W_0\},\\
 \cE^{\mathrm{pack}}_{u,v,m}
 &=\{\nearmax^{\alpha_N}(Z^0;W)\le\ell^{90}\}.
\end{align*}
The suprema here and below are over $0\le t\le h$.
{The preceding application of Lemma~\ref{lem:timeuniform}
and the definition of ${C_1'}$ bound the first failure probability
by $Ce^{-c\ell^2}$.}
Lemma~\ref{lem:regularity}, at scale $N$ with allowance $\ell$,
bounds the second by $Ce^{-c\ell^3}$: its restrictions hold
since $\log N\le\log n\le\ell$ and $N\ge\ell^Q$.

For the packing event, the tolerance satisfies
\begin{equation}\label{eq:alphacheck}
 \frac{A_N}{N^{1/3}}
 =\ell N^{1/6}h^{1/2}+\ell^2N^{-1/3}
 \le\ell^{-1}+\ell^{2-Q/3}\longrightarrow0
\end{equation}
uniformly in the stated ranges. The first term is most restrictive
at $N=n$, and the second at the cutoff $N=\ell^Q$.
Thus $\alpha_N\le N^{1/3}$ for large $n$, and
{Lemma~\ref{lem:packing}, with $C_{\mathrm{win}}=2C_{\mathrm{reg}}$,
bounds the third failure probability by $Ce^{-c\ell^{90}}$.} Consequently the event
\[
 \cE^{\mathrm{cross}}_{u,v,m}
 =\cE^{\mathrm{stab}}_{u,v,m}\cap\cE^{\mathrm{loc}}_{u,v,m}
                            \cap\cE^{\mathrm{pack}}_{u,v,m}
\]
satisfies
\begin{equation}\label{eq:crossingtail}
 \PP((\cE^{\mathrm{cross}}_{u,v,m})^c)\le Ce^{-c\ell^2}.
\end{equation}

We construct the cover on this event. For an exit
$x=\Gamma_u^{v,t}(m)$ choose $I\in\cI(u,v)$ containing $x$.
Then $F_I^t=T_u^{v,t}$, so
\begin{equation}\label{eq:witness}
 0\le T_u^{v,0}-F_I^0
   =(T_u^{v,0}-T_u^{v,t})+(F_I^t-F_I^0)\le\alpha_N.
\end{equation}
Choose a maximiser $y\in I$ of $Z^0$ on $I$. Since
$x\in W_0$ and $|x-y|\le1$, we have $y\in W$ for large $n$,
and \eqref{eq:witness} implies
$Z^0(y)\ge T_u^{v,0}-\alpha_N\ge\max_WZ^0-\alpha_N$.
Let $S$ be a maximal $\alpha_N^2$-separated subset of the
compact set $\{y\in W:Z^0(y)\ge\max_WZ^0-\alpha_N\}$.
The packing event gives $|S|\le\ell^{90}$, and maximality gives
a point $z\in S$ within distance $\alpha_N^2$ of each such $y$.
It follows that
\[
 \{[z-\alpha_N^2-1,z+\alpha_N^2+1]:z\in S\}
\]
covers $E(u,v,m;h)$. Its interval lengths are at most
$2\alpha_N^2+2\le C_{\mathrm{width}}(Nh\ell^2+\ell^4)$, after fixing
$C_{\mathrm{width}}$ sufficiently large. This proves the proposition.
\end{proof}

\begin{corollary}[{Exit covers for an independent endpoint list}]
\label{cor:independentcovers}
Fix $n,\ell,Q,h$ and common geometric ranges as in
Proposition~\ref{prop:local}. Let $\mathscr G$ be a {$\sigma$-algebra}
independent of the dynamics and let
\[
 \mathscr L=((N_i,u_i,v_i,m_i))_{i=1}^{M}
\]
be a finite $\mathscr G$-measurable list. Suppose every entry
satisfies $\ell^Q\le N_i\le n$ and {the geometric conditions
\eqref{eq:localranges} with $(u,v,m,N)=(u_i,v_i,m_i,N_i)$.}
Let $\cE_i^{\mathrm{cov}}$ be the event that $E(u_i,v_i,m_i;h)$
has a cover by at most $\ell^{90}$ intervals, each of length
at most $D_{N_i}$. Then, almost surely,
\begin{equation}\label{eq:independentcovers}
 \PP\left(\bigcup_{i=1}^{M}(\cE_i^{\mathrm{cov}})^c
                  \,\middle|\,\mathscr G\right)
 \le CM e^{-c\ell^2}.
\end{equation}
{In particular,} the unconditional failure is at most
$C(\EE M)e^{-c\ell^2}$ whenever $\EE M<\infty$.
\end{corollary}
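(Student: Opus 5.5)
The plan is to condition on $\mathscr G$ and reduce to Proposition~\ref{prop:local} entry by entry, followed by a union bound. Since $\mathscr G$ is independent of the dynamics, the regular conditional law of the dynamical environment (initial Brownian blocks, clocks and replacement samples) given $\mathscr G$ is the unconditional law. On $\mathscr G$, the list $\mathscr L$ is fixed. So, for almost every realisation, we may treat $M$ and each $(N_i,u_i,v_i,m_i)$ as deterministic, with the environment retaining its original distribution. Formally, I would write the conditional probability as $\Phi(\mathscr L)$, where $\Phi(\lambda)$ is the unconditional probability of the union computed for a deterministic list $\lambda$. This uses the independence lemma for conditional expectations, applied to a jointly measurable indicator of (list, environment). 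Joint measurability holds because the cover events can be expressed through countably many rational intervals, or simply because they are determined by the environment and the list in a measurable way.

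For a deterministic list, each entry satisfies the hypotheses of Proposition~\ref{prop:local} with the common fixed $Q$, geometric ranges, $\ell\ge\log n$ and $h\le n^{-1/3}\ell^{-4}$. The cover event $\cE_i^{\mathrm{cov}}$ is exactly the conclusion of that proposition for $(u_i,v_i,m_i)$ at scale $N_i$. Hence $\PP((\cE_i^{\mathrm{cov}})^c)\le Ce^{-c\ell^2}$, with $C,c$ uniform in the entry; this uniformity is the key point supplied by the proposition. A union bound over the $M$ entries gives $\Phi(\lambda)\le CMe^{-c\ell^2}$, and substituting $\lambda=\mathscr L$ yields \eqref{eq:independentcovers} almost surely. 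The unconditional statement then follows by taking expectations: $\PP(\bigcup_i(\cE_i^{\mathrm{cov}})^c)\le C(\EE M)e^{-c\ell^2}$.

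The only genuine obstacle is the measure-theoretic justification of replacing the random list by a deterministic one. I would handle it with the standard freezing lemma: for independent $\mathscr G$-measurable $\mathscr L$ and environment $\omega$, and a bounded jointly measurable $f$, one has $\EE[f(\mathscr L,\omega)\mid\mathscr G]=\EE[f(\lambda,\omega)]|_{\lambda=\mathscr L}$. The remaining step is to check measurability of the event that a cover by at most $\ell^{90}$ intervals of length at most $D_N$ exists. This reduces to closed covers of the exit set, which by continuity of the dynamics between the finitely many updates is a finite union of compact sets. One caveat is that the ``sufficiently large $n$'' threshold of Proposition~\ref{prop:local} is uniform over entries, so it causes no difficulty.
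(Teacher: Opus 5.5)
Your proposal is correct and is the paper's argument: condition on $\mathscr G$ so the list is deterministic while the environment keeps its law, apply Proposition~\ref{prop:local} to each entry using its uniform constants, take a union bound, and then take expectations. Your freezing-lemma and measurability remarks make explicit what the paper leaves implicit; one small correction is that the environment is constant, not merely continuous, between updates, so the exit set is in fact almost surely finite.
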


\begin{proof}
Conditional on $\mathscr G$, the pairs, scales and rows are
deterministic, while the environment retains its original law.
Apply Proposition~\ref{prop:local} to each entry and sum the
failure probabilities. Taking expectations gives the last assertion.
No independence between entries is needed.
\end{proof}
\par\endgroup

\section{Poisson capture of local subpaths}\label{sec:capture}

To refine the cover of Proposition~\ref{prop:local} inside a parent
interval, we cut a shorter segment around the row of interest.
{Its endpoints depend on the environment, so the estimate in
Proposition~\ref{prop:local} for covering exits from a fixed row
does not apply directly: that proposition assumes deterministic endpoints.}

Our aim in this section is to preserve the central part of each such
segment while replacing its endpoints by a pair from a small list
sampled independently of the environment. Corollary~\ref{cor:independentcovers}
then provides {exit covers} for the entire list before we select a
representative for any particular segment. This is the local step that
will be iterated in Section~\ref{sec:multiscale}.

The construction is a local version of the \textit{Poisson capture}
argument of \cite[Section 5]{B25}. Its geometric foundation is the
one-sided volume-accumulation estimate in
\cite[Section 5]{BB23}, developed for exponential LPP and adapted
to BLPP in \cite[Section 11]{B25}. This estimate provides a lower
bound on the volume of endpoints whose geodesics join a given path
sufficiently quickly, and this estimate is crucial for the
\textit{Poisson capture} to work.
We first explain the geometry of cutting, then state the resulting
basin-volume estimate from \cite[Proposition 28]{B25}. {The final lemma combines \textit{Poisson capture} with the covers
of row exits supplied by Corollary~\ref{cor:independentcovers},
on one event that can be used at every stage of the refinement.}

\subsection{Shorter portions of a fixed-endpoint geodesic}
\label{sec:cutting}

We begin with the geodesic $\Gamma_p^{q,t}$ between the original
deterministic endpoints $p,q$, at a fixed dynamical time $t$.
Suppose that $m$ is a bulk row and that the rows
$m-N$ and $m+N$ lie strictly between its endpoint levels. The two exits
\[
 a_t=\bigl(\Gamma_p^{q,t}(m-N),m-N\bigr),\qquad
 b_t=\bigl(\Gamma_p^{q,t}(m+N),m+N\bigr)
\]
delimit a shorter portion of this path. By optimality, that portion
is a geodesic between $a_t$ and $b_t$: a replacement of larger weight
would also improve the original path from $p$ to $q$.

The reason for making this cut is that the shorter geodesic has exactly
the same exit from row $m$ as the original one, while its longitudinal
scale is only $N$. {A cover of this shorter segment's exits
from row $m$, as in Proposition~\ref{prop:local}, would therefore
give a finer cover of the original exits.}
The dynamical horizon remains
{$h=n^{-1/3}\ell^{-4}$}, whereas the critical
time scale for a segment of length $N<n$ is larger:
$N^{-1/3}>n^{-1/3}$. {Thus,} the ratio $h/N^{-1/3}=hN^{1/3}$ becomes
smaller as $N$ decreases. Correspondingly, the stability tolerance
$\sqrt{Nh}$ is smaller relative to the fluctuation scale $N^{1/3}$,
so the same exit must pass a more stringent near-maximum test at the
shorter scale.
We refer the reader to Figure~\ref{fig:cutting} for a depiction of the setting.

\begin{figure}[tbp]
\centering
\begin{tikzpicture}[x=1cm,y=1cm]
  \fill[witnessgold!7] (.55,.7) rectangle (3.0,1.65);
  \fill[witnessgold!7] (4.2,3.95) rectangle (6.85,4.9);
  \draw[guide] (.2,1.15)--(8.05,1.15);
  \draw[guide] (.2,2.8)--(8.05,2.8);
  \draw[guide] (.2,4.45)--(8.05,4.45);
  \node[anchor=east] at (.15,1.15) {$m-N$};
  \node[anchor=east] at (.15,2.8) {$m$};
  \node[anchor=east] at (.15,4.45) {$m+N$};
  \fill[clusterblue!10] (1.05,.96) rectangle (2.55,1.34);
  \draw[clusterblue,densely dashed] (1.05,.96) rectangle (2.55,1.34);
  \fill[clusterblue!10] (4.72,4.26) rectangle (6.22,4.64);
  \draw[clusterblue,densely dashed] (4.72,4.26) rectangle (6.22,4.64);
  \node[clusterblue,anchor=south east,font=\footnotesize]
    at (1.03,1.37) {$U^-_{N;c,m}$};
  \node[clusterblue,anchor=east,font=\footnotesize]
    at (4.55,4.32) {$U^+_{N;c,m}$};
  \draw[clusterblue,<->] (4.72,3.73)--node[below,font=\footnotesize]
    {$2\omega_N$}(6.22,3.73);
  \draw[clusterblue] (4.72,3.66)--(4.72,3.8);
  \draw[clusterblue] (6.22,3.66)--(6.22,3.8);
  \draw[cluster,line width=2.5pt] (2.8,2.8)--(4.5,2.8);
  \draw[line width=.8pt] (3.65,2.68)--(3.65,2.45);
  \node[anchor=north east,fill=white,inner sep=1pt]
    at (3.62,2.43) {$c$};
  \node[anchor=south east] at (2.9,2.95) {$J$, width $w=N^{2/3}$};
  \draw[black!60,line width=1pt] (.6,0)--(1.02,0)--(1.02,.4)--(1.45,.4)
    --(1.45,.8)--(1.8,.8)--(1.8,1.15)--(2.2,1.15);
  \draw[refineteal,line width=1.6pt] (2.2,1.15)--(2.2,1.55)
    --(2.55,1.55)--(2.55,1.97)--(2.96,1.97)--(2.96,2.4)
    --(3.3,2.4)--(3.3,2.8)--(3.67,2.8)--(3.67,3.21)
    --(4.15,3.21)--(4.15,3.63)--(4.66,3.63)--(4.66,4.04)
    --(5.47,4.04)--(5.47,4.45)--(5.91,4.45);
  \draw[black!60,line width=1pt] (5.91,4.45)--(5.91,4.88)
    --(6.48,4.88)--(6.48,5.3)--(7.1,5.3)--(7.1,5.7);
  \fill (.6,0) circle (1.7pt) node[below] {$p$};
  \fill (7.1,5.7) circle (1.7pt) node[above] {$q$};
  \fill[refineteal] (2.2,1.15) circle (2pt);
  \node[anchor=north east,fill=white,inner sep=1pt] at (2.16,.95) {$a_t$};
  \fill[refineteal] (5.91,4.45) circle (2pt);
  \node[anchor=south west,fill=white,inner sep=1pt] at (5.96,4.67) {$b_t$};
  \fill[witnessgold] (3.67,2.8) circle (2.5pt);
  \draw[->,witnessgold] (6.65,2.23)--(3.78,2.75);
  \node[anchor=west,align=left] at (6.7,2.2) {same exit\\at row $m$};
  \draw[<->] (8.65,1.15)--node[right] {$2N$}(8.65,4.45);
  \node[anchor=west,align=left,font=\footnotesize] at (8.0,5.35)
    {$p,q$ fixed;\\$a_t,b_t$ random.};
  \node[clusterblue,anchor=west,align=left,font=\footnotesize] at (8.0,.2)
    {Blue endpoint windows:\\width $2\omega_N$, where\\{$\omega_N=RN^{2/3}$}.};
  \foreach \x/\y in {1.15/.86,2.68/1.43,1.65/1.32,4.58/4.7,6.55/4.55,6.2/4.18}{
    \fill[witnessgold] (\x,\y) circle (2pt);
  }
  \node[below left,witnessgold,font=\footnotesize] at (1.65,1.32) {$\tilde p_1$};
  \node[right,witnessgold,font=\footnotesize] at (6.2,4.18) {$\tilde q_1$};
  \node[below,witnessgold,font=\footnotesize] at (1.15,.86) {$\tilde p_2$};
  \node[above,witnessgold,font=\footnotesize] at (4.58,4.7) {$\tilde q_2$};
  \node[right,witnessgold,font=\footnotesize] at (2.68,1.43) {$\tilde p_3$};
  \node[right,witnessgold,font=\footnotesize] at (6.55,4.55) {$\tilde q_3$};
  \draw[witnessgold,densely dashed,line width=1.1pt]
    (1.65,1.32)--(2.2,1.32)--(2.2,1.55)
    --(2.55,1.55)--(2.55,1.97)--(2.96,1.97)--(2.96,2.4)
    --(3.3,2.4)--(3.3,2.8)--(3.67,2.8)--(3.67,3.21)
    --(4.15,3.21)--(4.15,3.63)--(4.66,3.63)--(4.66,4.04)
    --(5.47,4.04)--(5.47,4.18)--(6.2,4.18);
\end{tikzpicture}
\caption{Cutting and \textit{Poisson capture}, schematically, for a parent
interval $J$ of width $w=N^{2/3}$. The tick labeled $c$ marks
the fixed midpoint of $J$; the gold dot is the geodesic's exit,
which need not equal $c$. The green subpath from $a_t$ to
$b_t$ has the same exit at row $m$ as the original
geodesic $\Gamma_p^{q,t}$. The blue windows $U^-_{N;c,m}$ and $U^+_{N;c,m}$
contain $a_t$ and $b_t$, respectively. Each has horizontal width
{$2\omega_N=2RN^{2/3}$}. For a fixed parent $J$, these windows remain
fixed as the cut endpoints vary. Gold dots are projections of sampled pairs
$(\tilde p_i,\tilde q_i)$ onto the lower and upper sampling regions;
matching subscripts identify a pair. The dashed gold representative
for pair $1$ agrees with $\Gamma_p^{q,t}$ around row $m$, although its endpoints
differ from $a_t,b_t$. The sampling regions are depicted more precisely
in Figure~\ref{fig:windowsbasins}.}
\label{fig:cutting}
\end{figure}

As time varies, the original endpoints $p,q$ remain fixed but the cut points
$a_t,b_t$ can move. Even at one time they are selected by the
optimising path, and conditioning on them can change the law of the
Brownian environment. We therefore cannot
apply the deterministic-endpoint concentration estimate to them by
conditioning. Moreover, freezing the cut points from time zero does not
force $\Gamma_p^{q,t}$ at later times to pass through them.

Since the cut points are random, we discretize their possible
locations by deterministic windows. For each possible interval $J$
on row $m$, we choose lower and upper windows that contain the cut
points whenever $\Gamma_p^{q,t}(m)\in J$ and the bounds of
Lemma~\ref{lem:regularity} hold. We then sample a Poisson collection of endpoint pairs in
slightly larger regions around those two windows.

The \textit{Poisson capture} statement will show that geodesics between the sampled
pairs collectively contain every exit of $\Gamma_p^{q,t}$ associated with
these windows, at every time in $[0,h]$. The cloud is independent of
the environment, so Corollary~\ref{cor:independentcovers} gives
{exit covers} for all its pairs simultaneously. Taking the union
of those covers supplies a finer cover of the actual exits in $J$.
The representative pair may depend on the time and on the original geodesic $\Gamma_p^{q,t}$; the independently sampled list is fixed throughout.
The next two subsections give the windows and the \textit{Poisson capture} statement.

\subsection{{Local windows and sampling regions}}\label{sec:windowsbasins}

Fix a compact slope interval $K\subset(0,\infty)$, a slope
$\lambda\in K$, an integer scale $N\ge1$, and a center
$(c,m)\in\ZZ_{\RR}$.
{Let $R\ge1$ be a transverse allowance.
Lemma~\ref{lem:capture} specifies its permitted range.}
Positive decay constants are denoted by $c_\star$
to distinguish them from the horizontal center $c$. Write
\[
 \omega_N={RN^{2/3}},\qquad
 z_{c,m}^{\lambda}(x,j)=x-c-\lambda(j-m).
\]
Figure~\ref{fig:windowsbasins} illustrates the endpoint windows and
the larger regions in which we sample the Poisson pairs.
The endpoint windows, written in the original coordinates $(x,j)$, are
\begin{align}
 U^-_{N;c,m}
 &=\{(x,j)\in\ZZ_{\RR}:|j-(m-N)|\le N/32,\
              |x-c-\lambda(j-m)|\le \omega_N\},\nonumber\\
 U^+_{N;c,m}
 &=\{(x,j)\in\ZZ_{\RR}:|j-(m+N)|\le N/32,\
              |x-c-\lambda(j-m)|\le \omega_N\}.
 \label{eq:localwindows}
\end{align}
{Thus,} their centers are $(c-\lambda N,m-N)$ and
$(c+\lambda N,m+N)$. The corresponding sampling regions are
\begingroup
\begin{align}
 D^-_{N;c,m}
 &=\{(x,j)\in\ZZ_{\RR}:|j-(m-N)|\le N/8,\
                     |x-c-\lambda(j-m)|\le3\omega_N\},\nonumber\\
 D^+_{N;c,m}
 &=\{(x,j)\in\ZZ_{\RR}:|j-(m+N)|\le N/8,\
                     |x-c-\lambda(j-m)|\le3\omega_N\}.
 \label{eq:outerslabs}
\end{align}
\par\endgroup
The dependence of these sets on the fixed parameters {$\lambda,R$}
is suppressed; their dependence on the scale and center is displayed.

The sets $U^\pm_{N;c,m}$ contain the random cut endpoints. The larger
sets $D^\pm_{N;c,m}$ are where the endpoints of the independent
representatives are sampled. In the transverse coordinate
$z=z_{c,m}^{\lambda}(x,j)$, all four sets are rectangles, centered on
$z=0$; Figure~\ref{fig:windowsbasins} displays their different widths
and level ranges. {Each atom of the Poisson process is an
ordered pair $(\tilde p,\tilde q)$, with $\tilde p\in D^-_{N;c,m}$
and $\tilde q\in D^+_{N;c,m}$. It specifies both endpoints of one
representative geodesic $\Gamma_{\tilde p}^{\tilde q,t}$; the two
endpoints may each lie on any integer row allowed by their respective
sampling regions.}

\begin{figure}[tbp]
\centering
\begin{tikzpicture}[x=1.2cm,y=2.5cm,font=\footnotesize]
  \draw[->] (-3.6,0)--(3.65,0) node[right] {$z/\omega_N$};
  \draw[->] (0,-1.32)--(0,1.35) node[above] {$(j-m)/N$};
  \foreach \sgn in {-1,1}{
    \fill[witnessgold!12] (-3,{\sgn-.125}) rectangle (3,{\sgn+.125});
    \draw[witnessgold] (-3,{\sgn-.125}) rectangle (3,{\sgn+.125});
    \fill[clusterblue!25] (-1,{\sgn-.03125}) rectangle (1,{\sgn+.03125});
    \draw[clusterblue,line width=1pt] (-1,{\sgn-.03125}) rectangle (1,{\sgn+.03125});
  }
  \node[clusterblue,below=3pt] at (.7,-1.04) {$U^-_{N;c,m}$};
  \node[clusterblue,above=3pt] at (.7,1.04) {$U^+_{N;c,m}$};
  \node[witnessgold,below=3pt] at (2.5,-1.125) {$D^-_{N;c,m}$};
  \node[witnessgold,above=3pt] at (2.5,1.125) {$D^+_{N;c,m}$};
  \foreach \x in {-3,-1,1,3}{
    \draw (\x,-.025)--(\x,.025);
    \node[below=2pt] at (\x,0) {$\x$};
  }
  \node[left] at (-3.05,.875) {$7/8$};
  \node[left] at (-3.05,1.125) {$9/8$};
  \node[left] at (-3.05,-.875) {$-7/8$};
  \node[left] at (-3.05,-1.125) {$-9/8$};
  \fill[witnessgold] (-2.1,-.95) circle (2pt) node[left] {$\tilde p_1$};
  \fill[witnessgold] (1.9,1.03) circle (2pt) node[right] {$\tilde q_1$};
  \fill[witnessgold] (2.3,-1.05) circle (2pt) node[right] {$\tilde p_2$};
  \fill[witnessgold] (-1.8,.93) circle (2pt) node[left] {$\tilde q_2$};
\end{tikzpicture}
\caption{The local windows in coordinates transverse to the line of
slope $\lambda$ through $(c,m)$. The blue windows have horizontal
half-width $\omega_N$ and vertical half-height $N/32$, centered at levels
$m-N$ and $m+N$. The gold sampling regions have horizontal half-width
$3\omega_N$ and vertical half-height $N/8$. Matching subscripts on the gold
points indicate sampled endpoint pairs. The continuous rectangles
depict the defining inequalities; their BLPP points lie on integer
rows.}
\label{fig:windowsbasins}
\end{figure}

\begingroup
For a Borel set $E\subseteq\ZZ_{\RR}$, define its horizontal
volume by
\[
 |E|_{\hor}=\sum_{j\in\ZZ}\int_{\RR}\ind\{(x,j)\in E\}\,dx.
\]
For a Borel set $H\subseteq(\ZZ_{\RR})^2$ of endpoint pairs, define
\[
 |H|_{\hor}
 =\sum_{j,k\in\ZZ}\int_{\RR^2}
       \ind\{((x,j),(y,k))\in H\}\,dx\,dy.
\]
Thus, $|H|_{\hor}$ is the two-dimensional Lebesgue measure of the
horizontal endpoint coordinates, summed over all pairs of rows.
\par\endgroup

\subsection{A simultaneous \textit{Poisson capture} lemma}

For each fixed box $(N,c,m)$, we use one independent cloud of endpoint
pairs. We first prove that this cloud captures every geodesic between
that box's endpoint windows, throughout the time interval. A later
lemma will make these local statements simultaneous over the different
boxes used in the refinement.

\begingroup
\begin{lemma}[Local Poisson capture with an adjustable window]
\label{lem:capture}
There exist $\bar\eta\in(0,1/10)$ and $R_0,N_1,C,c_\star>0$,
depending only on the fixed compact slope interval $K$, such that
the following holds for every $N\ge N_1$, $R_0\le R\le N^{\bar\eta}$,
$\lambda\in K$ and center $(c,m)$. Use the windows and sampling
regions in \eqref{eq:localwindows}--\eqref{eq:outerslabs}, with
$\omega_N=RN^{2/3}$. Independently of the entire dynamical environment,
sample a Poisson cloud $\cQ_{N;c,m}$ on
$D^-_{N;c,m}\times D^+_{N;c,m}$ with intensity
\[
 \iota_N=N^{-10/3}R^4
\]
relative to $|\cdot|_{\hor}$.
Let $\cE^{\mathrm{cap}}_{N;c,m}(R)$ be the event that for every
$t\in[0,1]$, every $a\in U^-_{N;c,m}$, $b\in U^+_{N;c,m}$,
and every geodesic $\Gamma:a\to b$ at time $t$, there is
$(\tilde p,\tilde q)\in\cQ_{N;c,m}$ such that
\[
 \Gamma_{\tilde p}^{\tilde q,t}\cap[m-N/2,m+N/2]_{\RR}
 =\Gamma\cap[m-N/2,m+N/2]_{\RR}.
\]
Then
\begin{equation}\label{eq:capturefail}
 \PP((\cE^{\mathrm{cap}}_{N;c,m}(R))^c)
 \le CN^{11}e^{-c_\star R^{3/11}}
\end{equation}
and
\begin{equation}\label{eq:cloudmean}
 \EE|\cQ_{N;c,m}|\le CR^6.
\end{equation}
The sampled geodesics are almost surely unique at every time in
$[0,1]$. All constants are uniform in $N,R,c,m,\lambda$ in the
stated ranges.
\end{lemma}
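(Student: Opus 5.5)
The cloud-size bound \eqref{eq:cloudmean} is a volume computation, and we do it first. Each sampling region $D^\pm_{N;c,m}$ consists of about $N/4$ integer rows, each carrying a horizontal interval of length $6\omega_N=6RN^{2/3}$. Hence $|D^\pm_{N;c,m}|_{\hor}\le CRN^{5/3}$ and $|D^-\times D^+|_{\hor}\le CR^2N^{10/3}$. Multiplying by $\iota_N=N^{-10/3}R^4$ gives $\EE|\cQ_{N;c,m}|\le CR^6$.

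Almost-sure uniqueness of the sampled geodesics follows in two steps. Condition on the cloud, which is independent of the environment. Each pair then has a unique geodesic at every fixed time \cite[Lemma B.1]{Ham19}. Lemma~\ref{lem:clocks} extends this to all of the finitely many configurations in $[0,1]$, and a countable union over pairs finishes the argument.

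The capture event carries the real content. We would follow \cite[Proposition 30]{B25} (the Poisson capture proposition, stated there for a fixed slope) and verify that its proof is uniform in $\lambda\in K$ and in the allowance $R$. The plan has three steps.

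\textbf{Step 1: basins.} Fix a time $t$ and a geodesic $\Gamma:a\to b$ with $a\in U^-$ and $b\in U^+$. Use the basin-volume estimate \cite[Proposition 28]{B25}, the BLPP adaptation of the one-sided volume accumulation of \cite[Section 5]{BB23}. It should give, outside an event of probability $e^{-c_\star R^{3/11}}$, a set $B^-\subseteq D^-$ of starting points $\tilde p$ with $|B^-|_{\hor}\ge R^{-1}\cdot N^{5/3}$ up to constants, or some comparable power loss. Every $\tilde p\in B^-$ has geodesics to all points of $\Gamma$ above row $m-N/2$ that coalesce with $\Gamma$ before row $m-N/2$. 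A symmetric set $B^+\subseteq D^+$ handles the upper end. Planarity and ordering of geodesics then show that for $(\tilde p,\tilde q)\in B^-\times B^+$, the geodesic $\Gamma_{\tilde p}^{\tilde q,t}$ agrees with $\Gamma$ on $[m-N/2,m+N/2]_\RR$. Regularity (Lemma~\ref{lem:regularity}, with allowance of order $R^{1/3}$ or so) is needed here to keep $\Gamma$ inside the $\omega_N$ tube and far from the sides of $D^\pm$. That is why the sampling regions have width $3\omega_N$ and height $N/8$.

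\textbf{Step 2: a finite family.} Capture must hold for all $a,b$ and all geodesics at once, so we reduce to finitely many. Cover $U^\pm$ by a mesh of unit spacing, giving $O(N^{4})$ endpoint pairs up to powers of $R$. By ordering, every geodesic between the windows is sandwiched between extreme geodesics from mesh pairs. Those extreme paths coalesce in the central strip, so each central portion is determined by one of polynomially many reference geodesics. Alternatively, one can argue as in \cite{B25} that central portions form a family of polynomial size. Lemma~\ref{lem:clocks} then extends the statement to all times, at a further factor $CN^2$. Together these account for the prefactor $N^{11}$.

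\textbf{Step 3: hitting the basins.} Conditional on the environment, the number of cloud atoms in $B^-\times B^+$ is Poisson with mean at least $\iota_N|B^-|_{\hor}|B^+|_{\hor}\ge cR^{2}$ or so. The probability that it is zero is therefore at most $e^{-cR^2}\le e^{-c_\star R^{3/11}}$. A union bound over the finite family gives \eqref{eq:capturefail}.

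The main obstacle is Step 1: extracting from \cite[Proposition 28]{B25} a basin-volume lower bound whose failure probability decays like $e^{-cR^{3/11}}$ uniformly in $R\le N^{\bar\eta}$. It must also be combined with the reduction to finitely many paths while the path $\Gamma$ is itself random. We would first try to apply the cited proposition to each reference geodesic from Step 2, since each is a geodesic between deterministic points, and then transfer the conclusion by coalescence. The exponent $3/11$ and the restriction $\bar\eta<1/10$ would be inherited from the tradeoffs in that proposition.
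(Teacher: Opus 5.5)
Your volume computation for \eqref{eq:cloudmean}, the uniqueness argument, the Poisson hitting step and your reliance on \cite[Proposition 28]{B25} all match the paper. The genuine gap is in Step 2: the passage from the uncountable, environment-dependent family of geodesics $\Gamma:a\to b$ to finitely many geodesics with deterministic endpoints.

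Sandwiching $\Gamma$ between geodesics from neighbouring mesh pairs is the right tool for \emph{confinement}. The paper uses exactly this, with the extreme endpoints on each pair of rows and a transversal fluctuation bound, to put all such geodesics in the tube $\mathcal T_N(2)$ of half-width $2RN^{2/3}$. But sandwiching does not determine the central portion of $\Gamma$. Nothing forces the two bounding geodesics to coincide on $[m-N/2,m+N/2]_{\RR}$. For unit-separated endpoints the failure probability is only polynomially small in $N$ (heuristically of order $N^{-2/3}$). That cannot be union-bounded over polynomially many mesh pairs, let alone absorbed into $e^{-c_\star R^{3/11}}$ when $R$ is logarithmic. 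Where they fail to coincide, $\Gamma$ is still a geodesic between random points, and no static basin estimate applies to it. Your fallback, that central portions form a family of polynomial size, does not help either. The issue is not how many central portions there are, but that each must be part of a geodesic with \emph{deterministic} endpoints.

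The missing idea is that, on the confinement event, every such $\Gamma$ \emph{passes through} a deterministic point. Take the mesh
$\mathcal G_N^\pm=\{(x,j)\in N^{-1}\ZZ\times\ZZ:|j-(m\pm N)|\le N/16\}\cap\mathcal T_N(2)$,
with horizontal spacing $N^{-1}$ rather than $1$. Suppose $\Gamma$ avoided $\mathcal G_N^-$. On each of the $O(N)$ rows from $a$ up to row $\lfloor m-N+N/16\rfloor$, its horizontal portion lies in the tube but contains no point of $N^{-1}\ZZ$, so has length less than $N^{-1}$. The total horizontal displacement there is then $O(1)$. Yet the window and tube constraints, with $\lambda\ge\min K>0$, force a displacement of at least $\lambda(N/32-1)-3RN^{2/3}$.

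Hence $\Gamma$ contains points $p'\in\mathcal G_N^-$ and $q'\in\mathcal G_N^+$. By almost sure uniqueness for all mesh pairs at all times (the remark after Lemma~\ref{lem:clocks}), its portion between them is $\Gamma_{p'}^{q',t}$. This portion contains all of $\Gamma$ on the rows strictly between $m-3N/4$ and $m+3N/4$. So the basin of $\Gamma_{p'}^{q',t}$ is contained in that of $\Gamma$, with no coalescence estimate needed. The static basin bound is then applied only to the $CN^6$ mesh pairs.

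Two smaller points.
\begin{enumerate}
\item The volume bound in \cite[Proposition 28]{B25} is a product of one-sided volumes whose second factor is taken from a point depending on the first. You should not expect a literal product set $B^-\times B^+$. This is harmless, since only the total volume $c_\star R^{-2}N^{10/3}$ enters the Poisson step.
\item That bound carries a factor $CN^3$ from a union over auxiliary points, which \cite{B25} absorbs only because there $R=N^\delta$. Here $R$ may be as small as $R_0$, so the factor must be kept. Together with the $CN^6$ mesh pairs and the $CN^2$ expected configurations, it produces the $N^{11}$ in \eqref{eq:capturefail}.
\end{enumerate}
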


We call $\cE^{\mathrm{cap}}_{N;c,m}(R)$ the capture event
for the box $(N,c,m)$. Note that we deliberately keep the polynomial union factor in \eqref{eq:capturefail}
visible. {The estimate is uniform in $R_0\le R\le N^{\bar\eta}$,
so it applies, for example, to $R=(\log N)^a$ for any fixed $a>0$
and to $R=N^\eta$ for any fixed $0<\eta\le\bar\eta$, once $N$
is large enough. The constants do not depend on the choice of $R$.}

\begin{proof}
We follow the proof of \cite[Proposition 30]{B25}, using its
basin-volume input, \cite[Proposition 28]{B25}. We give the
parameter substitutions and the counts needed to retain the dependence
on $R$. The four steps below are the endpoint-mesh reduction, the
basin-volume estimate, the passage to all dynamical configurations,
and Poisson sampling. The factor $N^{11}$ in
\eqref{eq:capturefail} comes from three counts: at most $CN^3$
auxiliary mesh points in one basin argument, $CN^6$ endpoint pairs,
and $CN^2$ configurations in expectation. For one basin, both the
volume failure exponent and the conditional Poisson failure exponent
depend only on $R$.

Throughout the proof the box $(N,c,m)$ and the slope $\lambda$ are
fixed. In the notation of \cite[Section 5]{B25}, we take the
longitudinal scale to be $N$ and $\gamma=1/4$, and replace the
reference line of slope $1$ by $x=c+\lambda(j-m)$. The row
half-widths $N/32$, $N/16$ and $N/8$ below are precisely
$\gamma N/8$, $\gamma N/4$ and $\gamma N/2$ in that proof.
{Translation and Brownian scaling give static estimates that
are uniform over the fixed compact slope interval $K$.}

\par\smallskip\noindent\textbf{Step 1. Confinement and a finite endpoint mesh.}\quad
This is the reduction of \cite[Lemma 31]{B25}. Put
\[
 \begin{aligned}
 \mathcal T_N(a)&=\{(x,j)\in\RR^2:
                 |x-c-\lambda(j-m)|\le aRN^{2/3}\},\\
 \mathcal G_N^\pm&=
 \{(x,j)\in N^{-1}\ZZ\times\ZZ:
       |j-(m\pm N)|\le N/16\}\cap\mathcal T_N(2).
 \end{aligned}
\]
\begingroup
Dropping only the horizontal lattice constraint $x\in N^{-1}\ZZ$
from $\mathcal G_N^\pm$, while retaining integer rows, gives a
window containing $U^\pm_{N;c,m}$ and contained in $D^\pm_{N;c,m}$.
Indeed, its row half-width is $N/16$ and its horizontal half-width
is $2RN^{2/3}$, between the respective widths of those two windows.

Let $\mathcal F_N^t$ be the event that every geodesic from
$U^-_{N;c,m}$ to $U^+_{N;c,m}$ at time $t$ lies in
$\mathcal T_N(2)$. For each pair of endpoint rows, planarity
bounds all these geodesics between the geodesic joining the two
leftmost endpoints and the one joining the two rightmost endpoints.
The straight lines joining these extreme endpoints lie at transverse
coordinates $-RN^{2/3}$ and $RN^{2/3}$, respectively.
Thus a bounding geodesic must deviate from its own straight line
by at least $RN^{2/3}$ to leave $\mathcal T_N(2)$.
A transversal fluctuation estimate
(e.g.\ \cite[Corollary 1.5]{GH23}) bounds each such probability
by $Ce^{-c_\star R^3}$. Each endpoint window has $O(N)$ rows,
and there are only two bounding geodesics for each row pair.
The union bound therefore gives
\[
 \PP((\mathcal F_N^t)^c)\le CN^2e^{-c_\star R^3}.
\]
To check the range of the cited estimate \cite[Corollary 1.5]{GH23}, let $L$ be the row
separation of a bounding pair. Here $L\asymp N$, so a deviation
of $RN^{2/3}$ corresponds, after Brownian scaling, to a normalized
deviation parameter $r\asymp R$. The corollary applies for
$r_0\le r\le L^{1/10}$. Taking $R_0$ large enough and
$\bar\eta<1/10$ ensures these inequalities uniformly for
$R_0\le R\le N^{\bar\eta}$ and sufficiently large $N$.

On $\mathcal F_N^t$, every such geodesic $\Gamma:a\to b$ meets
both $\mathcal G_N^-$ and $\mathcal G_N^+$, and this can be seen by the mesh
argument from \cite[Lemma 31]{B25}, which we now discuss. Write $a=(x_a,j_a)$ and
take $j_*=\lfloor m-N+N/16\rfloor$. If $\Gamma$ did not meet
$\mathcal G_N^-$, each horizontal portion between $j_a$ and
$j_*$ would have length at most $N^{-1}$, and hence
$\Gamma(j_*)-x_a\le C$.
On the other hand, $a\in U^-_{N;c,m}$ gives the first inequality
below, and $\Gamma\subseteq\mathcal T_N(2)$ gives the second:
\[
 \begin{aligned}
 x_a&\le c+\lambda(j_a-m)+RN^{2/3},\\
 \Gamma(j_*)&\ge c+\lambda(j_*-m)-2RN^{2/3}.
 \end{aligned}
\]
Subtracting these bounds and using
$j_*-j_a\ge N/32-1$, with $\lambda_-:=\min K>0$, yields
\[
 \Gamma(j_*)-x_a
 \ge\lambda(j_*-j_a)-3RN^{2/3}
 \ge\lambda_-(N/32-1)-3RN^{2/3}\longrightarrow\infty.
\]
This contradicts the bound $\Gamma(j_*)-x_a\le C$, uniformly
in the stated parameter range since $RN^{2/3}=o(N)$.
The same argument read backwards from $b\in U^+_{N;c,m}$ gives
$\Gamma\cap\mathcal G_N^+\ne\varnothing$. {We have therefore
proved that, on $\mathcal F_N^t$, for every $a\in U^-_{N;c,m}$,
$b\in U^+_{N;c,m}$ and every geodesic $\Gamma:a\to b$ at time $t$,}
\[
 \Gamma\cap\mathcal G_N^-\ne\varnothing,
 \qquad \Gamma\cap\mathcal G_N^+\ne\varnothing.
\]
Choose $p'\in\Gamma\cap\mathcal G_N^-$ and
$q'\in\Gamma\cap\mathcal G_N^+$. Almost surely, geodesics
are unique simultaneously for all pairs in this deterministic
mesh and all $t\in[0,1]$, by the observation after
Lemma~\ref{lem:clocks}. The subpath between $p'$ and $q'$ is
therefore $\Gamma_{p'}^{q',t}$.
Finally, each mesh point lies in a common rectangle with side lengths $O_K(N)$.
It has $O(N)$ rows and $O(N^2)$ horizontal mesh points per row,
so there are at most $CN^3$ points in each mesh and at most
$CN^6$ mesh point pairs $(p',q')$.
\par\endgroup

\begingroup
\par\smallskip\noindent\textbf{Step 2. A static basin estimate.}\quad
We first state the analogue of \cite[Proposition 28]{B25}
needed here. Put
\[
 S_0=[m-N/2,m+N/2]_{\RR},\qquad
 S_1=(m-3N/4,m+3N/4)_{\RR}.
\]
Following the definition at the start of \cite[Section 5.1]{B25},
let $\mathcal B^t(\Gamma)$ consist of pairs in
$D^-_{N;c,m}\times D^+_{N;c,m}$ for which some geodesic
$\widetilde\Gamma$ at time $t$ satisfies
$\widetilde\Gamma\cap S_1=\Gamma\cap S_1$.
Since $S_0\subset S_1$, membership in this basin gives the
agreement required in the statement, including the boundary rows
of $S_0$.

\emph{Static basin estimate.} We use the following form of
\cite[Proposition 28, (91)]{B25}, allowing the transverse
allowance $R$ to vary in the stated range.  After choosing $\bar\eta>0$
sufficiently small and $R_0,N_1$ sufficiently large, there are
constants $C,c_\star>0$ such that for every
$N\ge N_1$, $R_0\le R\le N^{\bar\eta}$ and deterministic
$p'\in\mathcal G_N^-$, $q'\in\mathcal G_N^+$,
\begingroup
\begin{equation}\label{eq:staticbasin}
 \PP\!\left(
  |\mathcal B^0(\Gamma_{p'}^{q',0})|_{\hor}
       <c_\star R^{-2}N^{10/3}\right)
 \le CN^3e^{-c_\star R^{3/11}}.
\end{equation}
\par\endgroup
The constants are uniform in the center $(c,m)$ and
$\lambda\in K$. {By stationarity,} the same estimate holds at
every fixed time $t$.

To obtain this estimate, follow the proof of
\cite[Proposition 28]{B25} with $n$ replaced by $N$ and
$n^{-\delta}$ replaced by $R^{-1}$. Its one-sided input,
\cite[Proposition 29]{B25}, is the BLPP adaptation of the
volume estimate in \cite[Section 5.6]{BB23} discussed in
\cite[Appendix 5]{B25}. The two applications of that input used to obtain
\cite[(94) and (97)]{B25}, respectively,
use $\varepsilon=R^{-1}$ and $2R^{-1}$; each has failure
bound $Ce^{-c\varepsilon^{-3/11}}$ and is valid when
$N\varepsilon^{1/\delta_0}\ge K_0$. Thus both applications
are uniform in our range if $\bar\eta<\delta_0$.

\begingroup
The factor $N^3$ in \eqref{eq:staticbasin} comes from the union
defining the event $\disvol$ in \cite[(96)]{B25}. There are
$O(N)$ possible rows. On each row, the auxiliary points lie
in a horizontal interval of length $O(R^{2/11}N^{2/3})$ and
have spacing $N^{-1}$. Hence their number is at most
\[
 CN\bigl(R^{2/11}N^{2/3}\times N+1\bigr)
 \le CN^{8/3}R^{2/11}\le CN^3,
\]
using $R\le N^{\bar\eta}$ and $\bar\eta<1/10$.
In \cite{B25}, where $R=N^\delta$ for fixed $\delta>0$,
this polynomial factor is absorbed into the stretched
exponential. We retain it because $R$ may grow more slowly here.
In the notation of that proof, the two one-sided volumes are
$|\underline V_N^*(p',q')|_{\hor}$ and
$|\overline V_N^*((b_j,j),q')|_{\hor}$. Each is bounded below
by a constant multiple of $R^{-1}N^{5/3}$; their product
gives the basin-volume threshold. Rounding to the $N^{-1}$
mesh loses only $O(1)$ in the first one-sided volume.
\par\endgroup

\begingroup
\begingroup
In \cite[Proposition 30]{B25}, the Poisson cloud is sampled
on $(\ZZ_{\RR})^2$. To justify restricting it here to
$D^-_{N;c,m}\times D^+_{N;c,m}$, we check that the basin constructed
in \cite[Proposition 28]{B25} lies in this product, and hence
provides the volume lower bound in \eqref{eq:staticbasin}. The basin in
\cite[Section 5.1]{B25} is already restricted to a horizontal
neighbourhood of its reference geodesic. In our parameters, that
neighbourhood has radius $CR^{-4/11}N^{2/3}$, while the reference
geodesic is within $CR^{2/11}N^{2/3}$ of the chord joining $p',q'$,
except with probability $Ce^{-cR^{6/11}}$. This failure is
absorbed in \eqref{eq:staticbasin}. Since $p',q'$ lie in
$\mathcal T_N(2)$, their chord has transverse coordinate of
absolute value at most $2RN^{2/3}$. Thus each basin endpoint
$(x,j)$ satisfies
\[
 |z_{c,m}^{\lambda}(x,j)|
 \le 2RN^{2/3}+C(R^{2/11}+R^{-4/11})N^{2/3}
 \le 3RN^{2/3}
\]
for sufficiently large $R_0$. The lower and upper basin endpoints
have rows in $[m-9N/8,m-7N/8]$ and
$[m+7N/8,m+9N/8]$, respectively, by the same construction.
These are precisely the bounds defining $D^-_{N;c,m}$ and
$D^+_{N;c,m}$ in \eqref{eq:outerslabs}. No additional union
bound is needed for this restriction.
\par\endgroup
\par\endgroup

\par\begingroup
For a fixed time $t$, define the event
\[
 \mathcal V_N^t=
 \bigcap_{\substack{p'\in\mathcal G_N^-\\q'\in\mathcal G_N^+}}
 \left\{|\mathcal B^t(\Gamma_{p'}^{q',t})|_{\hor}
                  \ge c_\star R^{-2}N^{10/3}\right\}.
\]
\par\endgroup
A union over the at most $CN^6$ pairs counted in Step 1 gives
\[
 \PP((\mathcal V_N^t)^c)\le CN^9e^{-c_\star R^{3/11}}.
\]
On $\mathcal F_N^t$, the pair selected in Step 1 has its
subpath inside $\Gamma$, so
\[
 \mathcal B^t(\Gamma_{p'}^{q',t})\subseteq\mathcal B^t(\Gamma).
\]
{This is the first basin inclusion in equation (106) of the statement of
\cite[Lemma 31]{B25}.}
\par\endgroup

\par\smallskip\noindent\textbf{Step 3. All dynamical configurations.}\quad
 Following \cite[Lemmas 32--34]{B25}, we now make the
basin-volume lower bound in \eqref{eq:staticbasin} hold
simultaneously for all mesh pairs and all dynamical times in
$[0,1]$. Fix a deterministic
rectangle containing all the endpoint regions and auxiliary
mesh points above. By monotonicity it also contains all paths
determining the events and basins. Its side lengths are $O_K(N)$,
so it meets at most $CN^2$ unit update blocks. Let
$0=\tau_0<\tau_1<\cdots<\tau_{K_N}\le1$ be their ring times,
with time zero adjoined, and let $\mathscr T_N$ be their clock
{$\sigma$-algebra}. Then $\EE(K_N+1)\le CN^2$.
Define the environmental event
\[
 \mathcal A_N=\bigcap_{i=0}^{K_N}
           (\mathcal F_N^{\tau_i}\cap\mathcal V_N^{\tau_i}).
\]
Conditional on $\mathscr T_N$, each configuration has the static
law by Lemma~\ref{lem:clocks}. Thus
\[
 \PP(\mathcal A_N^c\mid\mathscr T_N)
 \le C(K_N+1)N^9e^{-c_\star R^{3/11}},\qquad
 \PP(\mathcal A_N^c)\le CN^{11}e^{-c_\star R^{3/11}}.
\]
Every configuration during $[0,1]$ is represented in this list.
Using the expected number of rings suffices here; no additional
tail event for $K_N$ is needed.

\par\smallskip\noindent\textbf{Step 4. The cloud hits the mesh-pair basins.}\quad
 The independent Poisson cloud in the lemma has intensity
$\iota_N=N^{-10/3}R^4$, chosen so that, with high probability,
at least one sampled pair belongs to each mesh-pair basin in
every dynamical configuration. We verify this using the
conditional Poisson calculation in \cite[Proposition 30]{B25}. Let $\mathscr E_N$ denote the
{$\sigma$-algebra} of the entire dynamical environment, including its
clocks. On $\mathcal A_N$, for each mesh pair and each
$i\in\{0,\ldots,K_N\}$,
\[
 \begin{aligned}
 &\PP\!\left(\cQ_{N;c,m}\cap
    \mathcal B^{\tau_i}(\Gamma_{p'}^{q',\tau_i})=\varnothing
       \mid\mathscr E_N\right)\\
 &\qquad=\exp\!\left\{-\iota_N
       |\mathcal B^{\tau_i}(\Gamma_{p'}^{q',\tau_i})|_{\hor}\right\}
 \le e^{-c_\star R^2}.
 \end{aligned}
\]
Here $N^{-10/3}R^4\times c_\star R^{-2}N^{10/3}
=c_\star R^2$, so the powers of $N$ cancel in the exponent.
Let $\mathcal H_N$ be the event that the cloud hits every basin
in this finite list. A union over at most $CN^6(K_N+1)$ basins
and then expectation give
\[
 \PP(\mathcal A_N\cap\mathcal H_N^c)
 \le CN^6\EE(K_N+1)e^{-c_\star R^2}
 \le CN^8e^{-c_\star R^2}.
\]
On $\mathcal A_N\cap\mathcal H_N$, every original geodesic
has a mesh subpath from Step 1, and its basin contains the basin
of that subpath by Step 2. The sampled representative therefore
agrees with the original geodesic on $S_1$, and hence on $S_0$.
This proves
\[
 \PP((\cE^{\mathrm{cap}}_{N;c,m}(R))^c)
 \le CN^{11}e^{-c_\star R^{3/11}}+CN^8e^{-c_\star R^2},
\]
Since $R\ge1$ and $3/11<2$, the second term is
absorbed into the first after increasing $C$, proving
\eqref{eq:capturefail}. Conditioning on the
independent finite cloud and applying the all-time uniqueness
observation after Lemma~\ref{lem:clocks} gives uniqueness for
every sampled pair. Finally, each $D$ region has $O(N)$ rows
of horizontal length $6RN^{2/3}$, and therefore
\[
 \EE|\cQ_{N;c,m}|
 =N^{-10/3}R^4|D^-_{N;c,m}|_{\hor}|D^+_{N;c,m}|_{\hor}
 \le N^{-10/3}R^4(CRN^{5/3})^2\le CR^6.
\]
This proves \eqref{eq:cloudmean}.
\end{proof}

\begin{remark}
Taking $R=N^\eta$ for a fixed sufficiently small $\eta>0$
recovers the power-law capture estimate used in
\cite[Proposition 30]{B25}, with $\delta=\eta$ and $\nu=2\eta$
in that paper. Keeping $R$ explicit also permits logarithmic
allowances. This is the same basin and Poisson argument with its
parameter dependence retained.
\end{remark}
\par\endgroup

\begingroup
\subsection{A finer exit cover inside every possible parent interval}
\label{sec:refinement}

Fix deterministic endpoints $p=(x_0,j_0)\le q=(x_1,j_1)$ with
$an\le D:=j_1-j_0\le bn$, slope $\lambda\in K$, and bulk fraction
$\beta\in(0,1/2)$. Define
\begin{equation}\label{eq:initialscale}
 N_0=\left\lfloor\tfrac12\min\{n,\beta D\}\right\rfloor.
\end{equation}
Then $N_0\asymp n$. For every bulk row $m$ and
$1\le N\le N_0$, the cut rows satisfy
$j_0<m-N<m+N<j_1$.

\begingroup
Write $K=[\lambda_-,\lambda_+]$ and set
$K'=[\lambda_-/2,2\lambda_+]$. We use this fixed larger
interval because sampled endpoint pairs can have slopes slightly
different from $\lambda$, including when $\lambda$ is an endpoint
of $K$. Let $\bar\eta$ be supplied by Lemma~\ref{lem:capture}
for $K'$. Fix
\par\endgroup
\begin{equation}\label{eq:allowanceparameters}
 Q>\max\{1080,12/\bar\eta\},\qquad
 \log n\le\ell\le n^{1/(2Q)},\qquad
 h=n^{-1/3}\ell^{-4},\qquad R=\ell^{12}.
\end{equation}
The exponent $Q$ is fixed; the allowance $\ell$ may vary with $n$.
At every scale $N\ge\ell^Q$, we have $R\le N^{\bar\eta}$.
\begingroup
The same $\ell$, $h$ and $R$ are used throughout the refinement;
only the local scale $N$ and the positions of the windows change.

Our goal is to prepare capture clouds and {exit covers} at every
possible parent interval and every scale in a deterministic list,
before selecting intervals from the exits actually visited by the
geodesic. We will obtain one event on which all these estimates,
together with the geodesic regularity estimate, hold simultaneously.
On that event, the multiscale argument can follow the visited
intervals and count their descendants using covers already
available there. This preparation is by union bounds; it requires
no independence between successive generations.
For each integer scale $N$, define the deterministic mesh
\par\endgroup
\begin{equation}\label{eq:deterministicmesh}
 w_N=N^{2/3},\qquad J_{N,k}=[kw_N,(k+1)w_N],\qquad
 c_{N,k}=(k+\tfrac12)w_N\quad(k\in\ZZ).
\end{equation}
Let $\mathcal W_n=[x_0,x_1]$, and put
\begin{equation}\label{eq:boxindices}
 \cI_n(N)=\{k:J_{N,k}\cap\mathcal W_n\ne\emptyset\},\qquad
 \cM_n=\ZZ\cap[j_0+\beta D,j_1-\beta D].
\end{equation}
For a deterministic scale list $\mathcal S_n$, set
\[
 \mathfrak B_n(\mathcal S_n)
 =\{(N,c_{N,k},m):N\in\mathcal S_n,\ k\in\cI_n(N),\ m\in\cM_n\}.
\]
For each such box sample a cloud as in Lemma~\ref{lem:capture},
with allowance $R=\ell^{12}$. The clouds are mutually independent
and jointly independent of the dynamics. Each cloud serves the
capture statement at its own box. Define the actual exit sets
\begin{equation}\label{eq:rootexitset}
 E_m=\{\Gamma_p^{q,t}(m):0\le t\le h\},\qquad E_m(J)=E_m\cap J.
\end{equation}

\begin{lemma}[Finer exit covers, simultaneous over parent intervals]
\label{lem:refinement}
Use the preceding geometry and parameters. Let
$\mathcal S_n\subseteq\ZZ\cap[\ell^Q,N_0]$ be deterministic
with $|\mathcal S_n|\le\log n$. There are $C,c>0$ and an event
$\cE_n^{\mathrm{ref}}$ such that
\begin{equation}\label{eq:refinementprob}
 \PP((\cE_n^{\mathrm{ref}})^c)\le Ce^{-c\ell^2}.
\end{equation}
{On this event, the geodesics $\Gamma_p^{q,t}$ satisfy
\eqref{eq:rootTF}--\eqref{eq:rowlength} with $N=n$ and
$P=C_{\mathrm{reg}}\ell^2$, simultaneously for every $t\in[0,1]$
and all row indices and increments specified in
Lemma~\ref{lem:regularity}.}
Moreover, for every $(N,c_{N,k},m)\in\mathfrak B_n(\mathcal S_n)$,
there is a finite family $\mathcal C(N,c_{N,k},m)$ of closed
intervals, each of length at most $D_N=C_{\mathrm{width}}(Nh\ell^2+\ell^4)$,
such that
\begin{equation}\label{eq:refinementcover}
 E_m(J_{N,k})\subseteq\bigcup_{I\in\mathcal C(N,c_{N,k},m)}I,
 \qquad |\mathcal C(N,c_{N,k},m)|\le\ell^{163}.
\end{equation}
The constants and the lower threshold on $n$ are uniform over
$\ell$, the deterministic scale lists and endpoint pairs in the
stated ranges.
\end{lemma}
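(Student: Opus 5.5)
We build $\cE_n^{\mathrm{ref}}$ as the intersection of three families of events and bound each failure probability by a union bound over the polynomially many boxes in $\mathfrak B_n(\mathcal S_n)$. The first event is the regularity event $\cE^{\mathrm{reg}}_{p,q,n}(\ell)$ of Lemma~\ref{lem:regularity}, with failure at most $Ce^{-c\ell^3}$. Second, for every box $(N,c,m)$ we include the capture event $\cE^{\mathrm{cap}}_{N;c,m}(R)$ of Lemma~\ref{lem:capture}, with $R=\ell^{12}$ and slope interval $K'$. We also include a cloud-size event $\{|\cQ_{N;c,m}|\le\ell^{73}\}$. Third, we include the event that every pair in every cloud has an exit cover from row $m$ during $[0,h]$ by at most $\ell^{90}$ intervals of length at most $D_N$, as in Corollary~\ref{cor:independentcovers}. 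The number of boxes is at most $|\mathcal S_n|\cdot Cn\cdot Cn\le Cn^3$. Since $R=\ell^{12}$ and $N\le n\le e^{\ell}$, the capture failure is at most $Cn^{11}e^{-c_\star\ell^{36/11}}\le Ce^{-c\ell^3}$. Multiplying by $n^3$ still leaves $Ce^{-c\ell^2}$.

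The cloud size needs a Poisson tail bound. The mean is at most $CR^6=C\ell^{72}$, so $\PP(|\cQ|>\ell^{73})\le e^{-c\ell^{73}}$, which survives the union bound. Given the clouds, let $\mathscr G$ be their $\sigma$-algebra and form the list of entries $(N,\tilde p,\tilde q,m)$ over all boxes and all sampled pairs. On the cloud-size event the length of this list is at most $Cn^3\ell^{73}$. Before applying Corollary~\ref{cor:independentcovers} to it, we must check that every sampled pair satisfies \eqref{eq:localranges} at scale $N$ with suitable constants. The row separation lies in $[3N/4,5N/4]$. The slope is $\lambda+O(RN^{-1/3})$, which lies in $K'$ because $R=\ell^{12}\le N^{\bar\eta}\le N^{1/6}$ (from $N\ge\ell^Q$ and $Q>12/\bar\eta$). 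Row $m$ lies at vertical distance at least $N/2$ from both endpoints, so we may take bulk fraction $\beta'=1/3$. The corollary then gives conditional failure at most $Cn^3\ell^{73}e^{-c\ell^2}$. Replacing $c$ by a smaller constant absorbs the factor $n^3\le e^{3\ell}$. (Truncating on the cloud-size event avoids needing the unconditional mean bound.)

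On the resulting event we construct the cover $\mathcal C(N,c_{N,k},m)$. Take any $t\le h$ with $x=\Gamma_p^{q,t}(m)\in J_{N,k}$, so that $|x-c_{N,k}|\le N^{2/3}/2$. The modulus bound \eqref{eq:modulus} at scale $n$ gives $|\Gamma_p^{q,t}(m\pm N)-x\mp\lambda N|\le C_{\mathrm{reg}}\ell^2N^{2/3}$. The cut points $a_t,b_t$ therefore lie in $U^\pm_{N;c,m}$ as soon as $C_{\mathrm{reg}}\ell^2+1\le R=\ell^{12}$, which holds for large $\ell$. The subpath from $a_t$ to $b_t$ is a geodesic, so the capture event supplies a sampled pair $(\tilde p,\tilde q)$ whose geodesic at time $t$ agrees with it on rows $[m-N/2,m+N/2]$. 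In particular the two exits from row $m$ coincide, and $x$ belongs to the exit set of $(\tilde p,\tilde q)$ during $[0,h]$. We define $\mathcal C(N,c_{N,k},m)$ as the union, over the cloud, of the covers of these exit sets. Its size is at most $\ell^{73}\cdot\ell^{90}=\ell^{163}$, and each interval has length at most $D_N$.

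The main obstacle is the parameter bookkeeping. The capture failure $N^{11}e^{-c\ell^{36/11}}$ must beat the $n^3$-fold union bound even at the smallest scale $N=\ell^Q$. The windows $U^\pm$ must absorb the $\ell^2N^{2/3}$ regularity margin. The sampled slopes must stay in a fixed compact interval. All three are controlled by the choices $R=\ell^{12}$, $\ell\ge\log n$ and $Q>12/\bar\eta$.
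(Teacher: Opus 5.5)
Your proposal is correct and follows the paper's proof essentially step for step. It uses the same four events: regularity, capture with $R=\ell^{12}$ over all boxes, cloud size at most $\ell^{73}$, and exit covers for every sampled pair via Corollary~\ref{cor:independentcovers}. It places the cut endpoints in $U^\pm_{N;c,m}$ via \eqref{eq:modulus} in the same way, and it bounds the union of covers by the same count $\ell^{73}\cdot\ell^{90}=\ell^{163}$. Truncating on the cloud-size event instead of using $\EE M_n$ is an immaterial variation.

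One arithmetic slip should be corrected. Pairs sampled from $D^\pm_{N;c,m}$ have row separation in $[7N/4,9N/4]$, not $[3N/4,5N/4]$. Row $m$ then lies at distance at least $7N/8-1$ from each endpoint, not $N/2$. Your bulk fraction $1/3$ still works with these correct values.
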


There are two stages in the proof. We first obtain capture and
{exit covers} for every prepared box. We then use geodesic
regularity to place the cut endpoints of each actual path in the
appropriate box, where those covers are already available. Thus
the estimates apply even though the box visited by the path was
not known when the clouds were sampled.

\begin{proof}
Write $\mathfrak B_n=\mathfrak B_n(\mathcal S_n)$ and
$\cR_n=\cE^{\mathrm{reg}}_{p,q,n}(\ell)$, {the regularity event
defined in Lemma~\ref{lem:regularity}.} Since
\[
 |\cI_n(N)|\le\frac{x_1-x_0}{N^{2/3}}+3\le Cn,
 \qquad |\cM_n|\le bn+1,
\]
there are at most $Cn^2\log n$ boxes in $\mathfrak B_n$. All sampling regions lie
in a single rectangle with side lengths $O(n)$. Define
\[
 \cA_n=\bigcap_{(N,c,m)\in\mathfrak B_n}\cE^{\mathrm{cap}}_{N;c,m}(R),
 \qquad
 \cD_n=\bigcap_{(N,c,m)\in\mathfrak B_n}
                       \{|\cQ_{N;c,m}|\le\ell^{73}\}.
\]
{Lemma~\ref{lem:capture}, applied with $R=\ell^{12}$,
and the Poisson exponential moment give}
\begin{align}
 \PP(\cA_n^c)
 &\le Cn^{13}\log n
       e^{-c\ell^{36/11}},
       \label{eq:allboxcapture}\\
 \PP(\cD_n^c)&\le Cn^2\log n\,e^{-\ell^{73}/2}.
       \label{eq:allcloudcount}
\end{align}
{For the second bound, each cloud cardinality $|\cQ_{N;c,m}|$
is Poisson with mean at most $C\ell^{72}$,} and
$\PP(\mathrm{Poi}(\mu)>a)\le\exp\{(e-1)\mu-a\}$ with
$a=\ell^{73}$ applies uniformly as $\ell\ge\log n\to\infty$.

Let $\mathscr Q_n$ be the {$\sigma$-algebra} of all clouds, and let
$M_n$ be the number of entries in their combined endpoint list,
retaining each entry's scale and row. Then
\begin{equation}\label{eq:totalcloudmean}
 \EE M_n\le Cn^2(\log n)\ell^{72}.
\end{equation}
\begingroup
The factor $n^2\log n$ bounds the number of prepared boxes:
there are at most $Cn$ mesh intervals at each scale, $Cn$ bulk
rows, and $\log n$ scales. Each box contributes a cloud of
mean size at most $C\ell^{72}$, giving
\eqref{eq:totalcloudmean} by summing these means.
\par\endgroup

Every sampled pair at scale $N$ has row separation in
$[7N/4,9N/4]$ and its crossing row has bulk fraction at least
$1/4$. Its slope differs from $\lambda$ by at most
$CRN^{-1/3}\le C\ell^{12-Q/3}=o(1)$, uniformly in the
scale list. Their slopes therefore belong to $K'$ for all
sufficiently large $n$, so Corollary~\ref{cor:independentcovers}
applies with the fixed slope interval $K'$.
\begingroup
{Let $\cX_n$ be the event that, for every $(N,c,m)\in\mathfrak B_n$
and every sampled pair $(\tilde p,\tilde q)\in\cQ_{N;c,m}$,
the exit set $E(\tilde p,\tilde q,m;h)$ has a cover by at most
$\ell^{90}$ intervals of length at most $D_N$, as in
Corollary~\ref{cor:independentcovers}.} Conditioning on
$\mathscr Q_n$ and then averaging gives
\begin{equation}\label{eq:allcrossingcovers}
 \begin{aligned}
 \PP(\cX_n^c\mid\mathscr Q_n)&\le CM_ne^{-c\ell^2},\\
 \PP(\cX_n^c)&\le Cn^2(\log n)\ell^{72}e^{-c\ell^2}.
 \end{aligned}
\end{equation}
\par\endgroup

Set $\cE_n^{\mathrm{ref}}=\cR_n\cap\cA_n\cap\cD_n\cap\cX_n$.
Lemma~\ref{lem:regularity} gives $\PP(\cR_n^c)\le Ce^{-c\ell^3}$.
\begingroup
Combining this regularity bound with \eqref{eq:allboxcapture},
\eqref{eq:allcloudcount} and \eqref{eq:allcrossingcovers}, we obtain
\[
 \PP((\cE_n^{\mathrm{ref}})^c)
 \le\PP(\cR_n^c)+\PP(\cA_n^c)+\PP(\cD_n^c)+\PP(\cX_n^c)
 \le Ce^{-c'\ell^2}.
\]
Here $R=\ell^{12}$ makes the basin exponent
$R^{3/11}=\ell^{36/11}$. Since $36/11>2$ and
$\ell\ge\log n$, this decay absorbs the polynomial factors
in the union over boxes while preserving the displayed bound.
The other three failure bounds are also at most
$Ce^{-c'\ell^2}$, uniformly in the allowed range of $\ell$.
\par\endgroup

Fix an indexed box and an actual exit
$x=\Gamma_p^{q,t}(m)\in J_{N,k}$. On $\cR_n$, its two cut
endpoints on rows $m-N,m+N$ lie in the windows $U^-_{N;c_{N,k},m}$,
$U^+_{N;c_{N,k},m}$, because
\[
 (C_{\mathrm{reg}}\ell^2+1/2)N^{2/3}
 \le\ell^{12}N^{2/3}.
\]
{On $\cA_n$, there is a pair
$(\tilde p,\tilde q)\in\cQ_{N;c_{N,k},m}$ such that
$\Gamma_{\tilde p}^{\tilde q,t}$ agrees with this subpath between
rows $m-N/2$ and $m+N/2$, and hence also exits row $m$ at $x$.
On $\cX_n$, choose one of the supplied exit covers for each pair
in this cloud and let}
$\mathcal C(N,c_{N,k},m)$ be their union. On $\cD_n$ it contains
at most $\ell^{73}\ell^{90}=\ell^{163}$ intervals.
Every actual exit in the parent is covered, which proves
\eqref{eq:refinementcover} for all boxes on the same event.
\end{proof}
\par\endgroup

\begingroup
\section{The accelerated multiscale hitset estimate}\label{sec:multiscale}

We now iterate the common refinement event. The probabilistic work
has been isolated in Lemma~\ref{lem:refinement}; the remaining
argument is a deterministic recursion of interval covers, followed
by an expectation bound. Keeping the allowance $\ell$ adjustable
will give both regimes of Theorem~\ref{thm:regional}.
{For convenience,} write
\begin{equation}\label{eq:allowancecost}
 \Phi_n(\ell)=(\log\ell)\log\!\left(\frac{\log n}{\log\ell}\right).
\end{equation}
The second factor bounds the number of generations.
Each generation costs a fixed power of $\ell$, whose logarithm
accounts for the first factor $\log\ell$.

\begin{proposition}[A uniform estimate with an adjustable allowance]
\label{prop:generalpoint}
Fix $0<a<b<\infty$, $\beta\in(0,1/2)$ and a compact slope
interval $K\subset(0,\infty)$. Choose $Q$ as in
\eqref{eq:allowanceparameters}, using
$K'=[(\min K)/2,2\max K]$. There are $C,c>0$ and $n_0<\infty$ such that the following
holds uniformly for $n\ge n_0$, $\log n\le\ell\le n^{1/(2Q)}$
and deterministic $p=(x_0,j_0)\le q=(x_1,j_1)$ satisfying
\[
 an\le D:=j_1-j_0\le bn,\qquad
 \lambda:=\frac{x_1-x_0}{D}\in K.
\]
\begingroup
With $h=n^{-1/3}\ell^{-4}$,
\begin{equation}\label{eq:generalpointprob}
 \PP\left(\left|\hitset_p^{q,[0,h]}
       (\operatorname{Bulk}_\beta(p,q))\right|
          >n\exp\{C\Phi_n(\ell)\}\right)
 \le Ce^{-c\ell^2}.
\end{equation}
Moreover,
\begin{equation}\label{eq:trace}
 \EE\left|\hitset_p^{q,[0,h]}(\operatorname{Bulk}_\beta(p,q))\right|
 \le n\exp\{C\Phi_n(\ell)\}.
\end{equation}
\par\endgroup

The constants may depend on the fixed geometry and $Q$, but not
on $n,\ell,p,q$.
\end{proposition}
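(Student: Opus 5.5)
The plan is to work entirely on the event $\cE_n^{\mathrm{ref}}$ of Lemma~\ref{lem:refinement}, for a deterministic scale list $\mathcal S_n$ chosen in advance, and to run a deterministic coarse-to-fine recursion of interval covers of the exit sets $E_m$, one bulk row $m\in\cM_n$ at a time.

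\textbf{Step 1: the scales.} Set $N_0$ as in \eqref{eq:initialscale}. Define $N_{j+1}$ to be the largest integer with $N_{j+1}^{2/3}\ge D_{N_j}=C_{\mathrm{width}}(N_jh\ell^2+\ell^4)$, and stop once $N_j\le\ell^{Q'}$ for a fixed $Q'$ slightly above $Q$, or once the additive $\ell^4$ term dominates. While $N_jh\ell^2$ dominates, put $q_j=C\ell^2hN_j^{1/3}$. Then $N_{j+1}^{2/3}\asymp q_jN_j^{2/3}$, and hence $q_{j+1}\lesssim q_j^{3/2}$. Since $q_0\lesssim\ell^{-2}$, the constants are absorbed for large $\ell$, and $\log(1/q_j)$ grows geometrically. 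We have $N_j\le N_0q_0q_1\cdots q_{j-1}$, so after $J\le C\log(\log n/\log\ell)$ generations the scale drops below a fixed power of $\ell$. In particular $|\mathcal S_n|\le\log n$, and all scales are at least $\ell^Q$ (the last step may be truncated at $\ell^Q$). This step, with its rounding and the switch between the two regimes of $D_N$, is the part I expect to need the most care: the per-step gain must really compound, so that the generation count is $\log(\log n/\log\ell)$ and not $\log n/\log\ell$.

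\textbf{Step 2: recursion on $\cE_n^{\mathrm{ref}}$.} At generation $0$, cover $E_m$ by the mesh intervals $J_{N_0,k}$. By \eqref{eq:rootTF} (regularity at $N=n$) only $O(\ell^2)$ of them meet the window around the characteristic line, so keep those. Given a retained interval $J_{N_j,k}$, \eqref{eq:refinementcover} gives at most $\ell^{163}$ intervals of length $D_{N_j}\le N_{j+1}^{2/3}$ covering $E_m(J_{N_j,k})$. Each such interval meets at most $3$ intervals of the mesh at scale $N_{j+1}$, and those become the children. By induction $E_m$ stays covered. After $J$ generations the number of retained intervals is at most $C\ell^2(3\ell^{163})^J\le\exp\{C\Phi_n(\ell)\}$, and each has length at most $N_J^{2/3}\le\ell^{C}$. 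Using \eqref{eq:rowlength}, the horizontal portion on row $m$ has length at most $C_{\mathrm{reg}}\ell^2$. Enlarging each terminal interval by this amount therefore covers $\coarse$ of the path on row $m$ at all $t\in[0,h]$, giving
\[
 \bigl|\hitset_p^{q,[0,h]}(\RR\times\{m\})\bigr|\le\ell^{C}\exp\{C\Phi_n(\ell)\}\le\exp\{C'\Phi_n(\ell)\}.
\]
The factor $\ell^C$ is absorbed because $\Phi_n(\ell)\ge c\log\ell$. Summing over the at most $bn+1$ bulk rows gives the bound $n\exp\{C\Phi_n(\ell)\}$ on $\cE_n^{\mathrm{ref}}$. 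Together with \eqref{eq:refinementprob}, this proves \eqref{eq:generalpointprob}.

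\textbf{Step 3: expectation.} Split the expectation on $\cE_n^{\mathrm{ref}}$ and its complement. Every hitset cell lies in $[x_0,x_1]\times\dint{j_0}{j_1}$, so the hitset size is deterministically at most $Cn^2$. The complement therefore contributes at most $Cn^2e^{-c\ell^2}\le Cn^2e^{-c(\log n)^2}=o(1)$, which gives \eqref{eq:trace}.
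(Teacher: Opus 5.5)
Your proposal is correct and follows the paper's proof essentially step for step. Like you, the paper fixes a deterministic scale list via $N_{j+1}=\lceil(C_*(N_jh\ell^2+\ell^4))^{3/2}\rceil$ and uses the acceleration $b_{j+1}\ge\frac32 b_j+2\log\ell$, which is equivalent to your $q_{j+1}\lesssim q_j^{3/2}$. It then works on $\cE_n^{\mathrm{ref}}$, retains next-mesh intervals with branching factor $3\ell^{163}$, enlarges the terminal intervals by $P$ using \eqref{eq:rowlength}, and handles the expectation with the deterministic $Cn^2$ bound.

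Two bookkeeping details to add. First, ``largest'' in your Step 1 should read ``smallest'' integer. Second, children should be restricted to mesh intervals meeting $\mathcal W_n=[x_0,x_1]$, as in \eqref{eq:childfamily}. This ensures every retained parent indexes a box in $\mathfrak B_n(\mathcal S_n)$, where \eqref{eq:refinementcover} is available, and it loses nothing since $E_m\subseteq\mathcal W_n$.
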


As we shall see in the proof, the scales are chosen before
the locations of any clusters are examined. At generation $j$ the parent mesh has width $N_j^{2/3}$.
The local cover has interval lengths at most
$D_{N_j}=C_{\mathrm{width}}(N_jh\ell^2+\ell^4)$; the next mesh is chosen to
accommodate these intervals. Its width becomes a successively
smaller fraction of the parent width. Figures~\ref{fig:coverhierarchy}
and~\ref{fig:scaleselection} show the interval recursion and its
geometric meaning.

\begin{figure}[tbp]
\centering
\begin{tikzpicture}[x=1cm,y=1cm]
  \node[anchor=west,text=black] at (.2,4.1) {\textbf{One row $m$, one fixed time interval $[0,h]$}};
  \draw[cluster] (.7,2.9) rectangle (11.8,3.75);
  \node[text=black] at (6.25,3.325) {parent $J$: width $N_j^{2/3}$};
  \draw[->,black!60] (6.25,2.9)--(6.25,1.65);
  \node[anchor=west,align=left,font=\footnotesize,text=black] at (6.5,2.3)
    {local \textit{Poisson capture}\\and the scale-$N_j$ exit cover};
  \foreach \a/\b in {1.1/2.0,5.1/6.0,9.8/10.7}{
    \draw[child] (\a,1.18) rectangle (\b,1.55);
  }
  \node[anchor=west,font=\footnotesize] at (.7,.75)
    {$I\in\mathcal C_j(m,J)$: at most {$\ell^{163}$} intervals, each of length at most $D_{N_j}$};
  \draw[->,black!60] (6.25,.52)--(6.25,.02);
  \node[anchor=west,font=\footnotesize] at (6.5,.27)
    {replace by next-mesh intervals};
  \foreach \a/\b in {.7/1.88,1.88/3.06,4.24/5.42,5.42/6.6,8.96/10.14,10.14/11.32}{
    \draw[cluster,fill=clusterblue!5] (\a,-.45) rectangle (\b,-.1);
  }
  \node[anchor=west,font=\footnotesize] at (.7,-.87)
    {{retain next-mesh intervals $J'$ with $J'\cap I\cap\mathcal W_n\ne\varnothing$}};
  \node[anchor=west,font=\footnotesize] at (.7,-1.3)
    {$D_{N_j}\le N_{j+1}^{2/3}$: each covering interval meets at most three next-mesh intervals};
\end{tikzpicture}
\caption{One generation of the recursion in the proof of
Proposition~\ref{prop:generalpoint}. The green intervals cover the
actual exits in $J$, and need not cover all of $J$. The second arrow
replaces this cover by intervals of the next deterministic mesh; it
is part of the same generation. The blue intervals all have width
$N_{j+1}^{2/3}\asymp D_{N_j}$, comparable to the upper bound on the
green-interval lengths. A blue interval can therefore be larger than
a particular green interval, while still being much smaller than
the original parent $J$. Each green interval meets at most three
blue intervals. The retained blue intervals become the parents for
the following generation. The numbers and placements are schematic;
no independence between generations is used.}
\label{fig:coverhierarchy}
\end{figure}

\begin{figure}[tbp]
\centering
\begin{tikzpicture}[x=.95cm,y=.85cm,font=\footnotesize]
  \node[anchor=west] at (0,7.65)
    {The same dynamical horizon $[0,h]$ at both scales; one fixed $t\in[0,h]$ is shown.};
  \node[anchor=west] at (0,7.05) {\textbf{(a) Nested cuts of the same staircase}};
  \node[anchor=west] at (8.15,7.05) {\textbf{(b) Choosing the next scale}};

  \fill[clusterblue!6] (1.6,1.2) rectangle (7.2,5.2);
  \fill[refineteal!12] (1.6,2.4) rectangle (7.2,4.0);
  \foreach \y/\lab in {1.2/{m-N_j},5.2/{m+N_j}}{
    \draw[clusterblue!65,densely dashed] (1.6,\y)--(7.2,\y);
    \node[anchor=east,text=clusterblue] at (1.45,\y) {$\lab$};
  }
  \foreach \y/\lab in {2.4/{m-N_{j+1}},4.0/{m+N_{j+1}}}{
    \draw[refineteal!75,densely dashed] (1.6,\y)--(7.2,\y);
    \node[anchor=east,text=refineteal] at (1.45,\y) {$\lab$};
  }
  \draw[black!45] (1.6,3.2)--(7.2,3.2);
  \node[anchor=east] at (1.45,3.2) {$m$};

  \draw[black!60,line width=.8pt]
    (2.0,0)--(2.4,0)--(2.4,.4)--(2.55,.4)--(2.55,.8)--(2.95,.8)
    --(2.95,1.2)--(3.15,1.2)--(3.15,1.6)--(3.55,1.6)
    --(3.55,2.0)--(3.8,2.0)--(3.8,2.4)--(4.0,2.4)
    --(4.0,2.8)--(4.25,2.8)--(4.25,3.2)--(4.75,3.2)
    --(4.75,3.6)--(5.0,3.6)--(5.0,4.0)--(5.15,4.0)
    --(5.15,4.4)--(5.6,4.4)--(5.6,4.8)--(5.7,4.8)
    --(5.7,5.2)--(6.2,5.2)--(6.2,5.6)--(6.45,5.6)
    --(6.45,6.0)--(6.7,6.0)--(6.7,6.4)--(7.0,6.4);
  \draw[clusterblue,line width=1.4pt]
    (3.15,1.2)--(3.15,1.6)--(3.55,1.6)--(3.55,2.0)--(3.8,2.0)
    --(3.8,2.4)--(4.0,2.4)--(4.0,2.8)--(4.25,2.8)
    --(4.25,3.2)--(4.75,3.2)--(4.75,3.6)--(5.0,3.6)
    --(5.0,4.0)--(5.15,4.0)--(5.15,4.4)--(5.6,4.4)
    --(5.6,4.8)--(5.7,4.8)--(5.7,5.2)--(6.2,5.2);
  \draw[refineteal,line width=1.8pt]
    (4.0,2.4)--(4.0,2.8)--(4.25,2.8)--(4.25,3.2)--(4.75,3.2)
    --(4.75,3.6)--(5.0,3.6)--(5.0,4.0)--(5.15,4.0);
  \foreach \x/\y in {3.15/1.2,6.2/5.2}
    \fill[clusterblue] (\x,\y) circle (2.2pt);
  \foreach \x/\y in {4.0/2.4,5.15/4.0}
    \fill[refineteal] (\x,\y) circle (2.2pt);
  \fill[witnessgold] (4.75,3.2) circle (2.6pt);
  \fill (2.0,0) circle (1.6pt) node[below left] {$p$};
  \fill (7.0,6.4) circle (1.6pt) node[above right] {$q$};
  \node[text=black!75] at (4.45,6.35) {$\Gamma_p^{q,t}$};
  \fill[witnessgold] (2.15,-.55) circle (2.3pt);
  \node[anchor=west] at (2.35,-.55) {the same exit on row $m$};
  \draw[black!15] (7.75,-.25)--(7.75,6.65);

  \node[text=clusterblue] at (11.15,6.55) {$J:\quad |J|=N_j^{2/3}$};
  \draw[cluster] (8.4,5.85) rectangle (13.9,6.15);
  \fill[witnessgold] (11.35,6.0) circle (2.4pt);
  \draw[->,black!60] (11.15,5.65)--(11.15,5.25);
  \node[align=center] at (11.15,4.4)
    {{exit cover} at scale $N_j$\\[2pt]
     interval lengths at most\\[2pt]
     {$D_{N_j}=C_{\mathrm{width}}(N_jh\ell^2+\ell^4)$}};
  \draw[->,black!60] (11.15,3.65)--(11.15,2.95);
  \node[anchor=west] at (11.4,3.3) {choose $N_{j+1}$};
  \draw[child] (10.05,2.35) rectangle (12.45,2.65);
  \fill[witnessgold] (11.35,2.5) circle (2.4pt);
  \node[text=refineteal] at (11.15,1.95) {$J':\quad |J'|=N_{j+1}^{2/3}$};
  \node at (11.15,1.3) {$D_{N_j}\le N_{j+1}^{2/3}\asymp D_{N_j}$};
  \node[align=center,text=refineteal] at (11.15,.35)
    {Cut at $m\pm N_{j+1}$\\and repeat around the same row $m$.};
\end{tikzpicture}
\caption{Two successive nonterminal scales in the
cut-and-refine construction.
\textbf{(a)} The blue and green bands mark the cuts at
$m\pm N_j$ and $m\pm N_{j+1}$ along the same geodesic
$\Gamma_p^{q,t}$. The colored endpoints are its exits from the
respective cut rows. Both portions retain the gold exit on row $m$.
\textbf{(b)} For the parent $J$, the local estimate gives covering
intervals of length at most $D_{N_j}$. We choose the next scale so
that $N_{j+1}^{2/3}\ge D_{N_j}$, and retain next-mesh intervals as in
Figure~\ref{fig:coverhierarchy}. One such interval $J'$ containing the
displayed exit is shown; the next step uses the shorter green cut
in (a). The scales are deterministic, while the cut endpoints and
retained intervals depend on the environment. The estimates use
the Poisson representatives from Figure~\ref{fig:cutting} and hold
simultaneously for all $t\in[0,h]$; only one time is drawn here.
The vertical coordinate denotes the BLPP row, not dynamical time.
Lengths are schematic. The recursion stops when {$N_{g_n}\le\ell^Q$}.}
\label{fig:scaleselection}
\end{figure}

\begin{proof}
We divide the argument into four steps. Step 1 chooses the scales
and bounds the number of generations. Step 2 obtains one event on
which all the local covers hold. On this event, Step 3 constructs
the retained interval families in a fixed row and proves that they
continue to cover every actual exit. Step 4 counts these intervals,
converts them into a cover of the horizontal path portions, and
sums over rows. Figures~\ref{fig:coverhierarchy}
and~\ref{fig:scaleselection} illustrate the two constructions.

\begingroup
Fix $n$, an allowed value of $\ell$, and the endpoints $p,q$.
The quantities $h=n^{-1/3}\ell^{-4}$, $R=\ell^{12}$ and
$P=C_{\mathrm{reg}}\ell^2$ are then fixed for this proof.
Only the longitudinal scale and the retained interval families
change from one generation to the next. In particular, every
generation examines exits during the same time interval $[0,h]$.
\par\endgroup

\par\smallskip\noindent\textbf{Step 1. The scales and their acceleration.}\quad
The initial scale is $N_0$ from \eqref{eq:initialscale}.
At scale $N_j$, the local cover has interval lengths at most
$D_{N_j}=C_{\mathrm{width}}(N_jh\ell^2+\ell^4)$. The next mesh must have width
at least this bound. Since mesh width is the $2/3$ power of the
longitudinal scale, we raise the length bound to the power $3/2$
and define
\begin{equation}\label{eq:recursion}
 N_{j+1}=\left\lceil
       \bigl(C_*(N_jh\ell^2+\ell^4)\bigr)^{3/2}\right\rceil,
 \qquad g_n=\min\{j:N_j\le\ell^Q\},
\end{equation}
where $C_*\ge C_{\mathrm{width}}$ is fixed. Then
$N_{j+1}^{2/3}\ge D_{N_j}$. Also $\ell^Q\le n^{1/2}<N_0$
for all sufficiently large $n$, uniformly over the endpoint ranges.
For a fixed $C_1$, the sum and ceiling in \eqref{eq:recursion} give
\begin{equation}\label{eq:recursionbound}
 N_{j+1}\le C_1\max\{N_j^{3/2}n^{-1/2}\ell^{-3},\ell^6\}.
\end{equation}
For $\ell^Q<N_j\le n$, the right-hand side divided by $N_j$
is at most $C_1\max\{\ell^{-3},\ell^{6-Q}\}<1$.
Thus the scales decrease until the cutoff is reached.

\begingroup
There are two ways in which \eqref{eq:recursionbound} can control
the next scale. If its maximum is attained by $\ell^6$, then
$N_{j+1}\le C_1\ell^6<\ell^Q$, so this step reaches the cutoff.
Otherwise the first term bounds $N_{j+1}$ and gives the further
reduction that we now quantify. In particular, whenever
$j+1<g_n$, the maximum must be attained by that first term.

Write $b_j=\log(n/N_j)$. This measures how much the scale has
already decreased relative to $n$: smaller $N_j$ means larger
$b_j$, and the cutoff corresponds to
$b_j\ge\log(n/\ell^Q)$. For a step that remains above the cutoff,
taking logarithms of the first term in \eqref{eq:recursionbound}
gives
\par\endgroup
\begin{equation}\label{eq:acceleration}
 b_{j+1}\ge\tfrac32b_j+3\log\ell-\log C_1
           \ge\tfrac32b_j+2\log\ell.
\end{equation}
\begingroup
The factor $3/2$ amplifies the decrease already accumulated;
the gain is therefore faster than adding a fixed multiple of
$\log\ell$ at every generation. Iterating
\eqref{eq:acceleration} from $b_0\ge0$ gives, for $j<g_n$,
\[
 b_j\ge 2\log\ell\sum_{i=0}^{j-1}(3/2)^i
      =4\log\ell\bigl((3/2)^j-1\bigr).
\]
This lower bound grows geometrically in $j$, so only logarithmically
many generations are needed for it to exceed the stopping threshold.
More precisely,
\par\endgroup

\begin{equation}\label{eq:generations}
 g_n\le J_n(\ell):=
 \left\lceil\log_{3/2}\left(1+\frac{\log n}{4\log\ell}\right)\right\rceil
 \le C\log\!\left(\frac{\log n}{\log\ell}\right).
\end{equation}
\begingroup
To verify the first inequality, write $J=J_n(\ell)$ and suppose
that $g_n>J$. By the definition of the stopping generation,
$N_J>\ell^Q$, so the iteration of \eqref{eq:acceleration}
above is valid up to $j=J$. The definition of $J$ gives
$(3/2)^J\ge1+\log n/(4\log\ell)$, and therefore
\[
 b_J\ge4\log\ell\bigl((3/2)^J-1\bigr)\ge\log n.
\]
Since $b_J=\log(n/N_J)$, this implies $N_J\le1$,
contradicting $N_J>\ell^Q>1$. Thus the cutoff must have been
reached by generation $J$.
\par\endgroup
\begingroup
 The second inequality in \eqref{eq:generations}, which bounds
$J_n(\ell)$ by $C\log(\log n/\log\ell)$, uses
$\log n/\log\ell\ge2Q>2$ to absorb the additive constant
from the ceiling.
\par\endgroup
\begingroup
Uniformly over the allowed $\ell\ge\log n$,
\eqref{eq:generations} gives $g_n\le J_n(\ell)=O(\log\log n)$.
In particular, the scale list has at most $\log n$ elements for
all sufficiently large $n$, as required by
Lemma~\ref{lem:refinement} in the next step.
\par\endgroup

\par\smallskip\noindent\textbf{Step 2. One event for all possible parents.}\quad
The scale list $\mathcal S_n=\{N_j:0\le j<g_n\}$ is deterministic:
the recursion uses only $n,\ell$ and the original endpoint geometry.
Every scale in this list exceeds $\ell^Q$, and
\eqref{eq:generations} bounds its cardinality by $\log n$.
Lemma~\ref{lem:refinement} therefore supplies its common event
$\cE_n^{\mathrm{ref}}$, with failure at most $Ce^{-c\ell^2}$.
The terminal scale $N_{g_n}$ is excluded from $\mathcal S_n$
because we stop refining there; Step 4 will count cells in its
intervals directly.

For the remainder of the counting argument fix an outcome in
$\cE_n^{\mathrm{ref}}$. The local covers are available for every
box in $\mathfrak B_n(\mathcal S_n)$ before the retained families
are selected. We will check that each retained parent indexes one
of these boxes. No additional probability estimate is needed when
the random family of parents becomes known.

\par\smallskip\noindent\textbf{Step 3. The retained families and their cover invariant.}\quad
Fix $m\in\cM_n$. For $0\le j\le g_n$ set
$w_j=N_j^{2/3}$ and
$\mathcal G_j=\{[kw_j,(k+1)w_j]:k\in\ZZ\}$.
With $P=C_{\mathrm{reg}}\ell^2$, define
\begin{equation}\label{eq:initialfamily}
 \begin{gathered}
 R_m=[x_0+\lambda(m-j_0)-Pn^{2/3},
             x_0+\lambda(m-j_0)+Pn^{2/3}],\\
 \mathcal J_0(m)=\{J\in\mathcal G_0:
                          J\cap R_m\cap\mathcal W_n\ne\emptyset\}.
 \end{gathered}
\end{equation}
Every staircase from $p$ to $q$ stays in the horizontal range
$\mathcal W_n=[x_0,x_1]$. {Since we have fixed an outcome in
$\cE_n^{\mathrm{ref}}$, the regularity bound \eqref{eq:rootTF}
places the exits of $\Gamma_p^{q,t}$ in $R_m$ for every $t\in[0,h]$.}
Hence $E_m\subseteq R_m\cap\mathcal W_n$, which is
covered by $\mathcal J_0(m)$. Moreover,
\begin{equation}\label{eq:initialcount}
 |\mathcal J_0(m)|\le\frac{2Pn^{2/3}}{w_0}+3\le C\ell^2,
\end{equation}
since $N_0\asymp n$.

\begingroup
The families $\mathcal G_j$ contain all intervals of the mesh;
$\mathcal J_j(m)$ contains only those retained in the cover.
We construct the latter by induction, maintaining two properties:
every retained interval meets $\mathcal W_n$, and the family
covers $E_m$. Both properties hold for $\mathcal J_0(m)$ by
\eqref{eq:initialfamily} and the preceding localization argument.

Suppose that $j<g_n$ and $\mathcal J_j(m)$ has these properties.
For $J\in\mathcal J_j(m)$, let $c_J$ be its midpoint. Since
$J\in\mathcal G_j$ meets $\mathcal W_n$, its horizontal mesh
index belongs to $\cI_n(N_j)$. Together with $m\in\cM_n$ and
$N_j\in\mathcal S_n$, this places $(N_j,c_J,m)$ in
$\mathfrak B_n(\mathcal S_n)$. The common refinement event
therefore supplies the family
$\mathcal C_j(m,J)=\mathcal C(N_j,c_J,m)$, with
\[
 E_m\cap J\subseteq\bigcup_{I\in\mathcal C_j(m,J)}I,
 \qquad |\mathcal C_j(m,J)|\le\ell^{163},
 \qquad |I|\le D_{N_j}\le w_{j+1}
       \quad(I\in\mathcal C_j(m,J)).
\]
This family comes from the representative geodesics for the parent
$J$. To use the prepared estimates at the next scale, we replace
its covering intervals by intervals of the next deterministic mesh.
{We keep those that meet the cover within $\mathcal W_n$:}
\begin{equation}\label{eq:childfamily}
 \mathcal J_{j+1}(m)=
 \left\{J'\in\mathcal G_{j+1}:\begin{array}{l}
 {J'\cap I\cap\mathcal W_n\ne\emptyset}\\
 \text{for some }J\in\mathcal J_j(m)
                 \text{ and }I\in\mathcal C_j(m,J)
 \end{array}\right\}.
\end{equation}

First, every interval in $\mathcal J_{j+1}(m)$ meets
$\mathcal W_n$ by this definition. If the next generation is
nonterminal, the same index check therefore permits another
application of the refinement lemma to each of its parents.
Second, let $x\in E_m$. The induction hypothesis gives a
$J\in\mathcal J_j(m)$ containing $x$, and the local cover
gives an $I\in\mathcal C_j(m,J)$ containing $x$. Since
$x\in\mathcal W_n$, any interval $J'\in\mathcal G_{j+1}$
containing $x$ satisfies the intersection condition in
\eqref{eq:childfamily} and is retained. Thus every actual exit
is still covered in generation $j+1$. Therefore, by induction, we have
\begin{equation}\label{eq:coverinvariant}
 E_m\subseteq\bigcup_{J\in\mathcal J_j(m)}J,
       \qquad 0\le j\le g_n.
\end{equation}
The invariant guarantees coverage of the points in $E_m$,
namely the actual exits during $[0,h]$; it does not require
covering the other points of each parent interval. These two
inductive checks ensure both that each refinement is available
and that no exit is lost as the cover becomes finer.
\par\endgroup

\par\smallskip\noindent\textbf{Step 4. Counting intervals and cells.}\quad
Each covering interval has length at most
$D_{N_j}\le w_{j+1}$ and therefore meets at most three closed
intervals of $\mathcal G_{j+1}$, including endpoint contacts.
Summing over the retained parents and using
\eqref{eq:refinementcover} and \eqref{eq:initialcount}, we obtain
\begin{equation}\label{eq:branchcount}
 \begin{aligned}
 |\mathcal J_{j+1}(m)|
 &\le3\sum_{J\in\mathcal J_j(m)}|\mathcal C_j(m,J)|
 \le3\ell^{163}|\mathcal J_j(m)|,\\
 |\mathcal J_{g_n}(m)|&\le C\ell^2(3\ell^{163})^{g_n}.
 \end{aligned}
\end{equation}
These are cardinality inequalities for the fixed outcome in
$\cE_n^{\mathrm{ref}}$. Dependence between generations has no
effect on them: we add the number of children over the parents
actually retained. The polynomial number of prepared boxes entered
the probability estimate for the common event, not this count.

At the terminal generation we impose no further near-maximum test.
For each $J\in\mathcal J_{g_n}(m)$, put
\[
 \widehat J=[\inf J-P,\sup J],\qquad
 H_m=\hitset_p^{q,[0,h]}(\RR\times\{m\}).
\]
\begingroup
Suppose that at time $t$ the exit $x=\Gamma_p^{q,t}(m)$ lies
in $J$. The geodesic enters row $m$ at
$y=\Gamma_p^{q,t}(m-1)$ and traverses the horizontal interval
$[y,x]$ before leaving that row. By \eqref{eq:rowlength},
$0\le x-y\le P$, and therefore
\[
 [y,x]\subseteq[x-P,x]\subseteq[\inf J-P,\sup J]=\widehat J.
\]
The exit-cover invariant ensures that every exit belongs to a
retained terminal interval. Applying this inclusion at every
$t\in[0,h]$ gives
\par\endgroup

\[
 H_m\subseteq\bigcup_{J\in\mathcal J_{g_n}(m)}
                      \coarse(\widehat J\times\{m\}).
\]
\begingroup
Thus $\widehat J$ extends $J$ to the left to include every
possible entrance coordinate of a row portion whose exit lies
in $J$. Its length is $w_{g_n}+P$. An interval of length $L$
meets at most $L+2$ closed unit cells, including contacts at
its endpoints, so each $\widehat J$ meets at most
\par\endgroup

\[
 w_{g_n}+P+2\le\ell^{2Q/3}+C_{\mathrm{reg}}\ell^2+2
                  \le C\ell^{2Q/3}
\]
unit cells, using $N_{g_n}\le\ell^Q$ and $Q>3$. Combining this
with \eqref{eq:branchcount} proves
\begin{equation}\label{eq:rowcount}
 \left|\hitset_p^{q,[0,h]}(\RR\times\{m\})\right|
 \le C\ell^{2+2Q/3}(3\ell^{163})^{g_n}.
\end{equation}
There are at most $bn+1$ bulk rows. Summing and using
\eqref{eq:generations} yields
\begin{equation}\label{eq:cellcount}
 \left|\hitset_p^{q,[0,h]}(\operatorname{Bulk}_\beta(p,q))\right|
 \le Cn\ell^{2+2Q/3}(3\ell^{163})^{g_n}
 \le n\exp\{C'\Phi_n(\ell)\}.
\end{equation}
The factors in the first bound have distinct origins:
$\ell^2$ counts the initial intervals, $(3\ell^{163})^{g_n}$
counts their descendants, and $\ell^{2Q/3}$ counts cells in one
terminal interval. Their product has logarithm at most
$C_Qg_n\log\ell$.
Equation~\eqref{eq:generations} bounds
this by $C'\Phi_n(\ell)$, which gives the second inequality
in \eqref{eq:cellcount}.

This bound holds on $\cE_n^{\mathrm{ref}}$, proving the tail
estimate after adjusting $C$. Off this event the containing
rectangle gives the deterministic bound $Cn^2$. Hence the
expectation is at most
$n\exp\{C'\Phi_n(\ell)\}+Cn^2e^{-c\ell^2}$, which is bounded
by $n\exp\{C\Phi_n(\ell)\}$ uniformly for $\ell\ge\log n$.
\end{proof}

\begingroup
The role of acceleration is clearest for the logarithmic choice
$\ell=\log n$. Using only the additive consequence
$b_{j+1}\ge b_j+2\log\ell$ of \eqref{eq:acceleration}
would give a bound of
$O(\log n/\log\log n)$ generations. Multiplying a fixed power
of $\log n$ at each generation would then yield only a counting
bound of $n^{O(1)}$. By contrast, \eqref{eq:generations} gives
$O(\log\log n)$ generations, and hence the subpolynomial bound
$\exp\{O((\log\log n)^2)\}$. This is the improvement needed
for the finer hitset threshold and the Hausdorff-gauge conclusion.
For $\ell=n^a$ with fixed $a>0$, both arguments already give a
number of generations bounded independently of $n$: the additive
argument gives $O(1/a)$ many generations, while acceleration improves this to
$O(\log(1/a))$ many. Thus in the power-law case the gain from the accelaration is in the
dependence on $a$, whereas in the logarithmic case it makes the
accumulated counting bound subpolynomial in $n$.
\par\endgroup

\begingroup
In Proposition~\ref{prop:generalpoint}, the refinement is carried
out over the shorter horizon $h=n^{-1/3}\ell^{-4}$. This choice
keeps the passage-time tolerance small compared with the natural
weight fluctuation scale $N^{1/3}$ at every scale where {the exit-cover
estimate of Proposition~\ref{prop:local}} is applied; see \eqref{eq:alphacheck}. To
recover the estimate over the critical time interval
$[0,n^{-1/3}]$, we cover it by $\lceil\ell^4\rceil$ intervals
of length at most $h$ and add their hitsets. This costs another
fixed power of $\ell$, which is absorbed in
$\exp\{C\Phi_n(\ell)\}$. The following elementary lemma records
this transfer; it uses stationarity and requires no independence
between the time intervals.
\par\endgroup

\begin{lemma}[Time subdivision]\label{lem:timesubdivision}
Fix deterministic endpoint sets $A,B$ and a deterministic spatial
region $R$, and write $H(I)=\hitset_A^{B,I}(R)$ for compact
dynamical time intervals $I\subseteq[0,\infty)$. For $T,h>0$, put
$K=\lceil T/h\rceil$. Then, for every $u>0$,
\[
 \begin{aligned}
 \PP\bigl(|H([0,T])|>Ku\bigr)
   &\le K\,\PP\bigl(|H([0,h])|>u\bigr),\\
 \EE|H([0,T])|&\le K\,\EE|H([0,h])|.
 \end{aligned}
\]
\end{lemma}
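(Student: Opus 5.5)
The argument is deterministic apart from an appeal to stationarity. Split $[0,T]$ into the $K=\lceil T/h\rceil$ intervals $I_k=[(k-1)h,kh]$, $1\le k\le K$. Their union contains $[0,T]$ because $Kh\ge T$. The hitset in \eqref{eq:hitdefinition} is a union over $t\in I$, so it is monotone in $I$ and subadditive under unions of time intervals:
\[
 H([0,T])\subseteq\bigcup_{k=1}^K H(I_k),\qquad
 |H([0,T])|\le\sum_{k=1}^K|H(I_k)|.
\]

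For the tail bound, note that if $|H([0,T])|>Ku$, then some summand must satisfy $|H(I_k)|>u$. A union bound therefore gives
\[
 \PP\bigl(|H([0,T])|>Ku\bigr)\le\sum_{k=1}^K\PP\bigl(|H(I_k)|>u\bigr).
\]
For the expectation, taking expectations in the displayed inequality gives $\EE|H([0,T])|\le\sum_k\EE|H(I_k)|$. This holds even if some terms are infinite. No independence between different $k$ is used at any point.

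It remains to identify the law of each $|H(I_k)|$ with that of $|H([0,h])|$. The dynamical environment $(W_j^t)_{j\in\ZZ,t\in\RR}$ is stationary in $t$, as constructed in Section~\ref{sec:dynamics} using the stationary extension. The random variable $|H(I)|$ is a measurable functional of the environment on the time window $I$. The sets $A,B,R$ are deterministic, and this functional is the same for every translate of the window. Hence $|H(I_k)|$ has the same law as $|H([0,h])|$, and each sum equals $K$ times the corresponding quantity for $[0,h]$.

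The only point needing care is that the functional commutes with time shifts, together with measurability. The hitset involves a supremum over a continuum of times and all geodesics. One could handle this via Lemma~\ref{lem:clocks}: it reduces the union over $t\in I$ to finitely many configurations, which are indexed by clock rings and translate naturally. Alternatively, it suffices to observe that the hitset is defined pathwise and shifting time maps the dynamics on $I_k$ to the dynamics on $[0,h]$ with the same law. Either way the step is routine, and I expect no real obstacle.
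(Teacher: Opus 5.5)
Your proposal is correct and follows the paper's proof essentially step for step. You cover $[0,T]$ by $K$ consecutive intervals of length $h$ and use the inclusion $H([0,T])\subseteq\bigcup_k H(I_k)$, then apply a union bound for the tail and linearity for the expectation. Time stationarity identifies each $H(I_k)$ in law with $H([0,h])$, and no independence between the intervals is needed.
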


\begin{proof}
Let $I_j=[jh,(j+1)h]$ for $0\le j<K$. Since $Kh\ge T$,
\[
 H([0,T])\subseteq\bigcup_{j=0}^{K-1}H(I_j),\qquad
 |H([0,T])|\le\sum_{j=0}^{K-1}|H(I_j)|.
\]
Stationarity gives $H(I_j)\stackrel{d}=H([0,h])$ for every $j$,
because $A,B,R$ are deterministic and unchanged by a time shift.
If the sum exceeds $Ku$, at least one term exceeds $u$; the union
bound proves the first inequality. Taking expectations proves the
second. No independence between the time intervals is used.
\end{proof}
\par\endgroup

\begingroup
\section{Regional hitsets and exceptional times}\label{sec:dimension}

{The preceding section controls one deterministic endpoint pair.
We now use Poisson capture once more to allow both endpoints to
vary. This was the original role of Poisson capture in
\cite[Section 5]{B25}. Its earlier use here, to represent shorter
geodesic segments with random cut endpoints, is new to this paper.
The resulting regional estimate gives Theorem~\ref{thm:regional}.
Spatial averaging then yields the bounds on the probability of
visiting a marked cell in Lemmas~\ref{lem:averaging}
and~\ref{lem:directionwindows}, which we use to study exceptional
times at which bigeodesics exist.}

\subsection{Endpoint regions and the critical time interval}

For $v=(\lambda n,n)$ and allowance $\ell$, define
\begin{equation}\label{eq:Kregion}
 \mathcal K_{n,\lambda}(\ell)
 =\{(x,j)\in\ZZ_{\RR}: |j|\le n/32,
                  \ |x-\lambda j|\le n^{2/3}\ell^{12}\}.
\end{equation}
Then $-v+\mathcal K_{n,\lambda}(\ell)$ and
$v+\mathcal K_{n,\lambda}(\ell)$ are exactly the windows
$U^-_{n;0,0},U^+_{n;0,0}$ with $R=\ell^{12}$ and slope $\lambda$ in
\eqref{eq:localwindows}; recall that this dependence on $R$ and $\lambda$ is suppressed in the
notation $U^\pm_{n;0,0}$.

\begin{proposition}[Regional hitsets with an adjustable allowance]
\label{prop:generalregional}
Fix a compact interval $K\subset(0,\infty)$. There is $Q_0<\infty$
such that, for each fixed $Q>Q_0$, there are $C,c>0$ and
$n_0<\infty$ with the following property. For $n\ge n_0$,
$\log n\le\ell\le n^{1/(2Q)}$ and $\lambda\in K$, let
\[
 X_{n,\lambda}(\ell)=
 \left|\hitset_{-v+\mathcal K_{n,\lambda}(\ell)}
                  ^{v+\mathcal K_{n,\lambda}(\ell),[0,n^{-1/3}]}
                           (\slab{-n/2}{n/2})\right|.
\]
\begingroup
Uniformly in these parameters,
\begin{equation}\label{eq:generalregionalprob}
 \PP\bigl(X_{n,\lambda}(\ell)>n\exp\{C\Phi_n(\ell)\}\bigr)
 \le Ce^{-c\ell^2}.
\end{equation}
Moreover,
\begin{equation}\label{eq:regionalmain}
 \EE X_{n,\lambda}(\ell)\le n\exp\{C\Phi_n(\ell)\}.
\end{equation}
\par\endgroup
\end{proposition}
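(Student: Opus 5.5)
We will reduce the regional statement to Proposition~\ref{prop:generalpoint} by one more application of Poisson capture, this time at the original scale $n$ with centre $(0,0)$, and then pass from $[0,h]$ to $[0,n^{-1/3}]$ by Lemma~\ref{lem:timesubdivision}. The reduction rests on the identity noted before the statement: $\mp v+\mathcal K_{n,\lambda}(\ell)$ are exactly the windows $U^\mp_{n;0,0}$ with $R=\ell^{12}$.

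First, sample a cloud $\cQ_{n;0,0}$ as in Lemma~\ref{lem:capture}, with $N=n$, $R=\ell^{12}$ and slope $\lambda$, independently of the dynamics. Since $\ell\le n^{1/(2Q)}$ and $Q>12/\bar\eta$, we have $R\le n^{\bar\eta}$, so the lemma applies. On the capture event $\cE^{\mathrm{cap}}_{n;0,0}(R)$, which fails with probability at most $Cn^{11}e^{-c\ell^{36/11}}\le Ce^{-c\ell^2}$, every geodesic from $U^-$ to $U^+$ at every time $t\in[0,1]$ agrees on $\slab{-n/2}{n/2}$ with $\Gamma_{\tilde p}^{\tilde q,t}$ for some $(\tilde p,\tilde q)\in\cQ_{n;0,0}$. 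Hence, on this event, and for any $J\subseteq[0,1]$,
\[
 \hitset_{U^-}^{U^+,J}(\slab{-n/2}{n/2})
 \subseteq\bigcup_{(\tilde p,\tilde q)\in\cQ_{n;0,0}}
 \hitset_{\tilde p}^{\tilde q,J}(\slab{-n/2}{n/2}).
\]
We also intersect with $\{|\cQ_{n;0,0}|\le\ell^{73}\}$; its complement has probability at most $e^{-\ell^{73}/2}$, since the mean is at most $C\ell^{72}$.

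Next, we apply Proposition~\ref{prop:generalpoint} conditionally on the cloud. Each sampled pair has row separation in $[7n/4,9n/4]$ and slope in $K'$, because its slope differs from $\lambda$ by $O(\ell^{12}n^{-1/3})$. Moreover, rows $|j|\le n/2$ lie in $\operatorname{Bulk}_\beta(\tilde p,\tilde q)$ for a fixed $\beta$, for instance $\beta=1/9$: the endpoint rows are within $n/8$ of $\pm n$, so the bulk slab contains $[-n/2,n/2]$ for such $\beta$. Conditional on the cloud the pairs are deterministic, so for each of them the hitset over $[0,h]$, with $h=n^{-1/3}\ell^{-4}$, exceeds $n\exp\{C\Phi_n(\ell)\}$ with probability at most $Ce^{-c\ell^2}$. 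A union bound over at most $\ell^{73}$ pairs shows that the regional hitset over $[0,h]$ is at most $\ell^{73}n\exp\{C\Phi_n(\ell)\}$ off an event of probability $Ce^{-c\ell^2}$. Since $\log\ell\le C\Phi_n(\ell)$, because $\log n/\log\ell\ge 2Q$, the factor $\ell^{73}$ is absorbed by enlarging $C$.

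For the critical interval, take $K=\lceil\ell^4\rceil$ in Lemma~\ref{lem:timesubdivision}. This multiplies the threshold by $\ell^4+1$, again absorbed, and multiplies the failure probability by $C\ell^4$, which is absorbed in $e^{-c\ell^2}$. This gives \eqref{eq:generalregionalprob}. For \eqref{eq:regionalmain}, bound $X_{n,\lambda}(\ell)$ by $n\exp\{C\Phi_n(\ell)\}$ on the good event and by $Cn^2$ deterministically off it; the bounded rectangle makes this valid. Since $n^2e^{-c\ell^2}\le 1$ for $\ell\ge\log n$, the expectation bound follows.

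The main obstacle is the capture step at the top level. We must verify that Lemma~\ref{lem:capture} is applicable with uniform constants over $\lambda\in K$ and the allowed range of $\ell$, and that the captured agreement region $[m-N/2,m+N/2]$ with $m=0$, $N=n$ is exactly the slab $\slab{-n/2}{n/2}$. We must also check that the sampled pairs satisfy the hypotheses of Proposition~\ref{prop:generalpoint} with a single fixed geometry $(a,b,\beta,K')$. The remaining steps are union bounds.
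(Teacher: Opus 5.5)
Your proposal is correct and follows the paper's proof essentially step by step: top-level Poisson capture at scale $n$ with centre $(0,0)$ and $R=\ell^{12}$, then Proposition~\ref{prop:generalpoint} applied conditionally to the at most $\ell^{73}$ sampled pairs, then Lemma~\ref{lem:timesubdivision} with $\lceil\ell^4\rceil$ pieces; your choice $\beta=1/9$ in place of the paper's $\beta=1/8$, and your restriction to $\{M_n\le\ell^{73}\}$ in place of averaging over $\EE M_n$, are only cosmetic differences. The one bookkeeping point to make explicit is the choice of $Q_0$: since you apply Proposition~\ref{prop:generalpoint} with input slopes in the enlarged interval $K'$, $Q_0$ must come from \eqref{eq:allowanceparameters} using the capture exponent $\bar\eta$ of the twice-enlarged interval (the paper's $K_2$), and this choice also covers the top-level capture for $\lambda\in K$.
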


\begin{proof}
Set $K_1=[(\min K)/2,2\max K]$ and
$K_2=[(\min K_1)/2,2\max K_1]$, so $K_2$ is the prescribed
enlargement of $K_1$ in Proposition~\ref{prop:generalpoint}.
Take $Q_0$ large enough
for \eqref{eq:allowanceparameters} using the capture exponent
for $K_2$. This permits Proposition~\ref{prop:generalpoint}
with input slopes in $K_1$, and all its local representatives
in $K_2$.

First work during $[0,h]$, where $h=n^{-1/3}\ell^{-4}$.
Apply Lemma~\ref{lem:capture} with $N=n$, $c=m=0$,
$R=\ell^{12}$ and slope $\lambda$. Write $\cQ_{n;0,0}$ for
its cloud, $M_n$ for its cardinality, and
$\cA_n^{\mathrm{out}}$ for its capture event. The mean satisfies
$\EE M_n\le C\ell^{72}$. Define
\[
 \cD_n^{\mathrm{out}}=\{M_n\le\ell^{73}\},\qquad
 H_n(a',b')=\hitset_{a'}^{b',[0,h]}(\slab{-n/2}{n/2}).
\]
The capture and Poisson bounds give
\[
 \PP((\cA_n^{\mathrm{out}})^c)
 \le Cn^{11}e^{-c\ell^{36/11}},\qquad
 \PP((\cD_n^{\mathrm{out}})^c)\le e^{-\ell^{73}/2}.
\]

 Each sampled pair has row separation in $[7n/4,9n/4]$, and its
slope differs from $\lambda$ by at most
$C\ell^{12}n^{-1/3}=o(1)$ uniformly in the allowed $\ell$.
For large $n$ its slope is in $K_1$. The slab
$\slab{-n/2}{n/2}$ is contained in
$\operatorname{Bulk}_{1/8}(a',b')$.
{Denote the threshold constant $C$ in \eqref{eq:generalpointprob}
by $C_1$, applying Proposition~\ref{prop:generalpoint} with
$a=7/4$, $b=9/4$, $\beta=1/8$ and slope interval $K_1$.}
Let $\cX_n^{\mathrm{out}}$ be the event that every sampled pair
satisfies $|H_n(a',b')|\le n\exp\{C_1\Phi_n(\ell)\}$.
Conditional only on $\mathscr Q=\sigma(\cQ_{n;0,0})$,
Proposition~\ref{prop:generalpoint} applies uniformly to this
deterministic list. It gives
\begingroup
\[
 \begin{aligned}
 \PP((\cX_n^{\mathrm{out}})^c\mid\mathscr Q)
 &\le CM_ne^{-c\ell^2},\\
 \PP((\cX_n^{\mathrm{out}})^c)
 &=\EE\!\left[\PP((\cX_n^{\mathrm{out}})^c\mid\mathscr Q)\right]
 \le C(\EE M_n)e^{-c\ell^2}
 \le C\ell^{72}e^{-c\ell^2}.
 \end{aligned}
\]
\par\endgroup
Let $H_n^{\mathrm{reg}}$ be the regional hitset in the proposition
with $[0,h]$ in place of $[0,n^{-1/3}]$. On
$\cA_n^{\mathrm{out}}$, capture gives the deterministic inclusion
\begin{equation}\label{eq:outerinclusion}
 H_n^{\mathrm{reg}}
 \subseteq\bigcup_{(a',b')\in\cQ_{n;0,0}}H_n(a',b').
\end{equation}
\begingroup
Set
$\cE_n^{\mathrm{out}}=\cA_n^{\mathrm{out}}\cap
\cD_n^{\mathrm{out}}\cap\cX_n^{\mathrm{out}}$.
On this event,
\[
 |H_n^{\mathrm{reg}}|
 \le\ell^{73}n\exp\{C_1\Phi_n(\ell)\}
 \le n\exp\{C_2\Phi_n(\ell)\},
\]
because $\Phi_n(\ell)\ge\log\ell$. Its complement is
\[
 (\cE_n^{\mathrm{out}})^c
 = (\cA_n^{\mathrm{out}})^c\cup(\cD_n^{\mathrm{out}})^c
      \cup(\cX_n^{\mathrm{out}})^c,
 \qquad
 \PP((\cE_n^{\mathrm{out}})^c)\le Ce^{-c\ell^2}.
\]
Every regional hitset has at most $Cn^2$ cells, since all its
paths lie in a rectangle of side lengths $O_K(n)$. Using this
deterministic bound on $(\cE_n^{\mathrm{out}})^c$ gives
\[
 \begin{aligned}
 \EE|H_n^{\mathrm{reg}}|
 &=\EE\!\left[|H_n^{\mathrm{reg}}|\ind_{\cE_n^{\mathrm{out}}}\right]
   +\EE\!\left[|H_n^{\mathrm{reg}}|\ind_{(\cE_n^{\mathrm{out}})^c}\right]\\
 &\le n\exp\{C_2\Phi_n(\ell)\}+Cn^2e^{-c\ell^2}
 \le n\exp\{C_3\Phi_n(\ell)\},
 \end{aligned}
\]
after increasing the constant, uniformly for $\ell\ge\log n$.
\par\endgroup

{Finally,} partition $[0,n^{-1/3}]$ into $\lceil\ell^4\rceil$
intervals of length at most $h$. Lemma~\ref{lem:timesubdivision}
multiplies the threshold, expectation and failure probability by
at most $\lceil\ell^4\rceil$. This factor is absorbed by
increasing $C_3$ in the threshold and decreasing the positive
constant in the failure exponent. These adjustments are uniform
over $\log n\le\ell\le n^{1/(2Q)}$.
\end{proof}

\subsection{The two regimes of the hitset theorem}

{We now deduce Theorem~\ref{thm:regional} by choosing the
allowance $\ell$ in terms of the desired size exponent $\varepsilon$.}

\begingroup
\begin{proof}[Proof of Theorem~\ref{thm:regional}]
Fix a compact slope interval containing $1$, and choose $Q$ as in
Proposition~\ref{prop:generalregional}. By \eqref{eq:introregions}
and \eqref{eq:Kregion},
$\mathscr R_n^\pm\subseteq\pm\bn+\mathcal K_{n,1}(\ell)$
whenever $\ell\ge\log n\ge1$. Thus the proposition gives fixed
constants $C_0,C,c>0$ such that, for all sufficiently large $n$
and every $\log n\le\ell\le n^{1/(2Q)}$,
\[
 \PP\bigl(X_n>n\exp\{C_0\Phi_n(\ell)\}\bigr)
       \le Ce^{-c\ell^2},
 \qquad
 \EE X_n\le n\exp\{C_0\Phi_n(\ell)\}.
\]
To obtain the threshold $n^{1+\varepsilon}$ in
\eqref{eq:segmenthitsetprob}, our goal is to choose $\ell$ so that
\[
 e^{C_0\Phi_n(\ell)}\le n^\varepsilon,
 \qquad\text{equivalently}\qquad
 C_0\Phi_n(\ell)\le\varepsilon\log n.
\]
Larger values of $\ell$ improve the failure probability. Writing
$\ell=n^a$, the bound on the hitset threshold becomes
$C_0a\log(1/a)\le\varepsilon$. Set
\begin{equation}\label{eq:allowancechoice}
 a=\frac{\varepsilon}{2C_0\log(1/\varepsilon)},\qquad \ell=n^a.
\end{equation}
If $\varepsilon_0$ is sufficiently small, then, uniformly for
$0<\varepsilon\le\varepsilon_0$,
\[
 \log(1/a)\le2\log(1/\varepsilon),
 \qquad C_0a\log(1/a)\le\varepsilon.
\]
Taking $\varepsilon_0$ small also ensures $a\le1/(2Q)$, while
taking $A$ sufficiently large ensures $\ell\ge\log n$
throughout the stated range of $\varepsilon$ for all sufficiently
large $n$. Thus the allowance is admissible, and the probability
bound above gives
\[
 \PP(X_n>n^{1+\varepsilon})
 \le C\exp\!\left\{-c n^{\varepsilon/(C_0\log(1/\varepsilon))}\right\}.
\]
Reducing $c$ if necessary gives \eqref{eq:segmenthitsetprob}.
The expectation estimate above also gives
$\EE X_n\le n^{1+\varepsilon}$ for the same choice of $\ell$.
All constants and the lower bound on $n$ are independent of
$\varepsilon$ in the stated range.

Finally, take $\varepsilon=D(\log\log n)^2/\log n$ with a
sufficiently large fixed $D\ge A$. This gives the smaller-threshold
tail \eqref{eq:quantitativehitsettail} from
\eqref{eq:segmenthitsetprob}, and the mean bound
\eqref{eq:segmenthitset} from the preceding expectation estimate.
\end{proof}
\par\endgroup

We will also need the logarithmically enlarged regions themselves.
Define
\begin{equation}\label{eq:logendpointregions}
 \mathcal K_{n,\lambda}^{\log}
 :=\mathcal K_{n,\lambda}(\log n)
 =\{(x,j)\in\ZZ_{\RR}:|j|\le n/32,
              \ |x-\lambda j|\le n^{2/3}(\log n)^{12}\}.
\end{equation}
The same choice in Proposition~\ref{prop:generalregional} gives,
uniformly for $\lambda$ in every fixed compact interval $K$,
\begin{equation}\label{eq:logregionalmean}
 \EE\left|\hitset_{-v+\mathcal K_{n,\lambda}^{\log}}
                  ^{v+\mathcal K_{n,\lambda}^{\log},[0,n^{-1/3}]}
                          (\slab{-n/2}{n/2})\right|
 \le n\exp\{C_K(\log\log n)^2\}.
\end{equation}

\subsection{A marked cell and one direction window}

For a small fixed $\delta>0$, let
\begin{equation}\label{eq:smallI}
 \mathscr I_n^\delta=[-n^{2/3+\delta}/2,n^{2/3+\delta}/2]\times\{0\}.
\end{equation}
For $v=(\lambda n,n)$ the two windows are
$-v+\mathscr I_n^\delta,v+\mathscr I_n^\delta$.
Integer translation invariance lets us average the marked-cell
probability over a central tube, while enlarging the endpoint sets.

\begin{lemma}[Spatial averaging]\label{lem:averaging}
Fix $e>0$ and a compact interval $K\subset(0,\infty)$.
For every sufficiently small fixed $\delta>0$, uniformly over
$\lambda\in K$ and all sufficiently large $n$,
\begin{equation}\label{eq:onewindow}
 \PP\left(\0\in
 \hitset_{-v+\mathscr I_n^\delta}^{v+\mathscr I_n^\delta,[0,n^{-1/3}]}
          \right)\le Cn^{-2/3+e}.
\end{equation}
\end{lemma}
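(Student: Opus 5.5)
The plan is to combine integer translation invariance of the dynamical environment with the regional expectation bound \eqref{eq:logregionalmean}. For an integer shift $z=(k,j)\in\ZZ^2$, the law of the whole dynamics is invariant under translation by $z$. Cells are indexed by $\ZZ^2$ and blocks move to blocks, so
\[
 \PP\bigl(\0\in\hitset_{-v+\mathscr I_n^\delta}^{v+\mathscr I_n^\delta,[0,n^{-1/3}]}\bigr)
 =\PP\bigl(-z\in\hitset_{-v-z+\mathscr I_n^\delta}^{v-z+\mathscr I_n^\delta,[0,n^{-1/3}]}\bigr).
\]
Write $T_n$ for the set of $z$ with $|j|\le n/64$ and $|k-\lambda j|\le n^{2/3}$; it has $|T_n|\asymp n^{5/3}$.

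The key geometric inclusion is that each shifted window lies in the log regions:
\[
 \pm v-z+\mathscr I_n^\delta\subseteq \pm v+\mathcal K_{n,\lambda}^{\log}.
\]
For this I check rows and transverse offsets separately. The row offset is $|j|\le n/64\le n/32$. The transverse offset is at most $n^{2/3}+n^{2/3+\delta}/2+O(1)$. This is at most $n^{2/3}(\log n)^{12}$ only if $\delta$ is of order $\log\log n/\log n$, and it fails for fixed $\delta$.

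I will therefore not use $\mathcal K^{\log}$ directly. Instead I use Proposition~\ref{prop:generalregional} with the allowance $\ell=n^{\delta/12}$, admissible for $\delta$ small since $\ell\le n^{1/(2Q)}$. Then $\mathcal K_{n,\lambda}(\ell)$ has transverse half-width $n^{2/3+\delta}\ge n^{2/3+\delta}/2+n^{2/3}+1$, so the inclusion holds. The proposition then gives expected regional hitset size at most $n\exp\{C\Phi_n(n^{\delta/12})\}=n^{1+C(\delta/12)\log(12/\delta)}$. Given $e$, choose $\delta$ small enough that this exponent is at most $1+e$.

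Next, monotonicity of the hitset in the endpoint sets, together with $-z\in\slab{-n/2}{n/2}$ for $z\in T_n$, gives for each $z\in T_n$
\[
 \PP(\0\in H)\le \PP\bigl(-z\in H^{\mathrm{reg}}\bigr),
\]
where $H^{\mathrm{reg}}$ is the regional hitset of Proposition~\ref{prop:generalregional} restricted to the slab. I then average over $z\in T_n$:
\[
 \PP(\0\in H)\le |T_n|^{-1}\,\EE|H^{\mathrm{reg}}|\le Cn^{-5/3}\,n^{1+e}=Cn^{-2/3+e}.
\]
All constants are uniform over $\lambda\in K$, because the proposition is uniform over that range.

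The main obstacle is the matching between window sizes. The fixed $\delta$ inflation means the logarithmic regions do not suffice, so I fall back on the power-law allowance. I must also check that the subpolynomial cost $\Phi_n(n^{a})\asymp a\log(1/a)\log n$ tends to $0$ relative to $\log n$ as $\delta\to0$, which is what the statement's requirement "$\delta$ sufficiently small" accommodates. A secondary point is integer shifts versus real $\lambda j$: rounding changes the offset only by $O(1)$, which is absorbed.
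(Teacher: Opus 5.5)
Your proof is correct and is essentially the paper's argument. Both apply Proposition~\ref{prop:generalregional} with a power allowance $\ell=n^a$; the paper fixes $a$ from $e$ and then takes $\delta<6a$, whereas you set $a=\delta/12$. Both then use integer translation invariance and monotonicity of the hitset in the endpoint sets to compare the marked-cell event with membership in the regional hitset for each cell of a central tube of $\asymp n^{5/3}$ cells, and divide the expected regional hitset size $n^{1+Ca\log(1/a)}$ by that volume.
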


\begin{proof}
{Choose $Q$ as in Proposition~\ref{prop:generalregional} on $K$,
and denote the constant $C$ in its expectation bound
\eqref{eq:regionalmain} by $C_0$.}
Choose a fixed $a\in(0,1/(2Q))$ so small that
$C_0a\log(1/a)<e/2$, and take $0<\delta<6a$.
Use the allowance $\ell=n^a$. Define the deterministic set of cells
\[
 S_{n,v}=\{(i,j)\in\ZZ^2: |j|\le n/128,
                              \ |i-\lambda j|\le n^{2/3}\}.
\]
It contains at least
$(2\lfloor n/128\rfloor+1)(2n^{2/3}-1)\ge cn^{5/3}$ cells.
For every $z\in S_{n,v}$,
$\mathscr I_n^\delta+z\subseteq\mathcal K_{n,\lambda}(\ell)$
for large $n$: the row is in $[-n/32,n/32]$, and its transverse
half-width is at most
$n^{2/3+\delta}/2+n^{2/3}\le n^{2/3+12a}$.
\begingroup
Let $p_n$ be the probability in \eqref{eq:onewindow}, and put
\[
 \mathcal H_{n,v}
 =\hitset_{-v+\mathcal K_{n,\lambda}(\ell)}
          ^{v+\mathcal K_{n,\lambda}(\ell),[0,n^{-1/3}]}
          (\slab{-n/2}{n/2}),
\]
so that $|\mathcal H_{n,v}|=X_{n,\lambda}(\ell)$.
Integer translation invariance gives, for each $z\in S_{n,v}$,
\[
 p_n=\PP\!\left(z\in
 \hitset_{-v+\mathscr I_n^\delta+z}
          ^{v+\mathscr I_n^\delta+z,[0,n^{-1/3}]}\right)
 \le\PP(z\in\mathcal H_{n,v}).
\]
The inequality uses the endpoint-set inclusion above and the fact
that $z$ lies in the central slab. Summing these inequalities
and then applying Proposition~\ref{prop:generalregional} yields
\[
 \begin{aligned}
 |S_{n,v}|p_n
 &\le\EE\sum_{z\in S_{n,v}}\ind\{z\in\mathcal H_{n,v}\}
 \le\EE X_{n,\lambda}(\ell)\\
 &\le n^{1+C_0a\log(1/a)}\le n^{1+e/2}.
 \end{aligned}
\]
\par\endgroup
Division by $|S_{n,v}|$ proves the stated bound, with constants
uniform in $\lambda\in K$.
\end{proof}
\par\endgroup

\subsection{Infinite geodesic localization and a fixed direction}
\label{sec:fixeddirection}

We first record the infinite-geodesic result needed to pass from
finite crossing events to bigeodesics. For $p,q\in\RR^2$, let
$\mathbb L_p^q$ denote the straight line segment joining them.
For $A\subseteq\RR^2$ and $R\ge0$, write
\[
 B_R(A)=\{(x+u,j):(x,j)\in A,\ |u|\le R\}
\]
for its horizontal $R$-neighbourhood. For the directedness and
localization assertions below, \cite[Appendix 1]{B25} adapts
the classical argument of Newman and Howard--Newman from static
first-passage percolation to dynamical BLPP, by making the
transversal fluctuation estimates uniform in dynamical time.

\begin{proposition}[{\cite[Proposition 13]{B25}}]
\label{prop:infinitegeometry}
For every fixed $\chi>0$, there is an event
$\Omega_\chi^{\mathrm{dir}}$ of probability one on which the following
holds simultaneously for all $t\in\RR$. Every non-trivial bigeodesic
$\Gamma$ in $T^t$ is $\theta$-directed for some $\theta\in(0,\infty)$,
and there is an integer $n_0=n_0(\Gamma,t,\chi)$ such that, for every
integer $n\ge n_0$,
\begin{equation}\label{eq:infiniteTF}
 \Gamma\cap[-n,n]_{\RR}
 \subseteq B_{n^{2/3+\chi}}
     \bigl(\mathbb L_{-(\theta n,n)}^{(\theta n,n)}\bigr).
\end{equation}
\end{proposition}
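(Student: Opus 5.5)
The plan is to adapt the Newman and Howard--Newman straightness argument to the dynamics. The probabilistic input will be a transversal fluctuation bound that holds simultaneously for all geodesics between points of a polynomial mesh and for all times in a unit time interval. A deterministic argument then turns this into directedness and the localization \eqref{infiniteTF} for every bigeodesic.

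\textbf{Step 1: uniform straightness at one scale.} Fix $\chi>0$, a unit time interval $[k,k+1]$ and a dyadic scale $r=2^i$. Consider endpoint pairs $p\le q$ in a mesh of spacing $r^{-1}$ inside a box of side $r^{C}$ around the origin, with row separation $L\ge r^{\chi/10}$ and slope in a compact interval $K_r$ that expands slowly with $r$. For each such pair I apply Lemma~\ref{lem:regularity} with allowance $\ell=L^{c\chi}$. This shows that $\Gamma_p^{q,t}$ stays within $L^{2/3+\chi/2}$ of the chord $\mathbb L_p^q$ for all $t\in[0,1]$, except with probability $e^{-cL^{3c\chi}}$. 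The all-time uniformity comes from Lemma~\ref{lem:clocks}, and stationarity moves $[0,1]$ to $[k,k+1]$. The planar ordering of geodesics, together with the increment bound \eqref{eq:modulus}, extends the statement from mesh endpoints to arbitrary endpoints in the box; this costs only an additive $O(1)$ in the tube width. A union bound over the polynomially many pairs shows that the scale-$r$ straightness event fails with probability summable in $i$. By Borel--Cantelli, and then a countable union over $k\in\ZZ$, there is a full-probability event $\Omega^{\mathrm{dir}}_\chi$ on which, for every $t$, straightness holds at all sufficiently large scales $r\ge r_0(k,\omega)$. Here $k=\lfloor t\rfloor$.

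\textbf{Step 2: deterministic consequences.} On $\Omega^{\mathrm{dir}}_\chi$, fix $t$ and a non-trivial bigeodesic $\Gamma$. Every finite portion of $\Gamma$ is a geodesic, so Step 1 applies to portions between its points at rows $\pm r$ once $r$ is large.

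\begin{itemize}
\item First, comparing the chords at successive dyadic scales, following Howard--Newman, gives the following. The slope of the chord between $\Gamma$'s points on rows $0$ and $2^i$ changes by at most $2^{-i/3+\chi}$ from one scale to the next. These changes are summable, so the upper half of $\Gamma$ has a limiting slope $\theta_+$. The same argument gives a limiting slope $\theta_-$ for the lower half.
\item Second, apply straightness to the single portion from row $-r$ to row $r$. The point of $\Gamma$ on row $0$ must lie within $r^{2/3+\chi/2}$ of the midpoint of this chord. If $\theta_+\ne\theta_-$, that distance is of order $r$, which is a contradiction. Hence $\theta_+=\theta_-=:\theta$.
\item The localization \eqref{infiniteTF} then follows by applying straightness once more, on $[-n,n]$. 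The chord of that portion deviates from $\mathbb L_{-(\theta n,n)}^{(\theta n,n)}$ by at most $\sum_{i\ge\log_2 n}2^{i(2/3+\chi/2)}\cdot 2^{-i}\cdot n\lesssim n^{2/3+\chi/2}$. This fits inside $n^{2/3+\chi}$ for $n\ge n_0(\Gamma,t,\chi)$.
\end{itemize}

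\textbf{Main obstacle: ruling out axial directions.} I expect the hard step to be showing $\theta\in(0,\infty)$ for non-trivial $\Gamma$. Step 1 gives tube widths relative to the chord that degrade as the slope approaches $0$ or $\infty$, because the estimates are only uniform over compact slope ranges. The approach I would try first is to let $K_r=[r^{-\kappa},r^{\kappa}]$ for a small $\kappa$. I would then check, using Brownian scaling as in the proof of Lemma~\ref{lem:regularity}, that the transversal bounds keep the form $L^{2/3+\chi/2}$ after the slope-dependent rescaling. This yields a modulus of continuity for the chord slopes even near the axes. A separate soft argument should then exclude $\theta\in\{0,\infty\}$: a bigeodesic with vanishing asymptotic slope that is not eventually horizontal would have chords with slope of order $r^{-\kappa}$. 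In that regime the weight gain over the straight horizontal path is of order $\sqrt{r}$ times a Brownian maximum, and this is beaten by some competing staircase at large scales, again uniformly in time via Lemma~\ref{lem:clocks}. The remaining cases are bigeodesics that are eventually vertical or horizontal in one half but not globally trivial. These are ruled out by the same chord comparison across row $0$ used in Step 2.
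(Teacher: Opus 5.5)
Your Steps 1--2 are the right skeleton. They are the Newman/Howard--Newman straightness scheme, with transversal estimates made uniform in dynamical time by Lemma~\ref{lem:clocks} and Borel--Cantelli, which is how \cite[Appendix 1]{B25} obtains directedness and \eqref{eq:infiniteTF}. The gap is the step you flag. The obstruction is not degrading constants near the axes: horizontal Brownian scaling makes the static slope-$\lambda$ estimate an exact rescaling of the diagonal one. The real problem is that every use of straightness in Step 2 presupposes three things: $\Gamma$ meets rows $\pm r$, $|\Gamma(\pm r)|\le r^C$, and the relevant chord slope lies in $K_r$. These are exactly what $\theta\in(0,\infty)$ is supposed to deliver.

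Two non-trivial staircases show what remains open.
\begin{itemize}
\item The first is horizontal on row $0$ to the left of some $z_0$ and on row $1$ to the right of it. It meets only two rows.
\item The second is vertical at column $x_-$ below row $0$, crosses row $0$ horizontally, and is vertical at column $x_+>x_-$ above it. Its chord slopes are $O(1/r)$, outside $[r^{-\kappa},r^{\kappa}]$ for your small $\kappa$.
\end{itemize}
Neither your dyadic slope comparison nor the ``same chord comparison across row $0$'' applies to these. The same is true of any path whose chord slopes degenerate faster than $r^{\mp\kappa}$, or whose horizontal coordinate escapes every polynomial box. Your soft argument does not fill this. In the paper's convention slope $0$ means nearly vertical, not nearly horizontal. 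Moreover, being ``beaten by some competing staircase at large scales'' is not an estimate, let alone one valid simultaneously for all $t$.

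The first example shows why a genuinely different input is needed. It is a bigeodesic in $T^t$ exactly when $z_0$ is a global maximiser of the two-sided Brownian motion $W_0^t-W_1^t$. Excluding it for all $t$ is therefore an exceptional-times statement about infinitely many blocks refreshed at a dense set of times. Lemma~\ref{lem:clocks} cannot supply this. Roughly $M$ configurations occur on $[-M,M]$ during $[0,1]$, while at a fixed time the maximum over $[-M,M]$ falls in a given bounded window with probability of order $M^{-1/2}$, so the union bound diverges.

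In this paper the proposition is imported from \cite{B25}, and axial behaviour is excluded by a separate result. The proof of Theorem~\ref{thm:main} uses, alongside Proposition~\ref{prop:infinitegeometry}, \cite[Proposition 49]{B25}: almost surely no time admits a non-trivial axially directed semi-infinite geodesic. To complete your proof you need such a statement, or a proof of it. You also need an argument that, for each half of $\Gamma$, some dyadic chord beyond the Borel--Cantelli threshold has slope in a fixed compact range. From there, planar ordering against geodesics of fixed slope keeps all later chords in a slightly larger fixed range, and the rest of your Step 2 goes through.
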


For a fixed direction, Lemma~\ref{lem:averaging} is strong enough
to exclude every time at which a bigeodesic in that direction visits
a marked cell. The reason is that a union over order {$n^{1/3}$}
time intervals still leaves a probability tending to zero. There
is no need to pay for a family of direction windows.

\begin{proof}[Proof of Theorem~\ref{thm:fixeddirection}]
\begingroup
Fix a deterministic $\theta\in(0,\infty)$. By integer spatial
and time translation invariance and a countable union over
$z\in\ZZ^2$ and $a\in\ZZ$, it suffices to rule out a
$\theta$-directed bigeodesic visiting the cell $\0$ at any time
in $[0,1]$. Choose a compact interval $K\subset(0,\infty)$
containing $\theta$, fix $0<e<1/3$, and then take $\delta>0$
sufficiently small for Lemma~\ref{lem:averaging}.
\par\endgroup

For each integer $n$, put $v_n=(\theta n,n)$,
{$h_n=n^{-1/3}$} and $q_n=\lceil h_n^{-1}\rceil$. Define
\[
 J_{n,i}=[ih_n,\min\{(i+1)h_n,1\}],\qquad 0\le i<q_n,
\]
and, for a time interval $J$, let
\[
 F_n^\theta(J)=
 \{\0\in\hitset_{-v_n+\mathscr I_n^\delta}^{v_n+\mathscr I_n^\delta,J}\},
 \qquad
 G_n^\theta=\bigcup_{i=0}^{q_n-1}F_n^\theta(J_{n,i}).
\]
By Lemma~\ref{lem:averaging}, time stationarity, and a union bound,
\begin{equation}\label{eq:fixedtimeunion}
 \PP(G_n^\theta)
 \le q_n Cn^{-2/3+e}
 \le{C n^{-1/3+e}}.
\end{equation}
{Since $e<1/3$}, we have $\PP(G_n^\theta)\to0$, and Fatou's lemma gives
\[
 \PP\bigl(\liminf_{n\to\infty}G_n^\theta\bigr)
 \le\liminf_{n\to\infty}\PP(G_n^\theta)=0.
\]

Work on the probability-one event $\Omega_{\delta/2}^{\mathrm{dir}}$
from Proposition~\ref{prop:infinitegeometry}. If a $\theta$-directed
bigeodesic $\Gamma$ visiting the cell $\0$ existed at some
$t\in[0,1]$, then \eqref{eq:infiniteTF} would put its intersections
with rows $-n,n$ within $n^{2/3+\delta/2}$ of $-\theta n,\theta n$,
respectively, for all sufficiently large $n$. Since
$n^{2/3+\delta/2}\le n^{2/3+\delta}/2$ for large $n$, those
intersections would lie in $-v_n+\mathscr I_n^\delta$ and
$v_n+\mathscr I_n^\delta$. The portion of $\Gamma$ between them
is a geodesic and contains its horizontal portion in the marked
cell. {Thus,} $F_n^\theta(\{t\})$, and hence $G_n^\theta$, would occur
for every sufficiently large $n$. Such a bigeodesic would therefore
force $\liminf_nG_n^\theta$, which has probability zero.

\end{proof}

\begingroup
\subsection{Logarithmic localization about the endpoint chord}
\label{sec:chordlocalization}

The quantitative hitset estimate allows logarithmic endpoint
windows. To use them for bigeodesics whose directions
range over a compact interval, we need a corresponding
logarithmic localization statement. The power allowance in
\eqref{eq:infiniteTF} does not provide this. Instead, we compare
each finite subpath with the line joining its own endpoints. Its
chord slope may vary with $n$; the later union over direction
windows will include all these slopes.

\begin{lemma}[Uniform localization about endpoint chords]
\label{lem:chordlocalization}
Fix $B>0$ and a compact interval $K\subset(0,\infty)$. There are
$C,c>0$ and $n_0<\infty$ such that the following holds for every
integer $n\ge n_0$. There is an event $\mathcal E_n^{\mathrm{ch}}(B,K)$
with
\begin{equation}\label{eq:chordfailure}
 \PP\bigl((\mathcal E_n^{\mathrm{ch}}(B,K))^c\bigr)
 \le Ce^{-c(\log n)^3}
\end{equation}
on which, simultaneously for every $t\in[0,1]$, every
$p=(x_-,-n)$, $q=(x_+,n)$ satisfying
\[
 |x_-|,|x_+|\le Bn,\qquad
 \frac{x_+-x_-}{2n}\in K,
\]
every geodesic $\Gamma:p\to q$ in $T^t$, and every $(x,j)\in\Gamma$
with integer $j$, we have
\begin{equation}\label{eq:chordlocalization}
 \left|x-\frac{n-j}{2n}x_--\frac{n+j}{2n}x_+\right|
 \le C n^{2/3}\log n.
\end{equation}
The constants are uniform over all the endpoint pairs in this statement.
\end{lemma}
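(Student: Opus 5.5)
We discretize the endpoints, prove the bound for deterministic pairs at a fixed time, and then extend it to all times using Lemma~\ref{lem:clocks}. The only probabilistic input is the transversal fluctuation estimate of \cite[Corollary 1.5]{GH23}, which Lemma~\ref{lem:regularity} already uses via \eqref{eq:rootTF}. We apply it with allowance $r=C\log n$, so the failure probability is $e^{-c(\log n)^3}$. This is far below every polynomial union factor that appears below.

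\textbf{Step 1: mesh endpoints.} Let $\mathcal M_n$ be the set of pairs $p'=(x'_-,-n)$, $q'=(x'_+,n)$ with $x'_\pm\in\ZZ$, $|x'_\pm|\le Bn+1$, and chord slope in a slightly enlarged interval $K'$. This set has $O(n^2)$ elements. For each such deterministic pair, Brownian scaling reduces the pair to the diagonal case. \cite[Corollary 1.5]{GH23}, with $r=C_0\log n\le n^{1/10}$, then shows that the static geodesic exit on every integer row stays within $C_0\log n\cdot n^{2/3}$ of the chord. The failure probability is at most $Ce^{-c(\log n)^3}$, uniformly over slopes in $K'$. We control both the leftmost and the rightmost geodesic; for deterministic pairs these coincide almost surely, but recording both costs nothing.

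\textbf{Step 2: all times.} All these paths lie in a rectangle of side $O(n)$, which meets $O(n^2)$ update blocks. Lemma~\ref{lem:clocks} gives the bound in every configuration during $[0,1]$ simultaneously. The failure probability becomes $Cn^2\cdot n^2e^{-c(\log n)^3}+e^{-cn^2}\le Ce^{-c'(\log n)^3}$. We call the resulting event $\mathcal E_n^{\mathrm{ch}}(B,K)$.

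\textbf{Step 3: sandwiching, the main step.} Take a general pair $p,q$ as in the statement and a geodesic $\Gamma:p\to q$ at time $t$. Set
\[
 p^-=(\lfloor x_-\rfloor,-n),\quad q^-=(\lfloor x_+\rfloor,n),\quad p^+=(\lceil x_-\rceil,-n),\quad q^+=(\lceil x_+\rceil,n).
\]
Planar ordering of geodesics places $\Gamma$ weakly to the right of the leftmost geodesic from $p^-$ to $q^-$, and weakly to the left of the rightmost geodesic from $p^+$ to $q^+$. This uses the standard crossing and exchange argument for staircases with ordered endpoints on the same rows; it holds in any environment, by the optimality of subpaths. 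On row $j$, the chords of the mesh pairs differ from the chord of $(p,q)$ by at most $1$, and their slopes lie in $K'$ for large $n$. So on $\mathcal E_n^{\mathrm{ch}}$, every point $(x,j)\in\Gamma$ with integer $j$ satisfies \eqref{eq:chordlocalization} with constant $C_0+1$, after absorbing the $O(1)$ error.

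One detail needs care. A point $(x,j)$ on $\Gamma$ ranges over a whole horizontal segment $[\Gamma(j-1),\Gamma(j)]$, not just the exit. The ordering compares these segments, so it suffices that the bounding geodesics' entry and exit on row $j$ both satisfy the bound. The exit bound on row $j-1$ controls the entry on row $j$, and the chord moves by only $O(1)$ between adjacent rows. I expect the main obstacle to be stating the monotonicity cleanly when geodesics are non-unique at exceptional times. This is the reason for using the extreme geodesics of the mesh pairs: ordering against them requires no uniqueness.
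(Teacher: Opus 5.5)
Your proof is correct, but it takes a different route from the paper.

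\textbf{The paper's route.} The paper discretizes the endpoints much more coarsely. It uses intervals $I_k=[ka_n,(k+1)a_n]$ with $a_n=\lceil n^{2/3}\rceil$, which gives only $O(n^{2/3})$ interval pairs. It then applies \cite[Corollary 1.5]{GH23} in its interval-to-interval form. That form controls simultaneously every geodesic between the two endpoint intervals, non-unique ones included, placing all of them within $Cn^{2/3}\log n$ of the reference chord. The chord of any actual pair differs from the reference chord by at most $a_n$, which is absorbed into the constant. So no rounding of endpoints and no ordering argument is needed, and Lemma~\ref{lem:clocks} then passes to all times.

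\textbf{Your route.} You use only the point-to-point estimate, on a unit integer mesh, and transfer to real endpoints by a deterministic sandwich. This has two costs.
\begin{enumerate}
\item A larger union bound: $O(n^2)$ pairs, and $O(n^4)$ after Lemma~\ref{lem:clocks}. This is harmless against $e^{-c(\log n)^3}$.
\item A monotonicity step, which you should justify explicitly. For staircases $\xi,\xi'$ whose endpoints lie on the same pair of rows, let $\xi\wedge\xi'$ and $\xi\vee\xi'$ be the coordinatewise minimum and maximum of the exit vectors. Then
\[
\wgt(\xi\wedge\xi')+\wgt(\xi\vee\xi')=\wgt(\xi)+\wgt(\xi'),
\]
since $\{\min(a,b),\max(a,b)\}=\{a,b\}$ on every row. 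This identity gives both the existence of leftmost and rightmost geodesics and the ordering $L(j)\le\Gamma(j)\le R(j)$ in every environment, with no uniqueness assumption.
\end{enumerate}
Your handling of points that are not exits, via $L(j-1)\le x\le R(j)$ together with the $O(1)$ shift of the chord between adjacent rows, is the right way to cover the whole horizontal segment.

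\textbf{Comparison.} The paper's argument is shorter, given the interval-to-interval estimate. Yours is more elementary: it needs only point-to-point transversal fluctuation bounds plus planarity.
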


\begin{proof}
\begingroup
Put $a_n=\lceil n^{2/3}\rceil$, $I_k=[ka_n,(k+1)a_n]$ for $k\in\ZZ$,
and $K_0=[\tfrac12\min K,2\max K]$. Define the interval-pair list
\[
 \mathfrak P_n=\left\{(I_k,I_l):
 I_k\cap[-Bn,Bn]\ne\varnothing,\quad
 I_l\cap[-Bn,Bn]\ne\varnothing,\quad
 \lambda_{k,l}:=\frac{(l-k)a_n}{2n}\in K_0\right\}.
\]
There are at most $C_Bn^{2/3}$ pairs. Every endpoint pair in the
statement belongs to one of them for large $n$, since rounding
changes its slope by at most $a_n/n=o(1)$.

Fix $(I_k,I_l)\in\mathfrak P_n$. After translation and horizontal
Brownian scaling, the reference endpoints $(ka_n,-n),(la_n,n)$
become $(0,0),(2n,2n)$. In the scaled coordinates of
\cite[Corollary 1.5]{GH23}, the two endpoint intervals have
bounded width, uniformly for $\lambda_{k,l}\in K_0$.
That corollary with $r=\log n\le(2n)^{1/10}$ therefore places
all geodesics between these intervals within horizontal distance
$C_Kn^{2/3}\log n$ of the reference chord, except with
probability $Ce^{-c(\log n)^3}$. The corollary holds simultaneously for all these geodesics,
including when a pair of endpoints admits more than one geodesic.
The chord of each such pair differs from the reference chord
by at most $a_n$, so increasing $C_K$ gives
\eqref{eq:chordlocalization} throughout this interval pair.

Let $\mathcal E_n^{\mathrm{stat}}$ be the static event that these
bounds hold for all pairs in $\mathfrak P_n$. The union bound gives
\[
 \PP((\mathcal E_n^{\mathrm{stat}})^c)
 \le Cn^{2/3}e^{-c(\log n)^3}.
\]
All the paths are determined by the environment in 
$[-(B+1)n,(B+1)n]\times[-n,n]$, which meets at most $C_Bn^2$
update blocks{, where $C_B$ depends on $B$.} Let $\mathcal E_n^{\mathrm{ch}}(B,K)$ be the event
that $\mathcal E_n^{\mathrm{stat}}$ holds in every configuration
of the dynamical environment during $[0,1]$. Lemma~\ref{lem:clocks} gives
\[
 \PP((\mathcal E_n^{\mathrm{ch}}(B,K))^c)
 \le Cn^{8/3}e^{-c(\log n)^3}+e^{-cn^2}
 \le C'e^{-c'(\log n)^3},
\]
as required.
\par\endgroup
\end{proof}

In particular, if such a geodesic visits the cell $\0$, there is
a point $(x,0)\in\Gamma$ with $x\in[0,1]$, and the lemma gives
\begin{equation}\label{eq:chordmidpoint}
 \left|\frac{x_-+x_+}{2}\right|
 \le 1+C n^{2/3}\log n.
\end{equation}
Thus its two endpoints are almost opposite about the marked
cell. This is the property required by the paired direction
windows; no estimate about the limiting direction on the
$n^{2/3}\log n$ scale is needed.

\subsection{Logarithmic windows for a compact range of directions}
\label{sec:logdirectionwindows}

Set
\begin{equation}\label{eq:logdirectionwidth}
 r_n=n^{-1/3},\qquad
 w_n=n^{2/3}(\log n)^{12},\qquad
 \mathscr I_n^{\log}=[-w_n/2,w_n/2]\times\{0\}.
\end{equation}
Fix $K=[\theta_1,\theta_2]\subset(0,\infty)$ and take a compact
interval $K'\subset(0,\infty)$ whose interior contains $K$. Define
\[
 \operatorname{Dir}_n(K)
 =\{j\in\ZZ:\theta_1n-w_n\le jw_n/2\le\theta_2n+w_n\},
 \qquad x_{n,j}=jw_n/2.
\]
For $j\in\operatorname{Dir}_n(K)$, put
$v_{n,j}=(x_{n,j},n)$ and
$\mathcal W_{n,j}=v_{n,j}+\mathscr I_n^{\log}$. Then
\[
 |\operatorname{Dir}_n(K)|
 \le 2(\theta_2-\theta_1)n/w_n+5\le C_Kn^{1/3},
\]
and, for all sufficiently large $n$, $x_{n,j}/n\in K'$ for every $j\in\operatorname{Dir}_n(K)$. For a time interval $J$, let
\begin{equation}\label{eq:En}
 E_n^K(J)=\bigcup_{j\in\operatorname{Dir}_n(K)}
       \{\0\in\hitset_{-\mathcal W_{n,j}}^{\mathcal W_{n,j},J}\}.
\end{equation}

\begin{lemma}[Quantitative marked-cell bound over directions]
\label{lem:directionwindows}
For every compact interval $K\subset(0,\infty)$, there are
$C_K,n_0>0$ such that, for every integer $n\ge n_0$ and every
deterministic time interval $J$ of length at most $r_n$,
\begin{equation}\label{eq:anywindow}
 \PP(E_n^K(J))
 \le n^{-1/3}\exp\{C_K(\log\log n)^2\}.
\end{equation}
\end{lemma}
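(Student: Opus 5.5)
The plan is to follow the proof of Lemma~\ref{lem:averaging}, now with the logarithmic allowance $\ell=\log n$, and then take a union bound over the $C_Kn^{1/3}$ direction windows. By time stationarity it suffices to take $J=[0,r_n]$; a shorter interval gives a subset of the hitset. Fix $j\in\operatorname{Dir}_n(K)$ and set $\lambda=\lambda_j=x_{n,j}/n\in K'$. We want a bound for
\[
 p_{n,j}=\PP\bigl(\0\in\hitset_{-\mathcal W_{n,j}}^{\mathcal W_{n,j},[0,r_n]}\bigr)
\]
of the form $n^{-2/3}\exp\{C(\log\log n)^2\}$. Multiplying by $|\operatorname{Dir}_n(K)|\le C_Kn^{1/3}$ then gives \eqref{eq:anywindow}, with the polynomial factors absorbed in the exponent.

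To bound $p_{n,j}$, use the central tube
\[
 S_{n,j}=\{(i,k)\in\ZZ^2:|k|\le n/128,\ |i-\lambda k|\le n^{2/3}\},
\]
which has at least $cn^{5/3}$ cells. The relevant endpoint sets are exact translates of $\pm\mathcal W_{n,j}$; the shifts are by $v_{n,j}$, which need not be an integer vector, so write $\pm v_{n,j}+\mathscr I_n^{\log}$ and translate by integer $z\in S_{n,j}$. For each such $z=(i,k)$, integer translation invariance gives
\[
 p_{n,j}=\PP\bigl(z\in\hitset_{-v_{n,j}+\mathscr I_n^{\log}+z}^{v_{n,j}+\mathscr I_n^{\log}+z,[0,r_n]}\bigr).
\]
We then check the inclusion $\pm v_{n,j}+\mathscr I_n^{\log}+z\subseteq\pm v_{n,j}+\mathcal K_{n,\lambda}(\ell')$ for an allowance $\ell'$ slightly larger than $\log n$:
\begin{itemize}
\item the row offset satisfies $|k|\le n/32$;
\item the transverse offset is at most $w_n/2+n^{2/3}+|\lambda k-\lambda k|$, which is at most $n^{2/3}(\log n)^{12}$ after taking $\ell'=2\log n$, say.
\end{itemize}
Since $\mathcal K_{n,\lambda}(2\log n)$ is defined with $\ell^{12}=2^{12}(\log n)^{12}$, the enlargement is harmless. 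Also $z$ lies in the central slab $\slab{-n/2}{n/2}$. Summing over $z\in S_{n,j}$ and applying \eqref{eq:regionalmain} of Proposition~\ref{prop:generalregional} with $\ell=2\log n$ and slope interval $K'$ gives
\[
 |S_{n,j}|\,p_{n,j}\le\EE X_{n,\lambda}(2\log n)\le n\exp\{C\Phi_n(2\log n)\}.
\]
Here $\Phi_n(2\log n)\le C(\log\log n)^2$ because $\log(\log n/\log\ell)=O(1)$. Dividing by $cn^{5/3}$ gives the desired bound on $p_{n,j}$, uniformly in $j$, since the proposition is uniform over $\lambda\in K'$.

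The main point to verify is the bookkeeping of the containment. First, the allowance window $\mathcal K_{n,\lambda}(\ell)$ must accommodate both the $(\log n)^{12}$-wide endpoint windows and the tube offset. Second, the proposition's constants must be uniform over the direction-dependent slopes $\lambda_j\in K'$, including endpoints of $K$ where $x_{n,j}/n$ may lie slightly outside $K$; this is why $K'$ is used. Everything else (stationarity for $J$, the union bound, and the absorption of $n^{1/3}$ and $n^{5/3}$ factors) is routine. Using the mean bound only suffices, with no tail estimate needed.
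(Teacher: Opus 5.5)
Your proposal is correct and is essentially the paper's proof: you average the single-window probability over the tube $S_{n,v}$ using integer translation invariance and the inclusion $\mathscr I_n^{\log}+z\subseteq\mathcal K_{n,\lambda}^{\log}$, apply the regional mean bound uniformly over $\lambda\in K'$, take a union bound over the $C_Kn^{1/3}$ windows, and pass to general $J$ by stationarity and monotonicity. Two minor remarks. First, $\ell=\log n$ already suffices, because $w_n/2+n^{2/3}\le w_n$; this is what the paper uses via \eqref{eq:logregionalmean}. Second, for $\ell=2\log n$ the factor $\log(\log n/\log\ell)$ is of order $\log\log n$, not $O(1)$; the bound $\Phi_n(2\log n)\le C(\log\log n)^2$ still holds because $\log\ell=O(\log\log n)$ as well.
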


\begin{proof}
We first bound the marked-cell probability for one paired
window $-v+\mathscr I_n^{\log},v+\mathscr I_n^{\log}$, where
$v=(\lambda n,n)$ and $\lambda\in K'$. Define
\begingroup
\[
 p_n(\lambda)
 :=\PP\!\left(\0\in
 \hitset_{-v+\mathscr I_n^{\log}}
          ^{v+\mathscr I_n^{\log},[0,r_n]}\right).
\]
We use the averaging set from Lemma~\ref{lem:averaging}:
\par\endgroup
\[
 S_{n,v}=\{(i,j)\in\ZZ^2: |j|\le n/128,
                             \ |i-\lambda j|\le n^{2/3}\}.
\]
Its size is at least $cn^{5/3}$. For every $z\in S_{n,v}$,
\[
 \mathscr I_n^{\log}+z\subseteq\mathcal K_{n,\lambda}^{\log},
\]
because its transverse half-width is at most
$w_n/2+n^{2/3}\le w_n$ and its row belongs to $[-n/32,n/32]$.
\begingroup
Put
\[
 \mathcal H_{n,v}^{\log}
 :=\hitset_{-v+\mathcal K_{n,\lambda}^{\log}}
          ^{v+\mathcal K_{n,\lambda}^{\log},[0,r_n]}
                         (\slab{-n/2}{n/2}).
\]
Integer translation invariance gives, for each $z\in S_{n,v}$,
\par\begingroup
\[
 p_n(\lambda)
 =\PP\!\left(z\in
 \hitset_{-v+\mathscr I_n^{\log}+z}
          ^{v+\mathscr I_n^{\log}+z,[0,r_n]}\right)
 \le\PP(z\in\mathcal H_{n,v}^{\log}).
\]
\par\endgroup
The inequality follows from the endpoint-set inclusion above
and the fact that $z$ lies in the central slab. Summing over
$z\in S_{n,v}$ and using \eqref{eq:logregionalmean} gives
\[
 \begin{aligned}
 |S_{n,v}|p_n(\lambda)
 &\le\EE\sum_{z\in S_{n,v}}\ind\{z\in\mathcal H_{n,v}^{\log}\}
 \le\EE|\mathcal H_{n,v}^{\log}|\\
 &\le n\exp\{C_{K'}(\log\log n)^2\}.
 \end{aligned}
\]
\par\endgroup
It follows that
$p_n(\lambda)\le Cn^{-2/3}\exp\{C_{K'}(\log\log n)^2\}$,
uniformly over these slopes. A union bound over
$\operatorname{Dir}_n(K)$ proves \eqref{eq:anywindow} for
$J=[0,r_n]$, after increasing $C_K$. Stationarity and monotonicity
of the hitset give the same bound for every deterministic interval
of length at most $r_n$.
\end{proof}

\subsection{Completion of the gauge-measure proof}
\label{sec:gaugeproof}

We now prove Theorem~\ref{thm:main}. The quantitative estimate
controls the number of selected time intervals. The remaining
steps verify that they cover the exceptional times and that the
sum of their gauge costs is finite over dyadic scales.

\begin{proof}[Proof of Theorem~\ref{thm:main}]
\begingroup
Fix a compact interval $K\subset(0,\infty)$.
Let $\scT_{\0}^K$ be the set of times in $[0,1]$ at which a
non-trivial bigeodesic with direction in $K$ visits the cell $\0$.
It suffices to prove $\mathcal H^H(\scT_{\0}^K)=0$ for each
such $K$, by integer spatial and time translation invariance
and the countable exhaustion of non-axial directions used below.
\par\endgroup
Choose a compact interval $K_1\subset(0,\infty)$ whose interior
contains $K$, and a constant $B>1+\max K_1$.

\paragraph{{\textbf{The selected time intervals and their expected cost.}}}
For every $n$, let $q_n=\lceil r_n^{-1}\rceil$ and put
\[
 I_{n,i}=[ir_n,\min\{(i+1)r_n,1\}],\quad 0\le i<q_n,
 \qquad
 \mathcal I_n^{K_1}
 =\{0\le i<q_n:E_n^{K_1}(I_{n,i})\text{ occurs}\}.
\]
Lemma~\ref{lem:directionwindows} implies, for some $C_1>0$ and
all sufficiently large $n$,
\begin{equation}\label{eq:counttime}
 \EE|\mathcal I_n^{K_1}|
 \le q_n n^{-1/3}\exp\{C_{K_1}(\log\log n)^2\}
 \le \exp\{C_1(\log\log n)^2\}.
\end{equation}
\begingroup
Since $H$ is nondecreasing and $|I_{n,i}|\le r_n$, for all
sufficiently large $n$,
\begin{equation}\label{eq:content}
 \EE\sum_{i\in\mathcal I_n^{K_1}}H(|I_{n,i}|)
 \le \exp\left\{C_1(\log\log n)^2
               -L(r_n)^2\log L(r_n)\right\}.
\end{equation}
{Recall that $L(r)=\log\log(1/r)$ for $0<r\le e^{-e}$.
Since $r_n=n^{-1/3}$, we have $L(r_n)=\log\log n-\log3$,
which tends to infinity.}
On taking $n=2^k$, the right-hand side of \eqref{eq:content}
is at most $k^{-2}$ for all sufficiently large $k$: the
negative term is asymptotic to $(\log k)^2\log\log k$,
whereas the positive term is $O((\log k)^2)$.
Thus the event
\[
 \Omega_{K_1}^{\mathrm{cost}}
 =\left\{\sum_{k\ge1}\sum_{i\in\mathcal I_{2^k}^{K_1}}
                        H(|I_{2^k,i}|)<\infty\right\}
\]
has probability one. The finitely many small scales contribute
only finitely many intervals and do not affect this assertion.
\par\endgroup

\paragraph{{\textbf{The cover contains every exceptional time.}}}
By \eqref{eq:chordfailure} and Borel--Cantelli, the event
\[
 \Omega_{B,K_1}^{\mathrm{ch}}
 =\{\mathcal E_{2^k}^{\mathrm{ch}}(B,K_1)
                              \text{ holds for all large }k\}
\]
has probability one. The event $\Omega_1^{\mathrm{dir}}$ from
Proposition~\ref{prop:infinitegeometry} also has probability one;
on it every non-trivial bigeodesic at every time has a direction.
Only this direction assertion is used here, not its power-scale
bound \eqref{eq:infiniteTF}. Work on
$\Omega_{B,K_1}^{\mathrm{ch}}\cap\Omega_1^{\mathrm{dir}}$.

Take $t\in\scT_{\0}^K$ and a bigeodesic $\Gamma$ witnessing it,
with direction $\theta\in K$. Define its endpoints on rows
$-n,n$ by
\[
 x_{n,-}=\Gamma(-n),\qquad x_{n,+}=\Gamma(n),\qquad
 \lambda_n=\frac{x_{n,+}-x_{n,-}}{2n},\qquad
 c_n=\frac{x_{n,+}+x_{n,-}}2.
\]
Directionality implies $x_{n,-}/n\to-\theta$,
$x_{n,+}/n\to\theta$ and $\lambda_n\to\theta$. Consequently,
for all sufficiently large $n$, both endpoint coordinates have
absolute value at most $Bn$ and $\lambda_n\in K_1$.
The portion between these rows is a geodesic and contains the
horizontal portion visiting the marked cell.

On $\Omega_{B,K_1}^{\mathrm{ch}}$, for all sufficiently large
dyadic $n$, equation~\eqref{eq:chordmidpoint} gives
$|c_n|\le1+Cn^{2/3}\log n\le w_n/4$.
Choose $j\in\operatorname{Dir}_n(K_1)$ such that
$|x_{n,j}-\lambda_n n|\le w_n/4$. This is possible because
the centers have spacing $w_n/2$ and $\lambda_n\in K_1$.
Since $x_{n,-}=c_n-\lambda_n n$ and
$x_{n,+}=c_n+\lambda_n n$, we obtain
\[
 |x_{n,-}+x_{n,j}|\le w_n/2,\qquad
 |x_{n,+}-x_{n,j}|\le w_n/2.
\]
Thus these endpoints lie in the paired windows
$-\mathcal W_{n,j},\mathcal W_{n,j}$ and their geodesic visits
the marked cell at time $t$. We have proved, simultaneously for
all these exceptional times,
\begin{equation}\label{eq:eventual}
 t\in\scT_{\0}^K
 \quad\Longrightarrow\quad
 E_{2^k}^{K_1}(\{t\})\text{ for all sufficiently large }k.
\end{equation}
The threshold may depend on the witnessing bigeodesic and time;
the events on which the implication holds do not.

\paragraph{{\textbf{Vanishing Hausdorff measure.}}}

\begingroup
On $\Omega_{K_1}^{\mathrm{cost}}\cap
\Omega_{B,K_1}^{\mathrm{ch}}\cap\Omega_1^{\mathrm{dir}}$,
for every $k_0$ we have the cover
\[
 \scT_{\0}^K\subseteq
 \bigcup_{k\ge k_0}\ \bigcup_{i\in\mathcal I_{2^k}^{K_1}}I_{2^k,i}.
\]
The interval lengths in this cover are at most $r_{2^{k_0}}$,
which tends to zero, and their total $H$-cost tends to
zero as $k_0\to\infty$, being a tail of the convergent series
defining $\Omega_{K_1}^{\mathrm{cost}}$.
By \eqref{eq:gaugemeasure},
$\mathcal H^H(\scT_{\0}^K)=0$ almost surely.

Apply this conclusion to $K=[1/b,b]$, $b\ge2$ an integer,
after every integer cell translation $z\in\ZZ^2$ and every
integer time translation $a\in\ZZ$. These translations preserve
the dynamics. By \cite[Proposition 49]{B25}, almost surely no
time admits a non-trivial axially directed semi-infinite geodesic.
Together with Proposition~\ref{prop:infinitegeometry}, this ensures
that every non-trivial bigeodesic has direction in one of these
compact intervals. It also visits some cell and occurs in some
$[a,a+1]$. Countable subadditivity of Hausdorff measure therefore
gives $\mathcal H^H(\scT)=0$ almost surely.
Since $r^s/H(r)\to0$ as $r\downarrow0$ for every $s>0$,
the dimension-zero assertion follows as well.
\par\endgroup
\end{proof}

\begingroup
For a fixed cell, bounded time interval and compact direction
range, \eqref{eq:counttime} also proves zero Hausdorff measure
for the gauge $\exp\{-A L(r)^2\}$ whenever $A>C_1$, by the
same summability calculation. The factor $\log L(r)$ in $H$
gives one gauge for every compact direction range without
requiring a common bound on the constants $C_1$.
More generally, for any fixed function $\phi$ as in {Footnote~\ref{fn:generalgauge}}
to Theorem~\ref{thm:main}, replace $\log L(r_n)$ in
\eqref{eq:content} by $\phi(L(r_n))$. Since
$\phi(L(r_n))\to\infty$, the expected costs over dyadic
scales are again summable for each compact direction range.
The same covering argument and countable union therefore prove
{the assertion in Footnote~\ref{fn:generalgauge}.}
\par\endgroup
\par\endgroup

\section{Further questions}

{Theorem~\ref{thm:regional}
bounds the expected hitset size by
$n\exp\{C(\log\log n)^2\}$ and gives a superpolynomial failure
bound at the same threshold. In view of this, we have the following natural question.}
\par\begingroup
\begin{question}[Linear dynamical hitsets]\label{qu:linearhitset}
Is the expected dynamical hitset on the critical time interval of
linear order? More precisely, is there an absolute constant $C$
such that, for every integer $n\ge1$,
\begin{equation}\label{eq:linearhitsetquestion}
 \EE\left|\hitset_{\mathscr R_n^-}^{\mathscr R_n^+,[0,n^{-1/3}]}
                     (\slab{-n/2}{n/2})\right|\le Cn?
\end{equation}
Further, do there exist constants $C,c>0$ and $\gamma\in(0,1)$,
independent of $n$ and $\alpha$, such that, for every integer
$n\ge1$ and every $\alpha\ge1$,
\begin{equation}\label{eq:uniformhitsetquestion}
 \PP\left(
 \left|\hitset_{\mathscr R_n^-}^{\mathscr R_n^+,[0,n^{-1/3}]}
                     (\slab{-n/2}{n/2})\right|>\alpha n
       \right)\le Ce^{-c\alpha^\gamma}?
\end{equation}
\end{question}
The tail estimate would imply the expectation bound by integration.
Both questions are already of interest for the single endpoint
pair $-\bn,\bn$.
\par\endgroup

Our proof first obtains simultaneous local estimates for
polynomially many possible boxes and representative endpoint pairs,
and then applies a deterministic recursion to the intervals actually
retained. A failure bound depending only on $\alpha$ would leave a
polynomial factor in $n$ after this union bound. {The accelerated recursion reduces
the accumulated branching cost to
$\exp\{C(\log\log n)^2\}$, while terminal intervals cost a fixed
power of $\log n$. These factors still depend on $n$, and
shrinking the cluster widths does not by itself control the amount
of new hitset contributed by successive generations.}

One possible approach to \eqref{eq:uniformhitsetquestion} would be to
explore the hierarchy of clusters from large scales to small scales,
retaining enough unrevealed randomness to control the smaller
excursions conditional on the information used to discover their
parent clusters. Such estimates could exploit the increasing
stability at smaller scales to control the total contribution of
the clusters encountered. This would require understanding the
dependence between scales, since a priori, discovering a cluster through an
optimizing geodesic already reveals information about its interior.
Exact independence need not be necessary, but suitable conditional
estimates are not supplied by the present proof.

Whether there actually exist exceptional times admitting a bigeodesic
with a random direction remains an open question. An affirmative
answer to the expectation bound in Question~\ref{qu:linearhitset}
should imply the a.s.\ non-existence of exceptional times. Indeed, after
fixing a unit cell and a compact slope interval $K\subset(0,\infty)$,
a first-moment
argument would show that the set of times $t\in[0,1]$ at which a
bigeodesic with direction in $K$ intersects that cell is a.s.\ finite.
Since the discrete BLPP dynamics form a well-behaved reversible
Markov process, general results ruling out isolated visits to closed
exceptional sets should then give non-existence. For a closely related
argument in dynamical percolation, see
\cite[Theorem~11.1 and its proof]{GS14}, and the original paper
\cite{HPS97}.

\begingroup
\appendix
\section{A guide to notation, dependencies and parameters}\label{app:guide}

The allowance $\ell$ is kept free until the end of the finite-hitset
proof. The choices $\ell=\log n$ and $\ell=n^a$ therefore use the
same objects and estimates. In the tables below, $j$ is a generation
index when attached to $N_j$ or $\mathcal J_j$, while $m$ denotes
a fixed spatial row during refinement.

\begingroup\small
\begin{longtable}{@{}p{0.25\textwidth}p{0.71\textwidth}@{}}
\hline
Notation & Meaning and defining reference\\[3pt]\hline
\endfirsthead
\hline
Notation & Meaning and defining reference (continued)\\[3pt]\hline
\endhead
{$\mathscr R_n^\pm$} & {The two KPZ-scale endpoint rectangles
in the regional theorem; \eqref{eq:introregions}.}\\[5pt]
$n$, $N$ & Original longitudinal scale and a shorter local scale.
Endpoint row and horizontal separations are comparable to the
stated scale, with slopes in a fixed compact interval.\\[5pt]
$p,q,D,\lambda$ & Original deterministic endpoints,
$D=j_1-j_0$, and slope $\lambda=(x_1-x_0)/D$.
Local endpoints $u,v$ obey the analogous convention in
\eqref{eq:localranges}.\\[5pt]
$\beta$, $\operatorname{Bulk}_\beta(p,q)$ & Fixed fraction excluded
near each endpoint and the resulting bulk slab; \eqref{eq:bulk}.\\[5pt]
$\ell$, $Q$, $h$ & The allowance $\ell$ controls the tolerances and failure
probabilities; it is chosen once for each $n$ and kept fixed at
every local scale. The fixed exponent $Q$ sets the cutoff
$\ell^Q$, below which we count cells directly. The refinement
is carried out over $[0,h]$, with $h=n^{-1/3}\ell^{-4}$;
see \eqref{eq:allowanceparameters}.\\[5pt]
$\Phi_n(\ell)$ & Accumulated logarithmic cost
$(\log\ell)\log(\log n/\log\ell)$; \eqref{eq:allowancecost}.
It bounds the logarithm of the hitset size divided by $n$.\\[5pt]
$P$ & Regularity margin $C_{\mathrm{reg}}\ell^2$ for global
transversal displacement, increments and horizontal row lengths;
\eqref{eq:regularitymargin}--\eqref{eq:rowlength}.\\[5pt]
$\mathcal L_n$, $L_i$, $F_i^t$ &  An indexed family of deterministic
path classes, their horizontal separations, and their optimized
passage values in Lemma~\ref{lem:timeuniform}.\\[5pt]
$A_N$, $D_N$ & Stability bound
$A_N=\ell\sqrt{Nh}+\ell^2$ and resulting cover width
$D_N=C_{\mathrm{width}}(Nh\ell^2+\ell^4)$;
\eqref{eq:AN}, Proposition~\ref{prop:local}.\\[5pt]
$Z^t$ & The exit-constrained routed profile
for the endpoints and crossing row under consideration;
\eqref{eq:routed}.\\[5pt]
$\nearmax^\alpha(f;W)$ & Largest number of $\alpha$-near-maximisers
in $W$ separated by at least $\alpha^2$; \eqref{eq:NMdefinition}.\\[5pt]
$R$, $\omega_N$ & Capture allowance and transverse window width
$\omega_N=RN^{2/3}$. The refinement uses $R=\ell^{12}$.\\[5pt]
$U^\pm_{N;c,m}$, $D^\pm_{N;c,m}$ & Endpoint windows and outer
sampling regions for scale $N$ and center $(c,m)$;
\eqref{eq:localwindows}--\eqref{eq:outerslabs}.
They also depend on the fixed slope and the allowance $R$.\\[5pt]
$\cQ_{N;c,m}$ & Independent Poisson cloud of endpoint pairs for
one box, with intensity $N^{-10/3}R^4$ and mean at most $CR^6$;
Lemma~\ref{lem:capture}.\\[5pt]
$\mathcal W_n$ & The interval $[x_0,x_1]$, containing every
horizontal coordinate of a staircase from $p$ to $q$.\\[5pt]
$J_{N,k}$, $c_{N,k}$ & Deterministic mesh interval of width
$N^{2/3}$ and its midpoint; \eqref{eq:deterministicmesh}.\\[5pt]
$\cI_n(N)$, $\cM_n$ & Mesh indices meeting $\mathcal W_n$ and
integer bulk rows; \eqref{eq:boxindices}. The row set is the same
at every scale.\\[5pt]
$\mathcal S_n$, $\mathfrak B_n(\mathcal S_n)$ & Deterministic list
of local scales and its boxes, all prepared before the actual
parents are selected; Section~\ref{sec:refinement}.\\[5pt]
$\mathscr Q_n$, $M_n$ & {$\sigma$-algebra} of all prepared clouds and
total number of sampled entries in the refinement proof;
\eqref{eq:totalcloudmean}. In Proposition~\ref{prop:generalregional}, $M_n$ instead
counts the pairs in $\cQ_{n;0,0}$, the cloud used to represent
geodesics between the original endpoint regions
$U^-_{n;0,0}$ and $U^+_{n;0,0}$.\\[5pt]
$E_m$, $E_m(J)$ & All exits of $\Gamma_p^{q,t}$ from row $m$
during $[0,h]$, and their restriction $E_m\cap J$;
\eqref{eq:rootexitset}.\\[5pt]
$\cR_n,\cA_n,\cD_n,\cX_n$ & Regularity, simultaneous capture,
cloud-count, and representative-cover events in the proof of
Lemma~\ref{lem:refinement}. Their intersection is
$\cE_n^{\mathrm{ref}}$.\\[5pt]
$\mathcal C(N,c,m)$ & A finer cover for one possible parent,
with at most $\ell^{163}$ intervals of length $D_N$;
\eqref{eq:refinementcover}.\\[5pt]
$N_0$, $N_j$, $g_n$ & Initial scale, recursively chosen scales,
and first generation below $\ell^Q$;
\eqref{eq:initialscale} and \eqref{eq:recursion}.\\[5pt]
$b_j$, $J_n(\ell)$ & Logarithmic scale $b_j=\log(n/N_j)$ and
deterministic upper bound on the number of generations;
\eqref{eq:acceleration}--\eqref{eq:generations}.\\[5pt]
$\mathcal G_j$, $R_m$, $\mathcal J_j(m)$ & Full mesh at generation
$j$, initial exit window, and retained interval family;
\eqref{eq:initialfamily}--\eqref{eq:childfamily}. Their union covers
$E_m$ by \eqref{eq:coverinvariant}.\\[5pt]
$c_J$, $\mathcal C_j(m,J)$ & Midpoint of a retained parent and
its finer cover $\mathcal C(N_j,c_J,m)$, used in
\eqref{eq:childfamily}.\\[5pt]
$\widehat J$, $H_m$ & A terminal interval enlarged to the left
by $P$, and the row hitset. Step 4 of the proof of
Proposition~\ref{prop:generalpoint} uses these to pass from exit
covers to cell covers; \eqref{eq:rowcount}.\\[5pt]
$\mathcal K_{n,\lambda}(\ell)$,
$\mathcal K_{n,\lambda}^{\log}$ & Enlarged endpoint region and
its specialization at $\ell=\log n$;
\eqref{eq:Kregion}, \eqref{eq:logendpointregions}.\\[5pt]
$\mathscr I_n^\delta$, $\mathscr I_n^{\log}$ & Horizontal windows
with power and logarithmic enlargement, respectively;
\eqref{eq:smallI}, \eqref{eq:logdirectionwidth}. The former are
used for a fixed direction, the latter for the Hausdorff gauge.\\[5pt]
$\mathcal W_{n,j}$ & Endpoint segment on row $n$, paired with
$-\mathcal W_{n,j}$ in the finite direction cover; \eqref{eq:En}.\\[5pt]
$\mathcal E_n^{\mathrm{ch}}(B,K)$ & Event of simultaneous
localization about finite endpoint chords;
Lemma~\ref{lem:chordlocalization}.\\[5pt]
$\lambda_n$, $c_n$ & Slope and midpoint of the chord joining
a bigeodesic's exits on rows $-n,n$; Section~\ref{sec:gaugeproof}.
These quantities may vary with $n$.\\[5pt]
$E_n^K(J)$ & Event that the marked cell is visited during $J$
by a geodesic in one of the paired direction windows;
\eqref{eq:En}. It is an event, whereas $E_m$ is an exit set.\\[5pt]
$I_{n,i}$, $\mathcal I_n^{K_1}$ & Deterministic time intervals of
length at most $n^{-1/3}$ and the random indices retained in
the final time cover; Section~\ref{sec:gaugeproof}.\\[5pt]
$L(r)$, $H$, $\mathcal H^H$ & Logarithmic scale,
gauge and associated Hausdorff measure; \eqref{eq:hausdorffgauge},
\eqref{eq:gaugemeasure}. The expected cover cost is bounded
in \eqref{eq:content}.\\[3pt]
\hline
\end{longtable}
\endgroup

\paragraph{\textbf{How the estimates fit together.}}
There are two uses of geodesic regularity. It locates the random
cut endpoints inside the capture windows, and it bounds the
horizontal row portions when exits are converted to visited cells.
Passage-time stability and the static near-maximiser count give
the {exit cover} for a deterministic endpoint pair in
Proposition~\ref{prop:local}. The independence of the clouds allows
this estimate to be applied to their sampled pairs; capture then
transfers the covers to the actual cut subpaths in
Lemma~\ref{lem:refinement}. Proposition~\ref{prop:generalpoint}
uses that lemma's common event and counts retained intervals
deterministically. Finally, Proposition~\ref{prop:generalregional} applies
Poisson capture to geodesics between the original endpoint
regions $U^-_{n;0,0}$ and $U^+_{n;0,0}$. Their portions in
the central slab are represented by the sampled endpoint pairs,
to which the point-to-point hitset estimate applies. Subdividing
the critical time interval into pieces of length at most $h$
then gives the regional bound over $[0,n^{-1/3}]$. Thus the local estimates are
reused at each scale, while the passage from one generation to
the next needs only the cover invariant \eqref{eq:coverinvariant}.

\paragraph{\textbf{Why these allowance powers suffice.}}
The numerical powers below are fixed throughout the argument.
Their purpose is to retain uniform estimates as $\ell$ varies,
including its smallest choice $\log n$.

\begin{center}\small
\begin{tabular}{p{0.29\textwidth}p{0.63\textwidth}}
\hline
Choice & Requirement it satisfies\\[3pt]\hline
$P=C\ell^2$ & Dominates the logarithmic modulus factors in
geodesic regularity; Lemma~\ref{lem:regularity}.\\[5pt]
$R=\ell^{12}$ & Contains the regularity windows and gives
capture failure $Cn^{13}(\log n)e^{-c\ell^{36/11}}$;
$36/11>2$ allows the final failure bound $Ce^{-c\ell^2}$.\\[5pt]
$\ell^{73}$ cloud pairs & Dominates the mean $C\ell^{72}$,
with failure at most $e^{-\ell^{73}/2}$ per box.\\[5pt]
$\ell^{90}$ near-maximisers & Dominates the Brownian-comparison
loss: the logarithmic exponent is
$-c\ell^{90}+C\ell^{14}(\ell^{90})^{5/6}
=-c\ell^{90}+C\ell^{89}$.\\[5pt]
$Q>\max\{1080,12/\bar\eta\}$ & At $N\ge\ell^Q$, ensures
$\ell^{90}=o(N^{1/12})$ for Brownian comparison and
$\ell^{12}\le N^{\bar\eta}$ for capture. It also implies the
weaker restrictions in regularity and stability.\\[5pt]
$\ell\le n^{1/(2Q)}$ & Puts the terminal cutoff
$\ell^Q\le n^{1/2}$ below the initial scale $N_0\asymp n$.
The constants needed in the proof are uniform in this range.\\[5pt]
$h=n^{-1/3}\ell^{-4}$ & Gives
$A_N/N^{1/3}\le\ell^{-1}+\ell^{2-Q/3}$ and starts the
contraction. Subdividing $[0,n^{-1/3}]$ into intervals of length
at most $h$ costs a factor of at most $\lceil\ell^4\rceil$;
Lemma~\ref{lem:timesubdivision}.\\[3pt]
\hline
\end{tabular}\end{center}

The proof uses two different counts. At each scale there are $O(n)$ mesh intervals and $O(n)$
bulk rows, hence $O(n^2)$ possible boxes. There are at most
$\log n$ scales, and all $O(n^2\log n)$ boxes enter the
probability bound in Lemma~\ref{lem:refinement}.
Only the retained parents enter \eqref{eq:branchcount}; each has
at most $3\ell^{163}$ children. The number of generations is
bounded by \eqref{eq:generations}, and the terminal intervals
cost at most $C\ell^{2Q/3}$ cells each. Multiplying the initial cover size, the branching factors
and the terminal cell cost, and then summing over $O(n)$ rows,
gives $n\exp\{C\Phi_n(\ell)\}$ in \eqref{eq:cellcount}.

For $\ell=\log n$, the logarithm of this loss is
$O((\log\log n)^2)$. For $\ell=n^a$, it is
$O(a\log(1/a)\log n)$. Equation~\eqref{eq:allowancechoice}
chooses $a$ in terms of the desired size exponent $\varepsilon$.
Thus the same local estimates and cover recursion yield
both regimes by choosing different values of $\ell$.
\par\endgroup

\printbibliography
\end{document}